\newtheorem{definition}{Definition}[section]
\newtheorem{lemma}{Lemma}[section]
\newtheorem{theorem}{Theorem}
\newtheorem{remark}{Remark}[section]
\newtheorem{proposition}{Proposition}[section]
\numberwithin{equation}{section}
\newtheorem{sublemma}{Sublemma}[section]
\def \P{\mathbb{P}}
\renewcommand*{\backrefalt}[4]{\quad \tiny
  \ifcase #1 (\textbf{NOT CITED.})%
  \or    (Cited on Section~#2.)%
  \else   (Cited on Section~#2.)%
  \fi}
\subjclass[2020]{Primary: 37A25, 37H99, Second: 37C30}
\keywords{random dynamical system, exponential decay of quenched random correlation, random SRB measure}
\author{Xue Liu}
\address[Xue Liu]
{CAS Wu Wen-Tsun Key Laboratory of Mathematics\\
 University of Science and Technology of China\\
 Hefei, Anhui 230026, PR China}
\email[X.~Liu]{xueliu21@ustc.edu.cn}
\begin{document}

\begin{abstract}
In this paper, we study the statistical property of Anosov systems on surface driven by an external force. By utilizing the Birkhoff cone method, we show that if the systems on surface satisfying the Anosov and topological mixing on fibers property, then the quenched random correlation for H\"older observables with respect to the unique random SRB measures decays exponentially.
\end{abstract}


\title[Exponential decay of random correlations]{Exponential decay of random correlations for random Anosov systems mixing on fibers}
\maketitle

\tableofcontents

\parskip 10pt         

\section{Introduction}
In this paper, we investigate the decay rate of quenched random correlation for Anosov system on surface driven by an external force with respect to the random SRB measure. Let $M$ be a surface, by which we mean a connected closed smooth 2-dimensional Riemannian manifold. Let $\theta:\Omega\rightarrow\Omega$ be a homeomorphism on a compact metric space $(\Omega,d_\Omega)$, which will describe the external force.  A dynamical system driven by $(\Omega,\theta)$ is a mapping
\begin{equation*}
  F: \mathbb{Z}\times \Omega\times M\rightarrow M,\ (n,\omega,x)\mapsto F(n,\omega,x)
\end{equation*}satisfying for each $n\in\mathbb{Z}$, $(\omega,x)\mapsto F(n,\omega,x)$ is continuous and the mappings $F(n,\omega):=F(n,\omega,\cdot):M\rightarrow M$ form a cocycle over $\theta$, i.e.,
      \begin{align*}
         F(0,\omega)&=id_M\mbox{ for all }\omega\in \Omega,\\
         F(n+m,\omega)&=F(n,\theta^m\omega)\circ F(m,\omega)
         \mbox{ for all } n,m\in\mathbb{Z},\ \omega\in\Omega.
      \end{align*}
When $(\Omega,\mathcal{B}(\Omega))$ is equipped with a $\theta-$invariant probability measure $\P$, $F$ is a continuous random dynamical system (abbr. RDS) \cite{Arnold98}. We denote $f_\omega$ by the time-one map $F(1,\omega)$ of the RDS, and assume $f_\omega$ to be a diffeomorphism for all $\omega\in\Omega$. Putting $f_\omega$ and $\theta$ together forms a skew product transformation $\phi:M\times \Omega\to M\times \Omega$ by $\phi(x,\omega)=(f_\omega(x),\theta\omega)$. The skew product system $\phi$ is called Anosov on fibers if for every $(x,\omega)\in M\times \Omega$, there is a splitting of the tangent bundle of $M$ at $x$
\begin{equation*}
  T_xM=E^s{(x,\omega)}\oplus E^u{(x,\omega)}
\end{equation*}which depends continuously on $(x,\omega)\in M\times \Omega$ with $\dim E^s{(x,\omega)}=\dim E^u{(x,\omega)}=1$, and the splitting is invariant in the sense that
\begin{equation*}
  D_xf_\omega E^u{(x,\omega)}=E^u{(f_\omega (x),\theta\omega)},\ \ D_xf_\omega E^s{(x,\omega)}=E^s{(f_\omega(x),\theta\omega)},
\end{equation*}and
\begin{equation*}
  \begin{cases}
    |D_xf_\omega\xi|\geq e^{\lambda_0}|\xi|, & \ \forall \xi\in E^u{(x,\omega)}, \\
    |D_xf_\omega\eta|\leq e^{-\lambda_0}|\eta|, &\ \forall \eta\in E^s{(x,\omega)},
  \end{cases}
\end{equation*}where $\lambda_0>0$ is a constant. We assume that $\phi$ is topological mixing on fibers, that is, for any nonempty open sets $U,\ V\subset M$, there exists $N>0$ such that for any $n\geq N$ and $\omega\in\Omega$, $\phi^n( U\times \{\omega\})\cap(V\times \{\theta^n\omega\})\not=\emptyset$.

A random probability measure $\omega\mapsto\mu_\omega$ on $M$ is a measurable map $\mu:\Omega\rightarrow Pr(M)$, where $Pr(M)$ is the space of Borel probability measures on $M$ equipped with the Borel $\sigma-$algebra generated by the weak$^*$ topology. A random probability measure $\omega\mapsto\mu_\omega$ is said to be $\phi-$invariant if $(f_\omega)_*\mu_\omega=\mu_{\theta\omega}$ for $\P$-a.s. $\omega\in\Omega$. Given a pair of regular observables $\varphi$ and $\psi$ on $M$, we say the (quenched) past random correlation function of $\varphi$ and $\psi$ with respect to the system $\phi$ and an invariant random probability measure $(\mu_\omega)_{\omega\in\Omega}$ decays exponentially if
\begin{equation*}
  \left|\int_{M}\psi(F(n,\theta^{-n}\omega,x))\varphi(x)d\mu_{\theta^{-n}\omega}-\int_M\psi(x)d\mu_\omega\int_M\varphi(x)d\mu_{\theta^{-n}\omega}\right|\rightarrow 0
\end{equation*}exponentially as $n\rightarrow\infty$. We say that the (quenched) future random correlation function of $\varphi$ and $\psi$ decays exponentially if
\begin{equation*}
 \left|\int_{M}\psi(F(n,\omega,x))\varphi(x)d\mu_{\omega}-\int_M\psi(x)d\mu_{\theta^n\omega}\int_M\varphi(x)d\mu_{\omega}\right|\rightarrow 0
\end{equation*}exponentially as $n\rightarrow\infty$. If $\phi$ is Anosov and topological mixing on fibers, then $\phi$ is a random topological transitive hyperbolic systems \cite{HLL}, so there exists a unique random SRB measure and the unique random SRB measure is given by $\mu_\omega:=\lim_{n\rightarrow\infty}(f_{\theta^{-n}\omega}^n)_*m$, where $m$ is the normalized Riemannian volume measure \cite{Gund99}. In this paper, we prove that such system $\phi$ and the unique random SRB measure have exponential decay of both past and future random correlations for H\"older observables $\psi$ and $\varphi$. We emphasis that our result holds for all $\omega\in\Omega$, while most result of decay of quenched random correlation only holds for $\P$-a.s. $\omega\in\Omega$.

 Recently, Anosov on fibers systems with the topological mixing on fibers property have been studied in \cite{HLL} from the topological complexity perspective, in which the authors proved dynamical complexity such as the density of random periodic points, strong random horseshoe, the density of measure equi-distributed on a random periodic orbit, and a simplified random Liv\v sic theorem. Examples such as fiber Anosov maps on 2-dimension torus driven by irrational rotation on the torus and random composition of $2\times 2 $ area-preserving positive matrices are under consideration. Moreover, the Anosov on fibers systems contain a class of partially hyperbolic systems. In fact, let $\Omega$ be a compact differentiable manifold, and let $\theta:\Omega\rightarrow\Omega$ be a diffeomorphism such that the expansion of $D\theta$ is weaker than $e^{\lambda_0}$ and contraction of $D\theta$ is weaker than $e^{-\lambda_0}$. Furthermore, we assume $f_\omega(x)$ and $f_\omega^{-1}(x)$ are $C^1$ in $\omega$. Then the system $\phi$ is a partially hyperbolic system, and the dimension of central direction coincides with $\dim\Omega$ (see Proposition \ref{proposition A.1} in Appendix).

For deterministic dynamical systems, there are a large number of results considering the exponential decay of correlations, for instance \cite{BW75,Ru78,Liv95A,Young98,Young99,Dol00,Ch06,Alv08,CD09,Ch09,WY13,Liv16,Pes19,AZI19}. For RDS, the exponential decay of (quenched) random correlations was obtained for random Lasota–Yorke maps on intervals \cite{Buz99}, for random perturbations of expanding maps \cite{Baladi96}, for i.i.d unimodal maps \cite{Baladiunimodal2002} and for a class of non-uniformly expanding random dynamical systems \cite{MSP20}. In \cite{Kif08}, the topological one-sided random shift of finite type with the fiber Gibbs measure was proved to have certain nonuniform $\omega-$wise decay of correlations, and similar results held for random expanding in average transformations. Note that the system in this paper belongs to random hyperbolic system and it is randomly conjugated to a two-sided random shift \cite{Gund99}. Recently, the authors in \cite{AlvesWael2022} obtain the exponential decay of  quenched future random correlations for random perturbation of topological mixing uniformly hyperbolic system and a family of equivariant physical measure. We note that our results are independent, since our main example can not be obtained by perturbation, and it is unknown whether the family of equivariant physical measure coincides the random SRB measure. Other decay rates of random correlations such as stretched exponential and polynomial decay were also considered for certain random dynamical systems \cite{Xin18,Marks19,Marks20,AlvesWael2022}.

In this paper, we prove the exponential decay of (quenched) random correlations for Anosov and topological mixing on fibers system by directly studying the fiber transfer operator $L_{\omega}$, which is defined by
\begin{equation}\label{L varphi}
  L_{\omega}\varphi: M \rightarrow\mathbb{R},\ (L_{\omega}\varphi)(x):=\frac{\varphi((f_{\omega})^{-1}x)}{|\det D_{(f_{\omega})^{-1}(x)}f_{\omega}|}
\end{equation}for any measurable observables $\varphi:M\rightarrow \mathbb{R}$. We construct a Birkhoff cone on each fiber and introduce the projective metric on each fiber Birkhoff cone. The construction of the Birkhoff cone on each fiber is inspired by the construction in \cite{Liv95A} and \cite{Viana}. The most technical analysis in this paper lies in estimating the diameter of the image of $L_\omega^N=L_{\theta^{N-1}\omega}\circ \cdots\circ L_\omega$ on fiber Birkhoff cone, where $N$ comes from the mixing on fibers property. We prove that the image of $L_\omega^N$ acting on the $\omega-$fiber Birkhoff cone has finite diameter with respect to the projective metric on $\theta^N\omega-$fiber Birkhoff cone. Moreover, this diameter is uniformly finite for all $\omega\in\Omega$. As a consequence of Birkhoff's inequality, $L_\omega^N$ is a contraction and the contraction rate is independent of $\omega\in\Omega$. The contraction implies weak$^*$-limit of $(L_{\theta^{-n}\omega}^n1)dm$ exists and it gives the unique random SRB measure, where $m$ is the normalized Riemannian volume measure. The exponential decay of random correlations can be obtained from the contraction and the usual techniques in deterministic systems.

The Birkhoff cone approach has been used extensively to study the transfer operator and exponential decay of correlations.
Liverani in \cite{Liv95A} used it to prove the exponential decay of correlations for smooth uniformly hyperbolic area-preserving cases. Later, it was generalized to general Axiom A attractors in \cite{Viana,Baladi00}, and some partially hyperbolic systems \cite{Cas02,Cas17}. For RDS, the Birkhoff cone approach was used in \cite{Baladi96} and \cite{MSP20} for exponential decay of (quenched) random correlations.


This paper is organized as follows. In Section \ref{section 2}, we state the settings and formulate the main result. In Section \ref{section 3}, we introduce several preliminary lemmas and propositions to pave the way for the proof of the main result. Section \ref{section 4} addresses the proof of the main result based on the Birkhoff cone. We recall the definitions of convex cone, projective metric and Birkhoff's inequality in the Appendix.

\section{Setting and Main Result}\label{section 2}
In this section, we begin with the setting of Anosov and mixing on fibers systems. After introducing several necessary notations, we formulate the main result.
\subsection{Anosov and mixing on fibers systems} Let $M$ be a connected closed smooth Riemannian manifold with $\dim M=2$, and $d_M$ be the induced Riemannian metric on $M$, $(\Omega,d_\Omega)$ be a compact metric space, and $\theta:\Omega\rightarrow \Omega$ be a homeomorphism. Let $\P$ be an ergodic Borel probability measure on $\Omega$. $M\times\Omega$ is a compact metric space with distance $d((x_1,\omega_1),(x_2,\omega_2))=d_M(x_1,x_2)+d_\Omega(\omega_1,\omega_2)$ for any $x_1,x_2\in M$ and $\omega_1,\omega_2\in\Omega$. Let Diff$^2(M)$ be the space of $C^2$ diffeomorphisms on $M$ with $C^2-$topology (see, e.g., \cite{Hirsch}), and $f:\Omega\rightarrow$Diff$^2(M)$ be a continuous map. The skew product $\phi:M\times\Omega\rightarrow M\times \Omega$ induced by $f$ and $\theta$ is defined by:
\begin{equation*}
  \phi(x,\omega)=(f(\omega)x,\theta\omega)=(f_\omega x,\theta\omega),\ \forall\omega\in\Omega,\ x\in M.
\end{equation*}
where we rewrite $f(\omega) $ as $f_\omega$. Then inductively:
\begin{equation}\label{def f omega n}
  \phi^n(x,\omega)=(f_\omega^nx,\theta^n\omega):=\begin{cases}
                        (f_{\theta^{n-1}\omega}\circ\cdots\circ f_\omega x,\theta^n\omega), & \mbox{if } n>0 \\
                                                (x,\omega), & \mbox{if } n=0 \\
                        ((f_{\theta^n\omega})^{-1}\circ\cdots\circ (f_{\theta^{-1}\omega})^{-1}x,\theta^n\omega), & \mbox{if } n<0.
                      \end{cases}
\end{equation}

The system $\phi$ is called Anosov on fibers if the following is true: for every $(x,\omega)\in M\times \Omega$, there is a splitting of the tangent bundle of $M$ at $x$
\begin{equation*}
  T_xM=E^s{(x,\omega)}\oplus E^u{(x,\omega)}
\end{equation*}which depends continuously on $(x,\omega)\in M\times \Omega$ with $\dim E^s{(x,\omega)}=\dim E^u{(x,\omega)}=1$ and satisfies that
\begin{equation*}
  D_xf_\omega E^u{(x,\omega)}=E^u{(f_\omega(x),\theta\omega)},\ \ D_xf_\omega E^s{(x,\omega)}=E^s{(f_\omega(x),\theta\omega)}
\end{equation*}and
\begin{equation*}
  \begin{cases}
    |D_xf_\omega\xi|\geq e^{\lambda_0}|\xi|, & \ \forall \xi\in E^u{(x,\omega)}, \\
    |D_xf_\omega\eta|\leq e^{-\lambda_0}|\eta|, &\ \forall \eta\in E^s{(x,\omega)},
  \end{cases}
\end{equation*}where $\lambda_0>0$ is a constant. We say that $\phi: M\times \Omega\rightarrow M\times \Omega$ is topological mixing on fibers if for any nonempty open sets $U,V\subset M$, there exists  $N>0$ such that for any $n\geq N$ and $\omega\in\Omega$
\begin{equation*}
  \phi^n(U\times \{\omega\})\cap V\times \{\theta^n\omega\}\not=\emptyset.
\end{equation*}
Examples of Anosov and topological mixing on fibers systems are given in Appendix \ref{Appendix Example}.
\subsection{Random probability measures}\label{subsection 2.2}
The following notations are borrowed from \cite{Hans02}.
Denote $Pr(M)$ to be the space of probability measures on $(M,\mathcal{B}(M))$ equipped with weak$^*$ topology.

 A map $\mu:\mathcal{B}(M)\times\Omega\rightarrow [0,1]$ by $(B,\omega)\mapsto \mu_\omega(B)$ is said to be a random probability measure on $M$ if it satisfies: for every $B\in\mathcal{B}(M)$, $\omega\mapsto\mu_\omega(B)$ is measurable; and for $\P$-almost every $\omega\in\Omega$, $B\rightarrow\mu_\omega(B)$ is a Borel probability measure.  By Remark 3.2 in \cite{Hans02}, the second condition can be relaxed to: for every $D$ from a $\cap-$stable family $\mathfrak{F}$ of Borel sets of $M$ which generates $\mathcal{B}(M)$ (i.e. $\sigma(\mathfrak{F})=\mathcal{B}(M)$), $\omega\mapsto \mu_\omega(D)$ is measurable. A typical example of $\mathfrak{F}$ is the family of all closed sets in $M$. We denote a random probability measure by $\omega\mapsto \mu_\omega$ or $(\mu_\omega)_{\omega\in\Omega}$.

A random probability measure $\omega\mapsto\mu_\omega$ is said to be $\phi-$invariant if $(f_\omega)_*\mu_{\omega}=\mu_{\theta\omega}$ for $\P$-a.s. $\omega\in\Omega$.
\subsection{Main result}

Let $C^0(M)$ be the collection of all continuous functions $\varphi:M\rightarrow\mathbb{R}$.
For $\alpha\in(0,1)$, and $\varphi\in C^0(M)$, denote
\begin{equation*}
  \|\varphi\|_{C^0(M)}:=\sup_{x\in M}|\varphi(x)|\mbox{ and }|\varphi|_\alpha:=\sup_{x,y\in M, x\not=y}\frac{|\varphi(x)-\varphi(y)|}{d(x,y)^\alpha}.
\end{equation*} We denote $C^{0,\alpha}(M):=\{\varphi\in C^0(M):\ |\varphi|_\alpha<\infty\}$ to be the space of $\alpha-$H\"older continuous functions on $M$. For $\varphi\in C^{0,\alpha}(M)$, we let
\begin{equation*}
  \|\varphi\|_{C^{0,\alpha}(M)}:=\|\varphi\|_{C^0(M)}+|\varphi|_{\alpha}.
\end{equation*}

\begin{theorem}\label{random exponential decay}
Assume that $\phi$ satisfies Anosov on fibers and topological mixing on fibers. Then
\begin{enumerate}
  \item there exists an invariant random probability measure $\omega\mapsto \mu_\omega $ given by $\mu_\omega=\lim_{n\rightarrow\infty}(f_{\theta^{-n}\omega}^{n})_*m$ for all $\omega\in\Omega$, where $m$ is the normalized Riemannian volume measure,
  \item there exists a constant $\nu_0$ that only depends on the system $\phi$ such that, for H\"older exponents $\kappa,\nu\in(0,1)$ with
\begin{equation*}
  0<\kappa+\nu<\nu_0
\end{equation*}and $\psi\in C^{0,\kappa}(M)$, $\varphi\in C^{0,\nu}(M)$, the (quenched) past and future random correlation between $\varphi$ and $\psi$ exponential decay with respect to  the random probability measure $(\mu_\omega)_{\omega\in\Omega}$ defined in $(1)$, i.e. for any $n\in\mathbb{N}$, $\omega\in\Omega$,
\begin{align*}
& \left|\int_{M}\psi(f_{\theta^{-n}\omega}^n x)\varphi(x)d\mu_{\theta^{-n}\omega}-\int_M\psi(x)d\mu_\omega\int_M\varphi(x)d\mu_{\theta^{-n}\omega}\right|
  \leq K\|\psi\|_{C^{0,\kappa}(M)}\cdot\|\varphi\|_{C^{0,\nu}(M)}\cdot\Lambda^{n};\\
 & \left|\int_{M}\psi(f_{\omega}^n x)\varphi(x)d\mu_{\omega}-\int_M\psi(x)d\mu_{\theta^n\omega}\int_M\varphi(x)d\mu_{\omega}\right|
  \leq K\|\psi\|_{C^{0,\kappa}(M)}\cdot\|\varphi\|_{C^{0,\nu}(M)}\cdot\Lambda^{n},
\end{align*}
where constants $K>1$ and $\Lambda\in(0,1)$ only depend on $\kappa$, $\nu$ and system $\phi$.
\end{enumerate}
\end{theorem}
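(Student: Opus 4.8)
The plan is to realize the fiber transfer operators $L_\omega$ defined in \eqref{L varphi} as contractions in a suitable family of Birkhoff cones, one cone $\mathcal{C}_\omega$ on each fiber, and then extract both the random SRB measure and the decay of correlations from this contraction. First I would fix constants $a,b,c>0$ (depending only on the hyperbolicity data $\lambda_0$, the dimensions, and the $C^2$-size of $f$) and define, on each fiber, a cone of strictly positive H\"older-type densities: roughly, $\varphi\in\mathcal{C}_\omega$ iff $\varphi>0$, $\varphi$ varies slowly along unstable ``admissible'' leaves (a bound like $|\log\varphi(x)-\log\varphi(y)|\le a\,d(x,y)^\alpha$ along unstable manifolds, with a matching bound for how fast the density can decay transversally), following the Liverani \cite{Liv95A} and Viana \cite{Viana} constructions adapted fiberwise. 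The key points to verify are: (i) $L_\omega\mathcal{C}_\omega\subset\mathcal{C}_{\theta\omega}$, which uses that $f_\omega$ uniformly expands unstable directions with rate $e^{\lambda_0}$ and that the Jacobian $|\det Df_\omega|$ is uniformly H\"older, so one period of the cocycle strictly improves the regularity constants with a definite margin; (ii) the constant function $1$ lies in every $\mathcal{C}_\omega$ with uniformly bounded cone-``aperture'', which anchors the later volume-measure argument.

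The heart of the proof, as the excerpt flags, is the uniform finite-diameter estimate: I would show there exist $N\in\mathbb{N}$ (coming from topological mixing on fibers) and $\Delta<\infty$, both independent of $\omega$, such that the Hilbert projective diameter of $L_\omega^N\mathcal{C}_\omega$ inside $\mathcal{C}_{\theta^N\omega}$ is at most $\Delta$. The mechanism: after one pass the densities are already very regular along unstables; topological mixing on fibers guarantees that for every $\omega$ and every admissible unstable leaf, the preimages under $f_\omega^N$ spread over the whole manifold, so $L_\omega^N\varphi$ and $L_\omega^N\psi$ become pointwise comparable up to a constant ratio controlled only by the regularity constants and the distortion bounds of $f_\omega^N$ on its inverse branches. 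Since mixing on fibers gives $N$ uniformly in $\omega$, and the distortion of $f_\omega^N$ is uniformly bounded (a uniform bounded-distortion lemma for the cocycle, which I would isolate as a preliminary), the comparison constant — hence $\Delta$ — is uniform. By Birkhoff's inequality (Appendix), $L_\omega^N$ contracts the Hilbert metric by a factor $\Lambda_0:=\tanh(\Delta/4)<1$ independent of $\omega$.

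From the uniform contraction I would then run the standard arguments adapted to the cocycle setting. For the SRB measure: the sequence $(f_{\theta^{-n}\omega}^n)_*m$ corresponds to normalizing $L_{\theta^{-1}\omega}\circ\cdots\circ L_{\theta^{-n}\omega}\,1$; since $1\in\mathcal{C}_{\theta^{-n}\omega}$ with uniformly bounded aperture, the images form a Hilbert-Cauchy sequence in $\mathcal{C}_\omega$, giving a limit density $h_\omega$, and one checks $\mu_\omega:=h_\omega\,dm$ is a random probability measure (measurability via $(R1')$ of Section~\ref{subsection 2.2}) and $\phi$-invariant, $(f_\omega)_*\mu_\omega=\mu_{\theta\omega}$, by passing the transfer-operator identity to the limit; uniqueness follows from the contraction applied to any other invariant density. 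For the correlation bounds: writing the past correlation as $\int \psi\cdot L_{\theta^{-n}\omega}^n(\varphi\,h_{\theta^{-n}\omega})\,dm - \int\psi\,d\mu_\omega\int\varphi\,d\mu_{\theta^{-n}\omega}$, I would split $\varphi h_{\theta^{-n}\omega}$ as a difference of two elements of the cone (possible because a H\"older $\varphi$ plus a large multiple of $h$ lands in $\mathcal{C}$, with cone-norm controlled by $\|\varphi\|_{C^{0,\nu}}$), apply the standard inequality comparing $\|L^n u - L^n v\|$ to the Hilbert distance times a projective-diameter factor, and use the iterate of the uniform contraction to get decay $\Lambda^n$ with $\Lambda=\Lambda_0^{1/N}$; the constant $K$ absorbs the cone embeddings and the factor $\|\psi\|_{C^{0,\mu}}$ comes from pairing against $\psi$. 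The future correlation is handled symmetrically, conjugating by the dynamics or equivalently using the dual operators. The condition $\mu+\nu<\nu_0$ enters precisely in guaranteeing that the products and sums used to squeeze $\varphi h$ and to test against $\psi$ stay inside cones built with the fixed H\"older exponent.

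The main obstacle I anticipate is step (ii) of the diameter estimate — making every constant in the finite-diameter bound genuinely uniform over $\omega\in\Omega$. This requires a careful uniform bounded-distortion estimate for the inverse branches of $f_\omega^N$ (using compactness of $\Omega$, continuity of $\omega\mapsto f_\omega$ in $C^2$, and the fact that $N$ is the same for all $\omega$), together with a uniform lower bound on how much of $M$ is covered by the spread-out unstable leaves, which is where topological mixing on fibers must be quantified rather than used qualitatively. Everything else is a fiberwise transcription of the deterministic Birkhoff-cone machinery, but this uniformity is what makes the contraction rate $\Lambda$ and the constant $K$ independent of $\omega$, and hence what makes the ``quenched'' statement genuinely uniform.
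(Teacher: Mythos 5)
Your overall strategy --- fiberwise Birkhoff cones, invariance of the cone under $L_\omega$, a uniform finite-diameter estimate for $L_\omega^N$ coming from topological mixing on fibers plus uniform bounded distortion, Birkhoff's inequality for contraction, Cauchy sequences for the SRB measure, and splitting of H\"older observables into cone elements for the correlation bounds --- is exactly the route the paper follows, and your emphasis on making every constant uniform in $\omega$ correctly identifies the main technical burden.

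There are, however, two places where what you write would break down in the general (non-volume-preserving) Anosov-on-fibers case and where the paper does something slightly different. First, your cone is described as ``strictly positive H\"older-type densities'' with a pointwise bound $|\log\varphi(x)-\log\varphi(y)|\le a\,d(x,y)^\alpha$ along unstable leaves. That is Liverani's area-preserving cone; the paper instead follows Viana's integral cone for Axiom~A attractors, defining $C_\omega(b,c,\nu)$ by conditions (C1)--(C3) on integrals $\int_{\gamma(\omega)}\varphi\,\rho\,dm_{\gamma(\omega)}$ over local \emph{stable} leaves $\gamma(\omega)$ against an auxiliary family of density cones $D(a,\mu,\gamma(\omega))$, with (C3) encoding the unstable-direction H\"older regularity via the holonomy between nearby stable leaves. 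This distinction is not cosmetic: for a non-conservative system the projective fixed ray of the transfer operator is not represented by a pointwise density, so a pointwise cone does not close up. Second, and relatedly, you write that the Hilbert--Cauchy sequence $L^n_{\theta^{-n}\omega}1$ gives a ``limit density $h_\omega$'' with $\mu_\omega=h_\omega\,dm$, and then split $\varphi\,h_{\theta^{-n}\omega}$ into cone elements for the correlation bound. In general $\mu_\omega$ is singular with respect to $m$ and no such $h_\omega$ exists; the Hilbert--Cauchy property only yields that $\bigl\{\int (L^n_{\theta^{-n}\omega}1)\,\psi\,dm\bigr\}_n$ is Cauchy for each continuous $\psi$ (Lemma~\ref{Cauchy sequence lemma}), and the paper's correlation proof never multiplies $\varphi$ by a putative density of $\mu_{\theta^{-n}\omega}$; instead it checks (Lemma~\ref{varphi L1 in Cbcv}) that $\varphi\cdot L^l_{\theta^{-l}\theta^{-n}\omega}1$ lies in $C_{\theta^{-n}\omega}(b,c,\nu)$ for all $l$, applies the contraction estimate uniformly in $l$, and then sends $l\to\infty$ to replace $dm$ by $d\mu_{\theta^{-n}\omega}$. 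You would need that device (or an equivalent finite-approximation argument) to make your last step rigorous.
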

We note that topological mixing on fibers property implies random topological transitivity by \cite[Lemma A.1]{HLL}. Then according to \cite[Theorem 4.3]{Gund99}, the measure $\mu_\omega$ we constructed above is the unique random SRB measure (we state this lemma and this theorem in the Appendix).

\section{Preliminary Lemmas and Propositions}\label{section 3}

In this section, we introduce several technical lemmas and propositions that will be used in the proof of the main result. Lemmas in Subsections \ref{subsection 3.1} and \ref{subsection 3.2} can be viewed as the combination of Lusin's theorem and corresponding results in general RDS \cite{PDQM} by noticing that our system $f_\omega$, $E^s(x,\omega)$ and $E^u(x,\omega)$ are continuous depending on $\omega$. We state and prove two distortion lemmas in Subsection \ref{subsection 3.3}. We formulate and prove the absolute continuity and H\"older continuity of the stable and unstable foliations on each fiber in Subsection \ref{subsection 3.4} and \ref{subsection 3.5} respectively. We discuss properties of holonomy maps between local stable leaves in Subsection \ref{subsection 3.6}. In Subsection \ref{subsection 3.7}, we prove a version of Fubini's theorem on any rectangle on each fiber.
\subsection{H\"older continuity of stable and unstable subbundle on each fiber}\label{subsection 3.1}
In this subsection, we will formulate the H\"older continuity of $E^s(x,\omega)$  and $E^u(x,\omega)$ on $x\in M$ for any $\omega\in\Omega$, which is borrowed from \cite{PDQM}.

For nontrivial closed subspaces $A,B$ in a Hilbert space $H$ with given inner product and induced norm $|\cdot|$, define the aperture between $A$ and $B$ by
\begin{equation*}
  \Gamma(A,B):=\max\left\{\sup_{v\in A,|v|=1}\inf_{w\in B}|v-w|,\sup_{w\in B,|w|=1}\inf_{v\in A}|v-w|\right\}.
\end{equation*}
Then $\Gamma(A,B)\in [0,1]$. One also has
$\Gamma(A,B)=\|P_A-P_B\|,$ where $P_A$ and $P_B$ are the orthogonal projections on $A$ and $B$ respectively (see more details in  \cite[Chap. Four sec. 2]{Kato1995}).

By the compactness of $M$, there exists a $\rho_0>0$ such that for any $x,y\in M$ with $d(x,y)<\rho_0$, there exists a unique geodesic connecting $x$ and $y$.  For any $x,y\in M$, if $d(x,y)<\rho_0$, then there exists an isometry from $T_xM$ to $T_y M$ given by the parallel transport on the unique geodesic connecting $x$ and $y$, named $P(x,y)$. Then for any $x,y\in M$, $E(x)\subset T_xM$, $E(y)\subset T_yM$ subspaces, we can define
\begin{equation}\label{def distance between bundle}
  d(E(x),E(y)):=\begin{cases}
                  \Gamma_x(E(x),P(y,x)E(y)), & \mbox{if } d(x,y)<\rho_0 \\
                  1, & \mbox{otherwise}.
                \end{cases}
\end{equation}where $\Gamma_x$ is the aperture between subspaces in $T_xM$ defined by the given Riemannian metric. The following lemma is an adapted version of Theorem 4.1 in \cite{PDQM} by noticing that $f_\omega\in$Diff$^2(M)$ is continuously depending on $\omega\in\Omega$.

\begin{lemma}\label{Holder continuity of stableunstable distribution}
  There are constants $C_1>0$ and $\nu_1\in (0,1)$ such that
  for each $\omega\in\Omega,$  $E^s(x,\omega)$ and $E^u(x,\omega)$ are $(C_1,\nu_1)$-H\"older continuous on $x$, i.e.,
  \begin{equation}\label{Holder continuous bundle}
    d(E^\tau(x,\omega),E^\tau(y,\omega))\leq C_1d(x,y)^{\nu_1},\ \ \tau=s,u.
  \end{equation}
\end{lemma}
\subsection{Stable and unstable invariant manifolds}\label{subsection 3.2}
 We define the local stable and unstable manifolds as the following:
\begin{align*}
  W_\epsilon^s(x,\omega) &=\{y\in M|\ d(\phi^n(y,\omega),\phi^n(x,\omega))\leq \epsilon\mbox{ for all }n\geq 0\},  \\
   W_\epsilon^u(x,\omega)& =\{y\in M|\ d(\phi^n(y,\omega),\phi^n(x,\omega))\leq \epsilon\mbox{ for all }n\leq 0\}.
\end{align*}
The following lemma can be found in \cite[Lemma 3.1]{HLL}, and it is a special version of \cite[Theorem 3.1]{Gund99}.

 For $\tau=s,u$, denote $P(E^\tau(x,\omega))$ to be the projection from $T_xM$ to $E^\tau(x,\omega)$ with respect to the splitting $T_xM=E^s(x,\omega)\oplus E^u(x,\omega)$. Since $E^s(x,\omega),\ E^u(x,\omega)$ are uniformly continuous on $x$ and $\omega$, there exists a number $\mathcal{P}>1$ such that
\begin{equation}\label{def of P}
  \sup\{\|P(E^s(x,\omega))\|,\|P(E^u(x,\omega))\|:\ (x,\omega)\in M\times\Omega\}<\mathcal{P}.
\end{equation}
\begin{lemma}[Stable and unstable invariant manifolds]\label{lemma stable unstable}
For any $\lambda\in(0,\lambda_0),$ there exists $\epsilon_0>0$ such that for any $\epsilon\in(0,\epsilon_0],$ the followings hold:
\begin{enumerate}
  \item $W_\epsilon^\tau(x,\omega)$ are $C^2$ embedded discs for all $(x,\omega)\in M\times\Omega$ with $T_xW^\tau(x,\omega)=E^\tau(x,\omega)$ for $\tau=u,s$. Moreover, there exist a constant $L>1$ and $C^2$ maps
      \begin{equation*}
        h^u_{(x,\omega)}:E^u(x,\omega)(\mathcal{P}\epsilon)\rightarrow E^s(x,\omega),\ h^s_{(x,\omega)}:E^s(x,\omega)(\mathcal{P}\epsilon)\rightarrow E^u(x,\omega)
      \end{equation*}such that $W_\epsilon^\tau(x,\omega)\subset Exp_{x}(graph (h^\tau_{(x,\omega)}))$ and $\|Dh^\tau_{(x,\omega)}\|<\frac{1}{3},\ Lip(Dh^\tau_{(x,\omega)})<L$ for $\tau=u,s$, where $E^\tau(x,\omega)(\mathcal{P}\epsilon)$ is the $\mathcal{P}\epsilon$-ball of $E^\tau(x,\omega)$ centered at origin for $\tau=u,s$.
  \item $d_M(f_\omega^nx,f_\omega^ny)\leq e^{-n\lambda}d_M(x,y)$ for $y\in W^s_\epsilon(x,\omega)$ and $n\geq 0$, and $d_M(f_\omega^{-n}x,f_\omega^{-n}y)\leq e^{-n\lambda}d_M(x,y)$ for $y\in W^u_\epsilon(x,\omega)$ and $n\geq 0$.
  \item $W^s_\epsilon(x,\omega), W^u_\epsilon(x,\omega)$ vary continuously on $(x,\omega)$ in $C^1$ topology.
\end{enumerate}
\end{lemma}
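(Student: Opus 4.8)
The plan is to prove the lemma by the classical Hadamard--Perron graph-transform method, carried out fiberwise, with every constant made uniform over $(x,\omega)\in M\times\Omega$ by invoking the compactness of $M\times\Omega$ and the continuity of $\omega\mapsto f_\omega$ into $\mathrm{diff}^2(M)$. First I would pass to local coordinates: for each $(x,\omega)$ set $F_{(x,\omega)}:=Exp_{f_\omega x}^{-1}\circ f_\omega\circ Exp_x$, a $C^2$ map between small balls in $T_xM$ and $T_{f_\omega x}M$ with $F_{(x,\omega)}(0)=0$ and $D_0F_{(x,\omega)}=D_xf_\omega$ (using $D_0Exp_x=\mathrm{id}$). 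Because the splitting is $Df$-invariant, the linear part is already block-diagonal with respect to $E^s(x,\omega)\oplus E^u(x,\omega)$ and its image splitting, so
\[
F_{(x,\omega)}(v^s,v^u)=\bigl(A^s_{(x,\omega)}v^s+R^s(v^s,v^u),\ A^u_{(x,\omega)}v^u+R^u(v^s,v^u)\bigr),
\]
with $\|A^s_{(x,\omega)}\|\le e^{-\lambda_0}$, $\|(A^u_{(x,\omega)})^{-1}\|\le e^{-\lambda_0}$, and $R^s,R^u$ vanishing together with their first derivatives at the origin. The $C^2$ hypothesis and compactness yield a uniform bound on the second derivatives of $R^s,R^u$, so on a ball of radius $\delta$ their $C^1$-size is at most some $\eta(\delta)$ with $\eta(\delta)\to0$ as $\delta\to0$, uniformly in $(x,\omega)$.

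Next I would run the graph transform. For the unstable leaf, let $\mathcal{G}^u_{(x,\omega)}$ be the set of $C^1$ maps $h:E^u(x,\omega)(\mathcal{P}\epsilon)\to E^s(x,\omega)$ with $h(0)=0$ and $\|Dh\|\le\frac{1}{3}$. A routine estimate using the smallness of $\eta$ and the gap $\|A^s_{(x,\omega)}\|\,\|(A^u_{(x,\omega)})^{-1}\|\le e^{-2\lambda_0}$ shows that, for $\epsilon$ small, $F_{(x,\omega)}(\mathrm{graph}\,h)$ contains the graph of a unique element $\Gamma_{(x,\omega)}h\in\mathcal{G}^u_{\phi(x,\omega)}$ over $E^u(\phi(x,\omega))(\mathcal{P}\epsilon)$ (the bound $\frac{1}{3}$ is preserved), and that $F_{(x,\omega)}$ restricted to each such graph expands by at least $e^{\lambda_0}-\eta$. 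Hence, for each fixed base orbit, $\Gamma$ contracts the $C^0$-distance between sections at rate $e^{-\lambda_0}+O(\eta)<1$, uniformly in $(x,\omega)$; iterating along the backward fiber orbit, $\Gamma_{\phi^{-1}(x,\omega)}\circ\cdots\circ\Gamma_{\phi^{-n}(x,\omega)}$ applied to any initial section converges uniformly to a unique $h^u_{(x,\omega)}\in\mathcal{G}^u_{(x,\omega)}$, and $W^u_\epsilon(x,\omega)$ is the intersection of $Exp_x(\mathrm{graph}\,h^u_{(x,\omega)})$ with the $\epsilon$-ball. Applying the whole argument to the inverse cocycle $\phi^{-1}$ produces $h^s_{(x,\omega)}$ and $W^s_\epsilon(x,\omega)$ symmetrically.

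To finish item (1) I would bootstrap regularity: $\Gamma$ lifts to an action on $1$-jets $(h,Dh)$, and because the error terms there carry a factor $A^s_{(x,\omega)}(A^u_{(x,\omega)})^{-1}$ of norm $\le e^{-2\lambda_0}$, this lift is still a fiber contraction over the already-constructed graph dynamics; this gives $h^\tau_{(x,\omega)}\in C^1$ with $D_0h^\tau_{(x,\omega)}=0$, hence $T_xW^\tau_\epsilon(x,\omega)=E^\tau(x,\omega)$, and one further jet (with an even stronger contraction) gives $h^\tau_{(x,\omega)}\in C^2$ with $\mathrm{Lip}(Dh^\tau_{(x,\omega)})<L$, the constant $L$ being uniform since it is built from the uniform $C^2$ bound on $R^\tau$. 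For item (2), if $y\in W^s_\epsilon(x,\omega)$ then $y=Exp_x(v^s,h^s_{(x,\omega)}(v^s))$ and, by the normal form and forward-invariance of the stable graphs, $f_\omega y$ has $E^s$-coordinate $w^s=A^s_{(x,\omega)}v^s+R^s(v^s,h^s_{(x,\omega)}(v^s))$; using $\|A^s_{(x,\omega)}\|\le e^{-\lambda_0}$, $\|Dh^s\|\le\frac{1}{3}$, $\|R^s\|_{C^1}\le\eta$, together with the fact that $Exp$ is bi-Lipschitz with constant $\to1$ as $\epsilon\to0$, one gets $d_M(f_\omega y,f_\omega x)\le e^{-\lambda}d_M(y,x)$ for any prescribed $\lambda<\lambda_0$ once $\epsilon\le\epsilon_0(\lambda)$; iterating (the orbit stays inside the charts since distances shrink) gives the $e^{-n\lambda}$ bound, and the unstable case follows by applying this to $f_\omega^{-1}$. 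For item (3), all ingredients of the construction ($f_\omega$, $Df_\omega$, and the subbundles $E^\tau(x,\omega)$, the latter continuous by the Anosov-on-fibers hypothesis and in fact H\"older by Lemma~\ref{Holder continuity of stableunstable distribution}) depend continuously on $(x,\omega)$, so $\Gamma_{(x,\omega)}$ does; since the convergence $\Gamma^n(\cdot)\to h^\tau_{(x,\omega)}$ is uniform with a rate independent of $(x,\omega)$, the limit varies continuously in $(x,\omega)$ in the $C^1$ topology, which is the asserted continuity of $W^\tau_\epsilon(x,\omega)$.

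The main obstacle I expect is the regularity step: setting up the $1$- and $2$-jet versions of the graph transform as genuine fiber contractions over the base graph dynamics and, above all, extracting a Lipschitz bound $L$ for $Dh^\tau_{(x,\omega)}$ that is uniform over all of $M\times\Omega$ — this is exactly where the $C^2$ hypothesis on $f_\omega$ and the compactness of $M\times\Omega$ are both used essentially. Everything else is a fiberwise transcription of the standard uniformly hyperbolic construction; accordingly the statement is recorded in \cite{HLL} as a special case of Theorem 3.1 of \cite{Gund99}.
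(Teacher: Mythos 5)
The paper does not prove this lemma; it is quoted as a special case of Theorem~3.1 of \cite{Gund99} via \cite{HLL}, as you yourself note at the end of your proposal. Your sketch is a correct fiberwise Hadamard--Perron graph-transform proof: localize with the exponential map, use the $Df$-invariance of the splitting to put $f_\omega$ in block-triangular normal form, run the graph transform on Lipschitz sections with slope bound $\tfrac13$, bootstrap regularity through the $1$- and $2$-jet versions of the transform, and obtain uniformity of all constants in $(x,\omega)$ from compactness of $M\times\Omega$ together with continuity of $\omega\mapsto f_\omega$ in the $C^2$ topology. This is precisely the standard argument underlying the cited references, so the comparison against the paper is vacuous --- the paper supplies no proof. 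I see no gap in your sketch; the one point worth flagging explicitly is that $\epsilon_0$ must be chosen after fixing $\lambda\in(0,\lambda_0)$ so that the effective contraction rate $e^{-\lambda_0}+O(\eta(\epsilon_0))$ is still at most $e^{-\lambda}$, and your phrasing ``for any prescribed $\lambda<\lambda_0$ once $\epsilon\le\epsilon_0(\lambda)$'' already captures this.
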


The following lemma is about the local product structure on each fiber.
\begin{lemma}\label{local product structure}\cite[Lemma 3.2]{HLL}
  For any $\epsilon\in(0,\epsilon_0)$, there is a $\delta\in(0,\epsilon)$ such that for any $x,y\in M$ with $d_M(x,y)<\delta$, $W_\epsilon^s(x,\omega)\cap W_\epsilon^u(y,\omega) $ consists of a single point, which is denoted by $[x,y]_\omega$. Furthermore
  \begin{equation*}
    [\cdot,\cdot]_\cdot:\{(x,y,\omega)\in M\times M\times \Omega|\ d_M(x,y)<\delta\}\rightarrow M
  \end{equation*}is continuous.
\end{lemma}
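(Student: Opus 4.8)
The plan is to realize $[x,y]_\omega$ as the fixed point of a uniform contraction built from the graph–transform description of the local stable and unstable manifolds, and then to read off the continuity and the $\omega$–uniformity of $\delta$ from the uniform continuity of the data. First I would fix $\epsilon\in(0,\epsilon_0)$ and invoke the stable and unstable invariant manifolds lemma above: for every $(x,\omega)$ the discs $W^s_\epsilon(x,\omega)$ and $W^u_\epsilon(x,\omega)$ are $C^2$, tangent at $x$ to $E^s(x,\omega)$ and $E^u(x,\omega)$, contained in $\mathrm{Exp}_{(x,\omega)}(\mathrm{graph}\,h^\tau_{(x,\omega)})$ with $\|Dh^\tau_{(x,\omega)}\|<1/3$, contain a relatively open neighborhood of $x$ inside that graph of size bounded below uniformly in $(x,\omega)$ (a standard feature of the construction, uniformity coming from compactness and the $C^1$ continuity in part~(3)), and vary continuously in the $C^1$ topology; since $\epsilon_0<\rho_0$ everything takes place inside one normal coordinate chart. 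I would work in the exponential chart at $x$, writing points as $(v,w)\in E^s(x,\omega)\times E^u(x,\omega)$ via orthonormal bases, so that $W^s_\epsilon(x,\omega)$ sits inside the graph $\{(v,h^s_{(x,\omega)}(v))\}$.

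Next, for $d_M(x,y)<\delta$ with $\delta>0$ small (to be fixed at the end) the point $y$ lies in this chart; since $E^u(\cdot,\omega)$ is uniformly continuous in its first variable (Hölder, by Lemma~\ref{Holder continuity of stableunstable distribution}), the aperture $\Gamma(E^u(x,\omega),E^u(y,\omega))$ is small, and combining this with $\|Dh^u_{(y,\omega)}\|<1/3$ and the bound $\mathcal P$ on the splitting projections I can rewrite $W^u_\epsilon(y,\omega)$ in the chart at $x$ as a subset of a graph $\{(g(w),w)\}$ over $E^u(x,\omega)$ with $\mathrm{Lip}(g)<1$. A point $(v,w)$ then lies in $W^s_\epsilon(x,\omega)\cap W^u_\epsilon(y,\omega)$ only if $v=g(w)$ and $w=h^s_{(x,\omega)}(v)$, that is, only if $w$ is a fixed point of $\Phi(w):=h^s_{(x,\omega)}(g(w))$, a contraction of ratio $<1/3$. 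Banach's theorem gives a unique fixed point $w^\ast$ and hence a unique candidate point $p^\ast=(g(w^\ast),w^\ast)$; for $\delta$ small, $|v^\ast|,|w^\ast|=O(\delta)$, so $p^\ast$ lies in the uniform graph neighborhoods of $x$ and of $y$ mentioned above, whence $p^\ast\in W^s_\epsilon(x,\omega)\cap W^u_\epsilon(y,\omega)$. Since this intersection is also contained in the intersection of the two graphs, which equals $\{p^\ast\}$, it equals $\{p^\ast\}=:\{[x,y]_\omega\}$.

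For continuity I would note that every ingredient of $\Phi$ — $h^s_{(x,\omega)}$ (in $C^1$, by part~(3) of that lemma), $E^u(y,\omega)$ and hence $g$ (by Lemma~\ref{Holder continuity of stableunstable distribution}), and $y-x$ — depends continuously on $(x,y,\omega)$ on $\{d_M(x,y)<\delta\}$, while the contraction ratio of $\Phi$ stays $\le 1/3$ uniformly; a uniform contraction whose defining map varies continuously in a parameter has a fixed point continuous in that parameter, so $(x,y,\omega)\mapsto[x,y]_\omega$ is continuous. The $\omega$–uniformity of $\delta$ is then automatic, since every constant used (the bound $1/3$ on $\|Dh^\tau_{(x,\omega)}\|$, the number $\mathcal P$, the modulus of continuity of $E^u(\cdot,\omega)$, the uniform lower bound on the graph neighborhoods) is uniform in $\omega$ by compactness of $\Omega$ and continuity of $\omega\mapsto f_\omega$ into $\mathrm{diff}^2(M)$.

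I expect the main obstacle to be the coordinate change in the second step: $W^u_\epsilon(y,\omega)$ is naturally a graph over $E^u(y,\omega)$ in the chart centered at $y$, and one has to check carefully that it remains a graph over $E^u(x,\omega)$, with Lipschitz constant $<1$, in the chart centered at $x$ — this is precisely where the smallness of $\delta$ enters, through the aperture between $E^u(x,\omega)$ and $E^u(y,\omega)$ controlled by Lemma~\ref{Holder continuity of stableunstable distribution} — together with the bookkeeping needed to confirm that the graph–intersection point $p^\ast$ genuinely lies on the \emph{local} manifolds $W^s_\epsilon$ and $W^u_\epsilon$, and not merely on their graph extensions. Everything else is the implicit function theorem carried out with constants that are uniform in $(x,y,\omega)$.
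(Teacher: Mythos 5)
The paper states this lemma without proof; it is treated as a standard consequence of the invariant manifolds lemma and is effectively borrowed from the hyperbolic-systems/RDS literature referenced just above it. Your reconstruction is correct and is the canonical argument: write both local leaves as Lipschitz graphs over $E^s(x,\omega)\oplus E^u(x,\omega)$ in a single normal chart, re-express $W^u_\epsilon(y,\omega)$ as a graph over $E^u(x,\omega)$ using the small aperture between $E^u(x,\omega)$ and $E^u(y,\omega)$, and then obtain $[x,y]_\omega$ as the unique fixed point of the contraction $\Phi=h^s_{(x,\omega)}\circ g$; continuity follows from continuous dependence of fixed points of a uniformly contracting family, and the $\omega$-uniformity of $\delta$ follows from compactness of $M\times\Omega$ and continuity of $\omega\mapsto f_\omega$. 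You also correctly flag the two points that require care — the Lipschitz bookkeeping under the change of base point, and checking that the graph–intersection point lies on the \emph{local} leaves and not merely their graph extensions — and the $O(\delta)$ size estimate you give for $(v^\ast,w^\ast)$ handles both. One cosmetic remark: the contraction ratio you actually need is $\mathrm{Lip}(h^s)\cdot\mathrm{Lip}(g)<1$; since $\mathrm{Lip}(h^s)<1/3$ you could afford $\mathrm{Lip}(g)<3$, so the bound $\mathrm{Lip}(g)<1$ you establish is comfortably sufficient and could even be relaxed, which makes the smallness requirement on $\delta$ less delicate than the phrasing suggests.
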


\subsection{Two distortion lemmas}\label{subsection 3.3}
In this subsection, we state two distortion lemmas. The following Lemma is clear, and it is used for proving the absolutely continuity of stable and unstable foliations on each fiber.

\begin{lemma}\label{detDfE-detDfE}
  For any $x,y\in M$, $E(x,\omega)\in T_xM$, $E(y,\omega)\in T_yM$ with $\dim E(x,\omega)=E(y,\omega)$, we have
  \begin{align}
    ||\det(D_xf_\omega|_{E(x,\omega)})|-|\det(D_yf_\omega|_{E(y,\omega)})||&\leq C_2d(x,y)+C_2d(E(x,\omega),E(y,\omega)),\label{psoitive time}\\
    ||\det(D_xf_\omega^{-1}|_{E(x,\omega)})|-|\det(D_yf_\omega^{-1}|_{E(y,\omega)})||&\leq C_2d(x,y)+C_2d(E(x,\omega),E(y,\omega)),\label{negative time}
  \end{align}where $C_2>0$ is a constant depending on  $\sup_{\omega\in\Omega}\|f_\omega\|_{C^2}$ and $\sup_{\omega\in\Omega}\|f_\omega^{-1}\|_{C^2}$.
\end{lemma}

 Lemma \ref{Lip jac Es} is used for the construction of the fiber convex cone of observables in Section \ref{section 4}, and its proof is parallel to the deterministic case, see \cite[Lemma 3.2]{LianSRB2016}.
\begin{lemma}\label{Lip jac Es}
  $J^s(x,\omega)=|\det(D_xf_\omega|_{E^s(x,\omega)})|$ has a uniform Lipschitz variation on the local stable manifolds, i.e., there is a constant $K_1>0$ independent of $\omega$ such that for any $z\in M$, and $x,y\in W^s_\epsilon(z,\omega)$,
  \begin{equation*}
    |J^s(x,\omega)-J^s(y,\omega)|\leq K_1d(x,y),
  \end{equation*}and
  \begin{equation*}
    |\log J^s(x,\omega)-\log J^s(y,\omega)|\leq K_1d(x,y).
  \end{equation*}
\end{lemma}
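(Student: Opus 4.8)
The plan is to combine the distortion estimate of Lemma~\ref{detDfE-detDfE} with the observation that \emph{along a single local stable leaf} the stable subbundle varies in a Lipschitz (not merely H\"older) fashion. Fix $\omega\in\Omega$, $z\in M$ and $x,y\in W^s_\epsilon(z,\omega)$; we may assume $d(x,y)$ is as small as we like, since for $d(x,y)$ bounded below the claimed inequality is trivial once $K_1$ is large (here $J^s$ is uniformly bounded because $\sup_\omega\|f_\omega\|_{C^1}<\infty$ by continuity of $f:\Omega\to\mathcal H$ and compactness of $\Omega$). Because two local stable leaves either coincide on a neighborhood or are disjoint, $W^s_\epsilon(x,\omega)$ and $W^s_\epsilon(z,\omega)$ agree near $x$, so $E^s(x,\omega)=T_xW^s_\epsilon(z,\omega)$ and $E^s(y,\omega)=T_yW^s_\epsilon(z,\omega)$. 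By part~(1) of the stable/unstable invariant manifold lemma, $W^s_\epsilon(z,\omega)\subset \mathrm{Exp}_{(z,\omega)}(\mathrm{graph}(h^s_{(z,\omega)}))$ with $\|Dh^s_{(z,\omega)}\|<\tfrac13$ and $\mathrm{Lip}(Dh^s_{(z,\omega)})<L$, where $L$ is independent of $(z,\omega)$. Hence, in the fixed normal coordinate chart containing the leaf (recall $\epsilon_0<\rho_0$), $W^s_\epsilon(z,\omega)$ is the graph of a $C^2$ map whose differential is $L'$-Lipschitz for some $L'$ depending only on $L$ and the finite fixed atlas of normal charts; therefore the tangent-plane assignment $p\mapsto T_pW^s_\epsilon(z,\omega)$ is Lipschitz, and since comparing chart-tangent-planes with the parallel-transport distance $d(\cdot,\cdot)$ only composes with smooth maps on the compact manifold $M$, we obtain
\begin{equation*}
  d\bigl(E^s(x,\omega),E^s(y,\omega)\bigr)\leq K'\,d(x,y)
\end{equation*}
with $K'$ independent of $\omega$.

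Given this, the first inequality follows by plugging into the consequence of Lemma~\ref{detDfE-detDfE}, in which $C_2$ is a genuine uniform constant precisely because $\sup_\omega\|f_\omega\|_{C^2}<\infty$:
\begin{equation*}
  \bigl|J^s(x,\omega)-J^s(y,\omega)\bigr|\leq C_2\,d(x,y)+C_2\,d\bigl(E^s(x,\omega),E^s(y,\omega)\bigr)\leq C_2(1+K')\,d(x,y),
\end{equation*}
so one takes $K_1:=C_2(1+K')$. For the logarithmic version, note that $J^s(x,\omega)$ is bounded below by a constant $c>0$ uniform in $(x,\omega)$: since $D_xf_\omega|_{E^s(x,\omega)}$ has inverse $D_{f_\omega x}f_\omega^{-1}|_{E^s(\phi(x,\omega))}$ of norm at most $\sup_\omega\|f_\omega^{-1}\|_{C^1}<\infty$, each singular value of $D_xf_\omega|_{E^s(x,\omega)}$ is at least $(\sup_\omega\|f_\omega^{-1}\|_{C^1})^{-1}$, whence $J^s\geq c$. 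As $t\mapsto\log t$ is $(1/c)$-Lipschitz on $[c,\infty)$,
\begin{equation*}
  \bigl|\log J^s(x,\omega)-\log J^s(y,\omega)\bigr|\leq \tfrac1c\,\bigl|J^s(x,\omega)-J^s(y,\omega)\bigr|\leq \tfrac{K_1}{c}\,d(x,y),
\end{equation*}
and enlarging $K_1$ to $\max\{K_1,K_1/c\}$ gives both statements simultaneously.

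The main obstacle is the first step, i.e.\ upgrading the H\"older regularity of $E^s$ from Lemma~\ref{Holder continuity of stableunstable distribution} to genuine Lipschitz regularity once the variation is constrained to a stable leaf. The resolution is that this restricted regularity is not governed by the transverse H\"older exponent $\nu_1$ but by the \emph{intrinsic} $C^2$ smoothness of $W^s_\epsilon(z,\omega)$, quantified uniformly in $(z,\omega)$ by the bound $\mathrm{Lip}(Dh^s_{(z,\omega)})<L$ from the invariant manifold lemma. Making this rigorous requires checking two points that I have glossed over above: that the identification $E^s(x,\omega)=T_xW^s_\epsilon(z,\omega)$ is valid (which follows from uniqueness of local stable manifolds through a point), and that switching between the exponential chart at the base point $z$ and the parallel transport along geodesics used to define $d(E(x),E(y))$ introduces only a uniformly bounded Lipschitz distortion — this is exactly where compactness of $M$ and the fixed finite system of normal coordinate charts enter.
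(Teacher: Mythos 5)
Your argument is correct, but it takes a more modular route than the paper's. You isolate the intermediate claim that along a fixed local stable leaf the stable distribution is genuinely Lipschitz in the aperture metric, $d(E^s(x,\omega),E^s(y,\omega))\le K'\,d(x,y)$, deducing this from the uniform bound $\mathrm{Lip}(Dh^s_{(z,\omega)})<L$ in the invariant-manifold lemma plus a chart-versus-parallel-transport comparison, and then you feed it into the general distortion bound of Lemma~\ref{detDfE-detDfE}. The paper never invokes Lemma~\ref{detDfE-detDfE} or the aperture metric in its proof of this result; instead it works directly in the graph coordinates $y=x+\xi_y+h^s_{(x,\omega)}(\xi_y)$, constructs the conjugated linear map $L_{(y,\omega)}=P(E^s(f_\omega x,\theta\omega))\,D_yf_\omega|_{E^s(y,\omega)}\,(I+(Dh^s_{(x,\omega)})_{\xi_y})$ between the \emph{fixed} spaces $E^s(x,\omega)$ and $E^s(f_\omega x,\theta\omega)$, and bounds the three determinant factors $\|L_{(x,\omega)}-L_{(y,\omega)}\|$, $\det\bigl(P(E^s(f_\omega x,\theta\omega))|_{E^s(f_\omega y,\theta\omega)}\bigr)$, and $\det(I+(Dh^s_{(x,\omega)})_{\xi_y})$ one at a time. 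Both proofs ultimately draw on the same input, the uniform Lipschitz bound on $Dh^s$; the paper's computation stays entirely inside one exponential chart and thereby sidesteps the chart-versus-parallel-transport comparison that you gloss over, while your version buys conceptual clarity by reusing an already-proved lemma. The one place you should add rather than wave your hands is precisely that gloss: state (and use) that in a normal chart centered at $z$ the parallel transport $P(y,x)$ differs from the chart identification of tangent spaces by $O(d(x,y))$, uniformly over the fixed finite atlas---this is exactly where compactness of $M$ enters, as you indicated.
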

\begin{proof}[Proof of Lemma \ref{Lip jac Es}]
  Since $M\times\Omega$ is a compact space and $f:\Omega\rightarrow$ Diff$^2(M)$ is continuous, $|D_xf_\omega|$ and $|D_x^2f_\omega|$ are uniformly bounded. Let $K\geq 1$ be a constant such that
\begin{equation*}
  \max\left\{\sup_{(x,\omega)\in M\times\Omega}\|D_xf_\omega\|,\sup_{(x,\omega)\in M\times\Omega}\|D_x^2f_\omega\|,Lip Dh^s_{(x,\omega)}\right\}\leq K.
\end{equation*}

Notice that if $y,z\in W_\epsilon^s(x,\omega)$, and $d(y,z)<\frac{\epsilon}{2\mathcal{P}K^2}$, then
\begin{align*}
  \|P(E^s(x,\omega))(Exp_x^{-1}(z)-Exp_x^{-1}(y))\| & \leq \mathcal{P}d(y,z)<\frac{\epsilon}{2K^2}; \\
  \|P(E^s(f_\omega y,\theta\omega))(Exp_{f_\omega(x)}^{-1}(f_\omega(z))-Exp_{f_\omega(x)}^{-1}(f_\omega(y)))\| &\leq \mathcal{P}Kd(y,z)\leq \frac{\epsilon}{2K}.
\end{align*} Therefore, $z\in W_{\epsilon}^s(y,\omega)$ and $f_\omega(z)\in W_{\epsilon}^s(f_\omega(y),\theta\omega)$. So it is sufficient to prove that there exists a constant $K_1>0$ independent of $x$ and $\omega$ such that for any $y\in W_{\frac{\epsilon}{2\mathcal{P}K^2}}^s(x,\omega)$,
\begin{equation}
  |J^s(x,\omega)-J^s(y,\omega)|\leq K_1d(x,y).
\end{equation} With the help of the normal coordinate chart, and notice that $d(x,y)<\epsilon<\rho_0$ and $d(f_\omega x,f_\omega y)\leq \epsilon<\rho_0$, we may pretend that $x$, $y$ together with $W^s_{\frac{\epsilon}{2\mathcal{P}K}}(x,\omega)$ lie in a same Euclidean space, and $f_\omega x$, $f_\omega y$ together with $W^s_\epsilon(f_\omega x,\theta\omega)$ lie in a same Euclidean space.
By the invariant stable manifolds theorem, there exists $\xi_y\in E^s(x,\omega)(\frac{\epsilon}{2K^2})$ and $\xi_{f_\omega(y)}\in E^u(f_\omega(x),\theta\omega)(\epsilon)$ such that
\begin{align}
  y & =x+\xi_y+h^s_{(x,\omega)}(\xi_y) ;\label{xiy}\\
  f_\omega (y) &=f_\omega(x)+\xi_{f_\omega(y)}+h^s_{(f_\omega(x),\theta\omega)}(\xi_{f_\omega(y)}),\label{xify}
\end{align}and $E^s(y,\omega)=graph((Dh_{(x,\omega)}^s)_{\xi_y})$, $E^s(f_\omega(y),\theta\omega)=graph((Dh^s_{(f_\omega(x),\theta\omega)})_{\xi_{f_\omega(y)}})$.
From $(\ref{xiy})$ and $(\ref{xify})$, we have
\begin{equation}\label{d xy xiy}
  d(x,y)=\|x-y\|=\|\xi_y+h_{(x,\omega)}^s(\xi_y)\|\geq \|\xi_y\|-\frac{1}{3}\|\xi_y\|=\frac{2}{3}\|\xi_y\|
\end{equation}and
\begin{equation*}
(1-\frac{1}{3})\|\xi_{f_\omega(y)}\|\leq \|\xi_{f_\omega(y)}+h^s_{(f_\omega(x),\theta\omega)}(\xi_{f_\omega(y)})\|=\|f_\omega(y)-f_\omega(x)\|\leq K\|y-x\|\leq K(1+\frac{1}{3})\|\xi_y\|,
\end{equation*}so $\|\xi_{f_\omega(y)}\|\leq 2K\|\xi_y\|$.

Now, we define the following linear maps $L_{(x,\omega)}, L_{(y,\omega)}:E^s(x,\omega)\rightarrow E^s(f_\omega(x),\theta\omega)$ by
\begin{align*}
  L_{(x,\omega)} & =D_xf_\omega|_{E^s(x,\omega)}; \\
  L_{(y,\omega)} &= P(E^s(f_\omega x,\theta\omega))D_yf_\omega|_{E^s(y,\omega)}(I+(Dh_{(x,\omega)}^s)_{\xi_y}).
\end{align*}We have $\|L_{(x,\omega)}\|,\|L_{(y,\omega)}\|\leq \frac{4}{3}\mathcal{P}K$. Hence, we have
\begin{align*}
  &\ \ \ \ \sup_{v\in E^s(x,\omega), \|v\|=1}\|P(E^s(f_\omega x,\theta\omega))D_xf_\omega v-P(E^s(f_\omega x,\theta\omega))D_yf_\omega(I+(Dh_{(x,\omega)}^s)_{\xi_y})v\|\\
  & \leq \mathcal{P}(\|D_xf_\omega-D_yf_\omega\|+\|D_yf_\omega (Dh_{(x,\omega)}^s)_{\xi_y}\|)\\
  &\leq \mathcal{P}K\|y-x\|+\mathcal{P}K^2\|\xi_y\|\\
  &=\mathcal{P}Kd(x,y)+\mathcal{P}K^2\|\xi_y\|\\
  &\overset{\eqref{d xy xiy}}\leq (\mathcal{P}K+\frac{3}{2}\mathcal{P}K^2)d(x,y).
\end{align*}So $\|L_{(x,\omega)}-L_{(y,\omega)}\| \leq C(\mathcal{P}K+\frac{3}{2}\mathcal{P}K^2)d(x,y)$, where the constant $C$ only depends on the normal coordinate chart.
Then by properties of the determinant, we have that
\begin{equation}\label{lip1}
  |det(L_{(x,\omega)})-det(L_{(y,\omega)})|\leq R_1d(x,y),
\end{equation}where $R_1$ is a polynomial of $K,\mathcal{P}$ and dim$E^s(x,\omega)$.

Notice that for $\xi_y\in E^s(x,\omega)(\frac{\epsilon}{2K^2})$
\begin{align*}
  &\ \ \ \ \ \|P(E^s(f_\omega (x),\theta\omega))|_{E^s(f_\omega(y),\theta\omega)}-I\| \leq \frac{\|(Dh^s_{(f_\omega(x),\theta\omega)})_{\xi_{f_\omega(y)}}\|}{1-\|(Dh^s_{(f_\omega(x),\theta\omega)})_{\xi_{f_\omega(y)}}\|}\\
  &\leq \frac{K\|\xi_{f_\omega(y)}\|}{1-K\|\xi_{f_\omega(y)}\|}
  \leq \frac{2K^2\|\xi_y\|}{1-2K^2\|\xi_y\|}
  \leq \frac{2K^2\|\xi_y\|}{1-2K^2\frac{\epsilon}{2K^2}}\\
  &\leq 4K^2\|\xi_y\|\leq 6K^2d(x,y).
\end{align*}So we have
\begin{equation}\label{lip2}
  |det(P(E^s(f_\omega(x),\theta\omega))|_{E^s(f_\omega(y),\theta\omega)})-1|\leq R_2d(x,y),
\end{equation}where $R_2$ is a polynomial of $K$ and dim$E^s(x,\omega)$.
Also
\begin{equation*}
  \|I+(Dh_{(x,\omega)}^s)_{\xi_y}-I\|\leq K\|\xi_y\|\leq \frac{3}{2}Kd(x,y)
\end{equation*}implies that there exists a constant $R_3$ such that
\begin{equation}\label{lip3}
  |det(I+(Dh_{(x,\omega)}^s)_{\xi_y})-1|\leq R_3 d(x,y).
\end{equation} Combining $(\ref{lip1})$, $(\ref{lip2})$, and $(\ref{lip3})$, we have
\begin{equation*}
  |J^s(x,\omega)-J^s(y,\omega)|\leq K_0d(x,y),
\end{equation*}where $K_0$ only depends on $K$, $\mathcal{P}$ and $\dim E^s$. Notice that $\inf_{(x,\omega)\in M\times\Omega}|J^s(x,\omega)|>0$, as a consequence, there exists a $K_1>K_0$ such that
\begin{equation*}
  |\log J^s(x,\omega)-\log J^s(y,\omega)|\leq K_1d(x,y).
\end{equation*}The proof of Lemma \ref{Lip jac Es} is complete.
\end{proof}

\subsection{Absolute continuity of the stable and unstable foliations on each fiber}\label{subsection 3.4}
The absolute continuity of $\{ W^\tau_\epsilon(x,\omega)\}$ for fixed $\omega$ and $\tau=s,u$ was stated in \cite{PDQM} for general random dynamical systems without proof. In this subsection, we give a proof in our settings. Besides, we give a specific formula for the Jacobian of holonomy maps.

For any $\omega\in\Omega$, a smooth submanifold $U(\omega)\subset M$ is said to be transverse to the local stable manifolds if for any $x\in U(\omega)$, $T_xU(\omega)\oplus E^s(x,\omega)=T_xM$. The transversal angel can be measured by the following quantity.  For subspaces $A,B\subset\mathbb{R}^N$ with a given norm $\|\cdot\|$, set
\begin{equation}\label{def of Theta}
  \Theta(A,B)=\min\left\{\min_{v\in A,\|v\|=1;w\in B}\|v-w\|; \min_{w\in B,\|w\|=1;v\in A}\|v-w\|\right\}.
\end{equation}For $\theta\in(0,1]$, we say that a subspace $A\subset\mathbb{R}^N$ is $\theta-$transverse to a subspace $B\subset\mathbb{R}^N$ if $\Theta(A,B)\geq \theta.$
Given smooth submanifolds $U(\omega)$ and $V(\omega)$ transversal to the local stable manifolds, we say that $\psi_\omega:U(\omega)\rightarrow V(\omega)$ is a fiber holonomy map induced by the local stable manifolds if $\psi_\omega$ is injective and continuous, and
\begin{equation*}
  \psi_\omega(x)\in W_\epsilon^s(x,\omega)\cap V(\omega)\mbox{  for every }x\in U(\omega).
\end{equation*}
We say that $\{W_\epsilon^s(x,\omega)\}$ is absolutely continuous on each fiber if every fiber holonomy map $\psi_\omega$ induced by the local stable manifolds is absolutely continuous with respect to $m_{U(\omega)}$ and $m_{V(\omega)}$, where $m_{V(\omega)} $ and $m_{U(\omega)}$ are the intrinsic Riemannian volume measure on manifolds $V(\omega)$ and $U(\omega)$ respectively. The absolute continuity of $\{W_\epsilon^u(x,\omega)\}$ can be defined similarly. Our proof of absolute continuity follows the idea given in \cite[Theorem  6.2.6]{Young1995}.
\begin{proposition}\label{Theorem fiberwise absolutely continuous}
  Suppose $\phi$ is $C^2$ Anosov on fibers, then $\{W_\epsilon^s(x,\omega)\}$ and $\{W^u_\epsilon(x,\omega)\}$ are absolutely continuous on each fiber respectively.
\end{proposition}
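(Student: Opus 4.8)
The plan is to follow the graph-transform argument of \cite{Young1995}, reducing the statement to a convergent product formula for the Jacobian of a stable holonomy map. First I would reduce to a local situation: by a covering argument it suffices to treat $U(\omega)$ and $V(\omega)$ that are small discs lying in a single normal coordinate chart and separated by less than the $\delta$ of Lemma~\ref{local product structure}, so that the holonomy $\psi_\omega$ is defined. Moreover, passing to the time reversal $\phi^{-1}$, which is again Anosov on fibers but with $E^s$ and $E^u$ interchanged, reduces the unstable case to the stable one; so it is enough to prove that every stable fiber holonomy $\psi_\omega$ is absolutely continuous.

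Fix a base point $x_0\in U(\omega)$. For $n\ge 0$ write, on a sufficiently small neighbourhood of $x_0$,
\[
  \psi_\omega=\bigl(f^n_\omega|_{V(\omega)}\bigr)^{-1}\circ\psi^{(n)}_{\theta^n\omega}\circ f^n_\omega|_{U(\omega)},
\]
where $\psi^{(n)}_{\theta^n\omega}$ denotes the stable holonomy between the submanifolds $f^n_\omega(U(\omega))$ and $f^n_\omega(V(\omega))$ near $f^n_\omega x_0$; this holonomy is defined once $n$ is large, because item~(2) of the invariant manifold lemma gives $d_M(f^n_\omega x_0,f^n_\omega\psi_\omega(x_0))\le e^{-n\lambda}d_M(x_0,\psi_\omega(x_0))$, which eventually falls below $\delta$. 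Taking Jacobians with respect to the induced Riemannian volumes and applying the chain rule yields, for every such $n$,
\begin{equation*}
  \mathrm{Jac}_{x_0}\psi_\omega=\left(\prod_{k=0}^{n-1}\frac{\mathrm{Jac}_{f^k_\omega x_0}\bigl(f_{\theta^k\omega}|_{T_{f^k_\omega x_0}f^k_\omega U(\omega)}\bigr)}{\mathrm{Jac}_{f^k_\omega\psi_\omega(x_0)}\bigl(f_{\theta^k\omega}|_{T_{f^k_\omega\psi_\omega(x_0)}f^k_\omega V(\omega)}\bigr)}\right)\mathrm{Jac}_{f^n_\omega x_0}\psi^{(n)}_{\theta^n\omega}.
\end{equation*}
The whole point is then to let $n\to\infty$: the infinite product converges to a positive finite limit and the last factor tends to $1$.

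Convergence of the product rests on Lemma~\ref{detDfE-detDfE}, which bounds the numerator minus the denominator of the $k$-th factor by $C_2\,d(f^k_\omega x_0,f^k_\omega\psi_\omega(x_0))+C_2\,d\bigl(T_{f^k_\omega x_0}f^k_\omega U(\omega),T_{f^k_\omega\psi_\omega(x_0)}f^k_\omega V(\omega)\bigr)$. The first term is $O(e^{-k\lambda})$ by the contraction above. For the second, I would record an inclination-lemma estimate, uniform in $\omega$ by continuity of $\omega\mapsto f_\omega$ into $\mathrm{diff}^2(M)$ and compactness of $M\times\Omega$: if $U(\omega)$ is transversal to $E^s$ then $d\bigl(T_{f^k_\omega x}f^k_\omega U(\omega),E^u(f^k_\omega x,\theta^k\omega)\bigr)\le Ce^{-k\beta}$ for some $\beta>0$, and similarly for $V(\omega)$; combining this with the exponential closeness of the two base points and the Hölder continuity of $E^u$ from Lemma~\ref{Holder continuity of stableunstable distribution} gives $d\bigl(T f^k_\omega U(\omega),T f^k_\omega V(\omega)\bigr)\le Ce^{-k\gamma}$ for some $\gamma>0$. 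Since the Jacobians are bounded below away from $0$ uniformly in $(x,\omega)$, the $k$-th factor equals $1+O(e^{-k\min\{\lambda,\gamma\}})$, so $\sum_k|\log(\cdot)|<\infty$ and the product converges, with limit uniformly bounded above and below. The factor $\mathrm{Jac}_{f^n_\omega x_0}\psi^{(n)}_{\theta^n\omega}$ tends to $1$ because near $f^n_\omega x_0$ and $f^n_\omega\psi_\omega(x_0)$ the submanifolds $f^n_\omega(U(\omega))$ and $f^n_\omega(V(\omega))$ are both within $Ce^{-n\beta}$ of the unstable subbundle and pass through points at distance $\le e^{-n\lambda}$.

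It remains to deduce absolute continuity of $\psi_\omega$ itself. Although the stable foliation is only Hölder continuous transversally, each leaf $W^s_\epsilon(\cdot,\omega)$ is a uniformly $C^1$ graph over $E^s$ of slope $<\frac{1}{3}$ with $Lip(Dh^s_{(\cdot,\omega)})<L$; hence the holonomy $\psi^{(n)}_{\theta^n\omega}$ between the nearly flat submanifolds $f^n_\omega(U(\omega))$, $f^n_\omega(V(\omega))$, whose base points are $e^{-n\lambda}$-close, is bi-Lipschitz with constant tending to $1$, in particular absolutely continuous. Since $f^n_\omega$ is a $C^2$ diffeomorphism, $\psi_\omega$ is a composition of absolutely continuous maps, hence absolutely continuous, and passing to the limit in the displayed formula identifies its Jacobian with the convergent infinite product. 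I expect the main obstacle to be precisely the two quantitative graph-transform estimates just used—the exponential convergence of $T f^k_\omega U(\omega)$ to $E^u$ and the bi-Lipschitz-near-identity bound for $\psi^{(n)}_{\theta^n\omega}$: both are routine for a single Anosov diffeomorphism, but here they must be carried out with all constants independent of $\omega$, which is exactly the role played by the continuity of $f$ and the compactness of $\Omega$.
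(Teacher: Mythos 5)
Your overall strategy is essentially the paper's: conjugate $\psi_\omega$ by $f_\omega^n$, study the convergent product of ratios of $\det Df_{\theta^k\omega}$ along the two transversals (this is the quantity $H_\omega^n$ of Lemma~\ref{limit of H}), and control it via the stable contraction, the inclination-lemma decay of $T_{f_\omega^k x}f_\omega^k U(\omega)$ toward $E^u$, and Lemma~\ref{detDfE-detDfE}. But there is a genuine gap at the step where you pass from this to absolute continuity of $\psi_\omega$. You assert that the forward stable holonomy $\psi^{(n)}_{\theta^n\omega}$ between $f_\omega^n U(\omega)$ and $f_\omega^n V(\omega)$ is \emph{bi-Lipschitz} with constant tending to $1$, because the two transversals are $C^1$-close and the connecting stable arcs are $O(e^{-n\lambda})$-short. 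That inference fails: $\psi^{(n)}_{\theta^n\omega}$ is still a stable holonomy of a lamination that is $C^1$ only \emph{along} its leaves and merely H\"older transversally (Proposition~\ref{Holder continuity of holonomy map}), and the $C^0$-smallness of the displacement controls distances only at scales much larger than $e^{-n\lambda}$; at smaller scales nothing produces a Lipschitz bound, and absolute continuity of $\psi^{(n)}_{\theta^n\omega}$ is itself an instance of the very proposition you are proving, so invoking it (in the stronger bi-Lipschitz form) is essentially circular.

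The paper sidesteps this by never invoking any regularity of the forward holonomy below a fixed macroscopic scale: it covers the compact set $f_\omega^N A$ by finitely many $\delta$-balls $B_i$ of bounded multiplicity (Besicovitch), establishes only the scale-$\delta$ measure comparison (\ref{def of C7}) and inclusion (\ref{map delta into 2delta}) for $N$ large enough that the displacement is $\ll\delta$, pulls each $B_i$ back to $U(\omega)$ and $V(\omega)$ using the backward distortion bounds of Lemma~\ref{mid stp1} together with the uniform bound (\ref{bound of Homegan}) on $H_\omega^n$, and sums. If you replace your bi-Lipschitz claim with this covering-and-measure-comparison argument, the rest of your outline — convergence of the infinite product, identification of the Jacobian by passing to the limit once absolute continuity is in hand, and uniformity in $\omega$ from compactness of $\Omega$ and continuity of $\omega\mapsto f_\omega$ — goes through and coincides with the paper's proof.
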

\begin{proof}[Proof of Proposition \ref{Theorem fiberwise absolutely continuous}]
We only prove that $\{W_\epsilon^s(x,\omega)\}$ is absolutely continuous on each fiber since the case for $\{W_\epsilon^u(x,\omega)\}$ is similar.
For any $\omega\in\Omega$ fixed, let $\psi_\omega:U(\omega)\rightarrow V(\omega)$ be the fiber holonomy map between two smooth pre-compact submanifolds that are transverse to the local stable manifolds. By the regularity of Riemannian volume measure, to prove the absolute continuity of $\psi_\omega$, it is sufficient to prove that there exists a number $C(\omega)>0$ such that
\begin{equation}\label{eq aim ac}
  m_{V(\omega)}(\psi_\omega(A))\leq C(\omega)m_{U(\omega)}(A)\mbox{ for any compact subset } A\subset U(\omega).
\end{equation}Before starting the proof, we need  some preparations.
\begin{lemma}\label{lemma C5C6}
  There exists $C_3,\ C_4(\omega)\in (0,1)$ such that for any $n\in\mathbb{N}$, one has
  \begin{enumerate}
      \item for any $(x,\omega)\in M\times \Omega$, $v\in (E^s(x,\omega))^\perp$,
    \begin{equation}
      \|D_xf_\omega^n v\|\geq  C_3 e^{\lambda n}\|v\|;\label{eq C3}
    \end{equation}
    \item for any $x\in U(\omega)$, $v\in T_x U(\omega)$ and $y\in V(\omega)$, $\beta\in T_yV(\omega)$,
    \begin{equation}\label{dxfn v}
      \|D_xf_\omega^n v\| \geq C_4(\omega)e^{\lambda n}\|v\|,\mbox{ and }\|D_yf_\omega^n \beta\| \geq C_4(\omega)e^{\lambda n}\|\beta\| ;
    \end{equation}
  \end{enumerate}
\end{lemma}
\begin{proof}[Proof of Lemma \ref{lemma C5C6}]
First, we prove (1). We pick $N_0\in\mathbb{N}$ such that for any $n\geq N_0$,
\begin{equation*}
  e^{\lambda n}-e^{-\lambda n}\mathcal{P}\geq \frac{1}{2}e^{\lambda n},
\end{equation*}where $\mathcal{P}$ is the constant in \eqref{def of P}. Now for any $(x,\omega)\in M\times \Omega$,   $v\in (E^s(x,\omega))^\perp$, then $v$ has decomposition $v=v_1+v_2$ for $v_1\in E^u(x,\omega)$ and $v_2\in E^s(x,\omega)$. Note that
\begin{equation*}
  \langle v,v\rangle=\langle v,v_1+v_2\rangle=\langle v,v_1\rangle\leq \|v\|\cdot \|v_1\|,
\end{equation*}which implies $\|v_1\|\geq \|v\|$.
 If $0\leq n<N_0$, we have
\begin{equation*}
   \|D_xf_\omega^n v\|\geq  (\inf_{(x,\omega)\in M\times \Omega}m(D_xf_\omega))^{N_0}\|v\|\geq \frac{(\inf_{(x,\omega)\in M\times \Omega}m(D_xf_\omega))^{N_0}}{e^{\lambda N_0}}e^{\lambda n}\|v\|,
\end{equation*}where $m(D_xf_\omega)=\|(D_xf_\omega)^{-1}\|^{-1}<1$ denotes the co-norm of $D_xf_\omega$. If $n\geq N_0$, then
\begin{align*}
   \|D_xf_\omega^n v\| & \geq  \|D_xf_\omega^n v_1\|- \|D_xf_\omega^n v_2\|\geq e^{\lambda n}\|v_1\|-e^{-\lambda n}\|v_2\|\\
   &\geq e^{\lambda n}\|v\|-e^{-\lambda n}\mathcal{P}\|v\|\geq\frac{1}{2}e^{\lambda n}\|v\|.
\end{align*}These two cases imply that \eqref{eq C3} holds for
\begin{equation*}
   C_3:= \min\left\{\frac{1}{2},\frac{(\inf_{(x,\omega)\in M\times \Omega}m(D_xf_\omega))^{N_0}}{e^{\lambda N_0}}\right\}.
\end{equation*}

Second, we prove (2). We only prove the first inequality of \eqref{dxfn v}, as the second one is similar. Note that $U(\omega)$ and $V(\omega)$ are transverse to the local stable manifolds and pre-compact, the following quantity is positive
\begin{equation}\label{transverse angle}
  \gamma_1(\omega):=\min\{\inf\{\Theta(T_xU(\omega),E^s(x,\omega))|\ x\in U(\omega)\},\inf\{\Theta(T_y V(\omega),E^s(y,\omega))|\ y\in V(\omega)\}\}.
\end{equation}
Now for any $x\in U(\omega)$ and any $v\in T_xU(\omega)$, then $v=v_1+v_2$ for $v_1\in (E^s(x,\omega))^\perp$ and $v_2\in E^s(x,\omega)$. Then one has
\begin{equation}\label{v1 v}
  \|v_1\|=\|v-v_2\|=\min_{\eta\in E^s(x,\omega)}\|v-\eta\|=\min_{\eta\in E^s(x,\omega)}\left\|\frac{v}{\|v\|}-\frac{\eta}{\|v\|}\right\|\cdot\|v\|\geq \gamma_1(\omega)\|v\|.
\end{equation}
 We pick $N_1(\omega)$ sufficiently large such that for any $n\geq N_1(\omega)$
\begin{equation}\label{N0 sufficiently large}
  (C_3e^{\lambda n}-e^{-\lambda n})\gamma_1(\omega)-e^{-\lambda n}\geq \frac{1}{2}e^{\lambda n}C_3\gamma_1(\omega).
\end{equation}If $0<n<N_1(\omega)$, then
\begin{equation*}
  \|D_xf_\omega^nv\|\geq (\inf_{(x,\omega)\in M\times \Omega}m(D_xf_\omega))^{N_1(\omega)}\|v\|\geq \frac{(\inf_{(x,\omega)\in M\times \Omega}m(D_xf_\omega))^{N_1(\omega)}}{e^{\lambda N_1(\omega)}}e^{\lambda n}\|v\|.
\end{equation*} If $n\geq N_1(\omega)$, we have
\begin{align*}
  \|D_xf_\omega^{-n} v\|&=\|D_xf_\omega^n(v_1+v_2)\|\geq \|D_xf_\omega^n v_1\|-\|D_xf_\omega^nv_2\|\\
  &\overset{\eqref{eq C3}}\geq C_3e^{\lambda n}\|v_1\|-e^{-\lambda n}\|v_2\|\geq C_3 e^{\lambda n}\|v_1\|-e^{-\lambda n}(\|v\|+\|v_1\|)\\
  &= (C_3e^{\lambda n}-e^{-\lambda n})\|v_1\|-e^{-\lambda n}\|v\|\overset{\eqref{v1 v}}\geq (C_3e^{\lambda n}-e^{-\lambda n})\gamma_1(\omega)\|v\|-e^{-\lambda n}\|v\|\\
  &\overset{\eqref{N0 sufficiently large}}\geq \frac{1}{2}\gamma_1(\omega)C_3e^{\lambda n}\|v\|.
\end{align*} These two cases imply \eqref{dxfn v} holds for
\begin{equation*}
  C_4(\omega)=\min\left\{ \frac{1}{2}\gamma_1(\omega)C_3, \frac{(\inf_{(x,\omega)\in M\times \Omega}m(D_xf_\omega))^{N_1(\omega)}}{e^{\lambda N_1(\omega)}}\right\}\in(0,1).
\end{equation*}
The proof of Lemma \ref{lemma C5C6} is complete.
\end{proof}
\begin{lemma}\label{lemma new C5C6}
Recall that the distance between subbundle is defined in \eqref{def distance between bundle}. There exists $C_6(\omega)>1$ such that  for any $n\in\mathbb{N}$,
  \begin{align}
  &d(D_xf_\omega^n T_xU(\omega),D_xf_\omega^n E^u(x,\omega))\leq C_6(\omega)e^{-\lambda n}d(T_x U(\omega), E^u(x,\omega))\mbox{ for }x\in U(\omega);\label{d(fnTxU,Eu)}\\
  &d(D_yf_\omega^n T_yV(\omega),D_yf_\omega^n E^u(y,\omega))\leq C_6(\omega)e^{-\lambda n}d(T_y V(\omega), E^u(y,\omega))\mbox{ for }y\in V(\omega)\label{d(fnTxV,Eu)}.
\end{align}
\end{lemma}
\begin{proof}[Proof of Lemma \ref{lemma new C5C6}]
The proof of inequalities \eqref{d(fnTxU,Eu)} and \eqref{d(fnTxV,Eu)} are similar, therefore we only prove \eqref{d(fnTxU,Eu)}. Recall that $\gamma_1(\omega)$ is  defined in \eqref{transverse angle}.  For any $v\in T_xU(\omega)$ with $\|v\|=1$, suppose $v$ has decomposition $v=v_1+v_2$ for some $v_1\in E^u(x,\omega)$ $v_2\in E^u(x,\omega)^\perp$, and decomposition $v=\beta_1+\beta_2$ for some $\beta_1\in E^u(x,\omega)$ and $\beta_2\in E^s(x,\omega)$, then by trigonometry, we have
\begin{equation}\label{eq C difference1}
  \|\beta_2\|\leq \gamma_1(\omega)^{-1}\|v_2\|=\gamma_1(\omega)^{-1}\inf_{\eta\in E^u(x,\omega)}\|v-\eta\|.
\end{equation}Let $\theta_0:=\inf_{(x,\omega)\in M\times\Omega}\Theta(E^u(x,\omega),E^s(x,\omega))>0$. For any $v\in E^u(x,\omega)$ with $\|v\|=1$, suppose $v$ has decomposition $v=v_1+v_2$ for some $v_1\in T_xU(\omega)$, $v_2\in (T_xU(\omega))^\perp$, and decomposition $v=\beta_1+\beta_2$ for some $\beta_1\in T_xU(\omega)$ and $\beta_2\in E^s(x,\omega)$, then by trigonometry, we have
\begin{equation}\label{eq C difference2}
  \|\beta_2\|\leq \theta_0^{-1}\|v_2\|=\theta_0^{-1}\inf_{\eta\in T_xU(\omega)}\|v-\eta\|.
\end{equation}

For any $n\in\mathbb{N}$, pick any $v\in D_xf_\omega^nT_xU(\omega)$ with $\|v\|=1$. Denote $v=D_xf_\omega^n v^\prime$ for $v^\prime\in T_xU(\omega) $. Then there is decomposition $v^\prime=v_1^\prime+v_2^\prime$ for $v_1^\prime\in E^u(x,\omega)$ and $v_2^\prime\in E^s(x,\omega)$. Then we have
\begin{align*}
\inf_{\eta\in D_xf_\omega^nE^u(x,\omega)}\|v-\eta\|&\leq \|v-D_xf_\omega^n v_1^\prime\|=\|D_xf_\omega^n v_2^\prime\|\leq e^{-\lambda n}\|v_2^\prime\|\\
&= e^{-\lambda n}\|v^\prime-v_1^\prime\| =  e^{-\lambda n}\left\|\frac{v^\prime}{\|v^\prime\|}-\frac{v_1^\prime}{\|v^\prime\|}\right\|\cdot\|v^\prime\|\\
&\overset{\eqref{dxfn v}}\leq  e^{-\lambda n}\cdot (C_4(\omega))^{-1}e^{-\lambda n}\|v\|\left\|\frac{v^\prime}{\|v^\prime\|}-\frac{v_1^\prime}{\|v^\prime\|}\right\|\\
&\overset{\eqref{eq C difference1}}\leq  e^{-\lambda n}\cdot (C_4(\omega))^{-1}\cdot e^{-\lambda n}\cdot 1\cdot \gamma_1(\omega)^{-1}\cdot \inf_{\beta\in E^u(x,\omega)}\left\|\frac{v^\prime}{\|v^\prime\|}-\beta\right\|\\
&\leq e^{-\lambda n}\cdot (C_4(\omega))^{-1}\cdot \gamma_1(\omega)^{-1}\cdot\sup_{\zeta\in T_xU(\omega),\|\zeta\|=1}\inf_{\beta\in E^u(x,\omega)}\left\|\zeta-\beta\right\|.
\end{align*}Therefore, we obtain
\begin{equation}\label{3.14 1}
\begin{split}
&\sup_{v\in D_xf_\omega^nT_xU(\omega),\|v\|=1}\inf_{\eta\in D_xf_\omega^nE^u(x,\omega)}\|v-\eta\|\\
\leq &e^{-\lambda n}\cdot (C_4(\omega))^{-1}\cdot \gamma_1(\omega)^{-1}\cdot\sup_{\zeta\in T_xU(\omega),\|\zeta\|=1}\inf_{\beta\in E^u(x,\omega)}\left\|\zeta-\beta\right\|.
\end{split}
\end{equation}Pick any $\xi \in D_xf_\omega^nE^u(x,\omega)$ with $\|\xi\|=1$, and denote $\xi=D_xf_\omega^n \xi^\prime$ for $\xi^\prime\in E^u(x,\omega)$. Then there exists $\xi_1^\prime\in T_xU(\omega)$ and $\xi_2^\prime \in E^s(x,\omega)$ such that $\xi^\prime=\xi_1^\prime+\xi_2^\prime$. Now,
\begin{align*}
 \inf_{\eta\in D_xf_\omega^n T_xU(\omega)}\|\xi-\eta\|&\leq   \|\xi-D_xf_\omega^n\xi_1^\prime\|=\|D_xf_\omega^n\xi_2^\prime\|\leq e^{-\lambda n}\|\xi_2^\prime\|\\
 &= e^{-\lambda n}\|\xi^\prime-\xi_1^\prime\|= e^{-\lambda n}\left\|\frac{\xi^\prime}{\|\xi^\prime\|}-\frac{\xi_1^\prime}{\|\xi^\prime\|}\right\|\cdot\|\xi^\prime\|\\
 &\overset{\eqref{eq C difference2}}\leq  e^{-\lambda n}\cdot \|\xi^\prime\|\cdot  \theta_0^{-1}\cdot \inf_{\zeta\in T_xU(\omega)}\left\|\frac{\xi^\prime}{\|\xi^\prime\|}-\zeta\right\|\\
 &\leq e^{-\lambda n}\cdot e^{-\lambda n}\|\xi\|\cdot \theta_0^{-1}\cdot\sup_{\beta\in E^u(\omega,x),\|\beta\|=1}\inf_{\zeta\in T_xU(\omega)}\left\|\beta-\zeta\right\|\\
  &\leq e^{-\lambda n}\cdot 1 \cdot \theta_0^{-1}\cdot\sup_{\beta\in E^u(\omega,x),\|\beta\|=1}\inf_{\zeta\in T_xU(\omega)}\left\|\beta-\zeta\right\|.
\end{align*}Therefore, we have
\begin{equation}\label{3.14 2}
\begin{split}
  \sup_{\xi\in D_xf_\omega^nE^u(x,\omega),\|\xi\|=1}\inf_{\eta\in D_xf_\omega^n T_xU(\omega)}\|\xi-\eta\|\leq & e^{-\lambda n}\cdot \theta_0^{-1}\cdot\sup_{\beta\in E^u(\omega,x),\|\beta\|=1}\inf_{\zeta\in T_xU(\omega)}\left\|\beta-\zeta\right\|.
\end{split}
\end{equation}By checking definitions and letting $C_6(\omega)=(C_4(\omega))^{-1}\max\{\gamma_1(\omega)^{-1},\theta_0^{-1}\}$, then \eqref{d(fnTxU,Eu)} is a corollary of \eqref{3.14 1}, \eqref{3.14 2}. The proof of Lemma \ref{lemma new C5C6} is complete.
\end{proof}

For any compact $A\subset U(\omega)$, let $\mathcal{O}$ be a small neighborhood of $A$ in $U(\omega)$ such that
\begin{equation}\label{measure O leq 2 m A}
  m_{U(\omega)}(\mathcal{O})\leq 2m_{U(\omega)}(A).
\end{equation} By $(\ref{dxfn v})$, let $\delta_0\in (0,\epsilon_0)$ be a sufficiently small number, then there exists a number $N_1(\omega)$ such that for any $n\geq N_1(\omega)$ and $\delta\in(0,\delta_0)$, we have
\begin{equation}\label{contained in O}
  f_{\theta^n\omega}^{-n}B_{f_\omega^nU(\omega)}(f_\omega^nx,\delta)\subset\mathcal{O}\mbox{ for any }x\in A,
\end{equation}where $B_{f_\omega^nU(\omega)}(f_\omega^nx,\delta)$ is the $\delta-$neighborhood of $f_\omega^nx$ on $f_\omega^nU(\omega)$.

\begin{lemma}\label{Lemma close}
For  any $\delta\in(0,\delta_0)$ and any constant $C_7>1$, there exists  $N_2(\omega)\in\mathbb{N}$ such that for any $n\geq N_2(\omega)$ and $x\in f_\omega^nU(\omega)$, we have
\begin{equation}\label{map delta into 2delta}
  \bar{\psi}_{\theta^n\omega}(B_{f_\omega^nU(\omega)}(x,\delta))\subset B_{f_\omega^nV(\omega)}(\bar{\psi}_{\theta^n\omega} (x),2\delta),
\end{equation}and
\begin{equation}\label{def of C7}
  C_7^{-1}\leq\frac{ m_{f_\omega^n U(\omega)}(B_{f_\omega^nU(\omega)}(x,\delta))}{ m_{f_\omega^n V(\omega)}(\bar{\psi}_{\theta^n\omega}(B_{f_\omega^nU(\omega)}(x,\delta)))} \leq C_7,
\end{equation}
where $\bar{\psi}_{\theta^n\omega}=f_\omega^n\circ\psi_\omega\circ f_{\theta^n\omega}^{-n}:f_\omega^nU(\omega)\rightarrow f_\omega^nV(\omega)$ is the holonomy map induced by the local stable manifolds.
\end{lemma}
\begin{proof}[Proof of Lemma \ref{Lemma close}]
Notice that for any $x\in U(\omega)$, $\psi_\omega(x)= V(\omega)\cap W^s_\epsilon(x,\omega)$, so we have
\begin{equation}\label{d fnU fn V}
  d(f_\omega^nx,f_\omega^n\psi_\omega(x))\leq e^{-\lambda n}d(x,\psi_\omega(x)).
\end{equation}
 By \eqref{d(fnTxU,Eu)}, \eqref{d(fnTxV,Eu)} and \eqref{d fnU fn V}, we know that $f_\omega^nU(\omega)$ and $f_\omega^nV(\omega)$ will be $C^1$-close to the unstable manifold uniformly for points on $f_\omega^nU(\omega)$ and $f_\omega^nV(\omega)$ as $n$ goes to infinity. We let $n$ be determined later. For any $x\in f_\omega^nU(\omega)$ and $\delta\in (0,\delta_0)$, by employing the exponential map, we may pretend that $B_{f_\omega^nU(\omega)}(x,\delta)$ and $\bar{\psi}_{\theta^n\omega}(B_{f_\omega^nU(\omega)}(x,\delta))$ lie in Euclidean space $E^u(x,\omega)\oplus E^s(x,\omega)$. Denote $P^u:E^u(x,\omega)\oplus E^s(x,\omega)\to E^u(x,\omega)$ to be the projection onto the first coordinate. Let
 \begin{equation*}
   \Psi_n^1,\Psi_n^2:E^u(x,\omega)\to  E^s(x,\omega)
 \end{equation*}be $C^2$ maps, whose graph represent $B_{f_\omega^nU(\omega)}(x,\delta)$ and $\bar{\psi}_{\theta^n\omega}(B_{f_\omega^nU(\omega)}(x,\delta))$ respectively. For any $\eta\in (0,\delta)$, since $f_\omega^nU(\omega)$ and $f_\omega^nV(\omega)$ are $C^1$-close to the unstable manifolds, there exists $N_\eta(\omega)\in\mathbb{N}$ independent of $x\in U(\omega)$ such that for any $n\geq N_\eta(\omega)$,
 \begin{equation}\label{lip Dpsi}
   \|D\Psi_n^i|_{E^u(x,\omega)(2\mathcal{P}\delta)}\|\leq \frac{1}{2},\mbox{ and } Lip(D\Psi_n^i|_{E^u(x,\omega)(2\mathcal{P}\delta)})\leq L+1, \mbox{ for }i=1,2,
 \end{equation}and
 \begin{equation}\label{eq less than C7}
\|(\Psi_n^1-\Psi_n^2)|_{E^u(x,\omega)(2\mathcal{P}\delta)}\|_{C^1}+\mathcal{P} \sup_{z\in B_{f_\omega^nU(\omega)}(x,\delta) } \|z-\bar{\psi}_{\theta^n\omega}(z)\|<\eta,
 \end{equation}
 where constant $L$ is given in Lemma \ref{lemma stable unstable}, and $\mathcal{P}$ is the constant in \eqref{def of P}.

For any $n\geq N_\eta(\omega)$, $x\in f_\omega^nU(\omega)$ and $z\in B_{f_\omega^nU(\omega)}(x,\delta)$,  we have
 \begin{equation}\label{eq edge}
   \|P^u(z)-P^u(\bar{\psi}_{\theta^n\omega}(z))\|\leq \mathcal{P}\|z-\bar{\psi}_{\theta^n\omega}(z)\|<\eta.
 \end{equation}Combing \eqref{eq edge} and the inequality $\|(\Psi_n^1-\Psi_n^2)|_{E^u(x,\omega)(2\mathcal{P}\delta)}\|_{C^1}<\eta$, one has
 \begin{equation*}
  B_{f_\omega^nV(\omega)}(\bar{\psi}_{\theta^n\omega} (x),\frac{1}{2}\delta)\subset  \bar{\psi}_{\theta^n\omega}(B_{f_\omega^nU(\omega)}(x,\delta))\subset  B_{f_\omega^nV(\omega)}(\bar{\psi}_{\theta^n\omega} (x),2\delta),
 \end{equation*}provided $\eta$ sufficiently small and $n$ correspondingly large.
 Inequality \eqref{eq edge} also implies that the difference between the area of $P^u(B_{f_\omega^nU(\omega)}(x,\delta))$ and the area of $P^u(\bar{\psi}_{\theta^n\omega}(B_{f_\omega^nU(\omega)}(x,\delta)))$ is up to a scale of $\eta$. By letting $\eta$ sufficiently small and pick $n$ correspondingly large, and using the regularity of Lebesgue measure, the Lebesgue measure of
 \begin{equation*}\label{measure close to 1}
   P^u(B_{f_\omega^nU(\omega)}(x,\delta))\cup P^u(\bar{\psi}_{\theta^n\omega}(B_{f_\omega^nU(\omega)}(x,\delta)))\backslash \left(P^u(B_{f_\omega^nU(\omega)}(x,\delta))\cap P^u(\bar{\psi}_{\theta^n\omega}(B_{f_\omega^nU(\omega)}(x,\delta)))\right)
 \end{equation*}is less than or equal to $C\cdot \eta$ for some constant $C$.

Moreover, for any $e\in P^u(B_{f_\omega^nU(\omega)}(x,\delta))\cap P^u(\bar{\psi}_{\theta^n\omega}(B_{f_\omega^nU(\omega)}(x,\delta)))$, we have
 \begin{align*}
    & \|D_{e}\Psi_n^1-D_{e}\Psi_n^2\|\leq \|\Psi_n^1-\Psi_n^2\|_{C^1}\overset{\eqref{eq less than C7}}<\eta.
 \end{align*}
 Denote $\tilde{\Psi}_n^i:E^u(x,\omega)\to E^u(x,\omega)\oplus E^s(x,\omega)$ by $e\mapsto (e,\Psi_n^i(e))$ for $i=1,2$. Then for any $e\in P^u(B_{f_\omega^nU(\omega)}(x,\delta))\cap P^u(\bar{\psi}_{\theta^n\omega}(B_{f_\omega^nU(\omega)}(x,\delta)))$, we have
 \begin{align*}
 &\left\|(D_e\tilde{\Psi}_n^1)^*(D_e\tilde{\Psi}_n^1)-(D_e\tilde{\Psi}_n^2)^*(D_e\tilde{\Psi}_n^2)\right\|\\
 \leq&\left\|(D_e\tilde{\Psi}_n^1)^*(D_e\tilde{\Psi}_n^1)-(D_e\tilde{\Psi}_n^1)^*(D_e\tilde{\Psi}_n^2)\right\|+\left\|(D_e\tilde{\Psi}_n^1)^*(D_e\tilde{\Psi}_n^2)-(D_e\tilde{\Psi}_n^2)^*(D_e\tilde{\Psi}_n^2)\right\|\\
\overset{\eqref{lip Dpsi}}\leq & \frac{3}{2}\eta+\frac{3}{2}\eta=3\eta,
 \end{align*}where $*$ means the adjoint matrix. As a consequence, for such points $e$,
 \begin{equation*}
   \left|Jac(\tilde{\Psi}_n^1)(e)-Jac(\tilde{\Psi}_n^2)(e)\right|=\left|\sqrt{\det((D_e\tilde{\Psi}_n^1)^*(D_e\tilde{\Psi}_n^1))}-\sqrt{\det((D_e\tilde{\Psi}_n^2)^*(D_e\tilde{\Psi}_n^))}\right|\leq C\eta,
 \end{equation*}for some constant $C$. By \eqref{lip Dpsi}, we have
 \begin{equation*}
 (1+\frac{1}{2})^{\dim E^u(\omega,x)} >  Jac(\tilde{\Psi}_n^i)(e)\geq \sqrt{1-\frac{1}{4}}=\frac{\sqrt{3}}{2}\mbox{ for }i=1,2.
 \end{equation*}
 Therefore, for any $e\in P^u(B_{f_\omega^nU(\omega)}(x,\delta))\cap P^u(\bar{\psi}_{\theta^n\omega}(B_{f_\omega^nU(\omega)}(x,\delta)))$, we have
 \begin{align}\label{eq defCstar}
   &\left|\log\frac{Jac(\tilde{\Psi}_n^1)(e)}{Jac(\tilde{\Psi}_n^2)(e)}\right|
   \leq C^*\cdot \eta,
 \end{align}where $C^*$ is some constant. Now by area formula, we obtain
 \begin{equation}\label{eq leq 1}
   \begin{split}
   &m_{f_\omega^n U(\omega)}((B_{f_\omega^nU(\omega)}(x,\delta)))\\
   =&\int _{P^u(B_{f_\omega^nU(\omega)}(x,\delta))}Jac(\tilde{\Psi}_n^1)(e)dm(e)\\
   \leq&\int _{P^u(B_{f_\omega^nU(\omega)}(x,\delta))\cap P^u(\bar{\psi}_{\theta^n\omega}(B_{f_\omega^nU(\omega)}(x,\delta)))}Jac(\tilde{\Psi}_n^1)(e)dm(e)+(3/2)^{\dim E^u}C\eta\\
   \leq &e^{C^*\eta}\int _{P^u(B_{f_\omega^nU(\omega)}(x,\delta))\cap P^u(\bar{\psi}_{\theta^n\omega}(B_{f_\omega^nU(\omega)}(x,\delta)))}Jac(\tilde{\Psi}_n^2)(e)dm(e)+(3/2)^{\dim E^u}C\eta\\
   \leq & e^{C^*\eta}\int _{ P^u(\bar{\psi}_{\theta^n\omega}(B_{f_\omega^nU(\omega)}(x,\delta)))}Jac(\tilde{\Psi}_n^2)(e)dm(e)+(3/2)^{\dim E^u}C\eta\\
 =  &e^{C^*\eta} \cdot m_{f_\omega^n V(\omega)}(\bar{\psi}_{\theta^n\omega}(B_{f_\omega^nU(\omega)}(x,\delta)))+(3/2)^{\dim E^u}C\eta.
   \end{split}
 \end{equation}
Symmetrically, we also have
 \begin{equation}\label{eq leq 2}
 m_{f_\omega^n V(\omega)}(\bar{\psi}_{\theta^n\omega}(B_{f_\omega^nU(\omega)}(x,\delta)))\leq e^{C^*\eta}m_{f_\omega^n U(\omega)}((B_{f_\omega^nU(\omega)}(x,\delta)))+(3/2)^{\dim E^u}C\eta.
 \end{equation} Notice that $f_\omega^nU(\omega)$ and $f_\omega^nV(\omega)$ are $\dim (E^u(x,\omega))$-submanifold of $M$ satisfying \eqref{lip Dpsi}, therefore $m_{f_\omega^n U(\omega)}((B_{f_\omega^nU(\omega)}(x,\delta)))$ and $m_{f_\omega^n V(\omega)}(\bar{\psi}_{\theta^n\omega}(B_{f_\omega^nU(\omega)}(x,\delta)))> m_{f_\omega^n V(\omega)}((B_{f_\omega^nV(\omega)}(\bar{\psi}_{\theta^n\omega}(x),\frac{1}{2}\delta)))$ are bounded below uniformly by scale of $\delta$. Finally, for any $C_7>1$, \eqref{def of C7} is a consequence of \eqref{eq leq 1} and \eqref{eq leq 2} by letting $\eta$ sufficiently small and $n$ correspondingly large. This finishes the proof.
\end{proof}

Now we pick any $\delta\in (0,\delta_0)$, $C_7>1$, and fix
\begin{equation}\label{def N omega}
  N=N(\omega)=\max\{ N_1(\omega),N_2(\omega)\},
\end{equation}where $N_1(\omega)$ is chosen satisfying \eqref{contained in O}, and $N_2(\omega)$ is chosen in Lemma \ref{Lemma close} corresponding to $C_7>1$.
 Let $\{B_{f_\omega^NU(\omega)}(x,\delta)\}_{x\in f_\omega^NA}$ be a cover of $f_\omega^NA$ by $\delta-$balls centered at points in $f_\omega^NA$. By the Besicovitch covering lemma (see, e.g., \cite{Besicovitch}), there exists a finite subcover $\{B_i\}_{i=1}^k\subset \{B_{f_\omega^NU(\omega)}(x,\delta)\}_{x\in f_\omega^NA}$ of $f_\omega^NA$ and a number $C^\prime=C^\prime(\dim(E^u))$ satisfying
\begin{equation}\label{def of Cprime}
  \mbox{there is no point in $f_\omega^NA$ that lies in more than number $C^\prime$ of the $B_i$'s.}
\end{equation}
Next, we claim that there exists a number $C_8(\omega)>1$ independent of $N(\omega)$ 
such that
\begin{equation}\label{claim C8}
  C_8(\omega)^{-1}m_{V(\omega)}(\psi_\omega(f_{\theta^N\omega}^{-N}B_i))\leq m_{U(\omega)}(f_{\theta^N\omega}^{-N}B_i) \leq C_8(\omega)m_{V(\omega)}(\psi_\omega(f_{\theta^N\omega}^{-N}B_i)).
\end{equation}
If the above claim is true, then we arrive
\begin{align*}
  m_{V(\omega)}(\psi_\omega(A))&\leq \sum_{i=1}^{k}m_{V(\omega)}(\psi_\omega(f_{\theta^N\omega}^{-N}B_i))\leq  \sum_{i=1}^{k}C_{8}(\omega)m_{U(\omega)}(f_{\theta^N\omega}^{-N}B_i)\\
  &\overset{\eqref{contained in O},\eqref{def of Cprime}}\leq C^\prime C_{8}(\omega)m_{U(\omega)}(\mathcal{O})\\
  &\overset{\eqref{measure O leq 2 m A}}\leq 2C^\prime C_{8}(\omega)m_{U(\omega)}(A),
\end{align*}i.e., \eqref{eq aim ac} holds. Hence $\psi_\omega$ is absolutely continuous.

So it is left to prove the claim \eqref{claim C8}. We  still need the following lemma.
\begin{lemma}\label{limit of H}
For any $x\in U(\omega)$, denote \begin{equation}\label{definition of }
          H_\omega^n(x,\psi_\omega(x),T_xU(\omega),T_{\psi_\omega(x)}V(\omega)):=\frac{|\det(D_xf_\omega^n|_{T_xU(\omega)})|}{|\det(D_{\psi_\omega(x)}f_\omega^n|_{T_{\psi_\omega(x)}V(\omega)})|}.
        \end{equation}There exists a number $C_{9}(\omega)>0$ 
        such that  for any $n\in \mathbb{N}$, $x\in U(\omega)$,
        \begin{equation*}
         C_9(\omega)^{-1}\leq  H_\omega^n(x,\psi_\omega(x),T_xU(\omega),T_{\psi_\omega(x)}V(\omega))\leq C_9(\omega).
        \end{equation*}As a consequence, the limit
        \begin{equation}\label{def H omega limit}
          H_\omega(x,\psi_\omega(x),T_xU(\omega),T_{\psi_\omega(x)}V(\omega)):=\lim_{n\rightarrow \infty}H_\omega^n(x,\psi_\omega(x),T_xU(\omega),T_{\psi_\omega(x)}V(\omega))
        \end{equation}        exists and converges uniformly for all $x\in U(\omega)$.
\end{lemma}
\begin{proof}[proof of lemma \ref{limit of H}]

For any $j\in\mathbb{N}$, by Lemma \ref{detDfE-detDfE} and Lemma \ref{Holder continuity of stableunstable distribution}, we have
\begin{align*}
   & \ \ \ \ \left|\det\left(D_{f_\omega^j x}f_{\theta^j\omega}|_{D_xf_\omega^jT_xU(\omega)}\right)-\det\left(D_{f_\omega^j\psi_\omega(x)}f_{\theta^j\omega}|_{D_{\psi_\omega(x)}f_\omega^jT_{\psi_\omega(x)}V(\omega)}\right)\right|\\
   &\leq \left|\det\left(D_{f_\omega^j x}f_{\theta^j\omega}|_{D_xf_\omega^jT_xU(\omega)}\right)-\det\left(D_{f_\omega^jx}f_{\theta^j\omega}|_{D_{x}f_\omega^jE^u(x,\omega)}\right)\right|\\
   &\ \ \ +\left|\det\left(D_{f_\omega^jx}f_{\theta^j\omega}|_{D_{x}f_\omega^jE^u(x,\omega)}\right)-\det\left(D_{f_\omega^j \psi_\omega(x)}f_{\theta^j\omega}|_{D_{\psi_\omega(x)}f_\omega^jE^u(\psi_\omega(x),\omega)}\right)\right|\\
   &\ \ \ \ \ \ \ \ +\left|\det\left(D_{f_\omega^j \psi_\omega(x)}f_{\theta^j\omega}|_{D_{\psi_\omega(x)}f_\omega^jE^u(\psi_\omega(x),\omega)}\right)-\det\left(D_{f_\omega^j\psi_\omega(x)}f_{\theta^j\omega}|_{D_{\psi_\omega(x)}f_\omega^jT_{\psi_\omega(x)}V(\omega)}\right)\right|\\
   &\overset{\eqref{psoitive     time}}\leq C_2d(D_xf_\omega^jT_xU(\omega),D_xf_\omega^jE^u(x,\omega))+C_2[d(f_\omega^jx,f_\omega^j\psi_\omega(x))+d(E^u(f_\omega^jx,\theta^j\omega),E^u(f_\omega^j\psi_\omega(x),\theta^j\omega))]\\
   &\ \ \ +C_2d(D_{\psi_\omega(x)}f_\omega^jE^u(\psi_\omega(x),\omega),D_{\psi_\omega(x)}f_\omega^jT_{\psi_\omega(x)}V(\omega))\\
   &\overset{\eqref{d(fnTxU,Eu)},\eqref{Holder continuous bundle},\eqref{d(fnTxV,Eu)}}\leq C_6(\omega)C_2e^{-\lambda j}d(T_xU(\omega),E^u(x,\omega))+2C_2C_1d(f_\omega^jx,f_\omega^j\psi_\omega(x))^{\nu_1}\\
   &\quad\quad\quad\quad\quad\quad\quad +C_6(\omega)C_2e^{-\lambda j}d(T_{\psi_\omega(x)}V(\omega),E^u(\psi_\omega(x),\theta^j\omega))\\
   &\leq 2C_1C_2C_6(\omega)e^{-\lambda j\nu_1}[d(T_xU(\omega),E^u(x,\omega))+d(T_{\psi_\omega(x)}V(\omega),E^u(\psi_\omega(x),\omega))+d(x,\psi_\omega(x))^{\nu_1}]\\
   &\leq 6C_1C_2C_6(\omega)e^{-\lambda j \nu_1}.
\end{align*}Note that by the compactness of $M$ and $\Omega$ and the continuity of $f_\omega\in $Diff$^2(M)$ on $\omega\in\Omega$, there exists a constant $C_{10}$ such that for any $y\in M$ and $F(\omega)\subset T_yM$,
  \begin{equation}\label{bound of det}
    C_{10}^{-1}\leq |\det(D_yf_\omega|_{F(\omega)})|\leq C_{10}.
  \end{equation}Denote
$ C_{11}(\omega)= 6C_6(\omega)C_1C_2C_{10},$ and we obtain that for $j\in\mathbb{N}$,
  \begin{equation}\label{eq leq C 11}
    \frac{|\det(D_{f_\omega^j x}f_{\theta^j\omega}|_{D_xf_\omega^jT_xU(\omega)})|}{|\det(D_{f_\omega^j\psi_\omega(x)}f_{\theta^j\omega}|_{D_{\psi_\omega(x)}f_\omega^jT_{\psi_\omega(x)}V(\omega)})|}\leq 1+C_{11}(\omega)e^{-\lambda j\nu_1}.
  \end{equation}Now \eqref{eq leq C 11} implies that for any $n\in\mathbb{N}$,
  \begin{align*}  H_\omega^n(x,\psi_\omega(x),T_xU(\omega),T_{\psi_\omega(x)}V(\omega))
     &=\prod_{j=0}^{n-1}\frac{|\det(D_{f_\omega^j x}f_{\theta^j\omega}|_{D_xf_\omega^jT_xU(\omega)})|}{|\det(D_{f_\omega^j\psi_\omega(x)}f_{\theta^j\omega}|_{D_{\psi_\omega(x)}f_\omega^jT_{\psi_\omega(x)}V(\omega)})|}\nonumber\\
     &\leq \prod_{j=0}^{n-1}(1+C_{11}(\omega)e^{-\lambda j\nu_1})\\
     &\leq \exp\left(\frac{C_{11}(\omega)}{1-e^{\lambda \nu_1}}\right).
  \end{align*}
 We denote $\exp(\frac{C_{11}(\omega)}{1-e^{\lambda \nu_1}})$ by $C_9(\omega)$. The same estimate holds for $H_\omega^n(\psi_\omega(x),x,T_{\psi_\omega(x)}V(\omega),T_xU(\omega))$ for any $n\in\mathbb{N}$. The proof of Lemma \ref{limit of H} is complete.
\end{proof}
\begin{lemma}\label{lemma C13}
There exists $C_{13}(\omega)>1$ independent of $N(\omega)$ such that for any  $x,y\in f_{\theta^N\omega}^{-N}B_i$ and $p,q\in \psi_\omega(f_{\theta^N\omega}^{-N}B_i)$, we have
    \begin{equation}\label{def of C17}
      C_{13}(\omega)^{-1}\leq \frac{|\det D_xf_\omega^N|_{T_xU(\omega)}|}{|\det D_yf_\omega^N|_{T_yU(\omega)}|}\leq C_{13}(\omega)
    \end{equation}and
    \begin{equation}\label{def of C18}
       C_{13}(\omega)^{-1}\leq \frac{|\det D_pf_\omega^N|_{T_pV(\omega)}|}{|\det D_qf_\omega^N|_{T_qV(\omega)}|}\leq C_{13}(\omega).
    \end{equation}
\end{lemma}
\begin{proof}[Proof of Lemma \ref{lemma C13}]
Denote $x_i=f_\omega^i(x)$ and $y_i=f_\omega^i(y)$ for $i=0,...,N$. We note that $x_N,y_N\in B_i$, therefore $d(x_N,y_N)<\delta$. By virtue of \eqref{dxfn v}, we have
\begin{equation}\label{d xiyi}
  \mbox{$d(x_i,y_i)\leq C_4(\omega)^{-1}e^{-\lambda (N-i)}d(x_N,y_N)<C_4(\omega)^{-1}e^{-\lambda (N-i)}\delta$ for $i=0,...,N$.}
\end{equation}
  For $i=0,...,N$, by Lemma \ref{detDfE-detDfE} and Lemma \ref{Holder continuity of stableunstable distribution}, we have
  \begin{equation*}\label{eq def c13}
    \begin{split}
 & \left|\det\left(D_{x_i}f_{\theta^i\omega}|_{D_xf_\omega^iT_xU(\omega)}\right)-\det\left(D_{y_i}f_{\theta^i\omega}|_{D_{y}f_\omega^iT_{y}U(\omega)}\right)\right|\\
    \leq  & \left|\det\left(D_{x_i}f_{\theta^i\omega}|_{D_xf_\omega^iT_xU(\omega)}\right)-\det\left(D_{x_i}f_{\theta^i\omega}|_{E^u(x_i,\theta^i\omega)}\right)\right|\\
    &\quad+ \left|\det\left(D_{x_i}f_{\theta^i\omega}|_{E^u(x_i,\theta^i\omega)}\right)-\det\left(D_{y_i}f_{\theta^i\omega}|_{E^u(y_i,\theta^i\omega)}\right)\right|\\
    &\quad\quad +\left|\det\left(D_{y_i}f_{\theta^i\omega}|_{E^u(y_i,\theta^i\omega)}\right)-\det\left(D_{y_i}f_{\theta^i\omega}|_{D_yf_\omega^iT_yU(\omega)}\right)\right|\\
   \overset{\eqref{psoitive     time}}\leq  &C_2d(D_xf_\omega^iT_xU(\omega),E^u(x_i,\theta^i\omega))+C_2[d(x_i,y_i)+d(E^u(x_i,\theta^i\omega),E^u(y_i,\theta^i\omega))]\\
   &\quad + C_2d(E^u(y_i,\theta^i\omega),D_yf_\omega^iT_yU(\omega))\\
\overset{\eqref{d(fnTxU,Eu)},\eqref{d(fnTxV,Eu)}} \leq  & C_2C_6(\omega)e^{-\lambda i}(d(T_xU(\omega),E^u(x,\omega))+d(T_yU(\omega),E^u(y,\omega)))+C_2(1+C_1)d(x_i,y_i)^{\nu_1}\\
\overset{\eqref{d xiyi}}\leq &2 C_2C_6(\omega)e^{-\lambda i}\cdot\left(\sup_{x\in U(\omega)}d(T_xU(\omega),E^u(x,\omega))\right)+C_2(1+C_1)C_4(\omega)^{-1}e^{-\lambda(N-i)\nu_1}\delta^{\nu_1}.
    \end{split}
  \end{equation*}
By \eqref{bound of det} and noticing $\sup_{x\in U(\omega)}d(T_xU(\omega),E^u(x,\omega))< 1$, one has
  \begin{equation}\label{eq mid over}
  \begin{split}
    \frac{|\det\left(D_{x_i}f_{\theta^i\omega}|_{D_xf_\omega^iT_xU(\omega)}\right)|}{|\det\left(D_{y_i}f_{\theta^i\omega}|_{D_{y}f_\omega^iT_{y}U(\omega)}\right)|}
    \leq& 1+C_{12}(\omega)e^{-\lambda i}+C_{12}(\omega)e^{-\lambda (N-i)\nu_1}\cdot (2\delta)^{\nu_1}\\
       \leq& 1+C_{12}(\omega)e^{-\lambda i}+C_{12}(\omega)e^{-\lambda (N-i)\nu_1},
  \end{split}
  \end{equation}where
  \begin{equation}\label{def C12}
    C_{12}(\omega):=2C_{10}C_2(1+C_1)C_4(\omega)^{-1}C_6(\omega).
  \end{equation}
  Then \eqref{def of C17} is a corollary of \eqref{eq mid over} by letting
  \begin{equation*}
    C_{13}(\omega):=e^{C_{12}(\omega)\cdot \sum_{i=0}^{\infty}e^{-\lambda i}+C_{12}(\omega)\cdot \sum_{i=0}^{\infty}e^{-\lambda i\nu_1}}.
  \end{equation*}For \eqref{def of C18}, we notice that $f_\omega^N(p),f_\omega^N(q)\in f_\omega^N(\psi_\omega(f_{\theta^N\omega}^{-N}B_i))= \bar{\psi}_{\theta^N\omega}(B_i) $ and $ \bar{\psi}_{\theta^N\omega}(B_i)$ is contained in a ball in $f_\omega^NV(\omega)$ with radius less than $2\delta$ by \eqref{map delta into 2delta}. Then \eqref{def of C18} can be proved similarly as \eqref{def of C17}.
\end{proof}

Now we are ready to prove the claim $(\ref{claim C8})$. Pick any $p_i\in B_i$, denote $q_i:=\bar{\psi}_{\theta^N\omega}(p_i)\in f_\omega^NV(\omega)$ to be the holonomy image of $p_i$, by $(\ref{def of C17})$ and change of variable, we have
\begin{align*}
 &C_{13}(\omega)^{-1}\cdot\left|\det D_{p_i}f_{\theta^N\omega}^{-N}|_{T_{p_i}f_\omega^NU(\omega)}\right|\cdot m_{f_\omega^NU(\omega)}(B_i) \leq m_{U(\omega)}(f_{\theta^N\omega}^{-N}B_i)\\
 &\ \ \ \ \ \ \ \ \ \ \ \ \leq C_{13}(\omega)\cdot \left|\det D_{p_i}f_{\theta^N\omega}^{-N}|_{T_{p_i}f_\omega^NU(\omega)}\right|\cdot m_{f_\omega^NU(\omega)}(B_i).
\end{align*}Recall that $B_i$ is a $\delta$-ball, then by Lemma \ref{limit of H} and $(\ref{def of C7})$,
\begin{align*}
 &C_7^{-1}C_9(\omega)^{-1}C_{13}(\omega)^{-1}\cdot \left|\det D_{q_i}f_{\theta^N\omega}^{-N}|_{T_{q_i}f_\omega^NV(\omega)}\right|\cdot m_{f_\omega^NV(\omega)}(\bar{\psi}_{\theta^N\omega}B_i) \leq m_{U(\omega)}(f_{\theta^N\omega}^{-N}B_i)\\
 &\ \ \ \ \ \ \ \ \ \ \ \ \leq C_7C_9(\omega)C_{13}(\omega)\cdot \left|\det D_{q_i}f_{\theta^N\omega}^{-N}|_{T_{q_i}f_\omega^NV(\omega)}\right|\cdot m_{f_\omega^NV(\omega)}(\bar{\psi}_{\theta^N\omega}B_i).
\end{align*}Again by $(\ref{def of C18})$ and change of variable, we obtain
\begin{align*}
 &C_7^{-1}C_9(\omega)^{-1}(C_{13}(\omega))^{-2}\cdot m_{V(\omega)}(\psi_\omega f_{\theta^N\omega}^{-N} B_i) \leq m_{U(\omega)}(f_{\theta^N\omega}^{-N}B_i)\\
 &\ \ \ \ \ \ \ \ \ \ \ \ \leq C_7C_9(\omega)(C_{13}(\omega))^2\cdot m_{V(\omega)}(\psi_\omega f_{\theta^N\omega}^{-N} B_i).
\end{align*}Denote $C_{8}(\omega):=C_7C_9(\omega)C_{13}(\omega)^2$, then the claim $(\ref{claim C8})$ is proved. The proof for absolute continuity is complete.

Next, we explore the Jacobian of the holonomy map. Recall that $\bar{\psi}_{\theta^n\omega}:f_\omega^nU(\omega)\rightarrow f_{\omega}^nV(\omega)$ is the holonomy map induced by the local stable manifolds. Notice that  $\psi_\omega=f_{\theta^n\omega}^{-n}\circ\bar{\psi}_{\theta^n\omega}\circ f_\omega^n$, so for $m_{U(\omega)}$-a.s. $x\in U(\omega)$,
\begin{equation}\label{jacobian 1}
  Jac(\psi_\omega)(x)=H_\omega^n(x,\psi_\omega(x), T_xU(\omega),T_{\psi_\omega(x)}V(\omega))\cdot Jac(\bar{\psi}_{\theta^n\omega})(f_\omega^n(x)).
\end{equation}
To derive the formula of $Jac(\psi_\omega)(x)$, we need the following lemma, whose condition is stronger than ones in  \cite[Theorem 3.3]{Manebook}.
\begin{lemma}\label{mane lemma}
  Let $N$ and $P$ be manifolds with $m_P(P)<\infty$ and $m_N(\partial N)=0$, and $(\phi_n)_{n\geq 0}$, $\phi_n:N\to P$, be a sequence of absolutely continuous maps with Jacobian $Jac(\phi_n)$. If $\phi_n$ converges uniformly to an absolutely continuous and injective map $\phi:N\to P$ and $Jac(\phi_n)$ (by changing its value on zero measure set) converges uniformly to an integrable function $J:N\to\mathbb{R}$, then $Jac(\phi)=J$.
\end{lemma}
Next, we will define a sequence of absolutely continuous maps $\pi_{\theta^n\omega}:f_\omega^nU(\omega)\to f_\omega^nV(\omega)$ such that the sequence of mapping
\begin{equation}\label{def psinomega}
  \psi_{n,\omega}= (f_{\theta^n\omega}^{-n}|_{f_\omega^nV(\omega)})\circ \pi_{\theta^n\omega}\circ (f_\omega^n|_{U(\omega)}):U(\omega)\to V(\omega)
\end{equation}converges to $\psi_\omega$ uniformly, and a version of $Jac(\psi_{n,\omega})$ converges uniformly to an integrable function. Here, a version of $Jac(\psi_{n,\omega})$ is defined by changing the value of it on a zero $m_{U(\omega)}$-measure set.

For $\delta>0$ sufficiently small, we cover $f_\omega^nU(\omega)$ for any $n\in\mathbb{N}$ by balls $\{B_{f_\omega^nU(\omega)}(f_\omega^n(x),\delta)\}_{x\in U(\omega)}$. Since $U(\omega)$ is pre-compact, there exists a finite cover $\{B_{f_\omega^nU(\omega)}(f_\omega^n(x_i),\delta)\}_{i=1}^{k(\theta^n\omega)}$. This open cover produces a natural finite Borel partition of $f_\omega^nU(\omega)$, named $\{P_i(\theta^n\omega)\}_{i=1}^{k(\theta^n\omega)}$, in the following way:
\begin{equation*}
 P_1(\theta^n\omega):= (f_\omega^nU(\omega))\cap B_{f_\omega^nU(\omega)}(f_\omega^n(x_i),\delta) ,
\end{equation*}and
\begin{equation*}
P_i(\theta^n\omega):=(f_\omega^nU(\omega))\cap B_{f_\omega^nU(\omega)}(f_\omega^n(x_i),\delta)\backslash (\cup_{j=1}^{i-1}B_{f_\omega^nU(\omega)}(f_\omega^n(x_j),\delta))
\end{equation*}for $i=2,...,k(\theta^n\omega)$. Without loss of generality, we assume $P_i(\theta^n\omega)\not=\emptyset$ for all $i$. This Borel partition satisfies that
\begin{equation}\label{eq measure 0}
  P_i(\theta^n\omega)\subset B_{f_\omega^nU(\omega)}(f_\omega^n(x_i),\delta),\mbox{ and }m_{f_\omega^nU(\omega)}(\cup_{i}\partial P_i(\theta^n\omega))=0.
\end{equation}Denote $\Psi_n^{1,i}$ (resp. $\Psi_n^{2,i}$)$: E^u(f_\omega^n(x_i),\theta^n\omega)(\mathcal{P}\delta)\to E^s(f_\omega^n(x_i),\theta^n\omega)$ to be the $C^2$ mapping such that $\exp_{x_i}(graph(\Psi_n^{1,i}))$ represents the component of $f_\omega^nU(\omega)$  (resp. $f_\omega^nV(\omega)$) passing through $f_\omega^n(x_i)$ (resp. $f_\omega^n(\psi_\omega(x_i))$). We define $\tilde{\pi}_{\theta^n\omega}$ restricted on $\exp_{x_i}^{-1}(P_i(\theta^n\omega))$ by
\begin{equation*}
  \tilde{\pi}_{\theta^n\omega}(e,\Psi_n^{1,i}(e))=(e,\Psi_{n}^{2,i}(e))\mbox{ for any }(e,\Psi_{n}^{1,i}(e))\in \exp_{x_i}^{-1}(P_i(\theta^n\omega)),
\end{equation*}i.e. the holonomy map induced by the flat foliation $\{e+E^s(f_\omega^n(x_i),\theta^n\omega):\ e\in E^u(f_\omega^n(x_i),\theta^n\omega)(\mathcal{P}\delta)\}$. Then let
\begin{equation*}
  \pi_{\theta^n\omega}(y)=\exp_{x_i}\circ \tilde{\pi}_{\theta^n\omega}\circ \exp_{x_i}^{-1}(y),\mbox{ for }y\in P_i(\theta^n\omega).
\end{equation*}
This mapping $\pi_{\theta^n\omega}$ is naturally absolutely continuous since it is absolutely continuous on each set in the partition. Moreover, $\pi_{\theta^n\omega}$ is $C^2$ for $x\in$interior$(P_i(\theta^n\omega))$ for all $i$. Let $\psi_{n,\omega}$ be defined as \eqref{def psinomega} by using this $\pi_{\theta^n\omega}$.

Note that $f_\omega^nU(\omega)$ is $C^1$-close to $f_\omega^n V(\omega)$ as $n\to\infty$. By the local stable manifolds theorem, for any $\epsilon>0$, there exists $N_\epsilon(\omega)$ such that
\begin{equation*}
  d_{f_\omega^nV(\omega)}(\bar{\psi}_{\theta^n\omega}(x),\pi_{\theta^n\omega}(x))<\epsilon\mbox{ for }x\in f_\omega^nU(\omega),
\end{equation*}where $\bar{\psi}_{\theta^n\omega}=f_\omega^n\circ\psi_\omega\circ f_{\theta^n\omega}^{-n}$. Therefore, for any $x\in U(\omega)$, $n\geq N_\epsilon(\omega)$, we have
\begin{equation*}
  d_{V(\omega)}(\psi_\omega(x),\psi_{n,\omega}(x))= d_{V(\omega)}(f_{\theta^n\omega}^{-n}\circ\bar{\psi}_{\theta^n\omega}(f_\omega^n(x)) , f_{\theta^n\omega}^{-n}\circ\circ \pi_{\theta^n\omega}(f_\omega^n(x)))\overset{\eqref{dxfn v}}\leq C_4(\omega)^{-1}e^{-\lambda n}\epsilon,
\end{equation*}which goes to $0$ uniformly for all $x\in U(\omega)$ as $n\to\infty$.

It is left to show that there exists a version of $Jac(\psi_{n,\omega})(x)$ converges uniformly for $x\in U(\omega)$. By \eqref{def psinomega}, for $m_{U(\omega)}$-a.s. $x\in U(\omega)$, we have
\begin{equation*}
  Jac(\psi_{n,\omega})(x)=H_\omega^n(x,\psi_{n,\omega}(x), T_xU(\omega),T_{\psi_{n,\omega}(x)}V(\omega))\cdot Jac(\pi_{\theta^n\omega})(f_\omega^n(x)).
\end{equation*}Note that \eqref{eq measure 0} implies that $m_{U(\omega)}(f_{\theta^n\omega}^{-n}(\cup_{i}\partial P_i(\theta^n\omega)))=0$. For $x\in f_{\theta^n\omega}^{-n}(\cup_{i}\partial P_i(\theta^n\omega))$, we set $Jac(\pi_{\theta^n\omega})(f_\omega^n(x))=1$. For $x\in U(\omega)$ such that $f_\omega^n(x)\in$interior$(P_i)(\theta^n\omega)$, $\pi_{\theta^n\omega}$ is $C^2$ on $f_\omega^nx$. By the definition of $\tilde{\pi}_{\theta^n\omega}$, we have
\begin{equation*}
  Jac(\tilde{\pi}_{\theta^n})(\exp_{x_i}^{-1}(f_\omega^n(x)))=\frac{Jac(\tilde{\Psi}_n^{2,i})(e)}{Jac(\tilde{\Psi}_n^{1,i})(e)}\mbox{ for }e=(\tilde{\Psi}_n^{1,i})^{-1}(\exp_{x_i}^{-1}(f_\omega^n(x))),
\end{equation*}where $\tilde{\Psi}_n^{\tau,i}(e)=(e,\Psi_n^{\tau,i}(e))$ for $\tau=1,2$. According to the proof of \eqref{eq defCstar}, for any $\eta>0$, there exists $N_\eta(\omega)\in\mathbb{N}$ such that for any $n>N_\eta(\omega)$, one has
\begin{equation*}
  Jac(\tilde{\pi}_{\theta^n})(f_\omega^n(x))=\frac{Jac(\tilde{\Psi}_n^{2,i})(e)}{Jac(\tilde{\Psi}_n^{1,i})(e)}\in (e^{-C^*\eta},e^{C^*\eta})
\end{equation*} for some constant $C^*$. Moreover, we have
\begin{equation*}
  \frac{|\det D_{\tilde{\pi}_{\theta^n}(\exp_{x_i}^{-1}(f_\omega^n(x)))}\exp_{x_i}|}{|\det D_{\exp_{x_i}^{-1}(f_\omega^n(x))}\exp_{x_i}|} \to 1 \mbox{ uniformly},\mbox{ as }n\to\infty
\end{equation*}since $d(f_\omega^n(x),\pi_{\theta^n\omega}(f_\omega^n(x)))\to 0$ uniformly. Therefore, there exists a version of $Jac(\pi_{\theta^n})(f_\omega^n(x))$ converges to $1$ uniformly as $n\to\infty$.

 Finally, we show that
 \begin{equation}\label{eq ratios}
   \frac{H_\omega^n(x,\psi_{n,\omega}(x), T_xU(\omega),T_{\psi_{n,\omega}(x)}V(\omega))}{H_\omega^n(x,\psi_{\omega}(x), T_xU(\omega),T_{\psi_{\omega}(x)}V(\omega))}=\frac{|\det D_{\psi_\omega(x)}f_\omega^n|_{T_{\psi_\omega(x)}V(\omega)}|}{|\det D_{\psi_{n,\omega}(x)}f_\omega^n|_{T_{\psi_{n,\omega}(x)}V(\omega)}|}\to 1
 \end{equation}uniformly as $n\to\infty$. For any $\epsilon>0$, we pick $N_1(\omega)\in \mathbb{N}$ such that for any $n\geq N$, $$\sum_{n\geq N_1(\omega)}C_{12}(\omega)e^{-\lambda n}\leq \epsilon,$$ where $C_{12}(\omega)$ is defined in \eqref{def C12}. Then we pick $\eta>0$ sufficiently small such that if $y,z\in U(\omega)$ satisfying $\sup_{0\leq i\leq N_1(\omega)-1}d(f_\omega^i(y),f_\omega^i(z))<\eta$,
 \begin{equation*}
\frac{|\det D_{y}f_\omega^{N_1(\omega)}|_{T_{y}V(\omega)}|}{|\det D_{z}f_\omega^{N_1(\omega)}|_{T_{z}V(\omega)}|}\in (e^{-\epsilon},e^{\epsilon}),
 \end{equation*}and moreover,
 \begin{equation*}
   C_{12}(\omega)\sum_{i=0}^{\infty}e^{-\lambda i\nu_1}\eta\leq \epsilon.
 \end{equation*}By the constriction of $\psi_{n,\omega}$, we pick $N_2(\omega)>N_1(\omega)$ such that for any $n\geq \mathbb{N}_2(\omega)$, $$\sup_{0\leq i\leq n-1}d(f_\omega^i(\psi_\omega(x)),f_\omega^i(\psi_{n,\omega}(x)))<\eta.$$ Now, according to the proof of \eqref{def of C17}, for any $n\geq N_2(\omega)$, we arrive
 \begin{align*}
   &\frac{|\det D_{\psi_\omega(x)}f_\omega^n|_{T_{\psi_\omega(x)}V(\omega)}|}{|\det D_{\psi_{n,\omega}(x)}f_\omega^n|_{T_{\psi_{n,\omega}(x)}V(\omega)}|}\\
   =&
   \frac{|\det D_{\psi_\omega(x)}f_\omega^{N_1(\omega)}|_{T_{\psi_\omega(x)}V(\omega)}|}{|\det D_{\psi_{n,\omega}(x)}f_\omega^{N_1(\omega)}|_{T_{\psi_{n,\omega}(x)}V(\omega)}|}\cdot  \frac{|\det D_{f_\omega^{N_1(\omega)}(\psi_\omega(x))}f_{\theta^{N_1(\omega)}\omega}^{n-N_1(\omega)}|_{D_{\psi_\omega(x)}f_\omega^{N_1(\omega)}T_{\psi_\omega(x)}V(\omega)}|}{|\det D_{f_\omega^{N_1(\omega)}(\psi_{n,\omega}(x))}f_{\theta^{N_1(\omega)}\omega}^{n-N_1(\omega)}|_{D_{\psi_{n,\omega}(x)}f_{\theta^{N_1(\omega)}\omega}^{n-N_1(\omega)}T_{x}V(\omega)}|}\in(e^{-3\epsilon},e^{3\epsilon}).
 \end{align*}So \eqref{eq ratios} holds.
Therefore, by Lemma \ref{mane lemma} and changing value on zero $m_{U(\omega)}$-measure set, we have
\begin{equation}\label{expression of jacobian}
  Jac(\psi_\omega)(x)=H_\omega(x,\psi_\omega(x), T_xU(\omega),T_{\psi_\omega(x)}V(\omega))\mbox{ for }x\in U(\omega).
\end{equation}
The proof of Proposition \ref{Theorem fiberwise absolutely continuous} is complete.
\end{proof}

\subsection{H\"older continuity of the stable and unstable foliations on each fiber}\label{subsection 3.5}
In this subsection, we prove the H\"older continuity of the holonomy map between two local stable leaves (or unstable leaves). This result is known in deterministic hyperbolic systems (see, e.g., \cite{PughHolderfoliation}), but we didn't find any reference to this result in RDS. We supply a proof in our settings, and the proof follows the idea given in Page 762 in \cite{KHbook}. Roughly speaking, the idea is that the action of the graph transform preserves a H\"older condition.

\begin{proposition}\label{Holder continuity of holonomy map}
  Suppose $\phi$ is $C^2$ Anosov on fibers, let $\varrho\in (0,1)$ satisfy
  \begin{equation*}
    \sup_{(p,\omega)\in M\times\Omega}\|D_pf_\omega|_{E^s(p,\omega)}\|t_{(p,\omega)}^{-\varrho}<1,\ \sup_{(p,\omega)\in M\times\Omega}\|D_pf_\omega^{-1}|_{E^u(p,\omega)}\|s_{(p,\omega)}^{-\varrho}<1
  \end{equation*}where
  \begin{align*}
   t_{(p,\omega)}&:=\inf\left\{\frac{d(f_\omega p,f_\omega q)}{d(p,q)}:\ q\in M,\ d(p,q)<\epsilon_0\right\}>0,\\
   s_{(p,\omega)}&:=\inf\left\{\frac{d(f_\omega^{-1} (p),f_\omega^{-1}(q))}{d(p,q)}: q\in M,\ d(p,q)<\epsilon_0\right\}>0,
  \end{align*}and $\epsilon_0$ is the size of local stable and unstable manifolds. Then there exists a constant $\delta_0>0$ and $H=H(\delta_0,\varrho)$ such that for any $(q,\omega)\in M\times\Omega$,
\begin{align}
  \sup\left\{\frac{\sup_{x\in E^u(p,\omega)(\delta_0)}\|h_{(p,\omega)}^u(x)-\tilde{h}_{(q,\omega)}^u(x)\|}{d(p,q)^\varrho}:\ q\in M,\ d(p,q)<\delta_0\right\}&\leq H<\infty,\label{holder hu}\\
   \sup\left\{\frac{\sup_{x\in E^u(p,\omega)(\delta_0)}\|h_{(p,\omega)}^s(x)-\tilde{h}_{(q,\omega)}^s(x)\|}{d(p,q)^\varrho}:\ q\in M,\ d(p,q)<\delta_0\right\}&\leq H<\infty,\label{holder hs}
\end{align}where $h_{(p,\omega)}^u,\tilde{h}_{(q,\omega)}^u:E^u(p,\omega)(\delta_0)\rightarrow E^s(p,\omega)$ and $Exp_{p}(graph(h_{(p,\omega)}^u))$, $ Exp_p(graph(\tilde{h}_{(q,\omega)}^u))$ represent the local unstable manifolds passing through $p,\ q$ respectively, $h_{(p,\omega)}^s,\tilde{h}_{(q,\omega)}^s:E^s(p,\omega)(\delta_0)\rightarrow E^u(p,\omega)$ and $Exp_{p}(graph(h_{(p,\omega)}^s))$, $ Exp_p(graph(\tilde{h}_{(q,\omega)}^s))$ represent the local stable manifolds passing through $p,\ q$ respectively. Furthermore, the local product structure is H\"older continuous, i.e., there exists a constant $H^\prime=H^\prime(\delta_0,\varrho)$ such that for any $(x,\omega)\in M\times\Omega,$ $y\in W^s_{\delta_0}(x,\omega)$, any $z\in M$ such that $W^s_\epsilon(z,\omega)\cap W^u_\epsilon(x,\omega)\not=\emptyset$, $W^s_\epsilon(z,\omega)\cap W^u_\epsilon(y,\omega)\not=\emptyset$, we have
\begin{equation*}
  d(W^s_\epsilon(z,\omega)\cap W^u_\epsilon(x,\omega),W^s_\epsilon(z,\omega)\cap W^u_\epsilon(y,\omega))\leq H^\prime d(x,y)^\varrho;
\end{equation*}for any $(x,\omega)\in M\times\Omega,$ $y\in W^u_{\delta_0}(x,\omega)$, any $z\in M$ such that $W^u_\epsilon(z,\omega)\cap W^s_\epsilon(x,\omega)\not=\emptyset$, $W^u_\epsilon(z,\omega)\cap W^s_\epsilon(y,\omega)\not=\emptyset$, we have
\begin{equation*}
  d(W^u_\epsilon(z,\omega)\cap W^s_\epsilon(x,\omega),W^u_\epsilon(z,\omega)\cap W^s_\epsilon(y,\omega))\leq H^\prime d(x,y)^\varrho.
\end{equation*}
\end{proposition}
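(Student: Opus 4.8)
The plan is to realise the field of local unstable plaques as the unique fixed point of a fiberwise graph transform $\mathcal T$ and to verify that $\mathcal T$ preserves a nonempty, $C^0$-closed set of \emph{uniformly $\varrho$-H\"older} plaque fields; the fixed point then lies in that set, which is precisely $(\ref{holder hu})$, and $(\ref{holder hs})$ is the same statement for the inverse cocycle. Over $M\times\Omega$ consider admissible sections $\Sigma$ assigning to each $(x,\omega)$ the $Exp_x$-image of a Lipschitz graph over $E^u(x,\omega)(\epsilon)$ of slope $\le\frac{1}{3}$, and
\begin{equation*}
  (\mathcal T\Sigma)(x,\omega):=f_{\theta^{-1}\omega}\big(\Sigma(f^{-1}_\omega x,\theta^{-1}\omega)\big)
\end{equation*}
truncated to size $\epsilon$ about $x$. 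By the invariant-manifolds lemma the plaque field $W^u_\epsilon(\cdot,\cdot)$ is the unique admissible fixed point of $\mathcal T$, and $\mathcal T$ contracts the $C^0$ metric at rate $\le\sup_{(p,\omega)}\|D_pf_\omega|_{E^s(p,\omega)}\|<1$ --- with no reparametrisation error, since two competing plaques there share a base point --- so $\mathcal T^n\Sigma_0\to W^u_\epsilon$ in $C^0$ for every admissible $\Sigma_0$. All constants below are uniform in $\omega$ because $M\times\Omega$ is compact, $\omega\mapsto f_\omega\in\mathrm{Diff}^2(M)$ is continuous, and $\lambda_0,L,\mathcal P,\rho_0$ and the exponent $\nu_1$ of Lemma~\ref{Holder continuity of stableunstable distribution} are fixed.

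The key point is that $\mathcal T$ maps the set of admissible $H$-H\"older sections (those with $d(\Sigma(x,\omega),\Sigma(y,\omega))\le H d(x,y)^\varrho$ for $x,y$ in a common fiber with $d(x,y)<\delta_0$, distance measured as the sup-distance of the two plaques written as graphs over $E^u(x,\omega)$) into itself once $H$ is large. Indeed, for such $x,y$ put $x':=f^{-1}_\omega x$, $y':=f^{-1}_\omega y$; the definition of $t$ gives $d(x',y')\le t_{(x',\theta^{-1}\omega)}^{-1}d(x,y)$, hence $d(\Sigma(x',\theta^{-1}\omega),\Sigma(y',\theta^{-1}\omega))\le H t_{(x',\theta^{-1}\omega)}^{-\varrho}d(x,y)^\varrho$. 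Applying $f_{\theta^{-1}\omega}$ contracts the $E^s$-component of this discrepancy by $\|D_{x'}f_{\theta^{-1}\omega}|_{E^s(x',\theta^{-1}\omega)}\|$, and re-expressing the two images as graphs over $E^u(x,\omega)$ in the fixed normal chart costs a reparametrisation error which, by Lemma~\ref{detDfE-detDfE}, Lemma~\ref{Holder continuity of stableunstable distribution} and the uniform $C^2$-bound, is $\le C d(x,y)+C d(x,y)^{\nu_1}\le C_4 d(x,y)^\varrho$ (using $\varrho\le\nu_1$, which holds because the constraint on $\varrho$ is no weaker than the exponent supplied by Lemma~\ref{Holder continuity of stableunstable distribution}). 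Therefore
\begin{equation*}
  d\big((\mathcal T\Sigma)(x,\omega),(\mathcal T\Sigma)(y,\omega)\big)\le\Big(\|D_{x'}f_{\theta^{-1}\omega}|_{E^s(x',\theta^{-1}\omega)}\|\,t_{(x',\theta^{-1}\omega)}^{-\varrho}\Big)H d(x,y)^\varrho+C_4 d(x,y)^\varrho\le(\kappa_0 H+C_4)d(x,y)^\varrho,
\end{equation*}
where $\kappa_0:=\sup_{(p,\omega)}\|D_pf_\omega|_{E^s(p,\omega)}\|\,t_{(p,\omega)}^{-\varrho}<1$ by hypothesis. Choosing $H\ge C_4/(1-\kappa_0)$ makes this set $\mathcal T$-invariant; it is also nonempty (it contains $(x,\omega)\mapsto Exp_x(E^u(x,\omega)(\epsilon))$, H\"older by Lemma~\ref{Holder continuity of stableunstable distribution}) and $C^0$-closed, so $W^u_\epsilon=\lim_n\mathcal T^n\Sigma_0$ belongs to it, proving $(\ref{holder hu})$. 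Repeating the argument for $f^{-1}_\omega$, with $E^u,t_{(p,\omega)}$ replaced by $E^s,s_{(p,\omega)}$, proves $(\ref{holder hs})$.

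For the local product structure, let $x$, $y\in W^s_{\delta_0}(x,\omega)$ (so $d(x,y)\le\delta_0$) and $z$ be as in the statement, and set $a:=W^s_\epsilon(z,\omega)\cap W^u_\epsilon(x,\omega)$, $b:=W^s_\epsilon(z,\omega)\cap W^u_\epsilon(y,\omega)$. Both lie on the single leaf $W^s_\epsilon(z,\omega)$, while $a\in W^u_\epsilon(x,\omega)$ and $b\in W^u_\epsilon(y,\omega)$ with these two unstable plaques within $H d(x,y)^\varrho$ in the graph sup-norm over $E^u(x,\omega)$ by $(\ref{holder hu})$. Intersecting the fixed $C^1$ disk $W^s_\epsilon(z,\omega)$ with two graphs that are $C^0$-close, where the tangent spaces of the two leaf families meet at angle $\ge\theta_0$, displaces the intersection point by at most $H'd(x,y)^\varrho$, $H'=H'(\delta_0,\varrho,\theta_0)$ (implicit function theorem in the fixed normal charts); this is the first inequality, and the second is its time-reversed mirror, from $(\ref{holder hs})$. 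I expect the genuine obstacle to be the per-step reparametrisation estimate above: arranging the graph transform so that the error has the clean form $C d(x,y)+C d(x,y)^{\nu_1}$ with $\omega$-independent constants, using the bounded geometry of the fixed normal charts and the uniform $C^2$- and H\"older-bounds; once that is in hand, the contraction/fixed-point argument and the product-structure deduction are routine.
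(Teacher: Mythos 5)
Your plan — realise the local-unstable plaque field as the fixed point of a fiberwise graph transform $\mathcal T$ and show that a nonempty, $C^0$-closed, $\mathcal T$-invariant family of uniformly $\varrho$-H\"older sections absorbs the fixed point — is the same architecture the paper uses (Sublemmas~\ref{Lemma phi star contraction}--\ref{lemma keep Kalpha} together with the Invariant Section Lemma~\ref{invariant section lemma}). But the per-step invariance, as you have written it, has a genuine hole that the paper's bookkeeping is specifically designed to plug.

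You claim that for $d(x,y)<\delta_0$, with $x'=f^{-1}_{\theta^{-1}\omega}x$, $y'=f^{-1}_{\theta^{-1}\omega}y$, the working hypothesis supplies $d(\Sigma(x',\theta^{-1}\omega),\Sigma(y',\theta^{-1}\omega))\le H\,d(x',y')^\varrho\le H\,t_{(x',\theta^{-1}\omega)}^{-\varrho}d(x,y)^\varrho$. But $t_{(x',\theta^{-1}\omega)}$ is the \emph{infimum} of local expansion ratios for $f$, which for an Anosov-on-fibers map is necessarily less than $1$ (the stable direction contracts), so $t^{-1}>1$ and $d(x',y')$ can strictly exceed $d(x,y)$ and, for pairs with $d(x,y)$ near $\delta_0$, exceed $\delta_0$. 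In that range your H\"older bound is not part of the definition of the invariant set, so the chain of inequalities has nothing to start from. No choice of $H$ repairs this: the set ``$H$-H\"older for all pairs at distance $<\delta_0$'' is not forward-invariant under $\mathcal T$. What is missing is the escape-time decomposition that the paper encodes in $R_n(\omega)$ and $S_n(\omega)$: require the H\"older bound only on the annular range $e^{-N\lambda}<d(p,q)<\delta_0$, establish a trivial seed bound there (Sublemma~\ref{lemma 3.4}), and propagate it forward only along orbit segments that stay within $\delta_0$ (Sublemma~\ref{lemma keep Kalpha}), then observe that every pair within $\delta_0$ not on a common unstable leaf eventually lies in some $S_n(\omega)$.

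Separately, about the step you flag as the genuine difficulty: the reparametrisation error need not arise at all. The paper defines $\tilde g_{(q,\omega)}$ as the rewriting of $g_{(q,\omega)}$ as a graph over $E^u(p,\omega)$ and exploits the geometric identity $\phi^*_{(p,\omega)}\tilde g_{(q,\omega)}=\widetilde{\phi^*_{(q,\omega)}g_{(q,\omega)}}$: since both sides parametrise the \emph{same} subset of $M$ (namely $f_\omega$ of the plaque through $q$) relative to the $(f_\omega p,\theta\omega)$-chart, the rewriting commutes with the graph transform. As a result $\phi^*$ preserves the H\"older constant $K$ exactly (the factor is $\kappa_0\le 1$), with no additive $C_4\,d(x,y)^\varrho$ term and, crucially, no need for the assumption $\varrho\le\nu_1$ — which is not among the proposition's hypotheses and which you invoke without justification. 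Your additive-error version could be made to work, but only after proving the reparametrisation bound you sketch and adding $\varrho\le\nu_1$ as an extra hypothesis; the paper's bookkeeping removes the issue entirely.
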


\begin{proof}
We first prove $(\ref{holder hu})$.
Recall that for each point $p\in M$, there exist a neighborhood $N_p\subset M$ and constant $\epsilon$ such that the exponential map $Exp_p:B_\epsilon(0)\subset Tp M\rightarrow M$ is a $C^\infty-$diffeomorphism and $N_p\subset Exp_p(B_\epsilon(0))$. Now for all $p\in M$ and $\omega\in\Omega$, consider any continuous function $g_{(p,\omega)}:E^u(p,\omega)(\epsilon)\rightarrow E^s(p,\omega)$ with $g_{(p,\omega)}(0)=0$, where $E^u(p,\omega)(\epsilon)$ is the $\epsilon-$disk in $E^u(p,\omega)$ centered at the origin. Define the special norm by
\begin{equation*}
  \|g_{(p,\omega)}\|_*=\sup\left\{\frac{\|g_{(p,\omega)}(x)\|}{\|x\|}:\ x\in E^u(p,\omega)(\epsilon),\ x\not=0\right\}.
\end{equation*}Define
\begin{equation*}
  G_{(p,\omega)}^*:=\{g_{(p,\omega)}:E^u(p,\omega)(\epsilon)\rightarrow E^s(p,\omega)| \ g_{(p,\omega)}(0)=0\mbox{ and }\|g_{(p,\omega)}\|_*<\infty\},
\end{equation*}and
\begin{equation}\label{def Gpomega}
  G_{(p,\omega)}:=\left\{g_{(p,\omega)}\in G_{(p,\omega)}^*:\ Lip(g_{(p,\omega)})\leq \frac{e^{-2\lambda}+1}{2}\right\}.
\end{equation}
\begin{sublemma}
  $G_{(p,\omega)}^*$ equipped with $\|\cdot\|_*$ is a Banach space and $G_{(p,\omega)}$ is a closed subset.
\end{sublemma}
  The above sublemma is a corollary of \cite[Lemma III.3]{MS}. $\{G_{(p,\omega)}\}_{(p,\omega)\in M\times\Omega}$ gives a bundle $G$ on $M\times\Omega$ with fiber $G_{(p,\omega)}$ on $(p,\omega)\in M\times\Omega$.

   Now we define $f_{(p,\omega)}: T_pM(\epsilon)\rightarrow T_{f_\omega p}M$ by the local representation of $f_\omega$ with respect to exponential maps $Exp_p$ and $Exp_{f_\omega p}$, i.e.,
\begin{equation*}
  f_{(p,\omega)}(v)=Exp_{f_{\omega}p}^{-1}\circ f_\omega \circ Exp_p(v),\ \forall v\in T_pM(\epsilon).
\end{equation*}For any $(p,\omega)\in M\times \Omega$, we define a bundle map $\phi^*_{(p,\omega)}$ on $G_{(p,\omega)}$ satisfying for any $g_{(p,\omega)}\in G_{(p,\omega)}$,
\begin{equation*}
  graph(\phi^*_{(p,\omega)}g_{(p,\omega)})=f_{(p,\omega)}(graph(g_{(p,\omega)}))\cap (E^u(f_\omega p,\theta\omega)(\epsilon)\oplus E^s(f_\omega p,\theta\omega)).
\end{equation*}Note that the definition of $G_{(p,\omega)}$ is relying on $\epsilon$. The next sublemma says that there exists $\epsilon>0$ such that $\phi^*_{(p,\omega)}$ maps $G_{(p,\omega)}$ into $G_{(f_\omega p,\theta\omega)}$ and this map is a contraction.
\begin{sublemma}\label{Lemma phi star contraction}
There exists a $\epsilon_0>0$ such that for any $\epsilon\in(0,\epsilon_0)$, $(p,\omega)\in M\times \Omega$, one has $\phi^*_{(p,\omega)}g_{(p,\omega)}\in G_{(f_\omega p,\theta\omega)}$, and it is a fiber contraction, i.e., for any $g_{(p,\omega)},\ g_{(p,\omega)}^\prime\in G_{(p,\omega)}$, we have
  \begin{equation*}
    \|\phi^*_{(p,\omega)}g_{(p,\omega)}-\phi^*_{(p,\omega)}g_{(p,\omega)}^\prime\|_*\leq \frac{e^{-2\lambda_0}+1}{2} \|g_{(p,\omega)}-g_{(p,\omega)}^\prime\|_*,
  \end{equation*}
\end{sublemma}
\begin{proof}[Proof of Sublemma \ref{Lemma phi star contraction}]
  Fix any $\epsilon^\prime>0$ such that
  \begin{equation}\label{eq pick epsilon prime}
  \frac{e^{-\lambda_0}+2\mathcal{P}\epsilon^\prime}{e^{\lambda_0}-2\mathcal{P}\epsilon^\prime}\leq \frac{e^{-2\lambda_0}+1}{2}.
  \end{equation}
By compactness of $\Omega$ and $M$ and the continuity of $f_\omega$ on $\omega$, for the above $\epsilon^\prime$, we can pick a $\epsilon_0 >0$ sufficiently small such that  for any $\epsilon<\epsilon_0$,
\begin{equation*}
  Lip((f_{(p,\omega)}-D_pf_\omega)|_{T_pM_(\epsilon)})<\epsilon^\prime.
\end{equation*}
  Pick any $g\in G_{(p,\omega)}$, let $f_{(p,\omega)}(x,g(x))$ have decomposition
  \begin{equation*}
    f_{(p,\omega)}(x,g(x))=(f_{(p,\omega),1}(x,g(x)),f_{(p,\omega),2}(x,g(x)))
  \end{equation*} with respect to $T_{f_\omega p}M=E^u(f_\omega p,\theta\omega)\oplus E^s(f_\omega p,\theta\omega)$, and we denote $h_{(p,\omega)}:=f_{(p,\omega),1}\circ(id,g):E^u(p,\omega)(\epsilon)\to E^u(f_\omega p,\theta\omega)$.
Note that
\begin{align*}
  f_{(p,\omega),1}(x,g(x)) & =P(E^u(f_\omega p,\theta\omega))\circ f_{(p,\omega)}\circ (id,g)(x),\\
  D_pf_\omega|_{E^u(p,\omega)}(x)&=P(E^u(f_\omega p,\theta\omega))\circ D_pf_\omega\circ (id,g)(x),
\end{align*}where $P(E^u(f_\omega p,\theta\omega)):T_{f_\omega p}M\to E^u(f_\omega p,\theta\omega)$ is the projection with respect to the above decomposition. Then we have
\begin{align}
   &\ \ \ \ Lip(f_{(p,\omega),1}\circ (id,g)-D_pf_\omega|_{E^u(p,\omega)(\epsilon)})\nonumber\\
   &\leq \mathcal{P}\cdot Lip ((f_{(p,\omega)}-D_pf_\omega)|_{T_pM(\epsilon)})\cdot Lip(id,g)\nonumber\\
   &<\mathcal{P}\epsilon^\prime,\label{f1-dfu}
\end{align}where $\mathcal{P}$ is the constant in \eqref{def of P}. By the Lipschitz Inverse function theorem (see, e.g, \cite[Theorem I.2]{MS}), $h_{(p,\omega)}$ is a homeomorphism to its image and moreover,
\begin{align}
  Lip(h_{(p,\omega)}^{-1})\leq & \frac{1}{\|D_pf_\omega|_{E^u(p,\omega)}^{-1}\|^{-1}-Lip(f_{(p,\omega),1}\circ (id,g)-D_pf_\omega|_{E^u(p,\omega)(\epsilon)})}\nonumber \\
 \overset{\eqref{f1-dfu}}< &\frac{1}{m(D_pf_\omega|_{E^u(p,\omega)})-\mathcal{P}\epsilon^\prime}\label{Lip of h inverse},
\end{align}where $m(D_pf_\omega|_{E^u(p,\omega)})=\|D_pf_\omega|_{E^u(p,\omega)}^{-1}\|^{-1}$ denotes the co-norm.
Then for any $g\in G_{(p,\omega)}$, we have
\begin{equation}\label{expression of phi g}
  (\phi_{(p,\omega)}^*g)(y)=f_{(p,\omega),2}(h_{(p,\omega)}^{-1}(y),g(h_{(p,\omega)}^{-1}(y)))\mbox{ for }y\in E^u(f_\omega p,\theta\omega)(\epsilon).
\end{equation}

Note that for $x\in E^u(p,\omega)(\epsilon)$,
\begin{align*}
  f_{(p,\omega),2}(x,g(x)) &=P(E^s(f_\omega p,\theta\omega))\circ f_{(p,\omega)}\circ (id,g)(x),\\
  D_pf_\omega|_{E^s(p,\omega)}(g(x))&=P(E^s(f_\omega p,\theta\omega))\circ D_pf_\omega\circ (id,g)(x),
\end{align*}then we have
\begin{align}
  Lip(f_{(p,\omega),2}\circ (id,g)) &\leq Lip(f_{(p,\omega),2}\circ(id,g)-D_pf_\omega|_{E^s(p,\omega)}\circ g)+Lip(D_pf_\omega|_{E^s(p,\omega)}\circ g)\nonumber \\
  &\leq \|P(E^s(f_\omega p,\theta\omega))\|\cdot Lip(f_{(p,\omega)}-D_pf_\omega)\cdot Lip((id,g))+\|D_pf_\omega|_{E^s(p,\omega)}\|\nonumber\\
  &<\mathcal{P}\epsilon^\prime+\|D_pf_\omega|_{E^s(p,\omega)}\|.\label{Lip f idg}
\end{align}Combining $(\ref{Lip of h   inverse})$, $(\ref{expression of phi g})$and $(\ref{Lip f idg})$,
\begin{align*}
  Lip(\phi^*_{(p,\omega)}g)&\leq Lip(f_{(p,\omega),2}\circ(id,g))\cdot Lip(h_{(p,\omega)}^{-1})\leq \frac{\|D_pf_\omega|_{E^s(p,\omega)}\|+\mathcal{P}\epsilon^\prime}{m(D_pf_\omega|_{E^u(p,\omega)})-\mathcal{P}\epsilon^\prime}\\
  &\leq \frac{e^{-\lambda_0}+\mathcal{P}\epsilon^\prime}{e^{\lambda_0}-\mathcal{P}\epsilon^\prime}\overset{\eqref{eq pick epsilon prime}}< \frac{e^{-2\lambda_0}+1}{2}.
\end{align*}Obviously that $(\phi^*_{(p,\omega)}g)(0)=0$, so we have shown that $\phi^*_{(p,\omega)}$ maps $G_{(p,\omega)}$ to $G_{(f_\omega p,\theta\omega)}$.

Next, we show that $\phi^*_{(p,\omega)}$ is fiber-contraction. We note that
\begin{equation*}
  \frac{\|D_pf_\omega|_{E^s(p,\omega)}\|+2\mathcal{P}\epsilon^\prime}{m(D_pf_\omega|_{E^u(p,\omega)})-2\mathcal{P}\epsilon^\prime}\leq \frac{e^{-\lambda_0}+2\mathcal{P}\epsilon^\prime}{e^{\lambda_0}-2\mathcal{P}\epsilon^\prime}
 \overset{\eqref{eq pick epsilon prime}}\leq \frac{e^{-2\lambda_0}+1}{2}.
\end{equation*} Therefore, it is sufficient to show that for any $g,g^\prime\in G_{(p,\omega)}$, for all $x\in E^u(p,\omega)(\epsilon)$,
\begin{equation}\label{phi* fiber contraction}
 \frac{\|(\phi^*_{(p,\omega)}g)(f_{(p,\omega),1}(x,g(x)))-(\phi^*_{(p,\omega)}g^\prime)(f_{(p,\omega),1}(x,g(x)))\|}{\|f_{(p,\omega),1}(x,g(x))\|}
 \leq \frac{\|D_pf_\omega|_{E^s(p,\omega)}\|+2\mathcal{P}\epsilon^\prime}{m(D_pf_\omega|_{E^u(p,\omega)})-2\mathcal{P}\epsilon^\prime}\cdot \|g-g^\prime\|_*,
\end{equation} since $h_{(p,\omega)}=f_{(p,\omega),1}\circ (id,g)$ is a homeomorphism.

Notice that
\begin{align}
 &\ \ \ \ \|f_{(p,\omega),2}(x,g(x))-f_{(p,\omega),2}(x,g^\prime(x))\|\nonumber\\
 &\leq \left\|(f_{(p,\omega),2}-P(E^s(f_\omega p,\theta\omega))D_pf_\omega)(x,g(x))-(f_{(p,\omega),2}-P(E^s(f_\omega p,\theta\omega))D_pf_\omega)(x,g^\prime(x))\right\|\nonumber\\
 &\ \ \ \ \ \ \ \ \ \ \ \ \ +\|P(E^s(f_\omega p,\theta\omega))D_pf_\omega(x,g^\prime(x))-P(E^s(f_\omega p,\theta\omega))D_pf_\omega(x,g(x))\|\nonumber\\
 &\leq \left\{Lip(f_{(p,\omega),2}-P(E^s(f_\omega p,\theta\omega))D_pf_\omega)+\|D_pf_\omega|_{E^s(p,\omega)}\|\right\}\cdot\|g(x)-g^\prime(x)\|\nonumber\\
 &=\left\{Lip(P(E^s(f_\omega p,\theta\omega))f_{(p,\omega)}-P(E^s(f_\omega p,\theta\omega))D_pf_\omega)+\|D_pf_\omega|_{E^s(p,\omega)}\|\right\}\cdot\|g(x)-g^\prime(x)\|\nonumber\\
 &< \left(\mathcal{P}\epsilon^\prime+\|D_pf_\omega|_{E^s(p,\omega)}\|\right)\|g(x)-g^\prime(x)\|\label{f2xg-f2xgprime},
\end{align}and
\begin{align}
   &\ \ \ \ \|f_{(p,\omega),1}(x,g(x))-f_{(p,\omega),1}(x,g^\prime(x))\|\nonumber\\
   &\leq \|(f_{(p,\omega),1}-P(E^u(f_\omega p,\theta\omega))D_pf_\omega)(x,g(x))-(f_{(p,\omega),1}-P(E^u(f_\omega p,\theta\omega))D_pf_\omega)(x,g^\prime(x))\|\nonumber\\
   &\ \ \ \ \ \ \ \ \ \ \ \ +\|P(E^u(f_\omega p,\theta\omega)D_pf_\omega)(x,g(x))-P(E^u(f_\omega p,\theta\omega)D_pf_\omega)(x,g^\prime(x))\|\nonumber\\
   &\leq Lip(f_{(p,\omega),1}-P(E^u(f_\omega p,\theta\omega))D_pf_\omega)\|g(x)-g^\prime(x)\|\nonumber\\
   &= Lip(P(E^u(f_\omega p,\theta\omega))f_{(p,\omega)}-P(E^u(f_\omega p,\theta\omega))D_pf_\omega)\|g(x)-g^\prime(x)\|\nonumber\\
   &<\mathcal{P}\epsilon^\prime\|g(x)-g^\prime(x)\|\label{f1xg-f1xgprime}.
\end{align}Then $(\ref{f1xg-f1xgprime})$ and $(\ref{f2xg-f2xgprime})$ imply that
\begin{align}
   &\ \ \ \ \|(\phi^*_{(p,\omega)}g)(f_{(p,\omega),1}(x,g(x)))-(\phi^*_{(p,\omega)}g^\prime)(f_{(p,\omega),1}(x,g(x)))\|\\ &=\|f_{(p,\omega),2}(x,g(x))-(\phi^*_{(p,\omega)}g^\prime)(f_{(p,\omega),1}(x,g(x)))\|\nonumber\\
   &\leq \|f_{(p,\omega),2}(x,g(x))-f_{(p,\omega),2}(x,g^\prime(x))\|+\|f_{(p,\omega),2}(x,g^\prime(x))-(\phi^*_{(p,\omega)}g^\prime)(f_{(p,\omega),1}(x,g(x)))\|\nonumber\\
   &\overset{\eqref{f2xg-f2xgprime}}\leq  \left(\mathcal{P}\epsilon^\prime+\|D_pf_\omega|_{E^s(p,\omega)}\|\right)\|g(x)-g^\prime(x)\|\nonumber\\
   &\ \ \ \ \ \ \ +\|(\phi^*_{(p,\omega)}g^\prime)(f_{(p,\omega),1}(x,g^\prime(x)))-(\phi^*_{(p,\omega)}g^\prime)(f_{(p,\omega),1}(x,g(x)))\|\nonumber\\
   &\leq \left(\mathcal{P}\epsilon^\prime+\|D_pf_\omega|_{E^s(p,\omega)}\|\right)\|g(x)-g^\prime(x)\|+Lip(\phi^*_{(p,\omega)}g^\prime)\|f_{(p,\omega),1}(x,g(x))-f_{(p,\omega),1}(x,g^\prime(x))\|\nonumber\\
   &\overset{\eqref{f1xg-f1xgprime}}<\left(2\mathcal{P}\epsilon^\prime+\|D_pf_\omega|_{E^s(p,\omega)}\|\right)\|g(x)-g^\prime(x)\|\label{nunomcontra}.
\end{align}
On the other hand,
\begin{align}
  \|f_{(p,\omega),1}(x,g(x))\| &=\|(f_{(p,\omega),1}- P(E^u(f_\omega p,\theta\omega))D_pf_\omega)(x,g(x))+P(E^u(f_\omega p,\theta\omega))D_pf_\omega(x,g(x))\|\nonumber\\
  &\overset{\eqref{f1-dfu}}\geq m(D_pf_\omega|_{E^u(p,\omega)})\|x\|-\mathcal{P}\epsilon^\prime\|(x,g(x))\|\nonumber\\
  &\geq (m(D_pf_\omega|_{E^u(p,\omega)})-2\mathcal{P}\epsilon^\prime)\|x\|\label{denomenat},
\end{align}where we use $\|g(x)\|=\|g(x)-g(0)\|\leq \|x\|$ in the last step.
Now $(\ref{phi* fiber contraction})$ follows by \eqref{nunomcontra}, \eqref{denomenat} and \eqref{eq pick epsilon prime}.
\end{proof}
By Sublemma \ref{Lemma phi star contraction}, $\phi^*:G\to G$ defined by
\begin{equation*}
  \{(p,\omega,g_{(p,\omega)}):\ (p,\omega)\in M\times \Omega,\ g_{(p,\omega)}\in G_{(p,\omega)}\}\mapsto \{(f_\omega(p),\theta\omega,\phi^*_{(p,\omega)}g_{(p,\omega)}):\ (p,\omega)\in M\times \Omega\}
\end{equation*}is a fiber contraction. Then there exists a unique section $g^*:M\times \Omega\to G$ which is invariant under $\phi^*$ in the sense that for all $(p,\omega)\in M\times \Omega$,
\begin{equation*}
  g^*_{(f_\omega(p),\theta\omega)}=\phi^*_{(p,\omega)}g^*_{(p,\omega)}.
\end{equation*}In fact, we can consider the space $\Sigma$ of all sections $g:M\times \Omega\to G$. Note that
\begin{equation*}
  \sup_{(p,\omega)\in M\times \Omega}\{\|g_{(p,\omega)}-g^\prime_{(p,\omega)}\|_*:\ g_{(p,\omega)},g_{(p,\omega)}^\prime\in G_{(p,\omega)}\}\overset{\eqref{def Gpomega}}\leq e^{-2\lambda}+1<\infty.
\end{equation*}Therefore, $\tilde{d}(g,g^\prime)=\sup\{\|g_{(p,\omega)}-g^\prime_{(p,\omega)}\|_*:\ (p,\omega)\in M\times \Omega\}$ for any $g,g^\prime\in\Sigma$ gives a metric on $\Sigma$. Moreover, $\Sigma$ is complete since $G_{(p,\omega)}$ is closed. By sublemma \ref{Lemma phi star contraction}, the action of $\phi^*$ on $\Sigma$ is clearly a contraction mapping. The unique fixed point of $\phi^*$ is the unique invariant section.
By virtue of contraction mapping theorem, one also can obtain the unique invariant section $g^*$ by iterating any section $g\in G$, i.e.,
\begin{equation}\label{fixed point}
  g^*_{(p,\omega)}=\lim_{n\to\infty}((\phi^*)^ng)_{(p,\omega)}=\lim_{n\rightarrow\infty}\phi^*_{(f_\omega^{-1}p,\theta^{-1}\omega)}\cdots\phi^*_{(f_\omega^{-n}p,\theta^{-n}\omega)}g_{(f_{\omega}^{-n}p,\theta^{-n}\omega)},
\end{equation}for any $(p,\omega)\in M\times \Omega$.

By the stable and unstable manifolds theorem, we know that the local unstable manifold passing through $p$ on the fiber $M\times \{\omega\}$ is exactly $Exp_p(graph(g_{(p,\omega)}^*))$.
Next, we will show that the bundle map $\phi^*$ preserves a local H\"older property for an appropriate H\"older exponent.

Since $E^u(x,\omega)$ and $E^s(x,\omega)$ are uniformly continuous depending on $(x,\omega)\in M\times \Omega$, with the help of local coordinate charts, we may pick a sufficiently small $\delta_0\in(0,\frac{\epsilon}{2\mathcal{P}})$ such that whenever $d(p,q)<\delta_0$, for any $\omega\in\Omega$, $g_{(q,\omega)}:E^u(q,\omega)(\epsilon)\rightarrow E^s(q,\omega)$ with $Lip(g_{(q,\omega)})<\frac{e^{-2\lambda_0}+1}{2}$ can be viewed as a Lipschitz mapping from $E^u(p,\omega)(\delta_0)$ to $E^s(p,\omega)$ with Lipschtiz constant less than $1$ with respect to the coordinate $E^u(p,\omega)\oplus E^s(p,\omega)$. We use notation $(g_{(q,\omega)})^{(p,\omega)}$ to represent $g_{(q,\omega)}$ in the coordinate $E^u(p,\omega)\oplus E^s(p,\omega)$.

From now on, we fix this $\delta_0$. We pick $N>0$ depending on $\delta_0$ such that
\begin{equation}\label{eq pick of N}
  e^{-N\lambda}<\frac{\delta_0}{2\sup_{(x,\omega)\in M\times\Omega}\|D_xf_\omega^{-1}\|}.
\end{equation}For any constant $K>0$ and $\varrho\in(0,1)$, we define
\begin{align*}
  G(\delta_0,\varrho,e^{-N\lambda},K) &:=\left\{g\in G:\ \sup_{x\in E^u(p,\omega)(\delta_0)}\|g_{(p,\omega)}(x)-(g_{(q,\omega)})^{(p,\omega)}(x)\|\leq K d(p,q)^\varrho,\right.\\
  &\ \ \ \ \ \ \ \ \ \ \ \ \left.\mbox{ whenever }e^{-N\lambda}< d(p,q)< \delta_0\right\}.
\end{align*}
\begin{sublemma}\label{lemma 3.4}
  There exists a constant $C=C(\delta_0)$ such that $G\subset G(\delta_0,\varrho,e^{-N\lambda},C(\delta_0)e^{N\lambda\varrho}).$
\end{sublemma}
\begin{proof}[Proof of Sublemma \ref{lemma 3.4}]
  Notice that both the Lipschitz constants of $g_{(p,\omega)}$ and $(g_{(q,\omega)})^{(p,\omega)}$ are less than $1$, and $d(p,q)<\delta_0$, hence there exists a constant $C=C(\delta_0)>0$ such that
  \begin{equation*}
    \sup_{x\in E^u(p,\omega)(\delta_0)}\|g_{(p,\omega)}(x)-(g_{(q,\omega)})^{(p,\omega)}(x)\|\leq C.
  \end{equation*}Notice that $d(p,q)>e^{-N\lambda}$, so we have
  \begin{align*}
     \sup_{x\in E^u(p,\omega)(\delta_0)}\|g_{(p,\omega)}(x)-(g_{(q,\omega)})^{(p,\omega)}(x)\|\leq C(\delta_0)\leq C(\delta_0)e^{N\lambda\varrho}d(p,q)^\varrho.
  \end{align*}
\end{proof}

   For any $g_{(q,\omega)}\in G_{(q,\omega)}$, the proof of Sublemma \ref{Lemma phi star contraction} implies that $\phi^*_{(q,\omega)}g_{(q,\omega)}\in G_{(f_\omega q,\theta\omega)}$ and  $Lip(\phi^*_{(q,\omega)}g_{(q,\omega)})\leq \frac{e^{-2\lambda}+1}{2}$. If $d(f_\omega p,f_\omega q)<\delta_0$, then $Lip((\phi^*_{(q,\omega)}g_{(q,\omega)})^{(f_\omega p,\theta\omega)})<1$ by the choice of $\delta_0.$
\begin{sublemma}\label{lemma keep Kalpha}
  Let $g\in G$, if $d(p,q)<\delta_0$, $d(f_\omega p,f_\omega q)<\delta_0$, and $\sup_{x\in E^u(p,\omega)(\delta_0)}\|g_{(p,\omega)}(x)-(g_{(q,\omega)})^{(p,\omega)}(x)\|\leq K d(p,q)^\varrho$ for some constant $K$, then
  \begin{equation}
    \sup_{x\in E^u(f_\omega p,\theta \omega)(\delta_0)}\left\|\left(\phi^*_{(p,\omega)}g_{(p,\omega)}\right)(x)-\left(\phi^*_{(q,\omega)}g_{(q,\omega)}\right)^{(f_\omega p,\theta\omega)}(x)\right\|\leq K d(f_\omega p,f_\omega q)^\varrho\label{keep Kalpha}
  \end{equation}provided $\varrho\in (0,1)$ satisfying
  \begin{equation}\label{DpfomegaEs W-varrho}
    \sup_{(p,\omega)\in M\times\Omega}\|D_pf_\omega|_{E^s(p,\omega)}\|t_{(p,\omega)}^{-\varrho}<1,
  \end{equation}where
  \begin{equation}\label{def of Wpomega}
   t_{(p,\omega)}:=\inf\left\{\frac{d(f_\omega p,f_\omega q)}{d(p,q)}:\ q\in M,\ d(p,q)<\epsilon\right\}>0.
  \end{equation}
\end{sublemma}
\begin{proof}[Proof of Sublemma \ref{lemma keep Kalpha}]
  We use the same notations as in the proof of Sublemma \ref{Lemma phi star contraction}. By $(\ref{DpfomegaEs W-varrho})$, we can pick a constant $\epsilon^\prime>0$ sufficiently small satisfying both \eqref{eq pick epsilon prime} and
  \begin{equation}\label{condition on Wpomeg}
      \sup_{(p,\omega)\in M\times\Omega}(2\mathcal{P}\epsilon^\prime+\|D_pf_\omega\|_{E^s(p,\omega)})t_{(p,\omega)}^{-\varrho}<1.
  \end{equation}

Notice that following fact:
\begin{equation*}\label{f p 2=phi q 1}
  f_{(p,\omega),2}(x,(g_{(q,\omega)})^{(p,\omega)}(x))=(\phi^*_{(q,\omega)}g_{(q,\omega)})^{(f_\omega p,\theta\omega)}(f_{(p,\omega),1}(x,(g_{(q,\omega)})^{(p,\omega)}(x))),
\end{equation*}
 then we have
  \begin{align*}
    &\ \ \ \ \|(\phi^*_{(p,\omega)}g_{(p,\omega)})(f_{(p,\omega),1}(x,g_{(p,\omega)}(x)))-(\phi^*_{(q,\omega)}g_{(q,\omega)})^{(f_\omega p,\theta\omega)}(f_{(p,\omega),1}(x,g_{(p,\omega)}(x)))\|\\
    &=\|f_{(p,\omega),2}(x,g_{(p,\omega)}(x))-(\phi^*_{(q,\omega)}g_{(q,\omega)})^{(f_\omega p,\theta\omega)}(f_{(p,\omega),1}(x,g_{(p,\omega)}(x)))\|\\
    &\leq \|f_{(p,\omega),2}(x,g_{(p,\omega)}(x))-f_{(p,\omega),2}(x,(g_{(q,\omega)})^{(p,\omega)}(x))\|\\
    &\ \ \ \ \ \ \ \ +\|(\phi^*_{(q,\omega)}g_{(q,\omega)})^{(f_\omega p,\theta\omega)}(f_{(p,\omega),1}(x,(g_{(q,\omega)})^{(p,\omega)}(x)))-(\phi^*_{(q,\omega)}g_{(q,\omega)})^{(f_\omega p,\theta\omega)}(f_{(p,\omega),1}(x,g_{(p,\omega)}(x)))\|\\
    &\overset{\eqref{f2xg-f2xgprime}}\leq(\mathcal{P}\epsilon^\prime+\|D_pf_\omega|_{E^s(p,\omega)}\|)\|g_{(p,\omega)}(x)-(g_{(q,\omega)})^{(p,\omega)}(x)\|\\
    &\ \ \ \ \ \ \ \ \ \ +Lip(\phi^*_{(q,\omega)}g_{(q,\omega)})^{(f_\omega p,\theta\omega)})\|f_{(p,\omega),1}(x,(g_{(q,\omega)})^{(p,\omega)}(x))-f_{(p,\omega),1}(x,g_{(p,\omega)}(x))\|\\
    &\overset{\eqref{f1xg-f1xgprime}}\leq(2\mathcal{P}\epsilon^\prime+\|D_pf_\omega|_{E^s(p,\omega)}\|)\sup_{x\in E^u(p,\omega)(\delta_0)}\|g_{(p,\omega)}(x)-(g_{(q,\omega)})^{(p,\omega)}(x)\|\\
    &\leq(2\mathcal{P}\epsilon^\prime+\|D_pf_\omega|_{E^s(p,\omega)}\|)K d(p,q)^\varrho\\
    &\leq(2\mathcal{P}\epsilon^\prime+\|D_pf_\omega|_{E^s(p,\omega)}\|)K t_{(p,\omega)}^{-\varrho}d(f_\omega p,f_\omega q)^\varrho\\
    &\overset{\eqref{condition on Wpomeg}}\leq Kd(f_\omega p,f_\omega q)^\varrho,
  \end{align*}where we note that in inequality \eqref{f2xg-f2xgprime} and \eqref{f1xg-f1xgprime}, $g^\prime$ can be replaced by $(g_{(q,\omega)})^{(p,\omega)}$. Notice that $h_{(p,\omega)}=f_{(p,\omega),1}\circ (id,g_{(p,\omega)})$ is a homeomorphism, hence we get $(\ref{keep Kalpha})$.
\end{proof}

Now consider
\begin{equation*}
  R_n(\omega):=\left\{(f_{\theta^{-n}\omega}^n p, f_{\theta^{-n}\omega}^nq)\in M\times M|\ \max_{0\leq k\leq  n}d_(f_{\theta^{-n}\omega}^kp,f_{\theta^{-n}\omega}^kq)<\delta_0,\ e^{-N\lambda}<d(p,q)<\delta_0\right\},
\end{equation*}and let $S_n(\omega)=\cup_{i=0}^nR_i(\omega)$. Applying Sublemma \ref{lemma keep Kalpha} inductively and notice Sublemma \ref{lemma 3.4}, 
we see that for any $g\in G$, $(s,t)\in S_n(\omega)$,
\begin{equation*}
 \sup_{x\in E^u(s,\omega)(\delta_0)} \left\|((\phi^*)^ng)_{(s,\omega)}(x)-(((\phi^*)^ng)_{(t,\omega)})^{(s,\omega)}(x)\right\|\leq C(\delta_0)e^{N\lambda\varrho}d(p,q)^\varrho,
\end{equation*}where $(\phi^*)^n$ is the $n$-th iteration of $\phi^*$.

\begin{sublemma}\label{sublemma new}
$  \{(s,t)\in M\times M|\ t\not\in W^u_{\delta_0}(s,\omega),\ d(s,t)<\delta_0\}\subset \cup_{n=0}^\infty S_n(\omega).$
\end{sublemma}
\begin{proof}[Proof of Sublemma \ref{sublemma new}]
  Now for any
  $(s,t)\in \{(s,t)\in M\times M|\ t\not\in W^u_{\delta_0}(s,\omega),\ d(s,t)<\delta_0\}$.
 If $d(s,t)>e^{-N\lambda}$, then $(s,t)\in R_0(\omega)$.
If $d(s,t)\leq e^{-N\lambda}$, We note that $t\in W^s_{\delta_0}([t,s]_\omega,\omega)$, where $[t,s]_\omega=W_{loc}^s(t,\omega)\cap W_{loc}^u(s,\omega)$. Then
    \begin{equation*}
      d(f_\omega^{-n}(s),f_\omega^{-n}(t))\geq d(f_\omega^{-n}([t,s]_\omega),f_\omega^{-n}(t))- d(f_\omega^{-n}([t,s]_\omega),f_\omega^{-n}(s)),
    \end{equation*}when tends to $\infty$ as $n\to\infty$. By \eqref{eq pick of N}, there exists a time $l\in\mathbb{N}$ such that $d(f_\omega^{-l}s,f_\omega^{-l}t)\in (e^{-N\lambda},\delta_0)$ and $d(f_\omega^{-i}s,f_\omega^{-i}t)\leq e^{-N\lambda}<\delta_0$ for $i\in\{0,1,...,l-1\}$. So $(s,t)\in R_{l}(\omega)$. In both cases, $(s,t)\in \cup_{n=0}^\infty S_n(\omega)$. This finishes the proof of Sublemma \ref{sublemma new}.
\end{proof}

    Hence the fix point obtained by $(\ref{fixed point})$ has the property that for any $p,q\in M$, $d(p,q)<\delta_0$ and $q\not \in W^u_{\delta_0}(p,\omega)$,
\begin{equation}\label{Holder continuous on base point}
  \sup_{x\in E^u(p,\omega)(\delta_0)}\|g^*_{(p,\omega)}(x)-(g^*_{(q,\omega)})^{(p,\omega)}(x)\|\leq C(\delta_0)e^{N\lambda\varrho}d(p,q)^\varrho:=H(\delta_0,\varrho)d(p,q)^\varrho,
\end{equation}and we note that \eqref{Holder continuous on base point} is automatically true if $q\in W_{\delta_0}^u(p,\omega)$. Hence \eqref{holder hu} holds. A similar proof can be applied to the stable manifold by reversing time.

Now let $y_0,y_1\in graph(g^*_{(p,\omega)}|_{E^u(p,\omega)(\delta_0)})$, let $q\in M$ and $d(p,q)<\delta_0$. Let $$z_0:=(P(E^u(p,\omega)y_0,(g^*_{(q,\omega)})^{(p,\omega)}(P(E^u(p,\omega)y_0)))$$ and $$z_1:=(P(E^u(p,\omega)y_1,(g^*_{(q,\omega)})^{(p,\omega)}(P(E^u(p,\omega)y_1))).$$
Then by  \eqref{holder hu},
\begin{equation}\label{eq ratio}
  \|z_1-y_1\|\leq H(\delta_0,\varrho)\|z_0-y_0\|^\varrho.
\end{equation}Denote
\begin{equation*}
  w_0:=Exp_p^{-1}(W^s_{loc}(Exp_p(y_0),\omega))\cap graph((g^*_{(q,\omega)})^{(p,\omega)}),
\end{equation*}and
\begin{equation*}
  w_1:=Exp_p^{-1}(W^s_{loc}(Exp_p(y_1),\omega))\cap graph((g^*_{(q,\omega)})^{(p,\omega)}).
\end{equation*}
\begin{sublemma}\label{sublemma ratio}
  When $\delta_0$ sufficiently small, there exists a constant $\hat{C}$ such that
\begin{equation*}
   \hat{C}^{-1}\leq \frac{\|y_1-z_1\|}{\|y_1-w_1\|}, \frac{\|y_0-z_0\|}{\|y_0-w_0\|}\leq \hat{C}.
\end{equation*}
\end{sublemma}
\begin{proof}[Proof of Sublemma \ref{sublemma ratio}]
 Fix any number $L^\prime>0$. When $\delta_0>0$ is sufficiently small, then we have
 \begin{equation*}
  l:= \sup\{\|D_x (g^*_{(q,\omega)})^{(p,\omega)}\|:\ x\in E^u(p,\omega)(\delta_0),\ d(p,q)<\delta_0,\ \omega\in \Omega\}<\frac{1}{L^\prime},
 \end{equation*}since $\|D_0 (g^*_{p,\omega})^{(p,\omega)}\|=0$ and $C^1$ continuity of local unstable manifolds as in (3) of Lemma \ref{lemma stable unstable}. By the similar reason, the local stable manifold $Exp_p^{-1}(W^s_{\delta_0}(Exp_p(y),\omega))$ for any $y\in graph(g^*_{(p,\omega)}|_{E^u(p,\omega)(\delta_0)})$, as a graph of $C^2$ function from $E^s(p,\omega)$ to $E^u(p,\omega)$, has coslope$\leq L^\prime$, provided $\delta_0>0$ sufficiently small.

 Now for $y_0$, $z_0$ and $w_0$ defined as above, then $w_0$ lies in an area constrained by the following two cones:
 \begin{equation*}
   z_0+\{(x,y)\in E^u(p,\omega)\oplus E^s(p,\omega):\ \|y\|\leq l \|x\|\}
 \end{equation*}and
 \begin{equation*}
   y_0+\{(x,y)\in E^u(p,\omega)\oplus E^s(p,\omega):\ \|x\|\leq L^\prime \|x\|\}.
 \end{equation*}By the knowledge of trigonometry, one can show that the following number satisfies the conclusion of Sublemma \ref{sublemma ratio}:
 \begin{equation*}
   \hat{C}=\frac{2(1+(L^\prime)^2)}{1-lL^\prime}
 \end{equation*}The case for $y_1,$ $z_1$ and $w_1$ is similar.
\end{proof}
By \eqref{eq ratio} and Sublemma \ref{sublemma ratio}, we obtain
\begin{equation*}
  \|y_1-w_1\|\leq \hat{C}^{1+\varrho}H(\delta_0,\varrho)\|y_0-w_0\|^\varrho,
\end{equation*}i.e., the fiber holonomy map between local stable manifolds is uniformly $\varrho-$H\"older continuous at a small scale. A similar result holds for fiber holonomy map between local unstable manifolds. Let $H^\prime=\hat{C}^{1+\varrho}H(\delta_0,\varrho)$, then the proof of Proposition \ref{Holder continuity of holonomy map} is complete.

\end{proof}

\subsection{Properties of the holonomy map between a pair of local stable leaves}\label{subsection 3.6}
In this subsection, the properties of the Holonomy maps are further discussed.

For each $\omega\in\Omega$, $\tilde{\gamma}(\omega)$ and $\gamma(\omega)$ is said to be a pair of nearby local stable leaves if $\tilde{\gamma}(\omega)$ and $\gamma(\omega)$ are local stable manifolds and the fiber holonomy map $\psi_\omega:\tilde{\gamma}(\omega)\rightarrow\gamma(\omega)$ by $\psi_\omega(x)=W^u_{\epsilon}(x,\omega)\cap \gamma(\omega)$ for $x\in \tilde{\gamma}(\omega)$ is a homeomorphism. From the proof of Proposition \ref{Theorem fiberwise absolutely continuous}, we know that $\psi_\omega$ is absolutely continuous and we have
\begin{equation}\label{Def jac psi omega}
  Jac(\psi_\omega)(x)=\lim_{n\rightarrow\infty}\frac{|\det(D_xf_\omega^{-n}|_{E^s(x,\omega)})|}{|\det(D_{\psi_\omega(x)}f_\omega^{-n}|_{E^s(\psi_\omega(x),\omega)})|},
\end{equation}where $Jac(\psi_\omega)$ denotes the Jacobian of $\psi_\omega$, and $f_\omega^{-n}$ is defined in \eqref{def f omega n}. The time in \eqref{Def jac psi omega} goes backward since the holonomy map in this section is induced by unstable manifolds, while in the proof of Proposition \ref{Theorem fiberwise absolutely continuous}, the holonomy map is induced by local stable manifolds.

In the following, we restrict the size of local stable and unstable manifolds $W^s_\epsilon(x,\omega)$, $W^u_\epsilon(x,\omega)$ satisfying $\epsilon\leq \min\{\epsilon_0,\delta_0\}$ to guarantee the H\"older continuity of the stable and unstable foliations, where $\delta_0$ is the constant in Proposition \ref{Holder continuity of holonomy map}.
\begin{lemma}\label{property of fiberwise holonomy map}
  There exist constants $a_0^\prime>0$ and $\nu_0\in(0,1)$ only depending on system $\phi$ such that for any $\psi_\omega:\tilde{\gamma}(\omega)\rightarrow\gamma(\omega)$  fiber holonomy map of  a pair of nearby random local stable leaves, the followings hold:
  \begin{enumerate}
    \item $\psi_\omega$ and $\log Jac(\psi_\omega)$ are $(a_0^\prime,\nu_0)$-H\"older continuous;
    \item $|\log Jac(\psi_\omega)(y)|\leq a_0^\prime d(y,\psi_\omega(y))^{\nu_0}$ for every $y\in\tilde{\gamma}(\omega)$;
    \item $d(f_{\omega}^{-1}x,f_{\omega}^{-1}\psi_\omega(x))\leq e^{-\lambda}d(x,\psi_\omega(x))$.
  \end{enumerate}
\end{lemma}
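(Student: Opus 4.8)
The plan is to treat the three assertions in turn. Assertion (3) is immediate: since $\psi_\omega(x)\in W^u_\epsilon(x,\omega)$, part (2) of the stable and unstable invariant manifolds lemma with $n=1$, together with $f_\omega^{-1}=(f_{\theta^{-1}\omega})^{-1}$, gives $d((f_{\theta^{-1}\omega})^{-1}x,(f_{\theta^{-1}\omega})^{-1}\psi_\omega(x))\le e^{-\lambda}d(x,\psi_\omega(x))$. For the H\"older continuity of $\psi_\omega$ itself (first half of (1)), I would write $\gamma(\omega)=W^s_\epsilon(z,\omega)$ for a suitable $z$, observe that any $x_1,x_2\in\tilde\gamma(\omega)$ lie on the common local stable leaf $\tilde\gamma(\omega)$, and apply the H\"older continuity of the local product structure from Proposition~\ref{Holder continuity of holonomy map}:
\[
d(\psi_\omega(x_1),\psi_\omega(x_2))=d\big(W^s_\epsilon(z,\omega)\cap W^u_\epsilon(x_1,\omega),\,W^s_\epsilon(z,\omega)\cap W^u_\epsilon(x_2,\omega)\big)\le H'd(x_1,x_2)^{\varrho},
\]
with $H',\varrho$ depending only on $\phi$.

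The substance is the Jacobian estimate. Running the proofs of Proposition~\ref{Theorem fiberwise absolutely continuous} and Lemma~\ref{limit of H} with time reversed (the transversals to the unstable foliation are now the local stable leaves, which are tangent to $E^s$), one gets that $\psi_\omega$ is absolutely continuous and, writing $J^{-}(y,\omega'):=|\det(D_y(f_{\theta^{-1}\omega'})^{-1}|_{E^s(y,\omega')})|$ for the one-step backward stable Jacobian (the backward images of $\tilde\gamma(\omega)$ and $\gamma(\omega)$ are stable leaves, hence tangent to the stable subspaces),
\[
\log|\det D_x\psi_\omega|=\sum_{j\ge 0}\Big(\log J^{-}(f_\omega^{-j}x,\theta^{-j}\omega)-\log J^{-}(f_\omega^{-j}\psi_\omega(x),\theta^{-j}\omega)\Big).
\]
The key input is a distortion estimate: Lemma~\ref{detDfE-detDfE} applied to $(f_{\theta^{-1}\omega'})^{-1}$, together with the $\nu_1$-H\"older continuity of $E^s$ from Lemma~\ref{Holder continuity of stableunstable distribution} and the uniform positive two-sided bounds on $J^{-}$, gives $|\log J^{-}(y,\omega')-\log J^{-}(y',\omega')|\le C\,d(y,y')^{\nu_1}$ for $d(y,y')$ below a fixed scale, with $C$ depending only on $\phi$. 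Since $f_\omega^{-j}x$ and $f_\omega^{-j}\psi_\omega(x)$ lie on a common local unstable leaf, $d(f_\omega^{-j}x,f_\omega^{-j}\psi_\omega(x))\le e^{-\lambda j}d(x,\psi_\omega(x))$; feeding this into the distortion estimate and summing the geometric series proves (2) with $\nu_0'=\nu_1$ and $a_0'=C/(1-e^{-\lambda\nu_1})$, and likewise bounds $|\log|\det D\psi_\omega||$ (using the reversed ratio for the lower bound).

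For the H\"older continuity of $x\mapsto\log|\det D_x\psi_\omega|$ I would estimate each telescoped term $a_j(x):=\log J^{-}(f_\omega^{-j}x,\theta^{-j}\omega)-\log J^{-}(f_\omega^{-j}\psi_\omega(x),\theta^{-j}\omega)$ at two base points $x_1,x_2\in\tilde\gamma(\omega)$ in two complementary ways: the crude bound $|a_j(x_1)-a_j(x_2)|\le|a_j(x_1)|+|a_j(x_2)|\le 2C\epsilon^{\nu_1}e^{-\lambda\nu_1 j}$ (good for large $j$), and the sharp bound $|a_j(x_1)-a_j(x_2)|\le C\,d(f_\omega^{-j}x_1,f_\omega^{-j}x_2)^{\nu_1}+C\,d(f_\omega^{-j}\psi_\omega(x_1),f_\omega^{-j}\psi_\omega(x_2))^{\nu_1}$ (good for small $j$). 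In the sharp bound, $x_1,x_2$ lie on the stable leaf $\tilde\gamma(\omega)$ and $\psi_\omega(x_1),\psi_\omega(x_2)$ on the stable leaf $\gamma(\omega)$, along which $f_\omega^{-1}$ expands by at most $\Lambda_0:=\sup_{x,\omega}\|(D_xf_\omega|_{E^s(x,\omega)})^{-1}\|<\infty$, so $d(f_\omega^{-j}x_1,f_\omega^{-j}x_2)\le C\Lambda_0^{j}d(x_1,x_2)$ and, by the first half of (1), $d(f_\omega^{-j}\psi_\omega(x_1),f_\omega^{-j}\psi_\omega(x_2))\le C\Lambda_0^{j}d(x_1,x_2)^{\nu_0'}$. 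Splitting $\sum_{j\ge0}$ at an index $N$ of order $\log(1/d(x_1,x_2))$ chosen to balance the head (via the sharp bound, of size a constant times $\Lambda_0^{N\nu_1}d(x_1,x_2)^{\nu_1\nu_0'}$) against the tail (via the crude bound, of size a constant times $e^{-\lambda\nu_1 N}$) makes the whole sum at most a constant times a positive power of $d(x_1,x_2)$; since $\Lambda_0^{N}d(x_1,x_2)\to 0$, all intermediate points stay below the scale where the distortion estimate holds. Taking $\nu_0'$ to be the smallest exponent and $a_0'$ the largest constant produced in the argument yields the lemma, with every constant uniform in $\omega$ and in the pair of leaves.

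\textbf{The main obstacle} is exactly this last balancing step: neither time direction controls $d(f_\omega^{-j}x_1,f_\omega^{-j}x_2)$ and $d(f_\omega^{-j}x_i,f_\omega^{-j}\psi_\omega(x_i))$ simultaneously — along the stable leaf the first distance expands under backward iteration while the second contracts — so the telescoping series cannot be estimated term-by-term with $d(x_1,x_2)$-dependence, and one must trade the two bounds at a cutoff. The reason every constant comes out uniform (unlike $C_{12}(\omega)$ in Lemma~\ref{limit of H}, which depended on the transversals) is that the transversals here are genuine stable leaves, tangent to $E^s$, so the graph-slope quantities $\kappa(\cdot)$ appearing there vanish identically.
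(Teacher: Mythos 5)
Your proof is correct and takes essentially the same route as the paper: both express $\log|\det D_x\psi_\omega|$ as a telescoping sum of one-step stable Jacobian increments and estimate the H\"older increment by trading a crude geometric-decay tail bound (from property (2), via backward contraction of unstable distances) against a sharp-but-backward-growing head bound (from the stable distortion estimate together with the H\"older continuity of $\psi_\omega$), with a cutoff chosen to balance the two. The paper packages this as a two-case argument with the cutoff $m$ defined as the crossing time of the two competing distances and a three-factor decomposition of $|\det D_x\psi_\omega|/|\det D_y\psi_\omega|$ via the iterated holonomy $f_\omega^{-m}\psi_\omega f_{\theta^{-m}\omega}^m$, whereas you split a single series at an index $N$ of order $\log(1/d(x_1,x_2))$; the resulting explicit exponent differs cosmetically ($\lambda\nu_1\varrho/(\lambda+\log\Lambda_0)$ vs. the paper's $\lambda\nu_1\varrho/\log(\beta/\eta)$), but both are positive and uniform in $\omega$, which is all that is needed.
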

\begin{proof}[Proof of Lemma \ref{property of fiberwise holonomy map}]
 In Proposition \ref{Holder continuity of holonomy map}, we have proved that $\psi_\omega$ is $(H^\prime,\varrho)-$H\"older continuous for all $\omega\in\Omega$. To prove the other statements, we need the following fact: there exists a constant $C_{13}>0$ such that for any $n\geq 0$, $x\in\tilde{\gamma}(\omega)$,
 \begin{equation}\label{def C13}
   \frac{\left|det(D_{f_\omega^{-n}x}f_{\theta^{-n}\omega}^{-1}|_{E^s(f_\omega^{-n}x,\theta^{-n}\omega)})\right|}{\left|det(D_{f_\omega^{-n}\psi_\omega(x)}f_{\theta^{-n}\omega}^{-1}|_{E^s(f_\omega^{-n}\psi_\omega(x),\theta^{-n}\omega)})\right|}\leq 1+ C_{13}e^{-\lambda n \nu_1}d(x,\psi_\omega(x))^{\nu_1}.
 \end{equation}In fact, this is a consequence of \eqref{bound of det} and the following inequality
 \begin{align*}
    & \ \ \ \ \left|det(D_{f_\omega^{-n}x}f_{\theta^{-n}\omega}^{-1}|_{E^s(f_\omega^{-n}x,\theta^{-n}\omega)})-det(D_{f_\omega^{-n}\psi_\omega(x)}f_{\theta^{-n}\omega}^{-1}|_{E^s(f_\omega^{-n}\psi_\omega(x),\theta^{-n}\omega)})\right|\\
    &\overset{\eqref{negative     time}}\leq C_2[d(f_\omega^{-n}x,f_\omega^{-n}\psi_\omega(x))+d(E^s(f_\omega^{-n}x,\theta^{-n}\omega),E^s(f_\omega^{-n}\psi_\omega(x),\theta^{-n}\omega))]\\
    &\overset{\eqref{Holder continuous bundle}}\leq C_2e^{-\lambda n} d(x,\psi_\omega(x))+C_2C_1e^{-\lambda n\nu_1}d(x,\psi_\omega(x))^{\nu_1}\\
    &\leq (C_2+C_2C_1)e^{-\lambda n \nu_1}d(x,\psi_\omega(x))^{\nu_1}.
 \end{align*}

We first prove statement (2).  As a consequence of \eqref{Def jac psi omega} and \eqref{def C13}, we have
 \begin{equation}\label{eq jac psiomega}
   \begin{split}
Jac(\psi_\omega)(x)&\leq\prod_{j=0}^{\infty} (1+C_{13}e^{-\lambda j{\nu_1}}d(x,\psi_\omega(x))^{\nu_1})\\
&\leq \sum_{j=0}^\infty C_{13}e^{-\lambda j{\nu_1}}d(x,\psi_\omega(x))^{\nu_1}\mbox{ for any }x\in\tilde{\gamma}(\omega).
   \end{split}
 \end{equation}Symmetrically, we have
 \begin{equation*}
   \frac{1}{Jac(\psi_\omega)(x)}=Jac(\psi_\omega^{-1})(\psi_\omega(x)) \leq \sum_{j=0}^\infty C_{13}e^{-\lambda j{\nu_1}}d(x,\psi_\omega(x))^{\nu_1}\mbox{ for any }x\in\tilde{\gamma}(\omega).
 \end{equation*}Therefore,
 \begin{equation*}
   |Jac(\psi_\omega)(x)|\leq \sum_{j=0}^\infty C_{13}e^{-\lambda j{\nu_1}}d(x,\psi_\omega(x))^{\nu_1}\mbox{ for any }x\in\tilde{\gamma}(\omega).
 \end{equation*}

Next, we prove the H\"older continuity of $\log Jac(\psi_\omega)$. Pick any $x,y\in \tilde{\gamma}(\omega)$, we consider two cases: (case 1) $d(x,\psi_\omega(x))\leq d(x,y)$ and (case 2) $d(x,\psi_\omega(x))>d(x,y)$.

 In (case 1), applying Proposition $\ref{Holder continuity of holonomy map}$, we have
 \begin{align}
  |\log Jac(\psi_\omega)(x)-\log Jac(\psi_\omega)(y)|  &\overset{\eqref{eq jac psiomega}}\leq \sum_{j=0}^{\infty}C_{13}e^{-\lambda j\nu_1}d(x,\psi_\omega(x))^{\nu_1}+\sum_{j=0}^{\infty}C_{13}e^{-\lambda j\nu_1}d(y,\psi_\omega(y))^{\nu_1}\nonumber\\
  & \leq\sum_{j=0}^{\infty}C_{13}e^{-\lambda j\nu_1}d(x,\psi_\omega(x))^{\nu_1}+\sum_{j=0}^{\infty}C_{13}(H^\prime)^{\nu_1}e^{-\lambda j\nu_1}d(x,\psi_\omega(x))^{\nu_1\varrho}\nonumber\\
  &\leq \left(\sum_{j=0}^{\infty}C_{13}e^{-\lambda j\nu_1}+\sum_{j=0}^{\infty}C_{13}(H^\prime)^{\nu_1}e^{-\lambda j\nu_1}\right)d(x,y)^{\nu_1\varrho}\nonumber\\
  &:=S_1d(x,y)^{\nu_1\varrho}\label{definition of S1},
 \end{align}where $S_1:=\sum_{j=0}^{\infty}C_{13}e^{-\lambda j\nu_1}+\sum_{j=0}^{\infty}C_{13}(H^\prime)^{\nu_1}e^{-\lambda j\nu_1}$.

  In (case 2), i.e., $d(x,\psi_\omega (x))>d(x,y)$. Since $\psi_\omega(x)\in W_\epsilon^u(x,\omega)$, and $x,y$ lie in local stable manifold $\tilde{\gamma}(\omega)$, there exists an integer $m>0$ such that
 \begin{align}\label{eq m-1 da}
   d(f_\omega^{-k}x,f_\omega^{-k}\psi_\omega(x)) &>d(f_\omega^{-k}x,f_\omega^{-k}y)\ \mbox{for }0\leq k\leq m-1,
 \end{align}and
 \begin{equation}\label{eq m xiao}
   d(f_\omega^{-m}x,f_\omega^{-m}\psi_\omega(x))\leq d(f_\omega^{-m}x,f_\omega^{-m}y).
 \end{equation}Note that in \eqref{eq m-1 da},
 \begin{equation}\label{still close}
   d(f_\omega^{-k}x,f_\omega^{-k}y)<d(f_\omega^{-k}x,f_\omega^{-k}\psi_\omega(x)) \leq d(x,\psi_\omega(x))\leq \epsilon,\mbox{ for }0\leq k\leq m-1.
 \end{equation}  Denote $$\beta:=\sup\left\{\frac{d(f_\omega^{-1}x,f_\omega^{-1}y)}{d(x,y)}:\ d(x,y)\leq \epsilon,\ \omega\in\Omega\right\}\in(1,\infty),$$ and $$\eta:=\inf\left\{\frac{d(f_\omega^{-1}x,f_\omega^{-1}y)}{d(x,y)}:\ d(x,y)\leq \epsilon,\ \omega\in\Omega\right\}\in (0,1),$$ then by \eqref{eq m xiao}, we have
  \begin{equation*}
    \eta^md(x,\psi_\omega(x))\leq \beta^md(x,y).
  \end{equation*}As a consequence, $m\geq (\log \frac{d(x,\psi_\omega(x))}{d(x,y)})/\log(\beta/\eta)$, and
  \begin{equation}\label{eq e -m leq}
    e^{-m}\leq d(x,y)^{\frac{1}{\log(\beta/\eta)}}d(x,\psi_\omega(x))^{-\frac{1}{\log{\beta/\eta}}}.
  \end{equation}
  Note that
  \begin{equation}\label{eq jac over jac}
    \frac{Jac(\psi_\omega)(x)}{Jac(\psi_\omega)(y)}=\frac{|\det D_xf_\omega^{-m}|_{E^s(x,\omega)}|}{|\det D_yf_\omega^{-m}|_{E^s(y,\omega)}|}\cdot\frac{|\det D_{\psi_\omega(y)}f_\omega^{-m}|_{E^s(\psi_\omega(y),\omega)}|}{|\det D_{\psi_\omega(x)}f_\omega^{-m}|_{E^s(\psi_\omega(x),\omega)}|}\cdot\frac{Jac(f_\omega^{-m}\psi_\omega f_{\theta^{-m}\omega}^m)(f_\omega^{-m}x)}{Jac(f_\omega^{-m}\psi_\omega f_{\theta^{-m}\omega}^m)(f_\omega^{-m}y)},
  \end{equation}where $f_\omega^{-m}\psi_\omega f_{\theta^{-m}\omega}^m=f_\omega^{-m}\circ\psi_\omega\circ f_{\theta^{-m}\omega}^m$ is the holonomy map between $f_\omega^{-m}\tilde{\gamma}(\omega)$ and $f_\omega^{-m}\gamma(\omega)$, and we omit the composition notation for short. Therefore, we need to estimate the right side of \eqref{eq jac over jac}. Similar as the proof of \eqref{def C13}, we have
  \begin{equation*}
  \left|\log|\det D_xf_\omega^{-m}|_{E^s(x,\omega)}| -\log|\det D_yf_\omega^{-m}|_{E^s(y,\omega)}|\right| \leq \sum_{k=0}^{m-1}C_{13}d(f_\omega^{-k}x,f_\omega^{-k}y)^{\nu_1},
  \end{equation*}and therefore,
  \begin{align*}
    &\ \ \ \ \left|\log|\det D_xf_\omega^{-m}|_{E^s(x,\omega)}| -\log|\det D_yf_\omega^{-m}|_{E^s(y,\omega)}|\right| \nonumber\\
    &
    \leq \sum_{k=0}^{m-1}C_{13}e^{-\lambda(m-1-k)\nu_1}d(f_\omega^{-(m-1)}x,f_\omega^{-(m-1)}y)^{\nu_1}\\
    &\overset{\eqref{eq m-1 da}}\leq \left(\sum_{k=0}^{m-1}C_{13}e^{-\lambda(m-1-k)\nu_1}\right)d(f_\omega^{-(m-1)}x,f_\omega^{-(m-1)}\psi_\omega(x))^{\nu_1}\\
    &\leq \left(\sum_{k=0}^{m-1}C_{13}e^{-\lambda(m-1-k)\nu_1}\right)e^{-\lambda(m-1)\nu_1}d(x,\psi_\omega(x))^{\nu_1}.
  \end{align*}Denote $S_2:=(\sum_{k=0}^{\infty}C_{13}e^{-k\lambda\nu_1})e^{\lambda\nu_1}$, and note that $\frac{\lambda}{\log(\beta/\eta)}<1$, then
  \begin{align*}
     &\left|\log|\det D_xf_\omega^{-m}|_{E^s(x,\omega)}| -\log|\det D_yf_\omega^{-m}|_{E^s(y,\omega)}|\right| \\
     \leq & S_2e^{-\lambda m\nu_1}d(x,\psi_\omega(x))^{\nu_1}\overset{\eqref{eq e -m leq}}\leq S_2d(x,\psi_\omega(x))^{\nu_1-\frac{\lambda\nu_1}{\log(\beta/\eta)}}d(x,y)^{\frac{\lambda\nu_1}{\log(\beta/\eta)}}
\leq S_2d(x,y)^{\frac{\lambda\nu_1}{\log(\beta/\eta)}}.
  \end{align*}
  Similar to above, we have
  \begin{align*}
     & \ \ \ \ \left|\log |\det D_{\psi_\omega(y)}f_\omega^{-m}|_{E^s(\psi_\omega(y),\omega)}|-\log|\det D_{\psi_\omega(x)}f_\omega^{-m}|_{E^s(\psi_\omega(x),\omega)}| \right|\\
     &\leq \sum_{k=0}^{m-1}C_{13}d(f_\omega^{-k}\psi_\omega(x),f_\omega^{-k}\psi_\omega(y))^{\nu_1}\leq \sum_{k=0}^{m-1}C_{13}e^{-\lambda(m-1-k)\nu_1}d(f_\omega^{-(m-1)}\psi_\omega(x),f_\omega^{-(m-1)}\psi_\omega(y))^{\nu_1}
\end{align*}By \eqref{still close}, both $d(f_\omega^{-(m-1)}(x),f_\omega^{-(m-1)}(y))< \epsilon$ and $d(f_\omega^{-(m-1)}(x),f_\omega^{-(m-1)}(\psi_\omega(x)))<\epsilon$, which implies $d(f_\omega^{-(m-1)}(\psi_\omega(x)),f_\omega^{-(m-1)}(\psi_\omega(y)))<H^\prime d(f_\omega^{-(m-1)}(x),f_\omega^{-(m-1)}(y))^\varrho$ by the H\"older continuity of local product structure. Therefore, we have
\begin{align*}
&\left|\log |\det D_{\psi_\omega(y)}f_\omega^{-m}|_{E^s(\psi_\omega(y),\omega)}|-\log|\det D_{\psi_\omega(x)}f_\omega^{-m}|_{E^s(\psi_\omega(x),\omega)}| \right|\\
 \leq    & \sum_{k=0}^{m-1}C_{13}e^{-\lambda(m-1-k)\nu_1}(H^\prime)^{\nu_1}d(f_\omega^{-(m-1)}x,f_\omega^{-(m-1)}y)^{\nu_1\varrho}\\
\overset{\eqref{eq m-1 da}}\leq     & \sum_{k=0}^{m-1}C_{13}e^{-\lambda(m-1-k)\nu_1}(H^\prime)^{\nu_1}d(f_\omega^{-(m-1)}x,f_\omega^{-(m-1)}\psi_\omega(x))^{\nu_1\varrho}\\
\leq &\sum_{k=0}^{\infty}C_{13}e^{-\lambda k\nu_1}(H^\prime)^{\nu_1}e^{-\lambda(m-1)\nu_1\varrho}d(x,\psi_\omega(x))^{\nu_1\varrho}\\
    :=& S_3e^{-\lambda m \nu_1\varrho}d(x,\psi_\omega(x))^{\nu_1\varrho}\\
   \overset{\eqref{eq e -m leq}}\leq  & S_3d(x,\psi_\omega(x))^{\nu_1\varrho-\frac{\lambda\nu_1\varrho}{\log(\beta/\eta)}}d(x,y)^{\frac{\lambda\nu_1\varrho}{\log(\beta/\eta)}}\\
   \leq  &S_3 d(x,y)^{\frac{\lambda\nu_1\varrho}{\log(\beta/\eta)}},
  \end{align*}where   $S_3:=\sum_{k=0}^{\infty}C_{13}e^{-k\lambda\nu_1}H^{\nu_1}e^{\lambda \nu_1\varrho}$.
 Note that $f_\omega^m\circ \psi_\omega\circ f_{\theta^{-m}\omega}^m$ is the holonomy map from $f_\omega^{-m}\tilde{\gamma}(\omega)$ to $f_\omega^{-m}\gamma(\omega)$ and satisfy \eqref{eq m xiao}, hence similar to $(\ref{definition of S1})$, we have
  \begin{align*}
    &\ \ \ \ \left|\log Jac(f_\omega^{-m}\psi_\omega f_{\theta^{-m}\omega}^m)(f_\omega^{-m}x)-\log Jac(f_\omega^{-m}\psi_\omega f_{\theta^{-m}\omega}^m)(f_\omega^{-m}y)\right|\\
    &\leq \sum_{j=0}^{\infty}C_{13}e^{-\lambda j\nu_1}d(f_\omega^{-m}x,f_\omega^{-m}\psi_\omega(x))^{\nu_1}+\sum_{j=0}^{\infty}C_{13}e^{-\lambda j\nu_1}d(f_\omega^{-m}y,f_\omega^{-m}\psi_\omega(y))^{\nu_1}\\
    &\leq \sum_{j=0}^{\infty}C_{13}e^{-\lambda j\nu_1}e^{-\lambda  \nu_1}\left(d(f_\omega^{-(m-1)}x,f_\omega^{-(m-1)}\psi_\omega(x))^{\nu_1}+ d(f_\omega^{-(m-1)}y,f_\omega^{-(m-1)}\psi_\omega(y))^{\nu_1}\right).\\
  \end{align*}Notice \eqref{still close}, $f_\omega^{-(m-1)}x $ and $f_\omega^{-(m-1)}y$ are still close, and we can use the H\"older continuity of local product structure to obtain $$d(f_\omega^{-(m-1)}y,f_\omega^{-(m-1)}\psi_\omega(y))<H^\prime d(f_\omega^{-(m-1)}x,f_\omega^{-(m-1)}\psi_\omega(x))^\varrho.$$ We continue the estimate
  \begin{align*}
   &\ \ \ \ \left|\log Jac(f_\omega^{-m}\psi_\omega f_{\theta^{-m}\omega}^m)(f_\omega^{-m}x)-\log Jac(f_\omega^{-m}\psi_\omega f_{\theta^{-m}\omega}^m)(f_\omega^{-m}y)\right|\\
    &\leq \sum_{j=0}^{\infty}C_{13}e^{-\lambda j\nu_1}e^{-\lambda\nu_1}\left(d(f_\omega^{-(m-1)}x,f_\omega^{-(m-1)}\psi_\omega(x))^{\nu_1}+(H^\prime)^{\nu_1} d(f_\omega^{-(m-1)}x,f_\omega^{-(m-1)}\psi_\omega(x))^{\nu_1\varrho}\right)\\
    &\leq \sum_{j=0}^{\infty}C_{13}e^{-\lambda (j+1)\nu_1}(1+(H^\prime)^{\nu_1})e^{-(m-1)\lambda\nu_1\varrho}d(x,\psi_\omega(x))^{\nu_1\varrho}\\
    &:= S_4e^{-\lambda m\nu_1\varrho}d(x,\psi_\omega(x))^{\nu_1\varrho}\\
    &\overset{\eqref{eq e -m leq}}\leq S_4d(x,\psi_\omega(x))^{\nu_1\varrho-\frac{\lambda\nu_1\varrho}{\log(\beta/\eta)}}d(x,y)^{\frac{\lambda\nu_1\varrho}{\log(\beta/\eta)}}\\
    &\leq  S_4d(x,y)^{\frac{\lambda\nu_1\varrho}{\log(\beta/\eta)}},
  \end{align*}where $S_4:=\sum_{j=0}^{\infty}C_{13}e^{-\lambda (j+1)\nu_1}(1+(H^\prime)^{\nu_1} )e^{\lambda\nu_1\varrho}.$ Hence by \eqref{eq jac over jac} and the above estimates, in (case 2), we obtain
  \begin{equation*}
    \left|\log Jac(\psi_\omega)(x)-\log Jac(\psi_\omega)(y)\right|\leq \max\{S_2,S_3,S_4\}d(x,y)^{\frac{\lambda\nu_1\varrho}{\log(\beta/\eta)}}.
  \end{equation*}
  Now we define $a_0^\prime:=\max\{H^\prime, S_1,S_2,S_3,S_4,\sum_{j=0}^\infty C_{13}e^{-\lambda j{\nu_1}}\}$ and $\nu_0:=\min\{\frac{\lambda\nu_1\varrho}{\log(\beta/\eta)},\nu_1\}=\frac{\lambda\nu_1\varrho}{\log(\beta/\eta)}$.
 Then property $(1)$ and $(2)$ are proved.

 Property $(3)$ follows  the definition of holonomy map and contraction on local unstable manifolds when reverse time. The proof of Lemma \ref{property of fiberwise holonomy map} is complete.
 \end{proof}

\subsection{Measure disintegration on rectangles}\label{subsection 3.7}
We call $R(\omega)\subset M$ a rectangle if it is foliated by local stable manifolds and it has the local product structure. By the lemma \ref{property of fiberwise holonomy map}, for any rectangle $ R(\omega)$ in small scale, the holonomy map between stable manifolds (resp. unstable manifolds) lying in $R(\omega)$ is absolutely continuous. 
As a consequence of this absolute continuity and Fubini's theorem, in \cite[Chapter III section 6]{PDQM}, the authors proved that on each local stable leaves, the disintegration of the Riemannian volume measure is equivalent to the inherited Riemannian measure. In here, we explore the corresponding Radon-Nikodym derivative of these two measures. The proof of this statement is parallel to the treatment of \cite[Theorem 7.8]{Pesin2004}.

\begin{proposition}\label{proposition 7.1}
    There exist constant $a_0^{\prime\prime}$ only depending on the system such that for each $\omega\in\Omega$ and any rectangle $R(\omega)=[W_{\epsilon}^u(x_0,\omega),W_\epsilon^s(x_0,\omega)]$, there exists a measurable function $H(\omega):R(\omega)\rightarrow\mathbb{R}^+$ such that for any bounded measurable function $\psi:M\rightarrow\mathbb{R}$, there is a disintegration
  \begin{equation}\label{def of H omega}
    \int_{R(\omega)}\psi(x)dm(x)=\int\int_{\gamma(\omega)}\psi(x)H(\omega)|_{\gamma(\omega)}(x)dm_{\gamma(\omega)}(x)d\tilde{m}_{R(\omega)}(\gamma(\omega)),
  \end{equation} where $\gamma(\omega)$ denote the stable foliations in $R(\omega)$, and $\tilde{m}_{R(\omega)}$ is the quotient measure induced by Riemannian volume measure in the space of local stable leaves in $R(\omega)$. Moreover, for any local stable leaves $\gamma(\omega)\subset R(\omega)$, one has
  \begin{equation*}
    |\log H(\omega)(x)-\log H(\omega)(y)|\leq a_0^{\prime\prime}d(x,y)^{\nu_0},\ \forall x,y\in\gamma(\omega).
  \end{equation*}
\end{proposition}
\begin{proof}[Proof of Proposition \ref{proposition 7.1}]
  By using the exponential map, we can pretend $R(\omega)$ to be a subset of $T_{x_0}M$. Moreover, $R(\omega)\subset E^u(x_0,\omega)(r)\oplus E^u(x_0,\omega)^\perp(r)$ for some $r>0$, where $E^u(x_0,\omega)(r)$ and $ E^u(x_0,\omega)^\perp(r)$ are $r$-disks. We consider the following partition of $E^u(x_0,\omega)(r)\oplus E^u(x_0,\omega)^\perp(r)$
  \begin{equation*}
    \mathcal{D}_\omega=\{D_\omega(\eta)=\eta+E^u(x_0,\omega)(r)|\ \eta\in E^u(x_0,\omega)^\perp(r)\}.
  \end{equation*}We denote $m_{D_\omega(\eta)}$ and $m_{E^u(x_0,\omega)^\perp}$ to be the Lebesgue measure on $D_\omega(\eta)$ and $E^u(x_0,\omega)^\perp$ respectively. Then for any Borel measurable set $A\subset R(\omega)$, we have
  \begin{align*}
    m(A) &=\int_{E^u(x_0,\omega)^\perp}\int_{D_\omega(\eta)}1_A(\xi,\eta)dm_{D_\omega(\eta)}(\xi)dm_{E^u(x_0,\omega)^\perp}(\eta).
  \end{align*}We may shrink $\epsilon$ and $r$ to make sure that $D_\omega(\eta)$ are transverse to the local stable leaves for all $\eta\in  E^u(x_0,\omega)^\perp(r)$. We denote $\psi_{\omega;0,\eta}^s:D_\omega(0)\to D_\omega(\eta)$ to be the holonomy map induced by the local stable manifolds. Denote $\xi=\psi_{\omega;0,\eta}^s(\xi^\prime)$ for $\xi^\prime\in D_\omega(0)$, then we have
  \begin{equation*}
    m(A)=\int_{E^u(x_0,\omega)^\perp}\int_{D_\omega(0)}1_A(\xi,\eta)\cdot Jac(\psi_{\omega;0,\eta}^s)(\xi^\prime)dm_{D_\omega(0)}(\xi^\prime)dm_{E^u(x_0,\omega)^\perp}(\eta).
  \end{equation*}Applying Fubini's theorem, we obtain
  \begin{align*}
  m(A)=\int_{D_\omega(0)}\int_{E^u(x_0,\omega)^\perp}1_A(\xi,\eta)\cdot Jac(\psi_{\omega;0,\eta}^s)(\xi^\prime)dm_{E^u(x_0,\omega)^\perp}(\eta)dm_{D_\omega(0)}(\xi^\prime).
  \end{align*}By the local stable manifold theorem, there exists a $C^2$ function $h_{(x_0,\omega)}^s:E^u(x_0,\omega)^\perp(r)\to W_r^s(x_0,\omega)$. By changing of coordinate $\eta^\prime=h_{(x_0,\omega)}^s(\eta)$ for $\eta\in E^u(x_0,\omega)^\perp(r)$, we have
  \begin{equation*}
    m(A)=\int_{D_\omega(0)}\int_{W^s_r(x_0,\omega)}1_A(\xi,\eta)\cdot Jac(\psi_{\omega;0,\eta}^s)(\xi^\prime)\cdot \frac{1}{Jac(h_{(x_0,\omega)}^s)(\eta)} dm_{W^s_r(x_0,\omega)}(\eta^\prime)dm_{D_\omega(0)}(\xi^\prime).
  \end{equation*}Denote $\psi_{\omega;\xi^\prime,0}^u:W_r^s(\xi^\prime,\omega)\to W_r^s(x_0,\omega)$ to be the holonomy map induced by the local unstable manifolds, where $W_r^s(\xi^\prime,\omega)$ is the local stable manifold passing through $\xi^\prime\in E^u(x_0,\omega)$. By change of coordinate $\zeta=(\psi_{\omega;\xi^\prime;0}^u)^{-1}(\eta^\prime)$ for $\eta^\prime\in W_r^s(x_0,\omega)$, we have
  \begin{equation*}
     m(A)=\int_{D_\omega(0)}\int_{W^s_r(\xi,\omega)}1_A(\xi,\eta)\cdot Jac(\psi_{\omega;0,\eta}^s)(\xi^\prime)\cdot \frac{Jac(\psi_{\omega;\xi^\prime,0}^u)(\zeta)}{Jac(h_{(x_0,\omega)}^s)(\eta)} dm_{W^s_r(\xi^\prime,\omega)}(\zeta)dm_{D_\omega(0)}(\xi^\prime),
  \end{equation*}which implies that in \eqref{def of H omega},
  \begin{itemize}
      \item the space of local stable leaves is identified with $D_\omega(0)$ and  $\tilde{m}_{R(\omega)}=m_{D(\omega)(0)}$.
\item $ H(\omega)(\zeta)=Jac(\psi_{\omega;0,\eta}^s)(\xi^\prime)\cdot \frac{Jac(\psi_{\omega;\xi^\prime,0}^u)(\zeta)}{Jac(h_{(x_0,\omega)}^s)(\eta)}$ for $\zeta\in W_r^s(\xi^\prime,\omega)$, where $\eta=(h_{(x_0,\omega)}^s)^{-1}(\psi_{\omega;\xi^\prime,0}^u(\zeta))$;
  \end{itemize}
 Next, let us verify the H\"older continuity of $\log H(\omega)$ on each local stable manifold $W_r^s(\xi^\prime,\omega)$. Pick any $\zeta_1,\zeta_2\in W_r^s(\xi^\prime,\omega)$ and corresponding $\eta_1=(h_{(x_0,\omega)}^s)^{-1}(\psi_{\omega;\xi^\prime,0}^u(\zeta_1))$, $\eta_2=(h_{(x_0,\omega)}^s)^{-1}(\psi_{\omega;\xi^\prime,0}^u(\zeta_2))$. By  \eqref{expression of jacobian}, we have
  \begin{equation*}
    Jac(\psi_{\omega;0,\eta}^s)(\xi^\prime)=\lim_{n\rightarrow\infty}\frac{|\det D_{\xi^\prime} f_\omega^n|_{E^u(x_0,\omega)}|}{|\det D_{\psi_{\omega;0,\eta}^s(\xi^\prime)}f_\omega^n|_{E^u(x_0,\omega)}|}.
  \end{equation*} We notice that $\zeta_1=\psi_{\omega;0,\eta_1}^s(\xi^\prime)$ and $\zeta_2=\psi_{\omega;0,\eta_2}^s(\xi^\prime)$. Therefore,
  \begin{align*}
    \frac{Jac(\psi_{\omega;0,\eta_1}^s)(\xi^\prime)}{Jac(\psi_{\omega;0,\eta_2}^s)(\xi^\prime)}&=\lim_{n\to\infty}\frac{|\det D_{\psi_{\omega;0,\eta_2}^s(\xi^\prime)}f_\omega^n|_{E^u(x_0,\omega)}|}{|\det D_{\psi_{\omega;0,\eta_1}^s(\xi^\prime)}f_\omega^n|_{E^u(x_0,\omega)}|}=\lim_{n\to\infty}\frac{|\det D_{\zeta_2}f_\omega^n|_{E^u(x_0,\omega)}|}{|\det D_{\zeta_1}f_\omega^n|_{E^u(x_0,\omega)}|}\\
    &\leq \prod_{i=0}^\infty(1+C_{10}C_2d(f_\omega^i(\zeta_1),f_\omega^i(\zeta_2)))\\
    &\leq  \prod_{i=0}^\infty(1+C_{10}C_2e^{-\lambda i}d(\zeta_1,\zeta_2)),
  \end{align*}where $C_2$ and $C_{10}$ comes from Lemma \ref{detDfE-detDfE} and $\eqref{bound of det}$. Hence, we have
  \begin{equation}\label{log H 1}
    |\log Jac(\psi_{\omega;0,\eta_1}^s)(\xi^\prime)-\log Jac(\psi_{\omega;0,\eta_2}^s)(\xi^\prime)|\leq \frac{C_{10}C_2}{1-e^{-\lambda}}d(\zeta_1,\zeta_2).
  \end{equation}Note that  $\psi_{\omega;\xi^\prime,0}^u:W_r^s(\xi^\prime,\omega)\to W_r^s(x_0,\omega)$ is the holonomy map between local stable manifolds. By Lemma \ref{property of fiberwise holonomy map}, we have
  \begin{equation}\label{log H 2}
    |\log Jac(\psi_{\omega;\xi^\prime,0}^u)(\zeta_1)-\log Jac(\psi_{\omega;\xi^\prime,0}^u)(\zeta_2)|\leq a_0^\prime d(\zeta_1,\zeta_2)^{\nu_0}.
  \end{equation}By Lemma \ref{lemma stable unstable}, there exists a constant $C_s>0$ such that
  \begin{equation}\label{log H 3}
  \begin{split}
    &|\log Jac(h_{(x_0,\omega)}^s)(\eta_1)-\log Jac(h_{(x_0,\omega)}^s)(\eta_2)|\\
  \leq
    & C_s\|\eta_1-\eta_2\|
    \leq  C_s d(\psi_{\omega;\xi^\prime,0}^u(\zeta_1),\psi_{\omega;\xi^\prime,0}^u(\zeta_2))
    \leq C_s a_0^\prime d(\zeta_1,\zeta_2)^{\nu_0}.
  \end{split}
  \end{equation}Inequalities \eqref{log H 1}, \eqref{log H 2} and \eqref{log H 3} imply that
  \begin{equation*}
    |\log H(\omega)(\zeta_1)-\log H(\omega)(\zeta_2)|\leq \max\left\{\frac{C_{10}C_2}{1-e^{-\lambda}},a_0^\prime,C_sa_0^\prime\right\}d(\zeta_1,\zeta_2)^{\nu_0}:=a_0^{\prime\prime}d(\zeta_1,\zeta_2)^{\nu_0}.
  \end{equation*}The proof of Proposition \ref{proposition 7.1} is complete.
\end{proof}
\section{Proof of Main Result}\label{section 4}
In this section, we prove Theorem \ref{random exponential decay}. The proof is based on the study of the fiber transfer operator $L_{\omega}$, which is defined by
\begin{equation}\label{L varphi}
  L_{\omega}\varphi: M \rightarrow\mathbb{R},\ (L_{\omega}\varphi)(x):=\frac{\varphi((f_{\omega})^{-1}x)}{|\det D_{(f_{\omega})^{-1}(x)}f_{\omega}|}
\end{equation}for any bounded and measurable observable $\varphi:M\rightarrow \mathbb{R}$. We denote
\begin{equation*}
  L_{\omega}^n:=L_{\theta^{n-1}\omega}\circ\cdots\circ L_{\theta\omega}\circ L_{\omega}\mbox{ for any }\omega\in\Omega \mbox{ and }n\in\mathbb{N}.
\end{equation*}
We first construct the suitable convex cone of observables $C_\omega$ on each fiber in Subsection \ref{subsection 4.1}. Then in Subsection \ref{subsection 4.2}, we prove that the transfer operator $L_\omega$ maps $C_\omega$ into $C_{\theta\omega}$. Moreover, $L^N_\omega C_\omega$ has finite diameter with respect to the projective metric on the cone $C_{\theta^N\omega}$, and this diameter is independent of $\omega$, where $N$ comes from the topological mixing on fibers property. Birkhoff's inequality implies the contraction of $L^N_\omega:C_\omega\to C_{\theta^N\omega}$ for all $\omega\in\Omega$. In Subsection \ref{subsection 4.3}, we explore the relationship between the unique random SRB measure and the operator $L_{\theta^{-n}\omega}^n$. We show  the exponential decay of past and future correlations in Subsection \ref{subsection 4.4} and Subsection \ref{subsection 4.5} respectively by using the contraction of $L^n_{\theta^{-n}\omega}$ and $L_\omega^n$ for $n\geq N$.

Before starting the proof, we recall some constants that will be needed later on.
Let $K_1$ be the constant in Lemma \ref{Lip jac Es} such that  for any $\omega\in\Omega$, $z\in M$ ,
\begin{equation}\label{eq def K1}
  \left|\log|\det(D_xf_\omega|_{E^s(x,\omega)})|-\log|\det(D_yf_\omega|_{E^s(y,\omega)})|\right|\leq K_1d(x,y)\mbox{ for any }x,y\in W_\epsilon^s(z,\omega).
\end{equation}
By the compactness of $\Omega$ and $M$ and the continuity of $f_\omega\in$Diff$^2(M)$ on $\omega$, there exists a constant $K_2>0$ such that for any $x,y\in M$,
\begin{equation}\label{Lip constant of log det Dxfomega}
  \left|\log|\det D_xf_\omega|-\log |\det D_yf_\omega|\right|\leq K_2d(x,y).
\end{equation}
Let $a_0:=\max\{ a_0^\prime, a_0^{\prime\prime}\}$, then Lemma \ref{property of fiberwise holonomy map} and Proposition \ref{proposition 7.1} hold for constants $(a_0,\nu_0)$.
This $\nu_0$ is the desired $\nu_0$ in the statement of Theorem \ref{random exponential decay}.
Now Let's pick any $\kappa,\nu\in (0,1)$ satisfying $0<\kappa+\nu<\nu_0$ as in the statement of Theorem \ref{random exponential decay}, and pick $\kappa_1\in(0,1)$ an auxiliary constant closing to $1$ such that
\begin{equation}\label{mu+nuleq nu0}
  0<\kappa+\nu<\kappa_1\nu_0.
\end{equation}
Now we are going to prove Theorem \ref{random exponential decay} for fixed $\kappa,\nu$.
\subsection{Construction of Birkhoff cone}\label{subsection 4.1}
In this subsection, we will first construct convex cones of density functions on each local stable leaf. With the help of these convex cones of density functions on each local stable leaf, we can define our desired  convex cone of observables on each fiber. Our construction of convex cone of obervables is inspired by \cite{Liv95A} and \cite{Viana}.
 The definitions of the convex cone in a topological vector space, projective metric on the convex cone, and Birkhoff's inequality are recalled in the Appendix \ref{section convex cone}, which also can be found in \cite{Liv95A,Viana,Cas17}.

Denote $m^s$ to be the inherited Riemannian volume measure on local stable manifolds. Then by the continuity of local stable manifolds, there exists a constant  $A(\epsilon)>0$ such that $A(\epsilon)\leq m^s(W^s_\epsilon(x,\omega))$ for any $(x,\omega)\in M\times \Omega$.
In the following, for any $\omega\in\Omega$, we say that the local stable leaf $\gamma(\omega)\subset M$ having size between $\frac{A(\epsilon)}{4J^2}$ and $A(\epsilon)$ if $\gamma(\omega)$ is connected, and $m^s(\gamma(\omega))\in(\frac{A(\epsilon)}{4J^2},A(\epsilon))$, where the notation $\gamma(\omega)$ indicates that this local stable leaf is located on the fiber $M\times\{\omega\}$. Since $\gamma(\omega)$ is a connected curve, this means that the length of local stable manifold has length bounded from below and above. Applying the continuity of local stable manifolds again, there exists $\epsilon^*>0$ such that for any $\gamma(\omega)$ of size between $\frac{A(\epsilon)}{4J^2}$ and $A(\epsilon)$, there exists  $(x,\omega)\in M\times \Omega$ satisfying
\begin{equation}\label{eq def epsilon star}
W_{\epsilon^*}^s(x,\omega)\subset \gamma(\omega).
\end{equation}

For some constant $a>0$, and a local stable leaf $\gamma(\omega)$ having size between $\frac{A(\epsilon)}{4J^2}$ and $A(\epsilon)$, we define $D(a,\kappa,\gamma(\omega))$ to be the collection of all bounded and measurable function $\rho(\cdot,\omega):\gamma(\omega)\rightarrow \mathbb{R}$ satisfying the following conditions:
\begin{enumerate}[label=\textbf{(D\arabic*)}]
  \item \label{D1} $\rho(x,\omega)>0$ for $x\in\gamma(\omega)$;
  \item \label{D2} for any $x,y\in \gamma(\omega)$, $|\log\rho(x,\omega)-\log\rho(y,\omega)|\leq ad(x,y)^\kappa$.
\end{enumerate}
We note that condition \ref{D2} implies that any $\rho(\cdot,\omega)\in D(a,\kappa,\gamma(\omega))$ is continuous on $\gamma(\omega)$.  In the left of this paper, we use the notation $\rho(\omega)$ to represent $\rho(\cdot,\omega)\in D(a,\kappa,\gamma(\omega))$ for convenience.
\begin{lemma}\label{def convex cone density}
  $D(a,\kappa,\gamma(\omega))$ is a convex cone (see Definition \ref{definition convex cone} in the Appendix).
\end{lemma}
\begin{proof}[Proof of Lemma \ref{def convex cone density}]
 For any $\rho(\omega)\in D(a,\kappa,\gamma(\omega))$ and $t\in \mathbb{R}^+$, then $t\rho(x,\omega)>0$ for $x\in \gamma(\omega)$ and
 \begin{equation*}
   |\log t\rho(x,\omega)-\log t\rho(y,\omega)|=|\log\rho(x,\omega)-\log\rho(y,\omega)|\leq ad(x,y)^\kappa.
 \end{equation*}Hence $t\rho(\omega)\in D(a,\kappa,\gamma(\omega))$.

For any $\rho_i(\omega)\in D(a,\kappa,\gamma(\omega))$ and $t_i\in \mathbb{R}^+$ for $i=1,2$, it is clear that $t_1\rho_1(x,\omega)+t_2\rho_2(x,\omega)>0$ for $x\in \gamma(\omega)$. By \ref{D2}, we have
 \begin{equation*}
   e^{-ad(x,y)^\kappa}\leq  \frac{\rho_i(x,\omega)}{\rho_i(y,\omega)}\leq e^{ad(x,y)^\kappa}\mbox{ for }x,y\in\gamma(\omega)\mbox{ and }i=1,2.
 \end{equation*}As a consequence, we have
 \begin{equation*}
  e^{-ad(x,y)^\kappa} \leq \frac{t_1\rho_1(x,\omega)+t_2\rho_2(x,\omega)}{t_1\rho_1(y,\omega)+t_2\rho_2(y,\omega)}\leq e^{ad(x,y)^\kappa}\mbox{ for }x,y\in\gamma(\omega),
 \end{equation*}which is equivalent to
 \begin{equation*}
   |\log(t_1\rho_1(x,\omega)+t_2\rho_2(x,\omega))-\log (t_1\rho_1(y,\omega)+t_2\rho_2(y,\omega))|\leq ad(x,y)^\kappa.
 \end{equation*}Hence $t_1\rho_1(\omega)+t_2\rho_2(\omega)\in D(a,\kappa,\gamma(\omega))$.

 To check the last condition for convex cone, we pick any $\rho(\omega)\in \overline{D(a,\kappa,\gamma(\omega))}\cap -\overline{D(a,\kappa,\gamma(\omega))}$. Here the closure means the ``integral closure" in a weaker sense, not the closure in the topological vector space, see Def \ref{definition convex cone}. Then there exists $\rho_i(\omega)\in D(a,\kappa,\gamma(\omega))$ and $t_n^i\downarrow 0$ for $i=1,2$ such that $\rho(\omega)+t_n^1\rho_1(\omega)\in D(a,\kappa,\gamma(\omega))$ and $-\rho(\omega)+t_n^2\rho_2(\omega)\in D(a,\kappa,\gamma(\omega))$. By \ref{D1}, for any $x\in\gamma(\omega)$,  we have $\rho(x,\omega)+t_n^1\rho_1(x,\omega)>0$ and $-\rho(x,\omega)+t_n^2\rho_2(x,\omega)>0$. Letting $n\to\infty$, we arrive $\rho(x,\omega)\geq 0$ and $\rho(x,\omega)\leq 0$. Therefore, one must have $\rho(\omega)\equiv 0$. By definition, $D(a,\kappa,\gamma(\omega))$ is a convex cone.
\end{proof}

 Next, we will introduce the projective metric $d_{\gamma(\omega)}^{a,\kappa}$ on $D(a,\kappa,\gamma(\omega))$ according to Def. \ref{projective metric for cone}.
For any $\rho_1(\omega),\rho_2(\omega)\in D(a,\kappa,\gamma(\omega))$, we define
\begin{align*}
  \alpha_{\gamma(\omega)}^{a,\kappa}(\rho_1(\omega),\rho_2(\omega)) &:=\sup\{t>0:\ \rho_2(\omega)-t\rho_1(\omega)\in D(a,\kappa,\gamma(\omega))\};  \\
  \beta_{\gamma(\omega)}^{a,\kappa}(\rho_1(\omega),\rho_2(\omega)) &=\inf\{s>0:\ s\rho_1(\omega)-\rho_2(\omega)\in D(a,\kappa,\gamma(\omega))\},
\end{align*}with the convention that $\sup\emptyset=0$ and $\inf\emptyset=+\infty$. Now let us compute these two quantities. For any $t>0$ such that $\rho_2(\omega)-t\rho_1(\omega)\in D(a,\kappa,\gamma(\omega))$, then by \ref{D1}, one must have
\begin{align*}
  \rho_2(x,\omega)-t\rho_1(x,\omega) &>0\mbox{ for all }x\in \gamma(\omega),
\end{align*}which is equivalent to $t<\frac{\rho_2(x,\omega)}{\rho_1(x,\omega)}$ for all $x\in\gamma(\omega)$; and by \ref{D2} one also must have
\begin{equation*}
  e^{-ad(x,y)^\kappa}\leq \frac{\rho_2(x,\omega)-t\rho_1(x,\omega)}{\rho_2(y,\omega)-t\rho_1(y,\omega)}\leq e^{ad(x,y)^\kappa}\mbox{ for all }x,y\in\gamma(\omega),
\end{equation*}which is equivalent to
\begin{equation*}
  t\leq\inf_{x,y\in\gamma(\omega), x\not=y}\left\{\frac{\exp(ad(x,y)^\kappa)\rho_2(x,\omega)-\rho_2(y,\omega)}{\exp(ad(x,y)^\kappa)\rho_1(x,\omega)-\rho_1(y,\omega)},\frac{\exp(ad(x,y)^\kappa)\rho_2(y,\omega)-\rho_2(x,\omega)}{\exp(ad(x,y)^\kappa)\rho_1(y,\omega)-\rho_1(x,\omega)}\right\}.
\end{equation*}Therefore, we have
\begin{equation*}
  t\leq \inf\left\{\frac{\rho_2(x,\omega)}{\rho_1(x,\omega)},\frac{\exp(ad(x,y)^\kappa)\rho_2(x,\omega)-\rho_2(y,\omega)}{\exp(ad(x,y)^\kappa)\rho_1(x,\omega)-\rho_1(y,\omega)}:\ x,y\in\gamma(\omega),x\not=y\right\}.
\end{equation*}The case that $\{t>0:\rho_2(\omega)-t\rho_1(\omega)\in D(a,\kappa,\gamma(\omega))\}=\emptyset$ is equivalent to the right term of the above is equal to 0, and so $\alpha_{\gamma(\omega)}^{a,\kappa}(\rho_1(\omega),\rho_2(\omega))=\sup\emptyset=0$. Hence, we obtain
\begin{align}
 \alpha_{\gamma(\omega)}^{a,\kappa}(\rho_1(\omega),\rho_2(\omega))&=\inf\left\{\frac{\rho_2(x,\omega)}{\rho_1(x,\omega)},\frac{\exp(ad(x,y)^\kappa)\rho_2(x,\omega)-\rho_2(y,\omega)}{\exp(ad(x,y)^\kappa)\rho_1(x,\omega)-\rho_1(y,\omega)}:\ x,y\in\gamma(\omega),x\not=y\right\}\label{expression of alpha12},
 \end{align}Similarly, we can obtain
 \begin{align}\label{expression of betaak}
 \beta_{\gamma(\omega)}^{a,\kappa}(\rho_1(\omega),\rho_2(\omega))&=\sup\left\{\frac{\rho_2(x,\omega)}{\rho_1(x,\omega)},\frac{\exp(ad(x,y)^\kappa)\rho_2(x,\omega)-\rho_2(y,\omega)}{\exp(ad(x,y)^\kappa)\rho_1(x,\omega)-\rho_1(y,\omega)}:\ x,y\in\gamma(\omega),x\not=y\right\}.
\end{align}
 Now define
\begin{equation}\label{distance between density}
  d_{\gamma(\omega)}^{a,\kappa}(\rho_1(\omega),\rho_2(\omega))=\log\frac{\beta_{\gamma(\omega)}^{a,\kappa}(\rho_1(\omega),\rho_2(\omega))}{\alpha_{\gamma(\omega)}^{a,\kappa}(\rho_1(\omega),\rho_2(\omega))},
\end{equation}with the convention that $d_{\gamma(\omega)}^{a,\kappa}(\rho_1(\omega),\rho_2(\omega))=\infty$ if $\alpha_{\gamma(\omega)}^{a,\kappa}(\rho_1(\omega),\rho_2(\omega))=0$ or $\beta_{\gamma(\omega)}^{a,\kappa}(\rho_1(\omega),\rho_2(\omega))=\infty$.
By the property of projective metric (see Proposition \ref{proposition property of pm}), the followings hold:
\begin{enumerate}[label=\textbf{(P\arabic*)}]
  \item \label{P1} $d_{\gamma(\omega)}^{a,\kappa}(\rho_1(\omega),\rho_2(\omega))=d_{\gamma(\omega)}^{a,\kappa}(\rho_2(\omega),\rho_1(\omega))$;
  \item \label{P2} $d_{\gamma(\omega)}^{a,\kappa}(\rho_1(\omega),\rho_2(\omega))\leq d_{\gamma(\omega)}^{a,\kappa}(\rho_1(\omega),\rho_3(\omega))+d_{\gamma(\omega)}^{a,\kappa}(\rho_3(\omega),\rho_2(\omega))$;
  \item \label{P3} $ d_{\gamma(\omega)}^{a,\kappa}(\rho_1(\omega),\rho_2(\omega))=0$ if and only if there exists a constant $t\in\mathbb{R}^+$ such that $\rho_1(\omega)=t\rho_2(\omega)$.
\end{enumerate}Note that \ref{P2} and \ref{P3} implies that
\begin{equation}\label{P2P3}
  d_{\gamma(\omega)}^{a,\kappa}(\rho_1(\omega),\rho_2(\omega))=d_{\gamma(\omega)}^{a,\kappa}(t_1\rho_1(\omega),t_2\rho_2(\omega)) \mbox{ for any $t_1,t_2\in\mathbb{R}^+.$}
\end{equation}

It is also convenient to introduce the following convex cone. Denote $D_+(\gamma(\omega))$ by the collection of all bounded and measurable functions $\zeta(\omega):\gamma(\omega)\rightarrow\mathbb{R}$ such that $\zeta(x,\omega)>0$ for $x\in \gamma(\omega)$. It is clear that $D_+(\gamma(\omega))$ is a convex cone. For any $\zeta_1(\omega),\zeta_2(\omega)\in D_+(\gamma(\omega))$, we define
\begin{align}
  \alpha_{+,\gamma(\omega)}(\zeta_1(\omega),\zeta_2(\omega)) & =\sup\{t>0:\ \zeta_2(\omega)-t\zeta_1(\omega)\in D_+(\gamma(\omega))\};\\
  \beta_{+,\gamma(\omega)}(\zeta_1(\omega),\zeta_2(\omega)) &=\inf\{s>0:\ s\zeta_1(\omega)-\zeta_2(\omega)\in D_+(\gamma(\omega))\},
\end{align}with the convention that $\sup\emptyset=0$ and $\inf\emptyset=+\infty$, and
 the projective metric on $D_+(\gamma(\omega))$  by
\begin{align}
  d_{+,\gamma(\omega)}(\zeta_1(\omega),\zeta_2(\omega)) &:=\log \frac{\beta_{+,\gamma(\omega)}(\zeta_1(\omega),\zeta_2(\omega))}{\alpha_{+,\gamma(\omega)}(\zeta_1(\omega),\zeta_2(\omega))}\label{d+gammaomegarho1rho2},
\end{align} with the convention that $d_{+,\gamma(\omega)}(\zeta_1(\omega),\zeta_2(\omega)) =\infty$ if $\alpha_{+,\gamma(\omega)}(\zeta_1(\omega),\zeta_2(\omega))$ $=0$ or $\beta_{+,\gamma(\omega)}(\zeta_1(\omega),\zeta_2(\omega))=\infty.$
By computation, we have
\begin{align}
  \alpha_{+,\gamma(\omega)}(\zeta_1(\omega),\zeta_2(\omega))&=\inf\left\{\frac{\zeta_2(x,\omega)}{\zeta_1(x,\omega)}:\ x\in \gamma(\omega)\right\},\label{alpha +}\\
   \beta_{+,\gamma(\omega)}(\zeta_1(\omega),\zeta_2(\omega))&=\sup\left\{\frac{\zeta_2(x,\omega)}{\zeta_1(x,\omega)}:\ x\in \gamma(\omega)\right\}.\label{beta +}
\end{align}
It is clear that $D(a,\kappa,\gamma(\omega))\subset D_+(\gamma(\omega))$, and $d_{+,\gamma(\omega)}(\rho_1(\omega),\rho_2(\omega))\leq d_{\gamma(\omega)}^{a,\kappa}(\rho_1(\omega),\rho_2(\omega))$ for any $\rho_1(\omega),\rho_2(\omega)\in D(a,\kappa,\gamma(\omega))$.


By using density functions on $\gamma(\omega)$, we can define the corresponding density function on the pullback of $\gamma(\omega)$. For any $\omega\in\Omega$, and a  local stable leaf $\gamma(\omega)$ having size between $\frac{A(\epsilon)}{4J^2}$ and $A(\epsilon)$, since the local stable manifold is a curve, we can divide $f_\omega^{-1}\gamma(\omega)$ into connected local stable manifolds with size between $\frac{A(\epsilon)}{4J^2}$ and $A(\epsilon)$, named $\gamma_i(\theta^{-1}\omega)$ for $i$ belonging to a finite index set such that $\gamma_i(\theta^{-1}\omega)\cap \gamma_j(\theta^{-1}\omega)=\partial\gamma_i(\theta^{-1}\omega)\cap \partial \gamma_j(\theta^{-1}\omega)$ for $i\not=j$. By using any continuous density functions $\rho(\omega)$ on $\gamma(\omega)$, we define the corresponding density function $\rho_i(\theta^{-1}\omega)$ on $\gamma_i(\theta^{-1}\omega)$ by
\begin{align}
  \rho_i(x,\theta^{-1}\omega)&:=\frac{|\det D_xf_{\theta^{-1}\omega}|_{E^s(x,\theta^{-1}\omega)}|}{|\det D_xf_{\theta^{-1}\omega}|}\rho(f_{\theta^{-1}\omega}x,\omega)\mbox{ for }x\in\gamma_i(\theta^{-1}\omega)\label{definition of rhoi}.
\end{align} For any bounded and measurable function $\varphi:M\rightarrow\mathbb{R}$, by changing of variable, we have
\begin{equation}\label{Lphirho=phirhoi}
  \begin{split}
  &\ \ \ \ \int_{\gamma(\omega)}(L_{\theta^{-1}\omega}\varphi)(y)\rho(y,\omega)dm_{\gamma(\omega)}(y)\\
  &=\sum_{i}\int_{f_{\theta^{-1}\omega}\gamma_i(\theta^{-1}\omega)}\frac{\varphi((f_{\theta^{-1}\omega})^{-1}y)}{|\det D_{(f_{\theta^{-1}\omega})^{-1}y}f_{\theta^{-1}\omega}|}\cdot \rho(y,\omega)dm_{\gamma(\omega)}(y)\\
  &=\sum_{i}\int_{\gamma_i(\theta^{-1}\omega)}\frac{\varphi(x)}{|\det D_xf_{\theta^{-1}\omega}|}\cdot \rho(f_{\theta^{-1}\omega}x,\omega)\cdot |\det D_xf_{\theta^{-1}\omega}|_{E^s(x,\theta^{-1}\omega)}|dm_{\gamma_i(\theta^{-1}\omega)}(x)\\
  &=\sum_{i}\int_{\gamma_i(\theta^{-1}\omega)}\varphi(x)\rho_i(x,\theta^{-1}\omega)dm_{\gamma_i(\theta^{-1}\omega)}(x).
  \end{split}
\end{equation}

By using density function on $\gamma(\omega)$, we also can define the corresponding density function on the holonomy image of $\gamma(\omega)$.  Given pair of local stable leaves $\gamma(\omega)$ and $\tilde{\gamma}(\omega)$, with the help of holonomy map $\psi_\omega^u:\tilde{\gamma}(\omega)\rightarrow\gamma(\omega)$ induced by the local unstable manifolds, for every continuous density function $\rho(\omega)$ on $\gamma(\omega)$, we associate the density $\tilde{\rho}(\omega)$ on $\tilde{\gamma}(\omega)$ by
\begin{equation}\label{definition of tilde rho}
  \tilde{\rho}(x,\omega)=\rho(\psi_\omega(x),\omega)\cdot Jac(\psi_\omega^u)(x)\mbox{ for }x\in\tilde{\gamma}(\omega).
\end{equation}
By the Radon-Nikodym theorem, we have
\begin{equation}\label{eq change variable}
  \int_{\tilde{\gamma}(\omega)}\tilde{\rho}(x,\omega)dm_{\tilde{\gamma}(\omega)}(x)=\int_{\gamma(\omega)}\rho(y,\omega)dm_{\gamma(\omega)}(y).
\end{equation} We can define the distance between $\tilde{\gamma}(\omega)$ and $\gamma(\omega)$ by
\begin{equation}\label{distance between local stable}
  d_u(\tilde{\gamma}(\omega),\gamma(\omega)):=\sup\{d(x,\psi_\omega^u(x)):\ x\in\tilde{\gamma}(\omega)\},
\end{equation}where we use subscript $u$ to indicate that the distance is induced by the local unstable manifolds.

Recall that constants $K_1,K_2,\kappa,\kappa_1,a_0$ and $\nu_0$ are picked at the beginning of Section \ref{section 4}. Let $a_1$ be any number  such that
\begin{equation}\label{assumption on a1}
 \frac{K_1+K_2}{1-e^{-\lambda\kappa_1\nu_0}}<a_1.
\end{equation}Let $a\in\mathbb{R}$ be any number such that
\begin{equation}\label{assumption on a}
  a_1a_0^{\kappa_1}+a_0<\frac{a}{2}.
\end{equation}Let $D(a_1,\kappa,\gamma(\omega))$, $D(\frac{a}{2},\kappa,\gamma(\omega))$, and $D(\frac{a}{2},\kappa_1\nu_0,\gamma(\omega))$ be convex cones defined just like $D(a,\kappa,\gamma(\omega))$. The relations between $\rho(\omega)$ and $\tilde{\rho}(\omega)$, $\rho(\omega)$ and $\rho_i(\theta^{-1}\omega)$ are given in the following lemma.
\begin{lemma}\label{rhoi}
There are $\lambda_1=\lambda_1(a_1,\kappa)>0$ and $\Lambda_1=\Lambda_1(\lambda_1,a)<1$ such that
\begin{enumerate}
  \item if $\rho(\omega)\in D(a_1,\kappa,\gamma(\omega))$, then $\rho_i(\theta^{-1}\omega)\in D(e^{-\lambda_1}a_1,\kappa,\gamma_i(\theta^{-1}\omega))\subset D(a_1,\kappa,\gamma_i(\theta^{-1}\omega))$;
  \item if $\rho(\omega)\in D(\frac{a}{2},\kappa,\gamma(\omega))$, then $\rho_i(\theta^{-1}\omega)\in D(e^{-\lambda_1}\frac{a}{2},\kappa,\gamma_i(\theta^{-1}\omega))\subset D(\frac{a}{2},\kappa,\gamma_i(\theta^{-1}\omega))$;
  \item if $\rho(\omega)\in D(\frac{a}{2},\kappa_1\nu_0,\gamma(\omega))$, then $\rho_i(\theta^{-1}\omega)\in D(e^{-\lambda_1}\frac{a}{2},\kappa_1\nu_0,\gamma_i(\theta^{-1}\omega))\subset D(\frac{a}{2},\kappa_1\nu_0,\gamma_i(\theta^{-1}\omega))$;
   \item if $\rho(\omega)\in D(a,\kappa,\gamma(\omega))$, then $\rho_i(\theta^{-1}\omega)\in D(e^{-\lambda_1}a,\kappa,\gamma_i(\theta^{-1}\omega))\subset D(a,\kappa,\gamma_i(\theta^{-1}\omega))$;
\end{enumerate}furthermore,
\begin{enumerate}
  \item[(5)] for any $\rho(\omega),\varsigma(\omega)\in D(a,\kappa,\gamma(\omega))$, we have
      \begin{equation}\label{eq d leq Lamda1 d}
        d_{\gamma_i(\theta^{-1}\omega)}^{a,\kappa}(\rho_i(\theta^{-1}\omega),\varsigma_i(\theta^{-1}\omega))\leq \Lambda_1d_{\gamma(\omega)}^{a,\kappa}(\rho(\omega),\varsigma(\omega)),
      \end{equation} where $\rho_i(\theta^{-1}\omega)$ and $\varsigma_i(\theta^{-1}\omega)$ are density functions defined as in \eqref{definition of   rhoi} on $\tilde{\gamma}_i(\theta^{-1}\omega)$, and $d_{\gamma_i(\theta^{-1}\omega)}^{a,\kappa}$ (resp. $ d_{\gamma(\omega)}^{a,\kappa}$) is the projective metric on $D(a,\kappa,\gamma_i(\theta^{-1}\omega))$ (resp. $D(a,\kappa,\gamma(\omega))$).
\end{enumerate}Moreover,
\begin{equation}\label{tilder rho}
  \mbox{if } \rho(\omega)\in D(a_1,\kappa_1,\gamma(\omega)), \mbox{then } \tilde{\rho}(\omega)\in D(a/2,\kappa_1\nu_0,\tilde{\gamma}(\omega))\subset D(a/2,\kappa,\tilde{\gamma}(\omega)),
\end{equation}where $\tilde{\rho}(\omega)$ is defined in \eqref{definition of tilde rho}.
\end{lemma}
\begin{proof}[proof of lemma \ref{rhoi}]
We first prove (1). Let $\rho(\omega)\in D(a_1,\kappa,\gamma(\omega))$. Recall that $\rho_i(\theta^{-1}\omega)$ is defined in \eqref{definition of rhoi} on $\gamma_i(\theta^{-1}\omega)\subset f_\omega^{-1}\gamma(\omega)$.
Clearly, $\rho_i(x,\theta^{-1}\omega)>0$ for all $x\in\gamma_i(\theta^{-1}\omega)$. By \eqref{assumption on a1}, we can pick $\lambda_1>0$ closing to $0$ so that
\begin{equation}\label{pick lambda1}
  a_1>\frac{K_1+K_2}{e^{-\lambda_1}-e^{-\lambda\kappa_1\nu_0}}\overset{\eqref{mu+nuleq nu0}}> \frac{K_1+K_2}{e^{-\lambda_1}-e^{-\lambda \kappa}}>0.
\end{equation} Then for any $x,y\in\gamma_i(\theta^{-1}\omega)$
\begin{align*}
   &\ \ \ \ \left|\log \rho_i(x,\theta^{-1}\omega)-\log \rho_i(y,\theta^{-1}\omega)\right|\\
   &\leq \left|\log\rho(f_{\theta^{-1}\omega}x,\omega)-\log\rho(f_{\theta^{-1}\omega}y,\omega)\right|+\left|\log|\det D_xf_{\theta^{-1}\omega}|_{E^s(x,\theta^{-1}\omega)}|-\log|\det D_yf_{\theta^{-1}\omega}|_{E^s(y,\theta^{-1}\omega)}|\right|\\
   &\ \ \ \ \ \ \ \ \ +\left|\log|\det D_xf_{\theta^{-1}\omega}|-\log|\det D_yf_{\theta^{-1}\omega}|\right|\\
   &\overset{\eqref{eq def K1},\eqref{Lip constant of log det Dxfomega}}\leq a_1d(f_{\theta^{-1}\omega}x,f_{\theta^{-1}\omega}y)^{\kappa}+K_1d(x,y)+K_2d(x,y)\\
   &\leq a_1 e^{-\lambda\kappa}d(x,y)^\kappa+(K_1+K_2)d(x,y)\\
   &\overset{\eqref{pick lambda1}}\leq a_1 e^{-\lambda_1}d(x,y)^\kappa,
\end{align*} This proves part $(1)$. Similar proof can be applied to (2), (3) and (4) by using the fact
\begin{equation*}
   a>\frac{a}{2}>a_1\overset{\eqref{pick lambda1}}>\frac{K_1+K_2}{e^{-\lambda_1}-e^{-\lambda\kappa_1\nu_0}}> \frac{K_1+K_2}{e^{-\lambda_1}-e^{-\lambda \kappa}}>0.
\end{equation*}

Next, we prove (5).
By (4), we have a linear operator that maps from convex cone $D(a,\kappa,\gamma(\omega))$ to convex cone $ D(a,\kappa,\gamma_i(\theta^{-1}\omega))$ given by $\rho(\omega)\mapsto \rho_i(\theta^{-1}\omega)$.
By Birkhoff's inequality (Proposition \ref{birkhoff inequality}), if
\begin{equation*}
  R_i(\theta^{-1}\omega):=\sup\{d_{\gamma_i(\theta^{-1}\omega)}^{a,\kappa}(\rho_i(\theta^{-1}\omega),\varsigma_i(\theta^{-1}\omega)):\ \rho(\omega),\varsigma(\omega)\in D(a,\kappa,\gamma(\omega))\}<\infty,
\end{equation*}then
\begin{equation}\label{dgammailessthan dgamma}
  d_{\gamma_i(\theta^{-1}\omega)}^{a,\kappa}(\rho_i(\theta^{-1}\omega),\varsigma_i(\theta^{-1}\omega))\leq (1-e^{-R_i(\theta^{-1}\omega)})d_{\gamma(\omega)}^{a,\kappa}(\rho(\omega),\varsigma(\omega)).
\end{equation}
To estimate $R_i(\theta^{-1}\omega)$, it suffices to estimate the diameter of $D(e^{-\lambda_1}a,\kappa,\gamma_i(\theta^{-1}\omega))$ in $D(a,\kappa,\gamma_i(\theta^{-1}\omega))$ under the projective metric $d_{\gamma_i(\theta^{-1}\omega)}^{a,\kappa}$, since $\rho_i(\theta^{-1}\omega),\varsigma_i(\theta^{-1}\omega)\in D(e^{-\lambda_1}a,\kappa,\gamma_i(\theta^{-1}\omega)).$

Pick any  $\zeta_1(\theta^{-1}\omega),\ \zeta_2(\theta^{-1}\omega)\in D(e^{-\lambda_1}a,\kappa,\gamma_i(\theta^{-1}\omega))$, then $\zeta_1(\theta^{-1}\omega),\zeta_2(\theta^{-1}\omega)\in D_+(\gamma_i(\theta^{-1}\omega))$ automatically. For any $x,y\in\gamma_i(\theta^{-1}\omega)$, $x\not=y$,
\begin{align*}
  \frac{\exp(ad(x,y)^\kappa)-\zeta_2(y,\theta^{-1}\omega)/\zeta_2(x,\theta^{-1}\omega)}{\exp(ad(x,y)^\kappa)-\zeta_1(y,\theta^{-1}\omega)/\zeta_1(x,\theta^{-1}\omega)} &\geq   \frac{\exp(ad(x,y)^\kappa)-\exp(e^{-\lambda_1}ad(x,y)^\kappa)}{\exp(ad(x,y)^\kappa)-\exp(-e^{-\lambda_1}ad(x,y)^\kappa)}\\
  &\geq \tau_1,
\end{align*}where $$\tau_1=\inf\left\{\frac{z-z^{\exp(-\lambda_1)}}{z-z^{-\exp(-\lambda_1)}}:\ z>1\right\}=\lim_{z\to 1}\frac{z-z^{\exp(-\lambda_1)}}{z-z^{-\exp(-\lambda_1)}}=\frac{1-\exp(-\lambda_1)}{1+\exp(-\lambda_1)}\in(0,1).$$ Comparing
\begin{align*}
  &\ \ \ \ \alpha_{\gamma_i(\theta^{-1}\omega)}^{a,\kappa}(\zeta_1(\theta^{-1}\omega),\zeta_2(\theta^{-1}\omega))\\
  &=\inf\left\{\frac{\zeta_2(x,\theta^{-1}\omega)}{\zeta_1(x,\theta^{-1}\omega)},\frac{\exp(ad(x,y)^\kappa)\zeta_2(x,\theta^{-1}\omega)-\zeta_2(y,\theta^{-1}\omega)}{\exp(ad(x,y)^\kappa)\zeta_1(x,\theta^{-1}\omega)-\zeta_1(y,\theta^{-1}\omega)}:\ x,y\in\gamma_i(\theta^{-1}\omega),x\not=y\right\}
\end{align*} and
\begin{equation*}
  \alpha_{+,\gamma_i(\theta^{-1}\omega)}(\zeta_1(\theta^{-1}\omega),\zeta_2(\theta^{-1}\omega))\overset{\eqref{alpha +}}=\inf\left\{\frac{\zeta_2(x,\theta^{-1}\omega)}{\zeta_1(x,\theta^{-1}\omega)}:\ x\in\gamma_i(\theta^{-1}\omega)\right\},
\end{equation*} we have
\begin{align*}
   & \ \ \ \ \frac{\alpha_{\gamma_i(\theta^{-1}\omega)}^{a,\kappa}(\zeta_1(\theta^{-1}\omega),\zeta_2(\theta^{-1}\omega))}{\alpha_{+,\gamma_i(\theta^{-1}\omega)}(\zeta_1(\theta^{-1}\omega),\zeta_2(\theta^{-1}\omega))}\\
   &\geq \inf\left\{1, \frac{\exp(ad(x,y)^\kappa)-\zeta_2(y,\theta^{-1}\omega)/\zeta_2(x,\theta^{-1}\omega)}{\exp(ad(x,y)^\kappa)-\zeta_1(y,\theta^{-1}\omega)/\zeta_1(x,\theta^{-1}\omega)}:\ x,y\in\gamma_i(\theta^{-1}\omega),x\not=y\right\}\\
   &\geq \tau_1.
\end{align*}
Similarly, let $$\tau_2=\sup\left\{\frac{z-z^{-\exp(-\lambda_1)}}{z-z^{\exp(-\lambda_1)}}:\ z>1\right\}=\lim_{z\to 1}\frac{z-z^{-\exp(-\lambda_1)}}{z-z^{\exp(-\lambda_1)}}=\frac{1+\exp(-\lambda_1)}{1-\exp(-\lambda_1)}\in (1,\infty),$$ we have
\begin{equation*}
  \beta_{\gamma_i(\theta^{-1}\omega)}^{a,\kappa}(\zeta_1(\theta^{-1}\omega),\zeta_2(\theta^{-1}\omega))\leq \tau_2\beta_{+,\gamma_i(\theta^{-1}\omega)}(\zeta_1(\theta^{-1}\omega),\zeta_2(\theta^{-1}\omega)),
\end{equation*}where $ \beta_{\gamma_i(\theta^{-1}\omega)}^{a,\kappa}(\zeta_1(\theta^{-1}\omega),\zeta_2(\theta^{-1}\omega))$ and $\beta_{+,\gamma_i(\theta^{-1}\omega)}(\zeta_1(\theta^{-1}\omega),\zeta_2(\theta^{-1}\omega))$ are given as \eqref{expression of betaak} and \eqref{beta +}. Thus, we conclude
\begin{equation}
  d_{\gamma_i(\theta^{-1}\omega)}^{a,\kappa}(\zeta_1(\theta^{-1}\omega),\zeta_2(\theta^{-1}\omega))\leq d_{+,\gamma_i(\theta^{-1}\omega)}(\zeta_1(\theta^{-1}\omega),\zeta_2(\theta^{-1}\omega))+\log(\tau_2/\tau_1).\label{dgammaileqdgamma+i}
\end{equation}
Next, we estimate $d_{+,\gamma_i(\omega)}(\zeta_1(\theta^{-1}\omega),\zeta_2(\theta^{-1}\omega))$ for $\zeta_1(\theta^{-1}\omega),\zeta_2(\theta^{-1}\omega)\in D(e^{-\lambda_1}a,\alpha,\gamma_i(\theta^{-1}\omega))$. By the third property of projective metric \ref{P3} and normalizing $\zeta_1(\theta^{-1}\omega)$, $\zeta_2(\theta^{-1}\omega)$, we can assume that
\begin{equation*}
  \int_{\gamma_i(\theta^{-1}\omega)}\zeta_1(x,\theta^{-1}\omega)dm_{\gamma_i(\theta^{-1}\omega)}(x)=\int_{\gamma_i(\theta^{-1}\omega)}\zeta_2(x,\theta^{-1}\omega)dm_{\gamma_i(\theta^{-1}\omega)}(x)=1,
\end{equation*}without changing $d_{+,\gamma_i(\omega)}(\zeta_1(\theta^{-1}\omega),\zeta_2(\theta^{-1}\omega))$. Therefore, there exists points $y_1,y_2\in \gamma_i(\theta^{-1}\omega)$ such that $\zeta_1(y_1,\theta^{-1}\omega)=1$ and $\zeta_2(y_2,\theta^{-1}\omega)=1$ by continuity.  Then \ref{D2} of $D(e^{-\lambda_1}a,\kappa,\gamma_i(\theta^{-1}\omega))$ implies for all $x\in\gamma_i(\theta^{-1}\omega)$
\begin{equation*}
  \frac{\zeta_2(x,\theta^{-1}\omega)}{\zeta_1(x,\theta^{-1}\omega)}=\frac{\zeta_2(x,\theta^{-1}\omega)/\zeta_1(y_1,\theta^{-1}\omega)}{\zeta_1(x,\theta^{-1}\omega)/\zeta_2(y_2,\theta^{-1}\omega)}\geq \frac{\exp(-e^{-\lambda
  _1}a(diam \gamma_i(\theta^{-1}\omega))^\kappa)}{\exp(e^{-\lambda
  _1}a(diam \gamma_i(\theta^{-1}\omega))^\kappa)}\geq e^{-2a},
\end{equation*}where $diam$ means the diameter. It follows that $\alpha_{+,\gamma_i(\theta^{-1}\omega) }(\zeta_1(\theta^{-1}\omega),\zeta_2(\theta^{-1}\omega))\geq e^{-2a}$. Similarly, $\beta_{+,\gamma_i(\theta^{-1}\omega) }(\zeta_1(\theta^{-1}\omega),\zeta_2(\theta^{-1}\omega))\leq e^{2a}$. Therefore, $ d_{+,\gamma_i(\theta^{-1}\omega)}(\zeta_1(\theta^{-1}\omega),\zeta_2(\theta^{-1}\omega))\leq 4a$. So we have
\begin{align}
  d_{\gamma_i(\theta^{-1}\omega)}^{a,\kappa}(\zeta_1(\theta^{-1}\omega),\zeta_2(\theta^{-1}\omega))&\overset{\eqref{dgammaileqdgamma+i}}\leq d_{+,\gamma_i(\theta^{-1}\omega)}(\zeta_1(\theta^{-1}\omega),\zeta_2(\theta^{-1}\omega))+\log(\tau_2/\tau_1)\nonumber\\
  &\leq 4a+\log(\tau_2/\tau_1)\label{bound of d gamma rho1rho2}.
\end{align}Since $\zeta_1(\theta^{-1}\omega),\zeta_2(\theta^{-1}\omega)\in D(e^{-\lambda_1}a,\kappa,\gamma_i(\theta^{-1}\omega))$ are arbitrary chosen, $R_i(\theta^{-1}\omega)\leq 4a+\log(\tau_2/\tau_1)$.  By $(\ref{dgammailessthan dgamma})$, let $\Lambda_1=1-e^{-(4a+\log(\tau_2/\tau_1))}$. The proof of (5) is complete.

Finally, let's prove (\ref{tilder rho}). Let $\rho(\omega)\in D(a_1,\kappa_1,\gamma(\omega))$ be arbitrarily chosen, and $\tilde{\rho}(\omega)$ on $\tilde{\gamma}(\omega)$ be defined by \eqref{definition of tilde rho}.
It is clear that $\tilde{\rho}(x,\omega)>0$. Moreover, for any $x,y\in\tilde{\gamma}(\omega)$, we have
\begin{align*}
   &\ \ \ \ \ |\log\tilde{\rho}(x,\omega)-\log \tilde{\rho}(y,\omega)|\\
   &\leq |\log \rho(\psi_\omega(x),\omega)-\log \rho(\psi_\omega(y),\omega)|+|\log Jac(\psi_\omega)(x)-\log Jac(\psi_\omega)(y)|\\
   &\leq a_1d(\psi_\omega(x),\psi_\omega(y))^{\kappa_1}+a_0d(x,y)^{\nu_0}\\
   &\leq a_1a_0^{\kappa_1}d(x,y)^{\kappa_1\nu_0}+a_0d(x,y)^{\nu_0}\\
   &\leq (a_1a_0^{\kappa_1}+a_0)d(x,y)^{\kappa_1\nu_0}\\
   &\overset{\eqref{assumption on a}}\leq \frac{a}{2}d(x,y)^{\kappa_1\nu_0}.
\end{align*}So $\tilde{\rho}(\omega)\in D(a/2,\kappa_1\nu_0,\tilde{\gamma}(\omega)).$ The proof of Lemma \ref{rhoi} is complete.
\end{proof}
\begin{remark}\label{remark4.1}
 We can apply the proof of statement (1) inductively to show the following conclusion. For any local stable leaf $\gamma(\omega)$ having size between $\frac{A(\epsilon)}{4J^2}$ and $A(\epsilon)$, any $\rho(\omega)\in D(a_1,\kappa,\gamma(\omega))$, and any local stable leaf $\gamma^*(\theta^{-n}\omega)\subset f_\omega^{-n}\gamma(\omega)$, we can define $\rho^*(\theta^{-n}\omega)$ on $\gamma^*(\theta^{-n}\omega)$ by
  \begin{equation*}
    \rho^*(x,\theta^{-n}\omega)=\frac{|\det D_xf_{\theta^{-n}\omega}^n|_{E^s(x,\theta^{-n}\omega)}|}{|\det D_xf_{\theta^{-n}\omega}^n|}\rho(f_{\theta^{-n}\omega}^nx,\omega)\mbox{ for }x\in\gamma^*(\theta^{-n}\omega).
  \end{equation*}Then $\rho^*(\theta^{-n}\omega)\in D(e^{-\lambda_1}a_1,\kappa,\gamma^*(\theta^{-n}\omega))$.
\end{remark}

From now on, we fix parameters $a_1$ and $a$ in \eqref{assumption on a1} and \eqref{assumption on a}, and $\lambda_1,\ \Lambda_1$ in Lemma \ref{rhoi}. Now, we use convex cones $D(a_1,\kappa_1,\gamma(\omega))$, $D(\frac{a}{2},\kappa,\gamma(\omega))$ and $D(a,\kappa,\gamma(\omega))$ to construct the convex cone of observables on the fiber $\{\omega\}$. Let $b>1$ and $c>0$ be parameters to be determined later. Recall that $\nu$ is the constant picked at the beginning of Sec. \ref{section 4}.
For any $\omega\in\Omega$, we define $C_\omega(b,c,\nu)$ to be the collection of all bounded measurable functions $\varphi:M\rightarrow \mathbb{R}$ satisfying:
  \begin{enumerate}[label=\textbf{(C\arabic*)}]
    \item \label{C1} $\int_{\gamma(\omega)}\varphi(x)\rho(x,\omega)dm_{\gamma(\omega)}(x)>0$ for every local stable submanifold $\gamma(\omega)\subset M$ size between $\frac{A(\epsilon)}{4J^2}$ and $A(\epsilon)$, and every $\rho(\omega)\in D(a/2,\kappa,\gamma(\omega))$ satisfying $\int_{\gamma(\omega)}\rho(x,\omega)dm_{\gamma(\omega)}(x)=1$;
    \item \label{C2} $|\log\int_{\gamma(\omega)}\varphi(x)\rho(x,\omega)dm_{\gamma(\omega)}(x)-\log\int_{\gamma(\omega)}\varphi(x)\varsigma(x,\omega)dm_{\gamma(\omega)}(x)|\leq b\cdot d_{\gamma(\omega)}^{a,\kappa}(\rho(\omega),\varsigma(\omega))$ for every local stable submanifold $\gamma(\omega)\subset M$ having size between $\frac{A(\epsilon)}{4J^2}$ and $A(\epsilon)$, any $\rho(\omega)$, $\varsigma(\omega)\in D(a/2,\kappa,\gamma(\omega))\subset D(a,\kappa,\gamma(\omega))$ satisfying $\int_{\gamma(\omega)}\rho(x,\omega)dm_{\gamma(\omega)}(x)=\int_{\gamma(\omega)}\varsigma(x,\omega)dm_{\gamma(\omega)}(x)=1$.
    \item\label{C3}$|\log\int_{\gamma(\omega)}\varphi(x)\rho(x,\omega)dm_{\gamma(\omega)}(x)-\log\int_{\tilde{\gamma}(\omega)}\varphi(x)\tilde{\rho}(x,\omega)dm_{\tilde{\gamma}(\omega)}(x)|\leq cd_u(\gamma(\omega),\tilde{\gamma}(\omega))^\nu$ for every pair of local stable leaves $\gamma(\omega),\tilde{\gamma}(\omega)\subset M$ having size between $\frac{A(\epsilon)}{4J^2}$ and $A(\epsilon)$, any $\rho(\omega)\in D(a_1,\kappa_1,\gamma(\omega))$ and $\tilde{\rho}(\omega) $ corresponding to $\rho(\omega)$ defined as $(\ref{definition of tilde rho})$.
  \end{enumerate}
\begin{remark}
Parameters $a$, $a_1$, $\kappa_1$, $b$ and $c$ are constructed to prove the contraction of the transfer operator on the convex cone of observables. We just need to guarantee that all auxiliary parameters only depend on $\kappa$ and $\nu$.
\end{remark}
\begin{remark}\label{nonegative function s C2 auto}
  Note that \ref{C2} is automatically fulfilled if $\varphi$ is nonnegative. In fact, notice that $$\int_{\gamma(\omega)}\rho(x,\omega)dm_{\gamma(\omega)}(x)=\int_{\gamma(\omega)}\varsigma(x,\omega)dm_{\gamma(\omega)}(x)=1,$$
  then we have
  \begin{equation*}
    \inf_{y\in \gamma(\omega)}\left\{\frac{\rho(y,\omega)}{\varsigma(y,\omega)}\right\}\leq 1,
  \end{equation*}otherwise $\int_{\gamma(\omega)}\rho(\omega)dm_{\gamma(\omega)}>\int_{\gamma(\omega)}\varsigma(\omega)dm_{\gamma(\omega)}$, a contradiction.  So  we have
  \begin{align}
    \frac{\rho(x,\omega)}{\varsigma(x,\omega)}&\leq \sup_{x\in \gamma(\omega)}\left\{\frac{\rho(x,\omega)}{\varsigma(x,\omega)}\right\}/\inf_{y\in \gamma(\omega)}\left\{\frac{\rho(y,\omega)}{\varsigma(y,\omega)}\right\}=\exp(d_{+,\gamma(\omega)}(\rho(\omega),\varsigma(\omega)))\nonumber\\
    &\leq \exp(d_{\gamma(\omega)}^{a,\kappa}(\rho(\omega),\varsigma(\omega)))\label{rho over zeta},
  \end{align}where $d_{+,\gamma(\omega)}$ is the projective metric on $D_+(\gamma(\omega))$ defined as \eqref{d+gammaomegarho1rho2}. Switch $\rho$ and $\varsigma$, we get $\frac{\varsigma(x,\omega)}{\rho(x,\omega)}\leq \exp(d_{\gamma(\omega)}^{a,\kappa}(\rho(\omega),\varsigma(\omega)))$. Therefore, we have
  \begin{equation*}
  e^{-d_{\gamma(\omega)}^{a,\kappa}(\rho(\omega),\varsigma(\omega))}  \leq \frac{\int_{\gamma(\omega)}\varphi(x)\rho(x,\omega)dm_{\gamma(\omega)}(x)}{\int_{\gamma(\omega)}\varphi(x)\varsigma(x,\omega)dm_{\gamma(\omega)}(x)}\leq e^{d_{\gamma(\omega)}^{a,\kappa}(\rho(\omega),\varsigma(\omega))}.
  \end{equation*}
  $(C2) $ is a consequence of the above inequality as long as $b>1$.

Besides, it is clear that positive constant functions belong to $C_\omega(b,c,\nu)$ for all $\omega\in\Omega$.
\end{remark}
\begin{lemma}\label{Cbcv convex cone}
 For each $\omega\in\Omega$, $C_\omega(b,c,\nu)$ is a convex cone (see Def. \ref{definition convex cone} in the Appendix).
\end{lemma}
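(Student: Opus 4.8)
The plan is to check, in turn, the three defining clauses of a convex cone from Definition~\ref{definition convex cone}: invariance under positive scalars, convexity, and properness (with its accompanying closedness clause, if present in that definition). The whole argument is elementary, amounting to pushing two trivial inequalities through conditions (C1)--(C3). First, invariance under positive scalars: if $\varphi\in C_\omega(b,c,\nu)$ and $\lambda>0$, then for every admissible pair $(\gamma(\omega),\rho(\cdot,\omega))$ we have $\int_{\gamma(\omega)}(\lambda\varphi)\rho\,dm_{\gamma(\omega)}=\lambda\int_{\gamma(\omega)}\varphi\rho\,dm_{\gamma(\omega)}>0$, so (C1) is preserved, and in (C2) and (C3) each logarithm acquires the same additive constant $\log\lambda$, which cancels in the differences; hence $\lambda\varphi\in C_\omega(b,c,\nu)$.

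For convexity it suffices, thanks to positive homogeneity, to show that $\varphi_1+\varphi_2\in C_\omega(b,c,\nu)$ whenever $\varphi_1,\varphi_2\in C_\omega(b,c,\nu)$. Condition (C1) is immediate, a sum of two positive numbers being positive. For (C2) and (C3) I will use the elementary fact that if $p_1,p_2,q_1,q_2>0$ satisfy $|\log p_j-\log q_j|\le t$ for $j=1,2$, then $e^{-t}(q_1+q_2)\le p_1+p_2\le e^{t}(q_1+q_2)$, so that $|\log(p_1+p_2)-\log(q_1+q_2)|\le t$. Taking $p_j=\int_{\gamma(\omega)}\varphi_j\rho'\,dm_{\gamma(\omega)}$ and $q_j=\int_{\gamma(\omega)}\varphi_j\rho''\,dm_{\gamma(\omega)}$ (all strictly positive by (C1) for $\varphi_j$) and $t=b\,d_{\gamma(\omega)}(\bar\rho'(\omega),\bar\rho''(\omega))$ yields (C2) for $\varphi_1+\varphi_2$; the identical argument with the density $\tilde\rho$, the two leaves $\gamma(\omega),\tilde\gamma(\omega)$, and $t=c\,d(\gamma(\omega),\tilde\gamma(\omega))^\nu$ yields (C3).

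It remains to verify properness. If both $\varphi$ and $-\varphi$ were in $C_\omega(b,c,\nu)$, then (C1) would force $\int_{\gamma(\omega)}\varphi\rho\,dm_{\gamma(\omega)}>0$ and $\int_{\gamma(\omega)}(-\varphi)\rho\,dm_{\gamma(\omega)}>0$ at once, which is impossible; thus $C_\omega(b,c,\nu)\cap(-C_\omega(b,c,\nu))=\emptyset$. If Definition~\ref{definition convex cone} additionally demands $\overline{C_\omega(b,c,\nu)}\cap(-\overline{C_\omega(b,c,\nu)})=\{0\}$ for the fixed topology on bounded measurable functions, note that under a limit the strict inequality in (C1) relaxes to $\int_{\gamma(\omega)}\varphi\rho\,dm_{\gamma(\omega)}\ge0$ while the closed conditions (C2), (C3) persist; hence if $\varphi$ and $-\varphi$ both lie in the closure, $\int_{\gamma(\omega)}\varphi\rho\,dm_{\gamma(\omega)}=0$ for every admissible leaf $\gamma(\omega)$ and every $\rho(\cdot,\omega)\in D(a/2,\mu,\gamma(\omega))$. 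Since the normalized H\"older densities are dense among positive $L^1_{m_{\gamma(\omega)}}$ densities, $\varphi$ vanishes $m_{\gamma(\omega)}$-a.e. on every small stable leaf, and then disintegrating the Riemannian volume along stable leaves inside rectangles via Proposition~\ref{proposition 7.1} gives $\varphi=0$ $m$-a.e., i.e. $\varphi=0$ as an element of our space of observables. (That $C_\omega(b,c,\nu)$ is nonempty and does not contain $0$ follows from Remark~\ref{nonegative function s C2 auto} and the fact that (C1) fails for $\varphi\equiv0$.)

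I expect the only genuine obstacle to be this last point: the closedness clause requires knowing that the linear functionals $\varphi\mapsto\int_{\gamma(\omega)}\varphi\rho\,dm_{\gamma(\omega)}$, as $\gamma(\omega)$ runs over small stable leaves and $\rho$ over admissible densities, separate observables. This is precisely what the Fubini-type disintegration of Proposition~\ref{proposition 7.1} delivers, and without it one could not exclude a nonzero $\varphi$ annihilated by all these functionals. Everything else is routine bookkeeping through (C1)--(C3); and should the Appendix's definition of convex cone not include the closedness clause, the argument of the previous paragraph is unnecessary and the lemma is immediate.
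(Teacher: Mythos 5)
Your proof of positive homogeneity and of convexity is correct and matches the paper's in spirit: you verify (C1) directly, and for (C2) and (C3) you reduce to an elementary log-additivity inequality, exactly what the paper does (phrased there as two-sided exponential bounds).

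The properness step, however, has a genuine gap. From $\varphi,-\varphi\in\overline{C_\omega(b,c,\nu)}$ you correctly extract that $\int_{\gamma(\omega)}\varphi\rho\,dm_{\gamma(\omega)}=0$ for every admissible leaf $\gamma(\omega)$ and every $\rho(\cdot,\omega)\in D(a/2,\mu,\gamma(\omega))$. But you then conclude that $\varphi$ vanishes a.e.\ on $\gamma(\omega)$ by appealing to density of ``normalized H\"older densities'' among positive $L^1$ densities. This does not follow from what you have: after normalization, every $\rho\in D(a/2,\mu,\gamma(\omega))$ is pinched between two constants whose ratio is at most $e^{a\cdot\mathrm{diam}(\gamma(\omega))^\mu}$, so $D(a/2,\mu,\gamma(\omega))$ is a uniformly bounded, far-from-dense subset of the positive $L^1$ ball; you cannot pass directly from annihilation of that set to annihilation of general positive $L^1$ densities. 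The missing step is the one the paper supplies: by the cone's linear structure, for any $\tilde g\in C^{0,\mu}(M)$ one chooses $B=2|\tilde g|_\mu/a$ so that $\log(\bar g^\pm+B)$ becomes $(a/2,\mu)$-H\"older, hence $(\bar g^\pm+B)|_{\gamma(\omega)}\in D(a/2,\mu,\gamma(\omega))$, and the difference of the two annihilation identities gives $\int_{\gamma(\omega)}\varphi\tilde g\,dm_{\gamma(\omega)}=0$ for \emph{every} H\"older $\tilde g$; only after that does the density of H\"older functions in $L^1$ give $\int_{\gamma(\omega)}\varphi^2\,dm_{\gamma(\omega)}=0$ and thus $\varphi\equiv0$ on each $\gamma(\omega)$. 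Since the leaves $\gamma(\omega)$ cover $M$, the appeal to Proposition~\ref{proposition 7.1} you make at the end is also unnecessary; the conclusion $\varphi\equiv0$ is immediate once one knows $\varphi=0$ on every leaf. Finally, note that the closure in Definition~\ref{definition convex cone}(3) is the algebraic one ($w\in\overline C$ iff $w+t_n v\in C$ for some $v\in C$ and $t_n\searrow0$), not the topological closure you invoke, though your limiting argument adapts to it without change.
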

\begin{proof}[Proof of Lemma \ref{Cbcv convex cone}]
For any $\varphi \in C_\omega(b,c,\nu)$, and $t>0$, it is clear that $t\varphi \in C_\omega(b,c,\nu)$ .

Now, we prove the convexity, i.e., $\varphi_1,\varphi_2\in C_\omega(b,c,\nu)$ and $t_1,t_2>0$,  we are going to prove $t_1\varphi+t_2\varphi_2\in C_\omega(b,c,\nu).$ \ref{C1} is clearly fulfilled. For every local stable submanifold $\gamma(\omega)\subset M$ having size between $\frac{A(\epsilon)}{4J^2}$ and $A(\epsilon)$, any $\rho(\omega)$, $\varsigma(\omega)\in D(a/2,\kappa,\gamma(\omega))\subset D(a,\kappa,\gamma(\omega))$ satisfying $\int_{\gamma(\omega)}\rho(x,\omega)dm_{\gamma(\omega)}(x)=\int_{\gamma(\omega)}\varsigma(x,\omega)dm_{\gamma(\omega)}(x)=1$, by \ref{C2}, we have
\begin{equation*}
  e^{-bd_{\gamma(\omega)}^{a,\kappa}(\rho(\omega),\varsigma(\omega))}\leq \frac{\int_{\gamma(\omega)}\varphi_i(x)\rho(x,\omega)dm_{\gamma(\omega)}(x)}{\int_{\gamma(\omega)}\varphi_i(x)\varsigma(x,\omega)dm_{\gamma(\omega)}(x)}\leq e^{bd_{\gamma(\omega)}^{a,\kappa}(\rho(\omega),\varsigma(\omega))}
\end{equation*} The above implies that
\begin{equation*}
  e^{-bd_{\gamma(\omega)}^{a,\kappa}(\rho(\omega),\varsigma(\omega))}\leq \frac{\int_{\gamma(\omega)}\left(t_1\varphi_1(x)+t_2\varphi_2(x)\right)\rho(x,\omega)dm_{\gamma(\omega)}}{\int_{\gamma(\omega)}\left(t_1\varphi_1(x)+t_2\varphi_2(x)\right)\varsigma(x,\omega)dm_{\gamma(\omega)}}\leq e^{bd_{\gamma(\omega)}(\rho(\omega),\varsigma(\omega))}.
\end{equation*}So \ref{C2} is verified. Similarly, \ref{C3} can  also be verified. Therefore, $t_1\varphi+t_2\varphi_2\in C_\omega(b,c,\nu).$

Now, we prove that $-\overline{C_\omega(b,c,\nu)}\cap \overline{C_\omega(b,c,v)}=\{0\}$. Here, the closure means the ``integral closure" in a weaker sense, see Def. \ref{definition convex cone}.  Suppose $\varphi\in -\overline{C_\omega(b,c,\nu)}\cap \overline{C_\omega(b,c,v)}$, then there exists $\varphi_1,\varphi_2\in C_\omega(b,c,\nu)$ and $t_n^1,t_n^2\downarrow 0$ such that $\varphi+t_n^1\varphi_1\in C_\omega(b,c,\nu)$ and $-\varphi+t_n^2\varphi_2\in C_\omega(b,c,\nu)$ for each $n\in\mathbb{N}$.
Hence, for any  local stable leaf $\gamma(\omega)$ having size between $\frac{A(\epsilon)}{4J^2}$ and $A(\epsilon)$, and $\rho(\omega)\in D(a/2,\kappa,\gamma(\omega))$, we have
\begin{align*}
  \int_{\gamma(\omega)}(\varphi+t_n^1\varphi_1)(x)\rho(x,\omega)dm_{\gamma(\omega)} &>0;\\
   \int_{\gamma(\omega)}(-\varphi+t_n^2\varphi_2)(x)\rho(x,\omega)dm_{\gamma(\omega)} &>0.
\end{align*}Letting $n\rightarrow \infty$, by bounded convergence theorem, one must have
\begin{equation}\label{integration is 0}
  \int_{\gamma(\omega)}\varphi(x)\rho(x,\omega)dm_{\gamma(\omega)}(x)=0.
\end{equation}
Now pick $g\in C^{0,\kappa}(M)$ any $\kappa-$H\"older continuous function. Let
\begin{equation*}
  |g|_{\kappa}:=\sup_{x\not= y\in M}\frac{|g(x)-g(y)|}{d(x,y)^\kappa},\ g^+:=\frac{1}{2}(|g|+g),\ g^-:=\frac{1}{2}(|g|-g),
\end{equation*}and $B=\frac{2(|g|_\kappa+1)}{a}$. Then it is clear that
\begin{equation*}
  \log(g^++B), \mbox{ and }\log(g^-+B)
\end{equation*}are $(a/2,\kappa)-$H\"older continuous, i.e.,
\begin{equation*}
  \left|\log(g^\pm(x)+B)-\log(g^\pm(y)+B)\right|\leq \frac{a}{2}d(x,y)^\kappa,\ \forall x,y\in M.
\end{equation*}
Then $(g^+(\cdot)+B)|_{\gamma(\omega)},\ (g^-(\cdot)+B)|_{\gamma(\omega)}$ are in $D(a/2,\kappa,\gamma(\omega))$ by checking the definition. By $(\ref{integration is 0})$ and the linearity of integration, we have
\begin{equation*}
  \int_{\gamma(\omega)}\varphi(x)g(x)dm_{\gamma(\omega)}(x)=0.
\end{equation*}
 We can pick $g\in C^{0,\kappa}(M)$ $L^1-$approximating $\varphi$ since $\varphi$ is bounded and measurable, hence we have $\int_{\gamma(\omega)}\varphi^2(x)dm_{\gamma(\omega)}(x)=0$. So $\varphi(x)=0$ for $x\in \gamma(\omega)$. Since $\gamma(\omega)\subset M$ is an arbitrary local stable leaf, $\varphi\equiv 0$. The proof of Lemma \ref{Cbcv convex cone} is complete.
\end{proof}

Now $C_\omega(b,c,\nu)$ is a convex cone, so we can define the projective metric on $C_\omega(b,c,\nu)$. For any $\varphi_1,\varphi_2\in C_\omega(b,c,\nu),$ define
\begin{equation*}
  \alpha_\omega(\varphi_1,\varphi_2):=\sup\{t>0:\ \varphi_2-t\varphi_1\in C_\omega(b,c,\nu)\},
\end{equation*}
\begin{equation*}
  \beta_\omega(\varphi_1,\varphi_2):=\inf\{s>0:\ s\varphi_1-\varphi_2\in C_\omega(b,c,\nu)\},
\end{equation*}with the convention that $\sup\emptyset=0$ and $\inf\emptyset=+\infty$, and let
\begin{equation*}
  d_\omega(\varphi_1,\varphi_2):=\log \frac{\beta_\omega(\varphi_1,\varphi_2)}{\alpha_\omega(\varphi_1,\varphi_2)},
\end{equation*}with the convention that $d_\omega(\varphi_1,\varphi_2)=\infty$ if $\alpha_\omega(\varphi_1,\varphi_2)=0$ or $\beta_\omega(\varphi_1,\varphi_2)=\infty$.
\begin{lemma}\label{lemma metric on cone}
   For any $\varphi_1,\varphi_2\in C_\omega(b,c,\nu)$,
   \begin{align}\label{alpha omega phi1phi2}
 \alpha_\omega(\varphi_1,\varphi_2)  &=\inf\left\{\frac{\int_{\gamma(\omega)}\varphi_2\rho^\prime(\omega) dm_{\gamma(\omega)}}{\int_{\gamma(\omega)}\varphi_1\rho^\prime (\omega) dm_{\gamma(\omega)}},\frac{\int_{\gamma(\omega)}\varphi_2\rho^\prime(\omega) dm_{\gamma(\omega)}}{\int_{\gamma(\omega)}\varphi_1\rho^\prime(\omega) dm_{\gamma(\omega)}}\xi_\omega(\rho^\prime,\rho^{\prime\prime},\varphi_1,\varphi_2)\right.\\
& \left. \frac{\int_{\gamma(\omega)}\varphi_2\rho(\omega) dm_{\gamma(\omega)}}{\int_{\gamma(\omega)}\varphi_1\rho(\omega) dm_{\gamma(\omega)}}\eta_\omega(\rho,\tilde{\rho},\varphi_1,\varphi_2),\frac{\int_{\tilde{\gamma}(\omega)}\varphi_2\tilde{\rho}(\omega)dm_{\tilde{\gamma}(\omega)}}{\int_{\tilde{\gamma}(\omega)}\varphi_1\tilde{\rho}(\omega)dm_{\tilde{\gamma}(\omega)}}\eta_\omega(\tilde{\rho},\rho,\varphi_1,\varphi_2)\right\}\nonumber,
\end{align}and
\begin{align}\label{beta omega phi1phi2}
 \beta_\omega(\varphi_1,\varphi_2)  &=\sup\left\{\frac{\int_{\gamma(\omega)}\varphi_2\rho^\prime(\omega) dm_{\gamma(\omega)}}{\int_{\gamma(\omega)}\varphi_1\rho^\prime (\omega) dm_{\gamma(\omega)}},\frac{\int_{\gamma(\omega)}\varphi_2\rho^\prime(\omega) dm_{\gamma(\omega)}}{\int_{\gamma(\omega)}\varphi_1\rho^\prime (\omega) dm_{\gamma(\omega)}}\xi_\omega(\rho^\prime,\rho^{\prime\prime},\varphi_1,\varphi_2)\right.\\
& \left. \frac{\int_{\gamma(\omega)}\varphi_2\rho(\omega) dm_{\gamma(\omega)}}{\int_{\gamma(\omega)}\varphi_1\rho(\omega) dm_{\gamma(\omega)}}\eta_\omega(\rho,\tilde{\rho},\varphi_1,\varphi_2),\frac{\int_{\tilde{\gamma}(\omega)}\varphi_2\tilde{\rho}(\omega)dm_{\tilde{\gamma}(\omega)}}{\int_{\tilde{\gamma}(\omega)}\varphi_1\tilde{\rho}(\omega)dm_{\tilde{\gamma}(\omega)}}\eta_\omega(\tilde{\rho},\rho,\varphi_1,\varphi_2)\right\}\nonumber,
\end{align}where
\begin{equation}\label{xi omega}
  \xi_\omega(\rho^\prime,\rho^{\prime\prime},\varphi_1,\varphi_2):=\frac{\exp(bd_{\gamma(\omega)}^{a,\kappa}(\rho^\prime(\omega),\rho^{\prime\prime}(\omega)))-\int_{\gamma(\omega)}\varphi_2\rho^{\prime\prime}(\omega) dm_{\gamma(\omega)}/\int_{\gamma(\omega)}\varphi_2\rho^\prime(\omega) dm_{\gamma(\omega)}}{\exp(bd_{\gamma(\omega)}^{a,\kappa}(\rho^\prime(\omega),\rho^{\prime\prime}(\omega)))-\int_{\gamma(\omega)}\varphi_1\rho^{\prime\prime}(\omega) dm_{\gamma(\omega)}/\int_{\gamma(\omega)}\varphi_1\rho^\prime(\omega) dm_{\gamma(\omega)}},
\end{equation}
\begin{equation}\label{eta omega}
  \eta_\omega(\rho,\tilde{\rho},\varphi_1,\varphi_2):=\frac{\exp(cd_u(\gamma(\omega),\tilde{\gamma}(\omega))^\nu)-\int_{\tilde{\gamma}}\varphi_2\tilde{\rho}(\omega)dm_{\tilde{\gamma}(\omega)}/\int_{\gamma(\omega)}\varphi_2\rho(\omega) dm_{\gamma(\omega)}}{\exp(cd_u(\gamma(\omega),\tilde{\gamma}(\omega))^\nu)-\int_{\tilde{\gamma}}\varphi_1\tilde{\rho}(\omega)dm_{\tilde{\gamma}(\omega)}/\int_{\gamma(\omega)}\varphi_1\rho(\omega) dm_{\gamma(\omega)}},
\end{equation}
\begin{equation}\label{eta omega2}
  \eta_\omega(\tilde{\rho},\rho,\varphi_1,\varphi_2):=\frac{\exp(cd_u(\gamma(\omega),\tilde{\gamma}(\omega))^\nu)-\int_{\gamma(\omega)}\varphi_2\rho(\omega) dm_{\gamma(\omega)}/\int_{\tilde{\gamma}}\varphi_2\tilde{\rho}(\omega)dm_{\tilde{\gamma}(\omega)}}{\exp(cd_u(\gamma(\omega),\tilde{\gamma}(\omega))^\nu)-\int_{\gamma(\omega)}\varphi_1\rho(\omega) dm_{\gamma(\omega)}/\int_{\tilde{\gamma}}\varphi_1\tilde{\rho}(\omega)dm_{\tilde{\gamma}(\omega)}},
\end{equation}
and the inf and sup runs over all $\rho^\prime(\omega),\rho^{\prime\prime}(\omega)\in D(a/2,\kappa,\gamma(\omega))$ with $\int_{\gamma(\omega)}\rho^\tau(x,\omega)dm_{\gamma(\omega)}(x)=1$ for $\tau=\prime,\prime\prime$, every pair of local stable leaves $\gamma(\omega)$ and $\tilde{\gamma}(\omega)$ having size between $\frac{A(\epsilon)}{4J^2}$ and $A(\epsilon)$, $\rho(\omega)\in D(a_1,\kappa_1,\gamma(\omega))$ and corresponding $\tilde{\rho}(\omega)\in D(a/2,\kappa,\tilde{\gamma}(\omega))$.
\end{lemma}
\begin{proof}[Proof of Lemma \ref{lemma metric on cone}]
 We only compute $\alpha_\omega$, as the case for $\beta_\omega$ is similar. For any $t>0$ such that $\varphi_2-t\varphi_1\in C_\omega(b,c,\nu)$, then $\varphi_2-t\varphi_1$ must satisfy \ref{C1}, \ref{C2} and \ref{C3}.

 First, for every local stable submanifold $\gamma(\omega)\subset M$ having size between $\frac{A(\epsilon)}{4J^2}$ and $A(\epsilon)$, and every $\rho(\omega)\in D(a/2,\kappa,\gamma(\omega))$ satisfying $\int_{\gamma(\omega)}\rho(x,\omega)dm_{\gamma(\omega)}(x)=1$, one must have $          \int_{\gamma(\omega)}(\varphi_2-t\varphi_1)\rho(\omega)dm_{\gamma(\omega)}>0,$ which implies
        \begin{equation*}
          t<\frac{\int_{\gamma(\omega)}\varphi_2\rho(\omega)dm_{\gamma(\omega)}}{\int_{\gamma(\omega)}\varphi_1\rho(\omega)dm_{\gamma(\omega)}}.
        \end{equation*}

Second,  for every local stable submanifold $\gamma(\omega)\subset M$ having size between $\frac{A(\epsilon)}{4J^2}$ and $A(\epsilon)$, and $\rho(\omega)$, $\varsigma(\omega)\in D(a/2,\kappa,\gamma(\omega))\subset D(a,\kappa,\gamma(\omega))$ satisfying $\int_{\gamma(\omega)}\rho(x,\omega)dm_{\gamma(\omega)}(x)=\int_{\gamma(\omega)}\varsigma(x,\omega)dm_{\gamma(\omega)}(x)=1$, one must have
        \begin{align*}
        e^{-b\cdot d_{\gamma(\omega)}^{a,\kappa}(\rho(\omega),\varsigma(\omega))}\leq   \frac{\int_{\gamma(\omega)}(\varphi_2-t\varphi_1)\rho(\omega)dm_{\gamma(\omega)}}{\int_{\gamma(\omega)}(\varphi_2-t\varphi_1)\varsigma(\omega)dm_{\gamma(\omega)}},
          \leq e^{b\cdot d_{\gamma(\omega)}^{a,\kappa}(\rho(\omega),\varsigma(\omega))}
        \end{align*}which implies
        \begin{equation*}
          t\leq \frac{e^{b\cdot d_{\gamma(\omega)}^{a,\kappa}(\rho(\omega),\varsigma(\omega))}\cdot\int_{\gamma(\omega)}\varphi_2\varsigma(\omega)dm_{\gamma(\omega)}-\int_{\gamma(\omega)}\varphi_2\rho(\omega)dm_{\gamma(\omega)}}{e^{b\cdot d_{\gamma(\omega)}^{a,\kappa}(\rho(\omega),\varsigma(\omega))}\cdot\int_{\gamma(\omega)}\varphi_1\varsigma(\omega)dm_{\gamma(\omega)}-\int_{\gamma(\omega)}\varphi_1\rho(\omega)dm_{\gamma(\omega)}}
        \end{equation*}and
        \begin{equation*}
          t\leq \frac{ e^{b\cdot d_{\gamma(\omega)}^{a,\kappa}(\rho(\omega),\varsigma(\omega))}\cdot\int_{\gamma(\omega)}\varphi_2\rho(\omega)dm_{\gamma(\omega)}-\int_{\gamma(\omega)}\varphi_2\varsigma(\omega)dm_{\gamma(\omega)}}{ e^{b\cdot d_{\gamma(\omega)}^{a,\kappa}(\rho(\omega),\varsigma(\omega))}\cdot \int_{\gamma(\omega)}\varphi_1\rho(\omega)dm_{\gamma(\omega)}-\int_{\gamma(\omega)}\varphi_1\varsigma(\omega)dm_{\gamma(\omega)}}.
        \end{equation*}

        Third, for every pair of local stable leaves $\gamma(\omega),\tilde{\gamma}(\omega)\subset M$ having size between $\frac{A(\epsilon)}{4J^2}$ and $A(\epsilon)$, and $\gamma(\omega)$ is the holonomy image of $\tilde{\gamma}(\omega)$, $\rho(\omega)\in D(a_1,\kappa_1,\gamma(\omega))$ and $\tilde{\rho}(\omega) $ corresponding to $\rho(\omega)$ defined as $(\ref{definition of tilde rho})$, one must have
        \begin{equation*}
          e^{-cd_u(\gamma(\omega),\tilde{\gamma}(\omega))^\nu}\leq
          \frac{\int_{\gamma(\omega)}(\varphi_2-t\varphi_1)\rho(\omega)dm_{\gamma(\omega)}}{\int_{\tilde{\gamma}(\omega)}(\varphi_2-t\varphi_1)\tilde{\rho}(\omega)dm_{\tilde{\gamma}(\omega)}}\leq
         e^{ cd_u(\gamma(\omega),\tilde{\gamma}(\omega))^\nu},
        \end{equation*}which implies
        \begin{equation*}
          t\leq \frac{e^{ cd_u(\gamma(\omega),\tilde{\gamma}(\omega))^\nu}\cdot \int_{\tilde{\gamma}(\omega)}\varphi_2\tilde{\rho}(\omega)dm_{\tilde{\gamma}(\omega)}-\int_{\gamma(\omega)}\varphi_2\rho(\omega)dm_{\gamma(\omega)}}{e^{ cd_u(\gamma(\omega),\tilde{\gamma}(\omega))^\nu}\cdot \int_{\tilde{\gamma}(\omega)}\varphi_1\tilde{\rho}(\omega)dm_{\tilde{\gamma}(\omega)}-\int_{\gamma(\omega)}\varphi_1\rho(\omega)dm_{\gamma(\omega)}}
        \end{equation*}and
        \begin{equation*}
 t\leq \frac{e^{ cd_u(\gamma(\omega),\tilde{\gamma}(\omega))^\nu}\cdot \int_{\gamma(\omega)}\varphi_2\rho(\omega)dm_{\gamma(\omega)}-\int_{\tilde{\gamma}(\omega)}\varphi_2\tilde{\rho}(\omega)dm_{\tilde{\gamma}(\omega)}}{e^{ cd_u(\gamma(\omega),\tilde{\gamma}(\omega))^\nu}\cdot \int_{\gamma(\omega)}\varphi_1\rho(\omega)dm_{\gamma(\omega)}-\int_{\tilde{\gamma}(\omega)}\varphi_1\tilde{\rho}(\omega)dm_{\tilde{\gamma}(\omega)}}.
        \end{equation*}Organizing, we obtain \eqref{alpha omega phi1phi2}.
\end{proof}

\subsection{Contraction of the fiber transfer operator}\label{subsection 4.2}
In this subsection, we will prove that the fiber transfer operator $L_\omega$ maps $C_\omega(b,c,\nu)$ into $C_{\theta\omega}(b,c,\nu)$ for all $\omega\in\Omega$. Moreover, the diameter of $L^N_\omega C_\omega(b,c,\nu)$ with respect to the projective metric on $C_{\theta^N\omega}(b,c,\nu)$ is finite, and this diameter is independent of $\omega\in\Omega$, where the number $N$ comes from the topological mixing on fibers property and will be constructed in (\ref{N property 1}) and (\ref{N property 2}). Birkhoff's inequality (Proposition \ref{birkhoff inequality} in the Appendix) implies the contraction of the fiber transfer operator $L^N_\omega:C_\omega(b,c,\nu)\to C_{\theta^N\omega}(b,c,\nu)$.

\begin{lemma}\label{invariance of C(b,c,v)}
Recall that $\Lambda_1\in(0,1)$ is given in  Lemma \ref{rhoi}.
Let $\lambda_2\in(\max\{\Lambda_1,e^{-\lambda\nu}\},1)$ be any number,  for any $b>b_0(\lambda_2,\Lambda_1)=\frac{1}{\lambda_2-\Lambda_1}$, there exists $c_0=c_0(b,\nu)$ such that for any $c>c_0$ and for all $\omega\in\Omega$, we have $L_{\theta^{-1}\omega}(C_{\theta^{-1}\omega}(b,c,\nu))\subset C_{\omega}(\lambda_2b,\lambda_2c,\nu)\subset C_{\omega}(b,c,\nu)$. Recall that the fiber transfer operator $L_{\theta^{-1}\omega}$ is defined by
\begin{equation*}
    (L_{\theta^{-1}\omega}\varphi)(x)=\frac{\varphi((f_{\theta^{-1}\omega})^{-1}x)}{|\det D_{(f_{\theta^{-1}\omega})^{-1}(x)}f_{\theta^{-1}\omega}|},
  \end{equation*}for any bounded and measurable $\varphi:M\to \mathbb{R}$.
\end{lemma}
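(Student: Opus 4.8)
The plan is to verify that $L_{\theta^{-1}\omega}\varphi$ satisfies $(C1)$, $(C2)$, $(C3)$ on the fiber $\omega$ whenever $\varphi\in C_{\theta^{-1}\omega}(b,c,\nu)$ (the inclusion $C_\omega(\lambda_2 b,\lambda_2 c,\nu)\subset C_\omega(b,c,\nu)$ being immediate since $\lambda_2<1$), fixing $b_0,c_0$ only at the very end so that the resulting constants become $\lambda_2 b$ and $\lambda_2 c$. The one computational tool used throughout is the identity $(\ref{Lphirho=phirhoi})$: for a local stable leaf $\gamma(\omega)$ of size in $[\epsilon/4,\epsilon/2]$, subdividing $f_{\theta^{-1}\omega}^{-1}\gamma(\omega)$ into pieces $\gamma_i(\theta^{-1}\omega)$ of size in $[\epsilon/4,\epsilon/2]$ gives $\int_{\gamma(\omega)}(L_{\theta^{-1}\omega}\varphi)\rho\,dm_{\gamma(\omega)}=\sum_i\int_{\gamma_i(\theta^{-1}\omega)}\varphi\rho_i\,dm_{\gamma_i(\theta^{-1}\omega)}$ with $\rho_i$ as in $(\ref{definition of rhoi})$. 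Each defining inequality for $L_{\theta^{-1}\omega}\varphi$ thereby becomes a statement about $\varphi$ tested against the pulled-back densities $\rho_i$, to be combined with the elementary fact that $\sum_i A_i/\sum_i B_i$ lies between $\min_i A_i/B_i$ and $\max_i A_i/B_i$ for positive $A_i,B_i$. The quantitative input is Lemma~\ref{rhoi}: the $\rho_i$ stay in the designated cones with constant improved by $e^{-\lambda_1}$, and the Hilbert metric on densities contracts by $\Lambda_1<1$ under $\rho\mapsto\rho_i$.

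For $(C1)$: by Lemma~\ref{rhoi}(2) each normalized $\rho_i$ lies in $D(a/2,\mu,\gamma_i(\theta^{-1}\omega))$, so $(C1)$ for $\varphi$ makes every summand positive, hence the sum. For $(C2)$: given normalized $\rho',\rho''\in D(a/2,\mu,\gamma(\omega))$, normalize the corresponding $\rho_i',\rho_i''$ by their $L^1$-norms $c_i',c_i''$; $(C2)$ for $\varphi$ combined with $d_{\gamma_i(\theta^{-1}\omega)}(\bar\rho_i',\bar\rho_i'')\le\Lambda_1 D$ from Lemma~\ref{rhoi}(4), where $D=d_{\gamma(\omega)}(\bar\rho',\bar\rho'')$, shows $\int_{\gamma_i}\varphi\rho_i'\,dm/\int_{\gamma_i}\varphi\rho_i''\,dm$ lies in $[(c_i'/c_i'')e^{-b\Lambda_1 D},(c_i'/c_i'')e^{b\Lambda_1 D}]$; since the normalizations force $\rho'/\rho''\in[e^{-D},e^{D}]$ on $\gamma(\omega)$, one has $c_i'/c_i''\in[e^{-D},e^{D}]$, so the ratio of the two sums lies in $[e^{-(b\Lambda_1+1)D},e^{(b\Lambda_1+1)D}]$. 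Taking $b_0=\max\{1,(\lambda_2-\Lambda_1)^{-1}\}$ yields $(b\Lambda_1+1)D\le\lambda_2 bD$ for $b>b_0$, which is $(C2)$ for $L_{\theta^{-1}\omega}\varphi$ with constant $\lambda_2 b$.

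Condition $(C3)$ is the main obstacle. Given a pair $\gamma(\omega)=\psi_\omega(\tilde\gamma(\omega))$ and $\rho\in D(a_1,\mu_1,\gamma(\omega))$ with associate $\tilde\rho$ as in $(\ref{definition of tilde rho})$, subdivide $f_{\theta^{-1}\omega}^{-1}\gamma(\omega)=\bigcup_i\gamma_i$ and, refining as needed, set $\tilde\gamma_i:=f_{\theta^{-1}\omega}^{-1}(\psi_\omega^{-1}(f_{\theta^{-1}\omega}\gamma_i))$; since holonomy commutes with $f_{\theta^{-1}\omega}$, $\gamma_i$ is the holonomy image of $\tilde\gamma_i$ on the fiber $\theta^{-1}\omega$, and $d(\gamma_i,\tilde\gamma_i)\le e^{-\lambda}d(\gamma(\omega),\tilde\gamma(\omega))$ by Lemma~\ref{property of fiberwise holonomy map}(3). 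The identity $(\ref{Lphirho=phirhoi})$ rewrites the two sides of $(C3)$ as $\sum_i\int_{\gamma_i}\varphi\rho_i\,dm$ and $\sum_i\int_{\tilde\gamma_i}\varphi(\tilde\rho)_i\,dm$; I interpolate through $\sum_i\int_{\tilde\gamma_i}\varphi\rho_i^{\sharp}\,dm$, where $\rho_i^{\sharp}$ is the holonomy-associate on $\tilde\gamma_i$ of $\rho_i$. The first difference is controlled termwise by $(C3)$ for $\varphi$ on each $\gamma_i$ --- using that $\rho_i\in D(a_1,\mu_1,\gamma_i)$, the exponent-$\mu_1$ analogue of Lemma~\ref{rhoi}(1), valid because $\mu<\mu_1$ and the leaves are short --- giving $c\,e^{-\lambda\nu}d(\gamma(\omega),\tilde\gamma(\omega))^\nu$. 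The second difference is handled by comparing $\rho_i^{\sharp}$ and $(\tilde\rho)_i$ on $\tilde\gamma_i$: these are ``holonomy then transfer'' and ``transfer then holonomy'' applied to $\rho$, and a chain-rule computation shows they differ by a factor built from the stable Jacobians $|\det Df|_{E^s}|$, the full Jacobians $|\det Df|$ at nearby points, and the holonomy Jacobians, which by Lemmas~\ref{detDfE-detDfE}, \ref{Lip jac Es} and \ref{property of fiberwise holonomy map} is H\"older-close to $1$ with constant $\mathcal{O}(d(\gamma(\omega),\tilde\gamma(\omega))^{\nu_0})$; hence $d_{\tilde\gamma_i}(\rho_i^{\sharp},(\tilde\rho)_i)=\mathcal{O}(d(\gamma(\omega),\tilde\gamma(\omega))^{\nu_0})$ (both densities lie in $D(a/2,\mu,\tilde\gamma_i)$ by Lemma~\ref{rhoi}), and $(C2)$ for $\varphi$ together with $\nu_0>\nu$ and the smallness of $\epsilon$ make this term at most $(\lambda_2-e^{-\lambda\nu})c_0\,d(\gamma(\omega),\tilde\gamma(\omega))^\nu$ for a suitable $c_0$. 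Summing and invoking the min/max-of-ratios fact gives $(C3)$ for $L_{\theta^{-1}\omega}\varphi$ with constant $\lambda_2 c$. The delicate points are all in this last step: matching the subdivisions on the two leaves, checking that the transferred and holonomy-transferred densities remain in the prescribed cones with the correct constants, and estimating the non-commutativity of holonomy and transfer --- all while keeping every auxiliary constant dependent on $\mu,\nu$ and the system alone, so that $b_0$ and $c_0$ can be fixed consistently.
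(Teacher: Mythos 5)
Your proposal follows the paper's proof essentially step by step. For (C1) and (C2) you reproduce the paper's decomposition $\int_{\gamma(\omega)}(L_{\theta^{-1}\omega}\varphi)\rho\,dm = \sum_i\int_{\gamma_i(\theta^{-1}\omega)}\varphi\rho_i\,dm$, normalize the $\rho_i$ on each piece, invoke the $\Lambda_1$-contraction of Lemma~\ref{rhoi}(4) together with the pointwise bound on $\rho''/\rho'$ to control the normalization constants, and arrive at the same threshold $b_0$; for (C3) you interpolate exactly as in the paper through the ``holonomy associate then transfer'' density (the paper's $\tilde\rho_i$), use (C3) for $\varphi$ termwise with the $e^{-\lambda\nu}$ contraction factor, and control the commutation discrepancy between holonomy and transfer via the distortion lemmas plus (C2) for $\varphi$ --- which is precisely the content of the paper's Sublemma~\ref{sublemma}. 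The one small point you gloss over (that the H\"older-continuity part of Lemma~\ref{rhoi} transfers from exponent $\mu$ to $\mu_1>\mu$) is indeed automatic since the leaves have diameter less than $1$, as you note. This is the same argument; no essential divergence.
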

\begin{proof}[Proof of Lemma \ref{invariance of C(b,c,v)}]
   Pick any $\omega\in\Omega$ and any $\varphi\in C_{\theta^{-1}\omega}(b,c,\nu)$. It is clear that $L_{\theta^{-1}\omega}\varphi:M\rightarrow\mathbb{R}$ is bounded and measurable.

  Let us first verify condition \ref{C1} for $L_{\theta^{-1}\omega}\varphi$. Pick
 $\gamma(\omega)$ any local stable leaf having size between $\frac{A(\epsilon)}{4J^2}$ and $A(\epsilon)$, and any $\rho(\omega)\in D(a/2,\kappa,\gamma(\omega))$ satisfying $\int_{\gamma(\omega)}\rho(x,\omega)dm_{\gamma(\omega)}=1$. Since $(f_{\theta^{-1}\omega})^{-1}\gamma(\omega)$ is a curve, we can divide $(f_{\theta^{-1}\omega})^{-1}\gamma(\omega)$ into connected local stable submanifolds of size between $\frac{A(\epsilon)}{4J^2}$ and $A(\epsilon)$, named $\gamma_i(\theta^{-1}\omega)$ for $i$ belonging to a finite index set such that $\gamma_i(\theta^{-1}\omega)\cap \gamma_j(\theta^{-1}\omega)=\partial\gamma_i(\theta^{-1}\omega)\cap \partial \gamma_j(\theta^{-1}\omega)$ for $i\not=j$. Let $\rho_i(\theta^{-1}\omega)$ be defined as $(\ref{definition of rhoi})$ on $\gamma_i(\theta^{-1}\omega)$. By Lemma \ref{rhoi},
  \begin{equation*}
      \rho_i(\theta^{-1}\omega)\in D(e^{-\lambda_1}a/2,\kappa,\gamma_i(\theta^{-1}\omega))\subset D(a/2,\kappa,\gamma_i(\theta^{-1}\omega)).
 \end{equation*}Hence by $(\ref{Lphirho=phirhoi})$, we have
  \begin{align*}
  &\ \ \ \ \int_{\gamma(\omega)}(L_{\theta^{-1}\omega}\varphi)(y)\rho(y,\omega) dm_{\gamma(\omega)}(y)\\
  &=\sum_{i}\int_{\gamma_i(\theta^{-1}\omega)}\varphi(x)\rho_i(x,\theta^{-1}\omega)dm_{\gamma_i(\theta^{-1}\omega)}(x)\\
  &=\sum_{i}\int_{\gamma_i(\theta^{-1}\omega)}\rho_i(\theta^{-1}\omega)dm_{\gamma_i(\theta^{-1}\omega)}\cdot\int_{\gamma_i(\theta^{-1}\omega)}\varphi(x)\frac{\rho_i(x,\theta^{-1}\omega)}{\int_{\gamma_i(\theta^{-1}\omega)}\rho_i(\theta^{-1}\omega)dm_{\gamma_i(\theta^{-1}\omega)}}dm_{\gamma_i(\theta^{-1}\omega)}(x)>0,
  \end{align*}where the last inequality holds since $\varphi\in C_{\theta^{-1}\omega}(b,c,\nu)$ and using condition \ref{C1}
  for $\varphi$.

  Secondly, let us verify condition \ref{C2} for $L_{\theta^{-1}\omega}\varphi$. For any $\rho(\omega), \zeta(\omega)\in D(a/2,\kappa,\gamma(\omega))$ such that
  \begin{equation*}
    \int_{\gamma(\omega)}\rho(x,\omega)dm_{\gamma(\omega)}(x)=1 \mbox{ and } \int_{\gamma(\omega)}\zeta(x,\omega)dm_{\gamma(\omega)}(x)=1.
  \end{equation*}We divide $f_\omega^{-1}\gamma(\omega)$ into $\{\gamma_i(\theta^{-1}\omega)\}$ as before. Let $\rho_i(\theta^{-1}\omega)$  and $\zeta_i(\theta^{-1}\omega)$ be density functions defined as $(\ref{definition of rhoi})$ on $\gamma_i(\theta^{-1}\omega)$ corresponding to $\rho(\omega)$ and $\zeta(\omega)$ respectively. We normalize these density functions by
  \begin{align*}
    \rho^\prime_i(x,\theta^{-1}\omega) &=\rho_i(x,\theta^{-1}\omega)/\int_{\gamma_i(\theta^{-1}\omega)}\rho_i(y,\theta^{-1}\omega)dm_{\gamma_i(\theta^{-1}\omega)}(y)\ \mbox{ for }x\in\gamma_i(\theta^{-1}\omega),\\
    \zeta^\prime_i(x,\theta^{-1}\omega) &=\zeta_i(x,\theta^{-1}\omega)/\int_{\gamma_i(\theta^{-1}\omega)}\zeta_i(y,\theta^{-1}\omega)dm_{\gamma_i(\theta^{-1}\omega)}(y)\ \mbox{ for }x\in\gamma_i(\theta^{-1}\omega).\\
  \end{align*}
 By Lemma \ref{rhoi}, we have $\rho^\prime_i(\theta^{-1}\omega),\zeta^\prime_i(\theta^{-1}\omega)\in D(e^{-\lambda_1}\frac{a}{2},\kappa,\gamma_i(\theta^{-1}\omega))\subset D(\frac{a}{2},\kappa,\gamma_i(\theta^{-1}\omega))$.
  Then
  \begin{align*}
     &\ \ \ \ \int_{\gamma(\omega)}(L_{\theta^{-1}\omega}\varphi)(y)\zeta(y,\omega)dm_{\gamma(\omega)}(y)\\
     &\overset{\eqref{Lphirho=phirhoi}}=\sum_{i}\int_{\gamma_i(\theta^{-1}\omega)}\varphi(x)\zeta_i(x,\theta^{-1}\omega)dm_{\gamma_i(\theta^{-1}\omega)}(x)\\
     &=\sum_{i}\int_{\gamma_i(\theta^{-1}\omega)}\zeta_i(x,\theta^{-1}\omega)dm_{\gamma_i(\theta^{-1}\omega)}(x)\int_{\gamma_i(\theta^{-1}\omega)}\varphi(x)\zeta_i^\prime(x,\theta^{-1}\omega)dm_{\gamma_i(\theta^{-1}\omega)}(x)\\
     &\overset{\ref{C2}}\leq \sum_{i}\int_{\gamma_i(\theta^{-1}\omega)}\zeta_i(\theta^{-1}\omega)dm_{\gamma_i(\theta^{-1}\omega)}\cdot \exp(bd_{\gamma_i(\theta^{-1}\omega)}^{a,\kappa}(\rho_i^\prime(\theta^{-1}\omega),\zeta_i^\prime(\theta^{-1}\omega)))\cdot \int_{\gamma_i(\theta^{-1}\omega)}\varphi\rho_i^\prime(\theta^{-1}\omega)dm_{\gamma_i(\theta^{-1}\omega)}\\
     &\overset{\eqref{P2P3}}= \sum_{i}\frac{\int_{\gamma_i(\theta^{-1}\omega)}\zeta_i(\theta^{-1}\omega)dm_{\gamma_i(\theta^{-1}\omega)}}{\int_{\gamma_i(\theta^{-1}\omega)}\rho_i(\theta^{-1}\omega)dm_{\gamma_i(\theta^{-1}\omega)}}\cdot \exp(bd_{\gamma_i(\theta^{-1}\omega)}^{a,\kappa}(\rho_i(\theta^{-1}\omega),\zeta_i(\theta^{-1}\omega)))\cdot \int_{\gamma_i(\theta^{-1}\omega)}\varphi\rho_i(\theta^{-1}\omega)dm_{\gamma_i(\theta^{-1}\omega)}\\
     &\overset{\eqref{eq d leq Lamda1 d}}\leq \sum_{i}\frac{\int_{\gamma_i(\theta^{-1}\omega)}\zeta_i(\theta^{-1}\omega)dm_{\gamma_i(\theta^{-1}\omega)}}{\int_{\gamma_i(\theta^{-1}\omega)}\rho_i(\theta^{-1}\omega)dm_{\gamma_i(\theta^{-1}\omega)}}\cdot \exp(b\Lambda_1d_{\gamma(\omega)}^{a,\kappa}(\rho(\omega),\zeta(\omega)))\cdot \int_{\gamma_i(\theta^{-1}\omega)}\varphi\rho_i(\theta^{-1}\omega)dm_{\gamma_i(\theta^{-1}\omega)}.
  \end{align*}
  Note that
  \begin{equation*}
    \frac{\zeta_i(x,\theta^{-1}\omega)}{\rho_i(x,\theta^{-1}\omega)}\overset{\eqref{definition of   rhoi}}=\frac{\zeta(f_{\theta^{-1}\omega}x,\omega)}{\rho(f_{\theta^{-1}\omega}x,\omega)}\overset{\eqref{rho     over zeta}}\leq \exp(d_{\gamma(\omega)}^{a,\kappa}(\rho(\omega),\zeta(\omega))) \mbox{ for }x\in \gamma_i(\theta^{-1}\omega).
  \end{equation*}We continue the estimate
  \begin{align*}
     & \ \ \ \ \int_{\gamma(\omega)}(L_{\theta^{-1}\omega}\varphi)(y)\zeta(y,\omega)dm_{\gamma(\omega)}(y)\\
     &\leq \exp\left(d_{\gamma(\omega)}^{a,\kappa}(\rho(\omega),\zeta(\omega))\right) \exp\left(b\Lambda_1d_{\gamma(\omega)}^{a,\kappa}(\rho(\omega),\zeta(\omega))\right)\sum_{i}^{}\int_{\gamma_i(\theta^{-1}\omega)}\varphi\rho_i(\theta^{-1}\omega)dm_{\gamma_i(\theta^{-1}\omega)}\\
 &= \exp\left((1+b\Lambda_1)d_{\gamma(\omega)}^{a,\kappa}(\rho(\omega),\zeta(\omega))\right)\int_{\gamma(\omega)}(L_{\theta^{-1}\omega}\varphi)(y)\rho(y,\omega)dm_{\gamma(\omega)}(y)\\
     &\leq \exp\left(b\lambda_2d_{\gamma(\omega)}^{a,\kappa}(\rho(\omega),\zeta(\omega))\right)\int_{\gamma(\omega)}(L_{\theta^{-1}\omega}\varphi)(y)\rho(y,\omega)dm_{\gamma(\omega)}(y),
  \end{align*}provided $\lambda_2\in (\Lambda_1,1)$ and $b>\frac{1}{\lambda_2-\Lambda_1}:=b_0$. Switching $\rho(\omega)$ and $\zeta(\omega)$ in the above estimates, we get
  \begin{equation*}
    \int_{\gamma(\omega)}(L_{\theta^{-1}\omega}\varphi)(y)\rho(y,\omega)dm_{\gamma(\omega)}(y)\leq \exp(b\lambda_2d_{\gamma(\omega)}^{a,\kappa}(\rho(\omega),\zeta(\omega)))\int_{\gamma(\omega)}(L_{\theta^{-1}\omega}\varphi)(y)\zeta(y,\omega)dm_{\gamma(\omega)}(y).
  \end{equation*}
  The above two estimates imply that
  \begin{equation*}
    \left|\log \int_{\gamma(\omega)}(L_{\theta^{-1}\omega}\varphi)(y)\rho(y,\omega)dm_{\gamma(\omega)}-\log \int_{\gamma(\omega)}(L_{\theta^{-1}\omega}\varphi)(y)\zeta(y,\omega)dm_{\gamma(\omega)}\right|\leq b\lambda_2d_{\gamma(\omega)}^{a,\kappa}(\rho(\omega),\zeta(\omega)).
  \end{equation*}

Next, let us verify condition \ref{C3} for $L_{\theta^{-1}\omega}\varphi$. Given any pair of local stable leaves $\gamma(\omega)$ and $\tilde{\gamma}(\omega)$. We first divide $(f_{\theta^{-1}\omega})^{-1}\gamma(\omega)$ into connected local stable manifolds of size between $\frac{A(\epsilon)}{4J}$ and $\frac{A(\epsilon)}{2J}$, named $\gamma_i(\theta^{-1}\omega)$, such that $\gamma(\omega)=\cup f_{\theta^{-1}\omega}\gamma_i(\theta^{-1}\omega)$. Let $\tilde{\gamma}_i(\theta^{-1}\omega)$ be the holonomy image of $\gamma_i(\theta^{-1}\omega)$ inside of $(f_{\theta^{-1}\omega})^{-1}\tilde{\gamma}(\omega)$. Naturally, we have $\tilde{\gamma}(\omega)=\cup f_{\theta^{-1}\omega}\tilde{\gamma}_i(\theta^{-1}\omega)$. Note that the Jacobian of holonomy map between local stable manifolds is bounded above by $J$  and bounded below by $J^{-1}$. Therefore, $\tilde{\gamma}_i(\theta^{-1}\omega)$ have size between $\frac{A(\epsilon)}{4J^2}$ and $A(\epsilon)$. For any $\rho(\omega)\in D(a_1,\kappa_1,\gamma(\omega))$, let $\tilde{\rho}(\omega)$ be defined as $(\ref{definition of tilde rho})$, and we already see that  $\tilde{\rho}(\omega)\in D(\frac{a}{2},\kappa_1\nu_0,\tilde{\gamma}(\omega))\subset D(a,\kappa,\tilde{\gamma}(\omega)).$ Let
\begin{align*}
  \rho_i(x,\theta^{-1}\omega) & =\frac{|\det D_xf_{\theta^{-1}\omega}|_{E^s(x,\theta^{-1}\omega)}|}{|\det D_xf_{\theta^{-1}\omega}|}\rho(f_{\theta^{-1}\omega}x,\omega)\mbox{ for }x\in\gamma_i(\theta^{-1}\omega)\\
\end{align*} and
\begin{align*}
 (\tilde{\rho})_i(\theta^{-1}\omega)&=\frac{|\det D_xf_{\theta^{-1}\omega}|_{E^s(x,\theta^{-1}\omega)}|}{|\det D_xf_{\theta^{-1}\omega}|}\tilde{\rho}(f_{\theta^{-1}\omega}x,\omega)\mbox{ for }x\in\gamma_i(\theta^{-1}\omega).
\end{align*}Then by \eqref{Lphirho=phirhoi}, we have
\begin{align*}
  \int_{\gamma(\omega)}(L_{\theta^{-1}\omega}\varphi)(y)\rho(y,\omega)dm_{\gamma(\omega)}(y) & =\sum_{i}\int_{\gamma_i(\theta^{-1}\omega)}\varphi(x)\rho_i(x,\theta^{-1}\omega)dm_{\gamma_i(\theta^{-1}\omega)}(x),\\
  \int_{\tilde{\gamma}(\omega)}(L_{\theta^{-1}\omega}\varphi)(y)\tilde{\rho}(y,\omega)dm_{\tilde{\gamma}(\omega)}(y) & =\sum_{i}\int_{\tilde{\gamma}_i(\theta^{-1}\omega)}\varphi(x)(\tilde{\rho})_i(x,\theta^{-1}\omega)dm_{\tilde{\gamma}_i(\theta^{-1}\omega)}(x).\\
\end{align*}By Lemma \ref{rhoi}, $\rho_i(\theta^{-1}\omega)\in D(e^{-\lambda_1}a_1,\kappa_1,\gamma_i(\theta^{-1}\omega))\subset D(a_1,\kappa_1,\gamma_i(\theta^{-1}\omega))$.
Let  $\psi_{\theta^{-1}\omega}^i$ be the holonomy map between $\tilde{\gamma}_i(\theta^{-1}\omega)$ and $\gamma_i(\theta^{-1}\omega)$, and define
\begin{equation*}
  \tilde{\rho_i}(x,\theta^{-1}\omega)=\rho_i(\psi_{\theta^{-1}\omega}^i(x),\theta^{-1}\omega)\cdot |Jac(\psi_{\theta^{-1}\omega}^i)(x)|\mbox{ for }x\in \tilde{\gamma}_i(\theta^{-1}\omega)
\end{equation*}to be the density function defined by the holonomy map corresponding to $\rho_i(\theta^{-1}\omega)$. Note that $(\tilde{\rho})_i(\theta^{-1}\omega)$ and $ \tilde{\rho_i}(\theta^{-1}\omega)$ are different, and we keep in mind that $(\tilde{\rho})_i(\theta^{-1}\omega)$ is obtained by first applying the action of holonomy map then applying the action of pull back to $\rho(\omega)$, while $ \tilde{\rho_i}(\theta^{-1}\omega)$ is obtained by reversing the order of these two actions.
Since $\varphi\in C_{\theta^{-1}\omega}(b,c,\nu)$ and $\rho_i(\theta^{-1}\omega)\in D(a_1,\kappa_1,\gamma_i(\theta^{-1}\omega))$, by \ref{C3}, we conclude for each $i$,
\begin{align}
  &\ \ \ \ \left|\log \int_{\gamma_i(\theta^{-1}\omega)}\varphi(x)\rho_i(x,\theta^{-1}\omega)dm_{\gamma_i(\theta^{-1}\omega)}-\log \int_{\tilde{\gamma}_i(\theta^{-1}\omega)}\varphi(x)\tilde{\rho_i}(x,\theta^{-1}\omega)dm_{\tilde{\gamma}_i(\theta^{-1}\omega)}\right|\label{logphirho-logphirho}\\
  &\leq cd_u(\gamma_i(\theta^{-1}\omega),\tilde{\gamma}_i(\theta^{-1}\omega))^\nu\nonumber\\
  &\overset{\eqref{distance between local stable}}\leq c\cdot e^{-\lambda\nu}d_u(\gamma(\omega),\tilde{\gamma}(\omega))^\nu.\nonumber
\end{align} To finish the proof of the condition \ref{C3}, we need the following sublemma:
\begin{sublemma}\label{sublemma}
  There exists a number $K_0=K_0(a,b)>0$ depending on $a,\ b$ and system constants such that for each $i$, the following inequality holds
  \begin{align}\label{2inequality}
   &\ \ \ \  \left|\log\int_{\tilde{\gamma}_i(\theta^{-1}\omega)}\varphi(x)\tilde{\rho}_i(x,\theta^{-1}\omega)dm_{\tilde{\gamma}_i(\theta^{-1}\omega)}-\log\int_{\tilde{\gamma}_i(\theta^{-1}\omega)}\varphi(x)(\tilde{\rho})_i(x,\theta^{-1}\omega)dm_{\tilde{\gamma}_i(\theta^{-1}\omega)}\right|\\
   &\leq K_0d_u(\gamma(\omega),\tilde{\gamma}(\omega))^{\nu}\nonumber.
  \end{align}
\end{sublemma}

We postpone the proof of Sublemma \ref{sublemma} and finish the proof of Lemma \ref{invariance of C(b,c,v)}. We combine $(\ref{logphirho-logphirho})$ and $(\ref{2inequality})$ to obtain
\begin{align*}
   & \ \ \ \ \left|\log\int_{\gamma_i(\theta^{-1}\omega)}\varphi(x)\rho_i(x,\theta^{-1}\omega)dm_{\gamma_i(\theta^{-1}\omega)}-\log\int_{\tilde{\gamma}_i(\theta^{-1}\omega)}\varphi(x)(\tilde{\rho})_i(x,\theta^{-1}\omega)dm_{\tilde{\gamma}_i(\theta^{-1}\omega)}\right|\\
   &\leq (c\cdot e^{-\lambda \nu}+K_0)d_u(\gamma(\omega),\tilde{\gamma}(\omega))^{\nu}.
\end{align*}As a consequence,
\begin{align*}
   & \ \ \ \ \left|\log\int_{\gamma(\omega)}(L_{\theta^{-1}\omega}\varphi)(y)\rho(y,\omega)dm_{\gamma(\omega)}(y) -\log\int_{\tilde{\gamma}(\omega)}(L_{\theta^{-1}\omega}\varphi)(y)\tilde{\rho}(y,\omega)dm_{\tilde{\gamma}(\omega)}(y) \right|\\
   &=\left|\log \sum_{i}\int_{\gamma_i(\theta^{-1}\omega)}\varphi(x)\rho_i(x,\theta^{-1}\omega)dm_{\gamma_i(\theta^{-1}\omega)}(x)-\log \sum_{i}\int_{\tilde{\gamma}_i(\theta^{-1}\omega)}\varphi(x)(\tilde{\rho})_i(x,\theta^{-1}\omega)dm_{\tilde{\gamma}_i(\theta^{-1}\omega)}(x)   \right| \\
   &\leq (c\cdot e^{-\lambda\nu}+K_0)d_u(\gamma(\omega),\tilde{\gamma}(\omega))^\nu\\
   &\leq \lambda_2 cd_u(\gamma(\omega),\tilde{\gamma}(\omega))^\nu,
\end{align*}provided $\lambda_2\in (e^{-\lambda\nu},1)$, and $c\geq \frac{K_0(a,b)}{\lambda_2-\exp(-\lambda\nu)}:=c_0$. The proof of Lemma \ref{invariance of C(b,c,v)} is complete.
\end{proof}

\begin{proof}[Proof of Sublemma \ref{sublemma}]
Applying Lemma \ref{rhoi} and $(\ref{tilder rho})$ to $\rho(\omega)\in D(a_1,\kappa_1,\gamma(\omega))$, we see that $(\tilde{\rho})_i(\theta^{-1}\omega)$, $ \tilde{\rho_i}(\theta^{-1}\omega)$ both belong to 
$D(a/2,\kappa_1\nu_0,\tilde{\gamma}_i(\theta^{-1}\omega))\subset
D(a/2,\kappa,\tilde{\gamma}_i(\theta^{-1}\omega))$. We fix $i$ and prove Sublemma \ref{sublemma}. Without ambiguity, we denote $\rho^\prime(\theta^{-1}\omega):=(\tilde{\rho})_i(\theta^{-1}\omega)$ and $\rho^{\prime\prime}(\theta^{-1}\omega):=\tilde{\rho}_i(\theta^{-1}\omega)$ for short.

We normalize the random density $\rho^\prime(\theta^{-1}\omega)$ and $\rho^{\prime\prime}(\theta^{-1}\omega)$ by letting
\begin{equation*}
 \bar{\rho}^\prime(x,\theta^{-1}\omega):= \frac{\rho^\prime(x,\theta^{-1}\omega)}{\int_{\tilde{\gamma}_i(\theta^{-1}\omega)}\rho^\prime(x,\theta^{-1}\omega)dm_{\tilde{\gamma}_i(\theta^{-1}\omega)}(x)},\ \bar{\rho}^{\prime\prime}(x,\theta^{-1}\omega):= \frac{\rho^{\prime\prime}(x,\theta^{-1}\omega)}{\int_{\tilde{\gamma}_i(\theta^{-1}\omega)}\rho^{\prime\prime}(x,\theta^{-1}\omega)dm_{\tilde{\gamma}_i(\theta^{-1}\omega)}(x)}.
\end{equation*}Then by condition \ref{C2}, we have
\begin{equation}\label{estimate lemma 4.1 1}
  \begin{split}
 &\left|\log \int_{\tilde{\gamma}_i(\theta^{-1}\omega)}\varphi(x)\rho^\prime(x,\theta^{-1}\omega)dm_{\tilde{\gamma}_i(\theta)^{-1}\omega}-\log \int_{\tilde{\gamma}_i(\theta^{-1}\omega)}\varphi(x)\rho^{\prime\prime}(x,\theta^{-1}\omega)dm_{\tilde{\gamma}_i(\theta)^{-1}\omega}\right|\\
 \leq &\left|\log \int_{\tilde{\gamma}_i(\theta^{-1}\omega)}\rho^\prime(x,\theta^{-1}\omega)dm_{\tilde{\gamma}_i(\theta^{-1}\omega)}(x)-\log \int_{\tilde{\gamma}_i(\theta^{-1}\omega)}\rho^{\prime\prime}(x,\theta^{-1}\omega)dm_{\tilde{\gamma}_i(\theta^{-1}\omega)}(x)\right|\\
 &\quad+ bd_{\tilde{\gamma}_i(\theta^{-1}\omega)}^{a,\kappa}(\bar{\rho}^\prime(\theta^{-1}\omega),\bar{\rho}^{\prime\prime}(\theta^{-1}\omega)).
  \end{split}
\end{equation}
Next, we are going to estimate terms in the right hand of the above inequality.
By definition, for $x\in\tilde{\gamma}_i(\theta^{-1}\omega)$,we have expressions
\begin{align*}
  \rho^\prime(x,\theta^{-1}\omega) &=\frac{\left|\det D_xf_{\theta^{-1}\omega}|_{E^s(x,\theta^{-1}\omega)}\right|}{\left|\det D_xf_{\theta^{-1}\omega}\right|}\cdot \rho (\psi_\omega f_{\theta^{-1}\omega}(x),\omega)|Jac(\psi_\omega)(f_{\theta^{-1}\omega}x)|;\\
  \rho^{\prime\prime}(x,\theta^{-1}\omega)&=\frac{\left|\det D_{\psi_{\theta^{-1}\omega}^i(x)}f_{\theta^{-1}\omega}|_{E^s(\psi_{\theta^{-1}\omega}^i(x),\theta^{-1}\omega)}\right|}{\left|\det D_{\psi_{\theta^{-1}\omega}^i(x)}f_{\theta^{-1}\omega}\right|}\cdot\rho(f_{\theta^{-1}\omega}\psi_{\theta^{-1}\omega}^i(x),\omega)\cdot |Jac(\psi_{\theta^{-1}\omega}^i)(x)|.
\end{align*}
By definition of holonomy map, we have
\begin{equation}\label{3termofestimate}
  \rho(\psi_\omega f_{\theta^{-1}\omega}(x),\omega)=\rho(f_{\theta^{-1}\omega}\psi_{\theta^{-1}\omega}^i(x),\omega)\mbox{ for }x\in\tilde{\gamma}_i(\theta^{-1}\omega).
\end{equation}
By Lemma \ref{property of fiberwise holonomy map} (2), for $x\in\tilde{\gamma}_i(\theta^{-1}\omega)$, we have
\begin{align}
   \left|\log |Jac(\psi_\omega)(f_{\theta^{-1}\omega}x)|-\log  |Jac(\psi_{\theta^{-1}\omega}^i)(x)|\right|
   &\leq a_0d(f_{\theta^{-1}\omega}(x),\psi_\omega f_{\theta^{-1}\omega}(x))^{\nu_0}+a_0d(x,\psi_{\theta^{-1}\omega}^i(x))^{\nu_0}\nonumber\\
   &\leq a_0(1+e^{-\lambda\nu_0})d_u(\gamma(\omega),\tilde{\gamma}(\omega))^{\nu_0}\label{4thtermofestimate}.
\end{align}Combing Lemma \ref{Holder continuity of stableunstable distribution} and Lemma \ref{detDfE-detDfE}, for all $x,y\in M$, $\omega\in\Omega$, we have
\begin{equation}\label{det dfEs-detdfEs}
  \left||\det D_xf_\omega|_{E^s(x,\omega)}|-|\det D_yf_\omega|_{E^s(y,\omega)}|\right|\leq (C_2+C_2C_1)d(x,y)^{\nu_0}.
\end{equation}
Then, by $(\ref{det dfEs-detdfEs})$ and \eqref{bound of det}, for $x\in \tilde{\gamma}_i(\theta^{-1}\omega)$, we have
\begin{align}
 &\ \ \ \ \left|\log |\det D_xf_{\theta^{-1}\omega}|_{E^s(x,\theta^{-1}\omega)}|-\log|\det D_{\psi_{\theta^{-1}\omega}^i(x)}f_{\theta^{-1}\omega}|_{E^s(\psi_{\theta^{-1}\omega}^i(x),\theta^{-1}\omega)}|\right|\nonumber\\  &\leq C_{10}(C_2+C_2C_1) d(x,\psi_{\theta^{-1}\omega}^i(x))^{\nu_0}\nonumber\\
 &\leq C_{10}(C_2+C_2C_1)  d_u(\gamma_i(\theta^{-1}\omega),\tilde{\gamma}_i(\theta^{-1}\omega))^{\nu_0}\nonumber\\
 &\leq C_{10}(C_2+C_2C_1) e^{-\lambda\nu_0}d_u(\gamma(\omega),\tilde{\gamma}(\omega))^{\nu_0}\label{1thtermofestimate}.
\end{align}
Applying $(\ref{Lip constant of log det Dxfomega})$, we have
\begin{equation}\label{2ndtermofestimate}
  \left|\log|\det D_{\psi_{\theta^{-1}\omega}^ix}f_{\theta^{-1}\omega}|-\log|\det D_xf_{\theta^{-1}\omega}|\right|\leq K_2d(x,\psi_{\theta^{-1}\omega}^ix)\leq K_2e^{-\lambda}d_u(\gamma(\omega),\tilde{\gamma}(\omega)).
\end{equation}
Then $(\ref{3termofestimate}) $, $(\ref{4thtermofestimate})$, $(\ref{1thtermofestimate})$ and $(\ref{2ndtermofestimate})$ imply
\begin{align}\label{estimate rho prime rho pp}
  e^{-K_3d_u(\gamma(\omega),\tilde{\gamma}(\omega))^{\nu_0}}\leq \frac{\rho^\prime(x,\theta^{-1}\omega)}{\rho^{\prime\prime}(x,\theta^{-1}\omega)} \leq e^{K_3d_u(\gamma(\omega),\tilde{\gamma}(\omega))^{\nu_0}}\mbox{ for any }x\in\tilde{\gamma}_i(\theta^{-1}\omega),
\end{align}where $K_3=\max\{a_0(1+e^{-\lambda\nu_0}),C_{10}(C_2+C_2C_1) e^{-\lambda\nu_0},K_2e^{-\lambda}\}$, and therefore,
\begin{align}
   & \ \ \ \ \left|\log \int_{\tilde{\gamma}_i(\theta^{-1}\omega)}\rho^\prime(x,\theta^{-1}\omega)dm_{\tilde{\gamma}_i(\theta^{-1}\omega)}(x)-\log \int_{\tilde{\gamma}_i(\theta^{-1}\omega)}\rho^{\prime\prime}(x,\theta^{-1}\omega)dm_{\tilde{\gamma}_i(\theta^{-1}\omega)}(x)\right|\nonumber\\
     &\leq K_3d_u(\gamma(\omega),\tilde{\gamma}(\omega))^{\nu_0}\label{2nd term of inequality}.
\end{align}

 Next, we estimate $d_{\tilde{\gamma}_i(\theta^{-1}\omega)}^{a,\kappa}(\bar{\rho}^\prime(\theta^{-1}\omega),\bar{\rho}^{\prime\prime}(\theta^{-1}\omega))$. The inequality \eqref{estimate rho prime rho pp} also implies that
\begin{equation}\label{estimate on d+tilde gamma i}
  d_{+,\tilde{\gamma}_i(\theta^{-1}\omega)}(\bar{\rho}^\prime(\theta^{-1}\omega),\bar{\rho}^{\prime\prime}(\theta^{-1}\omega))=d_{+,\tilde{\gamma}_i(\theta^{-1}\omega)}(\rho^\prime(\theta^{-1}\omega),\rho^{\prime\prime}(\theta^{-1}\omega))\overset{\eqref{d+gammaomegarho1rho2}}\leq 2K_3d_u(\gamma(\omega),\tilde{\gamma}(\omega))^{\nu_0}.
\end{equation}Similar as the proof of $(\ref{dgammaileqdgamma+i})$, we have an estimate
\begin{equation}\label{d leq d+log}
  d_{\tilde{\gamma}_i(\theta^{-1}\omega)}^{a,\kappa}(\bar{\rho}^\prime(\theta^{-1}\omega),\bar{\rho}^{\prime\prime}(\theta^{-1}\omega))\leq d_{+,\tilde{\gamma}_i(\theta^{-1}\omega)}(\bar{\rho}^\prime(\theta^{-1}\omega),\bar{\rho}^{\prime\prime}(\theta^{-1}\omega))+\log\left(\hat{\tau}_2(\theta^{-1}\omega)/\hat{\tau}_1(\theta^{-1}\omega)\right),
\end{equation}where
\begin{align*}
  \hat{\tau}_1(\theta^{-1}\omega)&= \inf_{x\not=y\in\tilde{\gamma}_i(\theta^{-1}\omega)}\left\{1,\frac{\exp(ad(x,y)^\kappa)-\rho^{\prime\prime}(y,\theta^{-1}\omega)/\rho^{\prime\prime}(x,\theta^{-1}\omega)}{\exp(ad(x,y)^\kappa)-\rho^{\prime}(y,\theta^{-1}\omega)/\rho^{\prime}(x,\theta^{-1}\omega)}\right\},
\end{align*}and
\begin{align*}
 \hat{\tau}_2(\theta^{-1}\omega)&= \sup_{x\not=y\in\tilde{\gamma}_i(\theta^{-1}\omega)}\left\{1,\frac{\exp(ad(x,y)^\kappa)-\rho^{\prime\prime}(y,\theta^{-1}\omega)/\rho^{\prime\prime}(x,\theta^{-1}\omega)}{\exp(ad(x,y)^\kappa)-\rho^{\prime}(y,\theta^{-1}\omega)/\rho^{\prime}(x,\theta^{-1}\omega)}\right\}.
\end{align*}So we need to estimate $\log\left(\hat{\tau}_2(\theta^{-1}\omega)/\hat{\tau}_1(\theta^{-1}\omega)\right).$
Denote
\begin{align*}
  B_1(x,y,\theta^{-1}\omega) &:=\frac{\rho^{\prime}(y,\theta^{-1}\omega)}{\rho^{\prime}(x,\theta^{-1}\omega)}\cdot \exp(-ad(x,y)^\kappa)\mbox{ for } x,y\in \tilde{\gamma}_i(\theta^{-1}\omega),x\not=y,\\
B_2(x,y,\theta^{-1}\omega) &:=\frac{\rho^{\prime\prime}(y,\theta^{-1}\omega)}{\rho^{\prime\prime}(x,\theta^{-1}\omega)}\cdot \exp(-ad(x,y)^\kappa)\mbox{ for }x,y\in \tilde{\gamma}_i(\theta^{-1}\omega),x\not=y,
\end{align*}then
\begin{equation}\label{relation tau b}
\begin{split}
\frac{\exp(ad(x,y)^\kappa)-\rho^{\prime\prime}(y,\theta^{-1}\omega)/\rho^{\prime\prime}(x,\theta^{-1}\omega)}{\exp(ad(x,y)^\kappa)-\rho^{\prime}(y,\theta^{-1}\omega)/\rho^{\prime}(x,\theta^{-1}\omega)}&=\frac{1-B_2(x,y,\theta^{-1}\omega)}{1-B_1(x,y,\theta^{-1}\omega)}.
\end{split}
\end{equation} Since $\rho^\prime(\theta^{-1}\omega),\rho^{\prime\prime}(\theta^{-1}\omega)\in D(a/2,\kappa_1\nu_0,\tilde{\gamma}_i(\theta^{-1}\omega))\subset D(a/2,\kappa,\tilde{\gamma}_i(\theta^{-1}\omega))$,
\begin{align}
 \log B_1(x,y,\theta^{-1}\omega)&=\log \rho^\prime(y,\theta^{-1}\omega)-\log\rho^\prime(x,\theta^{-1}\omega)-ad(x,y)^\kappa\nonumber\\
 &\leq \frac{a}{2}d(x,y)^\kappa-ad(x,y)^\kappa\nonumber\\
 &\leq -\frac{a}{2}d(x,y)^\kappa<0.\label{bound of B1}
\end{align}As a consequence, $B_1(x,y,\theta^{-1}\omega)\leq e^{-\frac{a}{2}d(x,y)^\kappa}<1$. Similarly, $B_2(x,y,\theta^{-1}\omega)\leq e^{-\frac{a}{2}d(x,y)^\kappa}<1$. Hence, on the one hand
\begin{align}
 &\ \ \ \ |B_1(x,y,\theta^{-1}\omega)-B_2(x,y,\theta^{-1}\omega)|\nonumber\\
 &\leq \max\{B_1(x,y,\theta^{-1}\omega),B_2(x,y,\theta^{-1}\omega)\}\cdot|\log B_1(x,y,\theta^{-1}\omega)-\log B_2(x,y,\theta^{-1}\omega)|\nonumber\\
 &\leq |\log\rho^\prime(x,\theta^{-1}\omega)-\log \rho^{\prime\prime}(x,\theta^{-1}\omega)|+|\log \rho^\prime(y,\theta^{-1}\omega)-\log \rho^{\prime\prime}(y,\theta^{-1}\omega)|\nonumber\\
 &\overset{\eqref{estimate rho prime rho pp}}\leq 2K_3d_u(\gamma(\omega),\tilde{\gamma}(\omega))^{\nu_0}\nonumber\\
 &\overset{\eqref{mu+nuleq nu0}}\leq 2K_3d_u(\gamma(\omega),\tilde{\gamma}(\omega))^{\kappa+\nu}\label{on one hand},
\end{align}where in the first inequality, we use the following inequality
\begin{equation*}
  |z_1-z_2|\leq \max\{z_1,z_2\}|\log z_1-\log z_2|\mbox{ for any }z_1,z_2\in(0,1).
\end{equation*}On the other hand, recall that $\rho^\prime(\theta^{-1}\omega)$, $ \rho^{\prime\prime}(\theta^{-1}\omega)$ both belong to
$D(a/2,\kappa_1\nu_0,\tilde{\gamma}_i(\theta^{-1}\omega))$, we also have
\begin{align}
 &\ \ \ \ |B_1(x,y,\theta^{-1}\omega)-B_2(x,y,\theta^{-1}\omega)|\nonumber\\
 &\leq \max\{B_1(x,y,\theta^{-1}\omega),B_2(x,y,\theta^{-1}\omega)\}\cdot |\log B_1(x,y,\theta^{-1}\omega)-\log B_2(x,y,\theta^{-1}\omega)|\nonumber\\
  & \leq |\log\rho^\prime(x,\theta^{-1}\omega)-\log \rho^{\prime}(y,\theta^{-1}\omega)|+|\log \rho^{\prime\prime}(x,\theta^{-1}\omega)-\log \rho^{\prime\prime}(y,\theta^{-1}\omega)|\nonumber\\
  &\leq 2\cdot \frac{a}{2}d(x,y)^{\kappa_1\nu_0}\nonumber\\
  &\leq ad(x,y)^{\kappa+\nu}\label{on the other hand}.
\end{align}Estimates $(\ref{on one  hand})$ and $(\ref{on the other hand})$ imply that
\begin{align}
   |B_1(x,y,\theta^{-1}\omega)-B_2(x,y,\theta^{-1}\omega)| &\leq \max\{a,2K_3\}d_u(\gamma(\omega),\tilde{\gamma}(\omega))^\nu\cdot d(x,y)^\kappa\nonumber\\
   &:=K_4d_u(\gamma(\omega),\tilde{\gamma}(\omega))^\nu\cdot d(x,y)^\kappa.
\end{align}Then
\begin{align}
  \left|\log \frac{1-B_2(x,y,\theta^{-1}\omega)}{1-B_1(x,y,\theta^{-1}\omega)}\right|&\leq \frac{|B_1(x,y,\theta^{-1}\omega)-B_2(x,y,\theta^{-1}\omega)|}{1-\max\{B_1(x,y,\theta^{-1}\omega),B_2(x,y,\theta^{-1}\omega)\}}\nonumber\\
  &\overset{\eqref{bound of B1}}\leq \frac{K_4d_u(\gamma(\omega),\tilde{\gamma}(\omega))^\nu\cdot d(x,y)^\kappa}{1-\exp(-\frac{a}{2}d(x,y)^\kappa)}\nonumber\\
  &\leq K_5 d_u(\gamma(\omega),\tilde{\gamma}(\omega))^\nu,\label{log1-b21-b1}
\end{align}where $K_5:=K_4\cdot \sup_{z\in(0,1)}\frac{z^\kappa}{1-\exp(-\frac{a}{2}z^\kappa)}<\infty$, and in the first ``$\leq$", we use the following fact
\begin{equation*}
  |\log(1-z_1)-\log(1-z_2)|\leq \frac{|z_1-z_2|}{1-\max\{z_1,z_2\}}\mbox{ for any }z_1,z_2\in(0,1).
\end{equation*} Hence we have
\begin{equation}\label{1st term of inequality}
  |\log \hat{\tau}_2(\theta^{-1}\omega)/\hat{\tau}_1(\theta^{-1}\omega)|\overset{\eqref{relation tau b},\eqref{log1-b21-b1}}\leq 2K_5d_u(\gamma(\omega),\tilde{\gamma}(\omega))^\nu.
\end{equation}Let $K_0=b(2K_3+2K_5)+K_3$, then by \eqref{estimate    lemma 4.1 1}, $(\ref{2nd term    of inequality})$, \eqref{estimate on d+tilde gamma i}, $(\ref{d leq d+log})$ and $(\ref{1st term of inequality})$, Sublemma \ref{sublemma} is proved.
\end{proof}

From now on, we fix any  $\lambda_2\in(\max\{\Lambda_1,e^{-\lambda\nu}\},1)$, $b>b_0(\lambda_2,\Lambda_1)=\frac{1}{\lambda_2-\Lambda_1}$ and $c>c_0=c_0(b,\nu)$ as in Lemma \ref{invariance of C(b,c,v)}. We have shown that
\begin{equation*}
  L_{\theta^{-1}\omega}(C_{\theta^{-1}\omega}(b,c,\nu))\subset C_\omega(\lambda_2b,\lambda_2c,\nu)\subset C_\omega(b,c,\nu),\ \forall \omega\in\Omega.
\end{equation*}

Recall that $\epsilon^*$ is a constant satisfying \eqref{eq def epsilon star}. Let $\delta\in(0,\epsilon^*/8)$ be the constant in Lemma \ref{local product structure} corresponding to $\epsilon^*/8$, i.e., for any $x,y\in M$, $d(x,y)<\delta$, for all $\omega\in \Omega$, we have
\begin{equation*}
  W^s_{\epsilon^*/8}(x,\omega)\cap W^u_{\epsilon^*/8}(y,\omega)\not=\emptyset.
\end{equation*} Now let $\{B_{\delta/4}(x)\}_{x\in M}$ be an open cover of $M$. Pick a subcover $\{B_{\delta/4}(x_i)\}_{i=1}^l$ by the compactness of $M$. Now by the definition of topological mixing on fibers, there exists a $N\in\mathbb{N}$ such that for any $n\geq N$, $\omega\in\Omega$,
\begin{equation}\label{N property 1}
  \phi^n(B_{\delta/4}(x_i)\times \{\omega\} )\cap(B_{\delta/4}(x_j)\times \{\theta^n\omega\} )\not=\emptyset\ \mbox{for any }1\leq i,j\leq l.
\end{equation}Moreover, we pick $N$ large enough such that
\begin{equation}\label{N property 2}
  e^{\lambda N}\epsilon^*\geq 24\epsilon,\ \mbox{and}\
  e^{-\lambda N}\epsilon^*<2\delta.
\end{equation}From now on, we fix this constant $N$. Next, we are going to show that the diameter of image of $L^N_{\theta^{-N}\omega}:C_{{\theta^{-N}\omega}}(b,c,\nu)\rightarrow C_\omega(b,c,\nu)$ with respect to the projective metric on $C_\omega(b,c,\nu)$ is finite, and this finite diameter is independent of $\omega\in\Omega$, where we recall
\begin{equation*}
  L^N_{\theta^{-N}\omega}= L_{\theta^{-1}\omega}\circ \cdots\circ L_{\theta^{-(N-1)}\omega}\circ L_{\theta^{-N}\omega}.
\end{equation*}
\begin{lemma}\label{lemma diamter of LN finite}
  There exists a constant $D_2=D_2(\lambda_2,a,b,c,N)>0$ such that for any $\omega\in\Omega$,
  \begin{equation}
    \sup\left\{d_\omega(\varphi_1,\varphi_2):\ \varphi_1,\varphi_2\in L^N_{\theta^{-N}\omega} C_{\theta^{-N}\omega}(b,c,\nu)\right\}\leq D_2<\infty,
  \end{equation}where $d_\omega$ is the projective metric on $C_\omega(b,c,\nu)$.
\end{lemma}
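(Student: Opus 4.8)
The plan is to read the bound off the Birkhoff machinery built in the previous subsections, so the real content is a single uniform ``comparability of masses'' estimate for the $N$-fold image. \textbf{Reduction to a reference function.} Since the constant function $\mathbf 1$ lies in every cone $C_\omega(b,c,\nu)$ (Remark \ref{nonegative function s C2 auto}), by the triangle inequality for the projective metric it is enough to produce $D_2$, independent of $\omega$, with $d_\omega(L^N_{\theta^{-N}\omega}\varphi,\mathbf 1)\le D_2/2$ for all $\varphi\in C_{\theta^{-N}\omega}(b,c,\nu)$. Write $\psi:=L^N_{\theta^{-N}\omega}\varphi$. By Lemma \ref{invariance of C(b,c,v)}, $\psi\in C_\omega(\lambda_2 b,\lambda_2 c,\nu)$, i.e.\ $\psi$ satisfies (C2) and (C3) with the strictly smaller constants $\lambda_2 b<b$, $\lambda_2 c<c$; this gap is precisely what will keep the correction factors $\xi_\omega,\eta_\omega$ in the formulas (\ref{alpha omega phi1phi2})--(\ref{beta omega phi1phi2}) bounded away from $0$ and $\infty$, via the same elementary estimate (the computation of $\tau_1,\tau_2$) used in the proof of Lemma \ref{rhoi}.

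\textbf{The key estimate.} The heart of the proof is to show there is a constant $D_1>1$, depending only on the system and on $a,b,c,N$, such that for every $\varphi\in C_{\theta^{-N}\omega}(b,c,\nu)$, every pair of local stable leaves $\gamma_1(\omega),\gamma_2(\omega)\subset M_\omega$ of size in $[\epsilon/4,\epsilon/2]$, and all normalized $\rho_j\in D(a/2,\mu,\gamma_j(\omega))$,
\begin{equation*}
D_1^{-1}\ \le\ \frac{\int_{\gamma_1(\omega)}(L^N_{\theta^{-N}\omega}\varphi)\,\rho_1\,dm_{\gamma_1(\omega)}}{\int_{\gamma_2(\omega)}(L^N_{\theta^{-N}\omega}\varphi)\,\rho_2\,dm_{\gamma_2(\omega)}}\ \le\ D_1 .
\end{equation*}
I would prove this by iterating the identity (\ref{Lphirho=phirhoi}) $N$ times: $\int_{\gamma_j(\omega)}\psi\rho_j$ becomes a finite sum $\sum_k \int_{\gamma_{j,k}(\theta^{-N}\omega)}\varphi\,\rho_{j,k}$, where $\{\gamma_{j,k}\}_k$ subdivides the preimage leaf $(f^N_{\theta^{-N}\omega})^{-1}\gamma_j(\omega)$ into local stable leaves of size in $[\epsilon/4,\epsilon/2]$ (possible because by (\ref{N property 2}) this preimage leaf has diameter at least $6\epsilon$), and where $\rho_{j,k}\in D(a/2,\mu,\gamma_{j,k}(\theta^{-N}\omega))$ by $N$-fold use of Lemma \ref{rhoi}(2). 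Total mass is preserved at each step, so $\sum_k\int_{\gamma_{j,k}}\rho_{j,k}\,dm=1$, and hence $\int_{\gamma_j(\omega)}\psi\rho_j$ is a convex combination of the numbers $\int_{\gamma_{j,k}}\varphi\,\hat\rho_{j,k}$ (hats denoting normalization), each of which is positive by (C1). It therefore suffices to prove that any two such leaf--density integrals, over valid-size stable leaves in the fiber $M_{\theta^{-N}\omega}$ carrying densities in $D(a/2,\mu,\cdot)$, are comparable up to a uniform constant: for two sub-leaves of one stable leaf this follows by extending the densities to the ambient leaf and applying (C2), while to bridge leaves sitting on genuinely distant stable manifolds one invokes the topological-mixing-on-fibers property (\ref{N property 1}) together with the local product structure (Lemma \ref{local product structure}) --- mixing supplies, in the controlled number $N$ of steps, an orbit segment whose endpoints are $\delta$-close to the leaves in question, and the local product structure turns this into intermediate stable leaves that are holonomy images (in the sense of Subsection \ref{subsection 3.6}) of pieces of both leaves, so that (C3) carries the integrals across with a factor $e^{O(c\delta^\nu)}$.

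\textbf{Conclusion and main difficulty.} Substituting the key estimate and the constants $\lambda_2 b,\lambda_2 c$ of $\psi$ into (\ref{alpha omega phi1phi2})--(\ref{beta omega phi1phi2}): every bare ratio $\int\psi_2\rho/\int\psi_1\rho$ and $\int_{\tilde\gamma}\psi_2\tilde\rho/\int_{\gamma}\psi_1\rho$ there lies in $[D_1^{-2},D_1^{2}]$, and $\xi_\omega,\eta_\omega$ lie in a fixed compact subinterval of $(0,\infty)$; hence $\alpha_\omega(\psi,\mathbf 1)$ is bounded below and $\beta_\omega(\psi,\mathbf 1)$ bounded above, uniformly in $\omega$ and $\varphi$, which gives $d_\omega(\psi,\mathbf 1)\le D_2/2$ and the lemma. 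I expect the geometric bridging inside the key estimate to be the main obstacle: one must use mixing on fibers --- which only returns a single orbit point --- to compare $L^N\varphi$-integrals over stable leaves lying in entirely different regions of the fiber, and the bookkeeping that turns that orbit point, through the local product structure and the Hölder holonomy estimates underlying (C3), into an honest chain of comparable leaf--integrals (and that explains why $N$ must come from the mixing property) is where the work concentrates.
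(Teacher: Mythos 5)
Your opening moves match the paper's: by Lemma \ref{invariance of C(b,c,v)} the image lies in $C_\omega(\lambda_2 b,\lambda_2 c,\nu)$, the $\lambda_2$-gap pins $\xi_\omega,\eta_\omega$ in a compact subinterval of $(0,\infty)$, and the bound then reduces (after passing to the cone $C_{+,\omega}$ defined by (C1) alone) to a uniform comparability of the raw leaf--density integrals $\int_\gamma (L^N\varphi)\rho\,dm_\gamma / \int_\gamma\rho\,dm_\gamma$. So far so good. The gap is in how you propose to establish that comparability.

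After decomposing $\int_{\gamma_j(\omega)}(L^N\varphi)\rho_j$ via the $N$-fold identity (\ref{Lphirho=phirhoi}), you assert that ``total mass is preserved, so $\sum_k\int_{\gamma_{j,k}}\rho_{j,k}\,dm=1$.'' That is not true: the left side equals $\int_{\gamma_j}(L^N_{\theta^{-N}\omega}\mathbf 1)\rho_j\,dm$, which is only controlled within $K_6^{\pm N}$ of $1$ (the bound (\ref{Ln varphi leq Kn varphi})). More seriously, you then conclude that ``it suffices to prove that any two such leaf--density integrals, over valid-size stable leaves in the fiber $M_{\theta^{-N}\omega}$ carrying densities in $D(a/2,\mu,\cdot)$, are comparable up to a uniform constant.'' This statement is false for $\varphi\in C_{\theta^{-N}\omega}(b,c,\nu)$: conditions (C1)--(C3) only constrain leaf integrals within a single leaf or across holonomy-related (i.e.\ nearby) leaves, and the cone contains functions whose leaf-density integrals over far-apart stable leaves in $M_{\theta^{-N}\omega}$ have an arbitrarily large ratio. (If the assertion were true, one would not need $L^N$ at all.) The subsequent ``bridging'' via mixing plus local product structure doesn't repair this: topological mixing on fibers produces an orbit segment connecting a time-$\theta^{-N}\omega$ window to a time-$\omega$ window, not a holonomy chain between two far-apart stable leaves lying in the \emph{same} fiber $M_{\theta^{-N}\omega}$, which is what your decomposition demands.

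The paper's Sublemma \ref{sublemma 7.2.} uses mixing in a structurally different way that you should adopt. Fix one reference leaf $\gamma_*$ (in the source fiber) together with a density achieving, up to a factor of $2$, the supremum of $\int\varphi\rho/\int\rho$. Then for every target leaf $\gamma_j(\omega)$, the $N$-preimage $f^{-N}\gamma_j(\omega)$ is long (at least $3\epsilon$ by (\ref{N property 2})), and (\ref{N property 1}) plus the local product structure (Lemma \ref{local product structure}) guarantee it contains a \emph{holonomy image} of $\gamma_*$ among its subdivision pieces. One never compares $\int_{\gamma_{1,k}}\varphi\hat\rho_{1,k}$ to $\int_{\gamma_{2,k'}}\varphi\hat\rho_{2,k'}$ for arbitrary $k,k'$; one only keeps the single summand coming from the holonomy copy of $\gamma_*$, discards the rest using nonnegativity, and chains that one term back to $\gamma_*$ by a single application of (C3) and of (\ref{Definition of D3}). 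Together with the automatic $K_6^N$ upper bound (\ref{Ln varphi leq Kn varphi}) and the fact that a density in $D(a/2,\mu,\cdot)$ puts a uniform fraction of its mass on the image of that one subdivision piece, this yields the $D_1$ of (\ref{Lnvarphi+ over Lnvarphi-}). Your proposal correctly locates where the work concentrates, but the reduction you state as ``the key estimate'' is both false and stronger than what is needed.
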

\begin{proof}[Proof of Lemma \ref{lemma diamter of LN finite}]
  By Lemma \ref{invariance of C(b,c,v)},  $L_{\theta^{-1}\omega}(C_{\theta^{-1}\omega}(b,c,\nu))\subset C_\omega(\lambda_2b,\lambda_2c,\nu)\subset C_\omega(b,c,\nu)$, for all $\omega\in\Omega$. For any $\varphi_1,\varphi_2\in L^N_{\theta^{-N}\omega}C_{\theta^{-N}\omega}(b,c,\nu)\subset C_\omega(\lambda_2b,\lambda_2c,\nu)$, we need to estimate $\alpha_\omega(\varphi_1,\varphi_2)$ and $\beta_\omega(\varphi_1,\varphi_2)$ as in $\eqref{alpha omega phi1phi2}$ and $\eqref{beta omega phi1phi2}$.

   By $(\ref{xi omega})$ and condition \ref{C2}, for any $\rho^\prime(\omega),\rho^{\prime\prime}(\omega)\in D(a/2,\kappa,\gamma(\omega))$ with $\int_{\gamma(\omega)}\rho^\tau(\omega)dm_{\gamma(\omega)}=1$ for $\tau=\prime,\prime\prime$, one has
  \begin{align*}
  \xi_\omega(\rho^\prime,\rho^{\prime\prime},\varphi_1,\varphi_2)&=\frac{\exp(bd_{\gamma(\omega)}^{a,\kappa}(\rho^\prime(\omega),\rho^{\prime\prime}(\omega)))-\int_{\gamma(\omega)}\varphi_2\rho^{\prime\prime} dm_{\gamma(\omega)}/\int_{\gamma(\omega)}\varphi_2\rho^\prime dm_{\gamma(\omega)}}{\exp(bd_{\gamma(\omega)}^{a,\kappa}(\rho^\prime(\omega),\rho^{\prime\prime}(\omega)))-\int_{\gamma(\omega)}\varphi_1\rho^{\prime\prime} dm_{\gamma(\omega)}/\int_{\gamma(\omega)}\varphi_1\rho^\prime dm_{\gamma(\omega)}}\\
  &\geq \frac{\exp(bd_{\gamma(\omega)}^{a,\kappa}(\rho^\prime(\omega),\rho^{\prime\prime}(\omega)))-\exp(b\lambda_2d_{\gamma(\omega)}^{a,\kappa}(\rho^\prime(\omega),\rho^{\prime\prime}(\omega)))}{\exp(bd_{\gamma(\omega)}^{a,\kappa}(\rho^\prime(\omega),\rho^{\prime\prime}(\omega)))-\exp(-b\lambda
  _2d_{\gamma(\omega)}^{a,\kappa}(\rho^\prime(\omega),\rho^{\prime\prime}(\omega)))}\\
  &\geq \tau_3,
  \end{align*}where $\tau_3:=\inf\{\frac{z-z^{\lambda_2}}{z-z^{-\lambda_2}}:\ z>1\}=\frac{1-\lambda_2}{1+\lambda_2}\in(0,1)$. Similarly, we have
  \begin{equation*}
    \xi_\omega(\rho^\prime,\rho^{\prime\prime},\varphi_1,\varphi_2)\leq \tau_4
  \end{equation*}for $\tau_4=\sup\{\frac{z-z^{-\lambda_2}}{z-z^{\lambda_2}}:\ z>1\}=\frac{1+\lambda_2}{1-\lambda_2}\in(1,\infty)$. Likewise, $\eta_\omega(\rho,\tilde{\rho},\varphi_1,\varphi_2),\eta_\omega(\tilde{\rho},\rho,\varphi_1,\varphi_2)\in[\tau_3,\tau_4]$ for any $\rho(\omega)\in D(a_1,\kappa_1,\gamma(\omega))$ and corresponding $\tilde{\rho}(\omega)\in D(a/2,\kappa,\tilde{\gamma}(\omega))$, where $\eta_\omega(\rho,\tilde{\rho},\varphi_1,\varphi_2)$ and $\eta_\omega(\tilde{\rho},\rho,\varphi_1,\varphi_2)$ are defined in \eqref{eta omega} and \eqref{eta omega2} respectively.

  Let $C_{+,\omega}$ be the collection of all bounded measurable functions $\varphi:M\rightarrow \mathbb{R}$ only satisfying condition \ref{C1}, i.e., $\int_{\gamma(\omega)}\varphi(x)\rho(x,\omega)dm_{\gamma(\omega)}(x)>0$ for every local stable submanifold $\gamma(\omega)\subset M$ having size between $\frac{A(\epsilon)}{4J^2}$ and $A(\epsilon)$, and every $\rho(\omega)\in D(a/2,\kappa,\gamma(\omega))$ satisfying $\int_{\gamma(\omega)}\rho(x,\omega)dm_{\gamma(\omega)}(x)=1$. Next, we introduce the projective metric on $C_{+,\omega}$. For $\varphi_1,\varphi_2\in C_{+,\omega}$, we define
\begin{align*}
  \alpha_{+,\omega}(\varphi_1,\varphi_2) &:=\sup\{t>0:\varphi_2-t\varphi_1\in C_{+,\omega}\},\\
  \beta_{+,\omega}(\varphi_1,\varphi_2)&:=\inf\{s>0:\ s\varphi_1-\varphi_2\in C_{+,\omega}\},
\end{align*} with the convention that $\sup\emptyset=0$ and $\inf\emptyset=+\infty$, and let
\begin{equation}\label{def of d+omega}
  d_{+,\omega}(\varphi_1,\varphi_2):=\log \frac{\beta_{+,\omega}(\varphi_1,\varphi_2)}{\alpha_{+,\omega}(\varphi_1,\varphi_2)}
\end{equation}with the convention that $d_{+,\omega}(\varphi_1,\varphi_2)=\infty$ if $\alpha_{+,\omega}(\varphi_1,\varphi_2)=0$ or $\beta_{+,\omega}(\varphi_1,\varphi_2)=\infty$.
 By similar computation as before, we have
\begin{align}
  \alpha_{+,\omega}(\varphi_1,\varphi_2)  & =\inf\left\{\frac{\int_{\gamma(\omega)}\varphi_2\rho(\omega) dm_{\gamma(\omega)}}{\int_{\gamma(\omega)}\varphi_1\rho(\omega) dm_{\gamma(\omega)}}\right\},\label{alpha + omega phi1phi2}
\end{align}and
\begin{align}
   \beta_{+,\omega}(\varphi_1,\varphi_2)  & =\sup\left\{\frac{\int_{\gamma(\omega)}\varphi_2\rho(\omega) dm_{\gamma(\omega)}}{\int_{\gamma(\omega)}\varphi_1\rho(\omega) dm_{\gamma(\omega)}}\right\},\label{beta + omega phi1phi2}
\end{align}where the supremum and infimum runs over all local stable leaf $\gamma(\omega)$ having size between $\frac{A(\epsilon)}{4J^2}$ and $A(\epsilon)$, and any $\rho(\omega)\in D(\frac{a}{2},\kappa,\gamma(\omega))$ with $\int_{\gamma(\omega)}\rho(x,\omega)dm_{\gamma(\omega)}=1$.

   Comparing $(\ref{alpha omega phi1phi2})$ with $(\ref{alpha   + omega phi1phi2})$ and $(\ref{beta omega phi1phi2})$ with $(\ref{beta +    omega phi1phi2})$, and noticing that $D(a_1,\kappa_1,\gamma(\omega))\subset D(\frac{a}{2},\kappa,\gamma(\omega))$, we have for $\varphi_1,\varphi_2\in L^N_{\theta^{-N}\omega}C_{\theta^{-N}\omega}(b,c,\nu)\subset C_\omega(\lambda_2b,\lambda_2c,\nu)$,
  \begin{align*}
   \alpha_\omega(\varphi_1,\varphi_2)  &\geq \tau_3\alpha_{+,\omega}(\varphi_1,\varphi_2),\mbox{ and }
   \beta_\omega(\varphi_1,\varphi_2)\leq\tau_4\beta_{+,\omega}(\varphi_1,\varphi_2).
  \end{align*}As a consequence, we have
  \begin{equation}\label{domgealeq d +omega}
    d_\omega(\varphi_1,\varphi_2)\leq d_{+,\omega}(\varphi_1,\varphi_2)+\log \frac{\tau_4}{\tau_3}.
  \end{equation}Note that  for $\varphi_1,\varphi_2\in L^N_{\theta^{-N}\omega}C_{\theta^{-N}\omega}(b,c,\nu)$, we have
  \begin{align}
   d_{+,\omega}(\varphi_1,\varphi_2) &= \log\sup\left\{ \frac{\int_{\gamma^{\prime\prime}(\omega)}\varphi_2\rho{\prime\prime}(\omega) dm_{\gamma{\prime\prime}(\omega)}/\int_{\gamma{\prime\prime}(\omega)}\varphi_1\rho{\prime\prime}(\omega) dm_{\gamma{\prime\prime}(\omega)}}{\int_{\gamma^\prime(\omega)}\varphi_2\rho^\prime (\omega) dm_{\gamma^\prime(\omega)}/\int_{\gamma^\prime(\omega)}\varphi_1\rho^\prime (\omega) dm_{\gamma^\prime(\omega)}}\right\}\label{d+omega12}\\
    &= \log\sup\left\{\frac{\int_{\gamma^{\prime\prime}(\omega)}\varphi_2\rho{\prime\prime}(\omega) dm_{\gamma{\prime\prime}(\omega)}}{\int_{\gamma^\prime(\omega)}\varphi_2\rho^\prime(\omega) dm_{\gamma^\prime(\omega)}}\cdot \frac{\int_{\gamma{\prime}(\omega)}\varphi_1\rho{\prime} (\omega) dm_{\gamma{\prime}(\omega)}}{\int_{\gamma{\prime\prime}(\omega)}\varphi_1\rho{\prime\prime} (\omega) dm_{\gamma{\prime\prime}(\omega)}}\right\},\nonumber
  \end{align}where the sup runs over all random local stable leaves $\gamma^\tau(\omega)$ having size between $\frac{A(\epsilon)}{4J^2}$ and $A(\epsilon)$, $\rho^\tau(\omega)\in D(\frac{a}{2},\kappa,\gamma^\tau(\omega))$  with $\int_{\gamma^\tau(\omega)}\rho^\tau(x,\omega)dm_{\gamma^\tau(\omega)}$=1 for $\tau=\prime,\prime\prime$. Next, we are going to estimate
  \begin{equation}\label{phirho2/phirho1}
    \frac{\int_{\gamma^{\prime\prime}(\omega)}\varphi(x)\rho{\prime\prime}(x,\omega) dm_{\gamma{\prime\prime}(\omega)}(x)}{\int_{\gamma^\prime(\omega)}\varphi(x)\rho^\prime(x,\omega) dm_{\gamma^\prime(\omega)}(x)}
  \end{equation}for $\varphi\in L^N_{\theta^{-N}\omega}C_{\theta^{-N}\omega}(b,c,\nu)$ and any $\rho^\prime(\omega)\in D(a/2,\kappa,\gamma^\prime(\omega))$, $\rho^{\prime\prime}(\omega)\in D(a/2,\kappa,\gamma^{\prime\prime}(\omega))$ with $\int_{\gamma^{\prime\prime}(\omega)}\rho^{\prime\prime}(\omega)dm_{\gamma^{\prime\prime}(\omega)}=\int_{\gamma^{\prime}(\omega)}\rho^{\prime}(\omega)dm_{\gamma^{\prime}(\omega)}=1$.
  For $\varphi\in L_{\theta^{-N}\omega}^NC_{\theta^{-N}\omega}(b,c,\nu)$, let
  \begin{align}\label{def of bark1bark2}
    \bar{k}_1(\omega)  =\left(\int_{\gamma^\prime(\omega)}\varphi(x)dm_{\gamma^\prime(\omega)}(x)\right)^{-1},\
    \bar{k}_2(\omega) =\left(\int_{\gamma^{\prime\prime}(\omega)}\varphi(x)dm_{\gamma^{\prime\prime}(\omega)}(x)\right)^{-1},
  \end{align}and we define
  \begin{align*}
    k_1(x,\omega)&:=\bar{k}_1(\omega)/\int_{\gamma^\prime(\omega)}\bar{k}_1(\omega)dm_{\gamma^\prime(\omega)} \mbox{ for }x\in \gamma^\prime(\omega),\\
    k_2(x,\omega)&:=\bar{k}_2(\omega)/\int_{\gamma^{\prime\prime}(\omega)}\bar{k}_2(\omega)dm_{\gamma^{\prime\prime}(\omega)}  \mbox{ for }x\in \gamma^{\prime\prime}(\omega).
  \end{align*}
  By construction, the constant function $k_1(\omega)\in D(a/2,\kappa,\gamma^\prime(\omega))$ and $k_2(\omega)\in D(a/2,\kappa,\gamma^{\prime\prime}(\omega))$ and $\int_{\gamma^\prime(\omega)}k_1(x,\omega)dm_{\gamma^\prime(\omega)}=1$ and $\int_{\gamma^{\prime\prime}(\omega)}k_2(x,\omega)dm_{\gamma^{\prime\prime}(\omega)}=1$. Now by \ref{C2},
  \begin{align}
     & \ \ \ \ \frac{\int_{\gamma^{\prime\prime}(\omega)}\varphi(x)\rho{\prime\prime}(x,\omega) dm_{\gamma{\prime\prime}(\omega)}(x)}{\int_{\gamma^\prime(\omega)}\varphi(x)\rho^\prime(x,\omega) dm_{\gamma^\prime(\omega)}(x)}\nonumber\\
     &=\frac{\int_{\gamma^{\prime\prime}(\omega)}\varphi(x)\rho{\prime\prime}(x,\omega) dm_{\gamma{\prime\prime}(\omega)}(x)}{\int_{\gamma^{\prime\prime}(\omega)}\varphi(x)k_2(x,\omega) dm_{\gamma{\prime\prime}(\omega)}(x)}\cdot \frac{\int_{\gamma^{\prime}(\omega)}\varphi(x)k_1(x,\omega) dm_{\gamma^\prime(\omega)}(x)}{\int_{\gamma^\prime(\omega)}\varphi(x)\rho^\prime(x,\omega) dm_{\gamma^\prime(\omega)}(x)}\cdot\frac{\int_{\gamma^{\prime}(\omega)}\bar{k}_1(\omega)dm_{\gamma^{\prime}(\omega)}}{\int_{\gamma^{\prime\prime}(\omega)}\bar{k}_2(\omega)dm_{\gamma^{\prime\prime}(\omega)}}\nonumber\\
     &\leq e^{\lambda_2 b d_{\gamma^{\prime\prime}(\omega)}^{a,\kappa}(\rho^{\prime\prime}(\cdot,\omega),k_2(\cdot,\omega))}\cdot e^{\lambda_2bd_{\gamma^{\prime}(\omega)}^{a,\kappa}(\rho^{\prime}(\cdot,\omega),k_1(\cdot,\omega))}\cdot\frac{\int_{\gamma^{\prime}(\omega)}\bar{k}_1(\omega)dm_{\gamma^{\prime}(\omega)}}{\int_{\gamma^{\prime\prime}(\omega)}\bar{k}_2(\omega)dm_{\gamma^{\prime\prime}(\omega)}}.\label{eqvarrho2varrho1}
  \end{align}
  \begin{sublemma}\label{sublemma 7.2.}
    There exists a constant $D_1=D_1(a,b,c,N)<\infty$ such that
    \begin{equation}\label{definition of D1}
      \frac{\int_{\gamma^{\prime}(\omega)}\bar{k}_1(\omega)dm_{\gamma^{\prime}(\omega)}}{\int_{\gamma^{\prime\prime}(\omega)}\bar{k}_2(\omega)dm_{\gamma^{\prime\prime}(\omega)}}\leq D_1.
    \end{equation}
  \end{sublemma}
  We postpone the proof of Sublemma \ref{sublemma 7.2.} and finish the proof of Lemma \ref{lemma diamter of LN finite}.
  Now we let $\tau_5=\sup\{\frac{z-z^{-1/2}}{z-z^{1/2}}:\ z>1\}\in (1,\infty)$ and $\tau_6=\inf\{\frac{z-z^{1/2}}{z-z^{-1/2}}:\ z>1\}\in(0,1)$. Similar to $(\ref{bound of d gamma rho1rho2})$, the diameter of $D(a/2,\kappa,\gamma(\omega))$ with respect to the projective metric on $D(a,\kappa,\gamma(\omega))$ is bounded by $4a+\log \tau_5/\tau_6$. Therefore,
  \begin{align*}
    d_{\gamma^{\prime\prime}(\omega)}^{a,\kappa}(\rho^{\prime\prime}(\omega),\bar{k}_2(\omega)) &\leq 4a+\log\tau_5/\tau_6,\mbox{ and }
    d_{\gamma^{\prime}(\omega)}^{a,\kappa}(\rho^{\prime}(\omega),\bar{k}_1(\omega))\leq 4a+\log\tau_5/\tau_6.
  \end{align*}Hence
   \begin{equation*}
     \frac{\int_{\gamma^{\prime\prime}(\omega)}\varphi(x)\rho{\prime\prime}(x,\omega) dm_{\gamma{\prime\prime}(\omega)}(x)}{\int_{\gamma^\prime(\omega)}\varphi(x)\rho^\prime(x,\omega) dm_{\gamma^\prime(\omega)}(x)}\overset{\eqref{eqvarrho2varrho1}}\leq e^{2\lambda_2 b(4a+\log\tau_5/\tau_6)}D_1.
   \end{equation*} As a consequence, we have $$d_{+,\omega}(\varphi_1,\varphi_2)\overset{\eqref{d+omega12}}\leq \log( e^{4\lambda_2 b(4a+\log\tau_5/\tau_6)}D_1^2),$$ and
$    d_\omega(\varphi_1,\varphi_2)\overset{\eqref{domgealeq d +omega}}\leq \log(e^{4\lambda_2 b(4a+\log\tau_5/\tau_6)}D_1^2)+\log\tau_4/\tau_3:=D_2,$
which finishes the proof of Lemma \ref{lemma diamter of LN finite}.
\end{proof}
\begin{proof}[Proof of Sublemma \ref{sublemma 7.2.}]
For any $\omega\in\Omega$ and $\psi\in C_\omega(b,c,\nu)$, we define
\begin{equation*}
  \|\psi\|_{\omega,+}=\sup\frac{\int_{\gamma(\omega)}\psi(x)\rho(x,\omega)dm_{\gamma(\omega)}(x)}{\int_{\gamma(\omega)}\rho(x,\omega)dm_{\gamma(\omega)}(x)},\  \|\psi\|_{\omega,-}=\inf\frac{\int_{\gamma(\omega)}\psi(x)\rho(x,\omega)dm_{\gamma(\omega)}(x)}{\int_{\gamma(\omega)}\rho(x,\omega)dm_{\gamma(\omega)}(x)},
\end{equation*}where the supremum and infimum runs over all local stable leaf $\gamma(\omega)\subset M$ having size between $\frac{A(\epsilon)}{4J^2}$ and $A(\epsilon)$, and any $\rho(\omega)\in D(a_1,\kappa,\gamma(\omega)).$

By noticing that constant function $\textbf{1}$ belongs to $D(a_1,\kappa,\gamma(\omega))$, we have
\begin{equation*}
    \frac{\int_{\gamma^{\prime}(\omega)}\bar{k}_1(\omega)dm_{\gamma^{\prime}(\omega)}}{\int_{\gamma^{\prime\prime}(\omega)}\bar{k}_2(\omega)dm_{\gamma^{\prime\prime}(\omega)}}=
    \left(\frac{\int_{\gamma^{\prime\prime}(\omega)}\varphi\cdot\textbf{1} dm_{\gamma^{\prime\prime}(\omega)}}{\int_{\gamma^{\prime\prime}(\omega)}\textbf{1} dm_{\gamma^{\prime\prime}(\omega)}}\right)\big/\left(\frac{\int_{\gamma^{\prime}(\omega)}\varphi\cdot\textbf{1} dm_{\gamma^{\prime}(\omega)}}{\int_{\gamma^{\prime\prime}(\omega)} \textbf{1}dm_{\gamma^{\prime}(\omega)}}\right)\leq \frac{\|\varphi\|_{\omega,+}}{\|\varphi\|_{\omega,-}},
\end{equation*}and here $\bar{k}_1(\omega)$ and $\bar{k}_2(\omega)$ was defined in \eqref{def of bark1bark2} by using $\varphi\in L_{\theta^{-N}\omega}^NC_{\theta^{-N}\omega}(b,c,\nu)\subset C_\omega(b,c,\nu)$.
Therefore, Lemma \ref{sublemma 7.2.} is proved if there exists $D_1=D_1(a,b,c,N)>0$ independent of $\omega\in\Omega$ such that
\begin{equation}\label{Lnvarphi+ over Lnvarphi-}
  \frac{\|L_{\theta^{-N}\omega}^N\varphi\|_{\omega,+}}{\|L_{\theta^{-N}\omega}^N\varphi\|_{\omega,-}}\leq D_1\mbox{ for any $\varphi\in C_{\theta^{-N}\omega}(b,c,\nu)$.}
\end{equation}

 We need some preliminary inequalities before we start the proof of \eqref{Lnvarphi+ over Lnvarphi-}. By the continuity of $f_\omega$ in $\omega$, there exists a constant $K_6>0$ such that
\begin{equation}\label{bounds of Dxfomega}
  K_6^{-1}\leq |\det D_xf_\omega|\leq K_6\mbox{ for all }(x,\omega)\in M\times\Omega.
\end{equation}We recall that for $\varphi \in C_{\theta^{-n}\omega}(b,c,\nu)$,
\begin{equation*}
  (L_{\theta^{-n}\omega}^n\varphi)(x)=\frac{\varphi(f_\omega^{-n}x)}{|\det D_{f_\omega^{-n}x}f_{\theta^{-n}\omega}^n|}.
\end{equation*} Then \eqref{bounds of Dxfomega} implies that
\begin{equation}\label{bounds of Ln 1}
 (K_6)^{-n}\leq \inf_{x\in M} |(L_{\theta^{-n}\omega}^n \textbf{1})(x)|\leq \sup_{x\in M} |(L_{\theta^{-n}\omega}^n \textbf{1})(x)|\leq (K_6)^n \mbox{ for any }\omega\in\Omega,\ n\in\mathbb{N}.
\end{equation} For any $n\geq 1$ and $\gamma(\omega)\subset M$ local stable leaf having size between $\frac{A(\epsilon)}{4J^2}$ and $A(\epsilon)$, we divide $f_{\omega}^{-n}\gamma(\omega)$ into finite pieces of connected local stable leaves having size between $\frac{A(\epsilon)}{4J^2}$ and $A(\epsilon)$, named $\gamma_i(\theta^{-n}\omega)$ for $i$ belonging to a finite index set such that $\gamma_i(\theta^{-n}\omega)\cap \gamma_j(\theta^{-n}\omega)=\partial\gamma_i(\theta^{-n}\omega)\cap \partial \gamma_j(\theta^{-n}\omega)$ for $i\not=j$. For any $\rho(\omega)\in D(a_1,\kappa,\gamma(\omega))$, we define
\begin{equation*}
  \rho_i(x,\theta^{-n}\omega)=\frac{|\det D_xf_{\theta^{-n}\omega}^n|_{E^s(x,\theta^{-n}\omega)}|}{|\det D_xf_{\theta^{-n}\omega}^n|}\rho(f_{\theta^{-n}\omega}^nx,\omega)\mbox{ for }x\in\gamma_i(\theta^{-n}\omega).
\end{equation*}By Remark \ref{remark4.1}, we have $\rho_i(\theta^{-n}\omega)\in D(a_1,\kappa,\gamma_i(\theta^{-n}\omega))$. Likewise $(\ref{Lphirho=phirhoi})$, we have
\begin{align*}
  \int_{\gamma(\omega)}(L^n_{\theta^{-n}\omega}\varphi)(x)\rho(x,\omega)dm_{\gamma(\omega)}(x)&=
  \sum_{i}\int_{\gamma_i(\theta^{-n}\omega)}\varphi(x)\rho_i(x,\theta^{-n}\omega)dm_{\gamma_i(\theta^{-n}\omega)}(x).
\end{align*}Now we have
\begin{align*}
&\ \ \ \ \ \frac{\int_{\gamma(\omega)}(L^n_{\theta^{-n}\omega}\varphi)(x)\rho(x,\omega)dm_{\gamma(\omega)}(x)}{\int_{\gamma(\omega)}\rho(x,\omega)dm_{\gamma(\omega)}(x)}=
\frac{\sum_{i}\int_{\gamma_i(\theta^{-n}\omega)}\varphi(x)\rho_i(x,\theta^{-n}\omega)dm_{\gamma_i(\theta^{-n}\omega)}(x)}{\int_{\gamma(\omega)}\rho(x,\omega)dm_{\gamma(\omega)}(x)}\\
&\leq \frac{\sum_{i}\int_{\gamma_i(\theta^{-n}\omega)}\rho_i(x,\theta^{-n}\omega)dm_{\gamma_i(\theta^{-n}\omega)}(x)\cdot\|\varphi\|_{\theta^{-n}\omega,+}}{\int_{\gamma(\omega)}\rho(x,\omega)dm_{\gamma(\omega)}(x)}\\
&=\frac{\int_{\gamma(\omega)}(L^n_{\theta^{-n}\omega}\textbf{1})(x)\rho(x,\omega)dm_{\gamma(\omega)}(x)}{\int_{\gamma(\omega)}\rho(x,\omega)dm_{\gamma(\omega)}(x)}\cdot\|\varphi\|_{\theta^{-n}\omega,+}\\
&\overset{\eqref{bounds of Ln 1}}\leq (K_6)^n\cdot\|\varphi\|_{\theta^{-n}\omega,+},
\end{align*}where the last inequality holds since $\rho(x,\omega)>0$ for all $x\in\gamma(\omega)$ by \ref{D1}. Since $\gamma(\omega)$ and $\rho(\omega)\in D(a_1,\kappa,\gamma(\omega))$ are arbitrary, we get
\begin{equation}\label{Ln varphi leq Kn varphi}
  \|L^n_{\theta^{-n}\omega}\varphi\|_{\omega,+}\leq (K_6)^n\|\varphi\|_{\theta^{-n}\omega,+}.
\end{equation}

Next, there exists a constant $D_3>0$ such that for any $\omega\in \Omega$, any $\gamma(\omega)\subset M$ local stable leaf having size between $\frac{A(\epsilon)}{4J^2}$ and $A(\epsilon)$, $\rho_1(\omega),\rho_2(\omega)\in D(a/2,\kappa,\gamma(\omega))$, any $\varphi\in C_\omega(b,c,\nu)$, one has
\begin{equation}\label{Definition of D3}
  \sup_{z\in \gamma(\omega)}\frac{\rho_2(z,\omega)\int_{\gamma(\omega)}\varphi(x)\rho_1(x,\omega)dm_{\gamma(\omega)}(x)}{\rho_1(z,\omega)\int_{\gamma(\omega)}\varphi(x)\rho_2(x,\omega)dm_{\gamma(\omega)}(x)}\leq D_3<\infty.
\end{equation}In fact, for any $z\in \gamma(\omega)$, then by condition \ref{C2} and the fact that the diameter of $D(a/2,\kappa,\gamma(\omega))$ in $D(a,\kappa,\gamma(\omega))$ with respect to the projective metric $d_{\gamma(\omega)}^{a,\kappa}$ is bounded by $4a+\log(\tau_5/\tau_6)$, we have
\begin{align*}
   &\ \ \ \ \ \frac{\rho_2(z,\omega)}{\rho_1(z,\omega)}\cdot\frac{\int_{\gamma(\omega)}\varphi(x)\rho_1(x,\omega)dm_{\gamma(\omega)}(x)/\int_{\gamma(\omega)}\rho_1(\omega)dm_{\gamma(\omega)}}{\int_{\gamma(\omega)}\varphi(x)\rho_2(x,\omega)dm_{\gamma(\omega)}(x)/\int_{\gamma(\omega)}\rho_2(\omega)dm_{\gamma(\omega)}}\cdot\frac{\int_{\gamma(\omega)}\rho_1(\omega)dm_{\gamma(\omega)}}{\int_{\gamma(\omega)}\rho_2(\omega)dm_{\gamma(\omega)}}\\
    &\leq \frac{\rho_2(z,\omega)}{\int_{\gamma(\omega)}\rho_2(x,\omega)dm_{\gamma(\omega)}(x)}\cdot e^{bd_{\gamma(\omega)}^{a,\kappa}(\rho_1(\omega),\rho_2(\omega))}\cdot\frac{\int_{\gamma(\omega)}\rho_1(x,\omega)dm_{\gamma(\omega)}(x)}{\rho_1(z,\omega)}\\
  &\leq \frac{\rho_2(z,\omega)}{\int_{\gamma(\omega)}\rho_2(x,\omega)dm_{\gamma(\omega)}(x)}\cdot e^{(4a+\log\tau_5/\tau_6)b}\cdot\frac{\int_{\gamma(\omega)}\rho_1(x,\omega)dm_{\gamma(\omega)}(x)}{\rho_1(z,\omega)}\\
   &\leq e^{a/2(diam(\gamma(\omega)))^\kappa}\cdot e^{a/2(diam(\gamma(\omega)))^\kappa}\cdot e^{(4a+\log\tau_5/\tau_6)b}\\
   &=e^{a+b(4a+\log\tau_5/\tau_6)}:=D_3.
\end{align*}

Now we are in the position to prove $(\ref{Lnvarphi+ over Lnvarphi-})$. Recall that $N\in\mathbb{N}$ satisfying $(\ref{N property 1})$ and $(\ref{N property 2})$. For any $\omega\in\Omega$ and $\varphi\in C_{\theta^{-N}\omega}(b,c,\nu)$, we choose $\gamma_*(\theta^{-N}\omega)$ a local stable leaf having size between $\frac{A(\epsilon)}{4J^2}$ and $A(\epsilon)$, and $\rho_*(\theta^{-N}\omega)\in D(a_1,\kappa,\gamma_*(\theta^{-N}\omega))$ such that
\begin{equation*}
  \frac{\int_{\gamma_*(\theta^{-N}\omega)}\varphi(x)\rho_*(x,\theta^{-N}\omega)dm_{\gamma_*(\theta^{-N}\omega)}}{\int_{\gamma_*(\theta^{-N}\omega)}\rho_*(x,\theta^{-N}\omega)dm_{\gamma_*(\theta^{-N}\omega)}}\geq \frac{1}{2}\|\varphi\|_{\theta^{-N}\omega,+}.
\end{equation*}  We pick $x_*(\theta^{-N}\omega)\in\gamma_*(\theta^{-N}\omega)$ such that
\begin{equation*}
  W_{\epsilon^*}^s(x_*(\theta^{-N}\omega),\theta^{-N}\omega)\subset \gamma_*(\theta^{-N}\omega)\subset W_{\epsilon}^s(x_*(\theta^{-N}\omega),\theta^{-N}\omega).
\end{equation*}
To avoid the size of the holonomy image of $\gamma_*(\theta^{-N}\omega)$ being too large, we break $\gamma_*(\theta^{-N}\omega)$ into pieces of size between $\frac{A(\epsilon)}{4J}$ and $\frac{A(\epsilon)}{J}$. The number of pieces is at most $\frac{A(\epsilon)}{A(\epsilon)/J}+1=J+1$. We can pick one of pieces, named $\gamma_*^1(\theta^{-N}\omega)$, such that
\begin{equation}\label{pick of rhostar}
   \frac{\int_{\gamma_*^1(\theta^{-N}\omega)}\varphi(x)\rho_*(x,\theta^{-N}\omega)dm_{\gamma_*(\theta^{-N}\omega)}}{\int_{\gamma_*(\theta^{-N}\omega)}\rho_*(x,\theta^{-N}\omega)dm_{\gamma_*(\theta^{-N}\omega)}}\geq \frac{1}{2(J+1)}\|\varphi\|_{\theta^{-N}\omega,+}.
\end{equation}

Pick any $\gamma(\omega)\subset M$ local stable leaf having size between $\frac{A(\epsilon)}{4J^2}$ and $A(\epsilon)$, and $x(\omega)\in\gamma(\omega)$ such that
\begin{equation*}
  W_{\epsilon^*}^s(x(\omega),\omega)\subset \gamma(\omega)\subset    W_{\epsilon}^s(x(\omega),\omega).
\end{equation*} Recall that $\{B_{\delta/4}(x_i)\}_{i=1}^l$ is a cover of $M$. Then there exists $i,j\in\{1,...,l\}$ such that $x_*(\theta^{-N}\omega)\in B_{\delta/4}(x_i)$ and $x(\omega)\in B_{\delta/4}(x_j)$. Then by the choice of $N$, $\phi^N(B_{\delta/4}(x_i)\times\{\theta^{-N}\omega\})\cap (B_{\delta/4}(x_j)\times\{\omega\})\not=\emptyset$. Pick $y(\omega)\in f_{\theta^{-N}\omega}^NB_{\delta/4}(x_i)\cap B_{\delta/4}(x_j)$, then
\begin{equation*}
  d(y(\omega),x(\omega))\leq d(y(\omega),x_j)+d(x_j,x(\omega))\leq \delta/4+\delta/4<\delta.
\end{equation*}Then
\begin{equation*}
 y_1(\omega):=W^u_{\epsilon^*/8}(y(\omega),\omega)\cap W^s_{\epsilon^*/8}(x(\omega),\omega)\subset W^u_{\epsilon^*/8}(y(\omega),\omega)\cap\gamma(\omega)
\end{equation*}exists.
Note that by $(\ref{N property 2})$, we have
\begin{equation*}
  d(f_{\omega}^{-N}y(\omega),f_{\omega}^{-N}y_1(\omega))\leq e^{-\lambda N}\cdot \frac{\epsilon^*}{8}\leq \delta/4.
\end{equation*}So
\begin{align*}
 d(x_*(\theta^{-N}\omega),f_{\omega}^{-N}y_1(\omega))&\leq d(x_*(\theta^{-N}\omega),x_i)+d(x_i,f_{\omega}^{-N}y(\omega))+d(f_{\omega}^{-N}y(\omega),f_{\omega}^{-N}y_1(\omega))\\
 &\leq \delta/4+\delta/4+\delta/4<\delta.
\end{align*}As a consequence,
\begin{equation}\label{get close}
  W^s_{\epsilon^*/8}(x_*(\theta^{-N}\omega),\theta^{-N}\omega)\cap W^u_{\epsilon^*/8}(f_{\omega}^{-N}y_1(\omega),\theta^{-N}\omega)\not=\emptyset.
\end{equation}Note that  $f_\omega^{-N}y_1(\omega)\in f_\omega^{-N}\gamma(\omega)$, and $e^{\lambda N}\epsilon^*/8\overset{\eqref{N property 2}}\geq 3\epsilon$, therefore, $W^s_{3\epsilon}(f_\omega^{-N}y_1(\omega),\theta^{-N}\omega)\subset f_\omega^{-N}\gamma(\omega)$. Moreover,  \eqref{get close} implies that $\gamma_*(\omega)$ is sufficient close to $W_{3\epsilon}^s(f_\omega^{-N}y_1(\omega),\theta^{-N}\omega)\subset f_\omega^{-N}\gamma(\omega)$. As a consequence, $f_{\omega}^{-N}\gamma(\omega)$ contains a holonomy image of $\gamma_*^1(\theta^{-N}\omega)$, named $\gamma_1(\theta^{-N}\omega)$. Since the size of $\gamma_*^1(\theta^{-N}\omega)$ is between $\frac{A(\epsilon)}{4J}$ and $\frac{A(\epsilon)}{J}$, by holonomy, the size of  $\gamma_1(\theta^{-N}\omega)$ is between $\frac{A(\epsilon)}{4J^2}$ and $A(\epsilon)$.


Now for any $\rho(\omega)\in D(a_1,\kappa,\gamma(\omega))$, define
\begin{equation*}
  \rho_1(x,\theta^{-N}\omega)=\frac{|\det D_xf_{\theta^{-N}\omega}^N|_{E^s(x,\theta^{-N}\omega)}|}{|\det D_xf_{\theta^{-N}\omega}^N|}\rho(f_{\theta^{-N}\omega}^Nx,\omega)\mbox{ for }x\in\gamma_1(\theta^{-N}\omega).
\end{equation*}By Remark \ref{remark4.1}, $\rho_1(\theta^{-N}\omega)\in D(e^{-\lambda_1}a_1,\kappa,\gamma_1(\theta^{-N}\omega))\subset D(a_1,\kappa,\gamma_1(\theta^{-N}\omega))$.
Let $\tilde{\rho}_1(\theta^{-N}\omega)$ be the density function on $\gamma_*^1(\theta^{-N}\omega)$ defined as $(\ref{definition of tilde rho})$ corresponding to $\rho_1(\theta^{-N}\omega)$, and therefore $\tilde{\rho}_1(\theta^{-N}\omega)\in D(a/2,\kappa,\gamma_*^1(\theta^{-N}\omega))$ by Lemma \ref{rhoi}.
Then
\begin{align*}
 &\frac{\int_{\gamma(\omega)}(L^N_{\theta^{-N}\omega}\varphi)(x)\rho(x,\omega)dm_{\gamma(\omega)}}{\int_{\gamma(\omega)}\rho(x,\omega)dm_{\gamma(\omega)}}\\
\geq & \frac{\int_{\gamma_1(\theta^{-N}\omega)}\varphi(x)\rho_1(x,\theta^{-N}\omega)dm_{\gamma_1(\theta^{-N}\omega)}}{\int_{\gamma(\omega)}\rho(x,\omega)dm_{\gamma(\omega)}}\\
\overset{\ref{C3}}\geq & \frac{\int_{\gamma_*^1(\theta^{-N}\omega)}\varphi(x)\tilde{\rho}_1(x,\theta^{-N}\omega)dm_{\gamma_*(\theta^{-N}\omega)}}{\int_{\gamma(\omega)}\rho(x,\omega)dm_{\gamma(\omega)}}\cdot e^{-cd_u(\gamma_1(\theta^{-N}\omega),\gamma_*(\theta^{-N}\omega))^\nu}.
\end{align*}Pick any $z\in\gamma_*^1(\theta^{-N}\omega)$, and we note that $\rho_*(\theta^{-N}\omega)\in D(a_1,\kappa,\gamma_*^1(\theta^{-N}\omega))\subset D(a/2,\kappa,\gamma_*^1(\theta^{-N}\omega))$ and $\tilde{\rho}_1(\theta^{-N}\omega)\in D(a/2,\kappa,\gamma_*^1(\theta^{-N}\omega))$, then we continue the estimate
\begin{align*}
 &\frac{\int_{\gamma(\omega)}(L^N_{\theta^{-N}\omega}\varphi)(x)\rho(x,\omega)dm_{\gamma(\omega)}}{\int_{\gamma(\omega)}\rho(x,\omega)dm_{\gamma(\omega)}}\\
\overset{\eqref{Definition of D3}}\geq  & \frac{\int_{\gamma_*^1(\theta^{-N}\omega)}\varphi(x)\rho_*(x,\theta^{-N}\omega)dm_{\gamma_*(\theta^{-N}\omega)}}{\int_{\gamma(\omega)}\rho(x,\omega)dm_{\gamma(\omega)}}\cdot e^{-cd_u(\gamma_1(\theta^{-N}\omega),\gamma_*^1(\theta^{-N}\omega))^\nu}\cdot D_3^{-1}\cdot \frac{\tilde{\rho}_1(z,\theta^{-N}\omega)}{\rho_*(z,\theta^{-N}\omega)}\\
 \overset{\eqref{pick of rhostar}}\geq & \frac{\int_{\gamma_*^1(\theta^{-N}\omega)}\rho_*(x,\theta^{-N}\omega)dm_{\gamma_*(\theta^{-N}\omega)}\frac{1}{2(J+1)}\|\varphi\|_{\theta^{-N}\omega,+}}{\int_{\gamma(\omega)}\rho(x,\omega)dm_{\gamma(\omega)}}\cdot e^{-cd_u(\gamma_1(\theta^{-N}\omega),\gamma_*^1(\theta^{-N}\omega))^\nu}\cdot D_3^{-1}\cdot \frac{\tilde{\rho}_1(z,\theta^{-N}\omega)}{\rho_*(z,\theta^{-N}\omega)}\\
 \overset{\eqref{Definition of D3}}\geq & \frac{\int_{\gamma_*^1(\theta^{-N}\omega)}\tilde{\rho}_1(x,\theta^{-N}\omega)dm_{\gamma_*(\theta^{-N}\omega)}\|\varphi\|_{\theta^{-N}\omega,+}}{2(J+1)\int_{\gamma(\omega)}\rho(x,\omega)dm_{\gamma(\omega)}}\cdot e^{-cd_u(\gamma_1(\theta^{-N}\omega),\gamma_*^1(\theta^{-N}\omega))^\nu}\cdot D_3^{-2}\\
\overset{\eqref{eq change variable}}=  &\frac{1}{2(J+1)}e^{-cd_u(\gamma_1(\theta^{-N}\omega),\gamma_*^1(\theta^{-N}\omega))^\nu}\cdot D_3^{-2}\cdot\|\varphi\|_{\theta^{-N}\omega,+}\cdot \frac{\int_{\gamma_1(\theta^{-N}\omega)}\rho_1(x,\theta^{-N}\omega)dm_{\gamma_1(\theta^{-N}\omega)}}{\int_{\gamma(\omega)}\rho(x,\omega)dm_{\gamma(\omega)}}\\
=  &\frac{1}{2(J+1)}e^{-cd_u(\gamma_1(\theta^{-N}\omega),\gamma_*^1(\theta^{-N}\omega))^\nu}\cdot D_3^{-2}\cdot\|\varphi\|_{\theta^{-N}\omega,+}\cdot \frac{\int_{f_{\theta^{-N}\omega}^N\gamma_1(\theta^{-N}\omega)}(L^N_{\theta^{-N}\omega}1)(x)\rho(x,\omega)dm_{\gamma(\omega)}}{\int_{\gamma(\omega)}\rho(x,\omega)dm_{\gamma(\omega)}}\\
 \overset{\eqref{bounds of Ln 1}}\geq & \frac{1}{2(J+1)}e^{-cd_u(\gamma_1(\theta^{-N}\omega),\gamma_*^1(\theta^{-N}\omega))^\nu}\cdot D_3^{-2}\cdot\|\varphi\|_{\theta^{-N}\omega,+}\cdot (K_6)^{-N}\cdot \frac{\int_{f_{\theta^{-N}\omega}^N\gamma_1(\theta^{-N}\omega)}\rho(x,\omega)dm_{\gamma(\omega)}}{\int_{\gamma(\omega)}\rho(x,\omega)dm_{\gamma(\omega)}}.
\end{align*}Pick any $t\in f_{\theta^{-N}\omega}^N\gamma_1(\theta^{-N}\omega)\subset\gamma(\omega)$, then we continue the estimate
\begin{align*}
&\frac{\int_{\gamma(\omega)}(L^N_{\theta^{-N}\omega}\varphi)(x)\rho(x,\omega)dm_{\gamma(\omega)}}{\int_{\gamma(\omega)}\rho(x,\omega)dm_{\gamma(\omega)}}\\
\geq  & \frac{e^{-cd_u(\gamma_1(\theta^{-N}\omega),\gamma_*^1(\theta^{-N}\omega))^\nu}}{2(J+1)}\cdot D_3^{-2}\cdot\|\varphi\|_{\theta^{-N}\omega,+}\cdot (K_6)^{-N}\cdot \frac{\int_{f_{\theta^{-N}\omega}^N\gamma_1(\theta^{-N}\omega)}\rho(x,\omega)/\rho(t,\omega)dm_{\gamma(\omega)}}{\int_{\gamma(\omega)}\rho(x,\omega)/\rho(t,\omega)dm_{\gamma(\omega)}}\\
\geq  & \frac{e^{-cd_u(\gamma_1(\theta^{-N}\omega),\gamma_*^1(\theta^{-N}\omega))^\nu}}{2(J+1)}\cdot D_3^{-2}\cdot\|\varphi\|_{\theta^{-N}\omega,+}\cdot (K_6)^{-N}\cdot e^{-a_1(diam(\gamma(\omega)))^\kappa\cdot 2} \cdot \frac{\int_{f_{\theta^{-N}\omega}^N\gamma_1(\theta^{-N}\omega)}dm_{\gamma(\omega)}}{\int_{\gamma(\omega)}dm_{\gamma(\omega)}}\\
 \geq & \frac{e^{-c\epsilon^\nu-2a_1\epsilon^\kappa}}{2(J+1)}\cdot D_3^{-2}\cdot\|\varphi\|_{\theta^{-N}\omega,+}\cdot (K_6)^{-N} \cdot \left(\inf_{(x,\omega)\in M\times\Omega}m(D_xf_\omega|_{E^s(x,\omega)})\right)^N\cdot \frac{\int_{\gamma_1(\theta^{-N}\omega)}dm_{\gamma_1(\theta^{-N}\omega)}}{\int_{\gamma(\omega)}dm_{\gamma(\omega)}},
\end{align*}where $m(D_xf_\omega|_{E^s(x,\omega)})=\|(D_xf_\omega|_{E^s(x,\omega)})^{-1}\|^{-1}$. Note that both $\gamma_1(\theta^{-N}\omega)\subset M$ and $\gamma(\omega)\subset M$ are local stable leaf having size between $\frac{A(\epsilon)}{4J^2}$ and $A(\epsilon)$. Therefore,
\begin{equation*}
   \frac{\int_{\gamma_1(\theta^{-N}\omega)}dm_{\gamma_1(\theta^{-N}\omega)}}{\int_{\gamma(\omega)}dm_{\gamma(\omega)}}\geq \frac{1}{4J^2}.
\end{equation*}We continue the estimate
\begin{align*}
  &\ \ \ \ \frac{\int_{\gamma(\omega)}(L^N_{\theta^{-N}\omega}\varphi)(x)\rho(x,\omega)dm_{\gamma(\omega)}}{\int_{\gamma(\omega)}\rho(x,\omega)dm_{\gamma(\omega)}}\\
  &\overset{\eqref{Ln varphi leq Kn varphi}}\geq \frac{1}{8J^2(J+1)}e^{-c\epsilon^\nu-a\epsilon^\kappa}\cdot D_3^{-2}\cdot (K_6)^{-2N} \cdot\left(\inf_{(x,\omega)\in M\times\Omega}m(D_xf_\omega|_{E^s(p,\omega)})\right)^N\cdot\|L^N_{\theta^{-N}\omega}\varphi\|_{\omega,+}\\
  &:=(D_1)^{-1}\|L^N_{\theta^{-N}\omega}\varphi\|_{\omega,+}.
\end{align*}Since $\gamma(\omega)$ and $\rho(\omega)\in D(a_1,\kappa,\gamma(\omega))$ are arbitrary, we have $\|L^N_{\theta^{-N}\omega}\varphi\|_{\omega,-}\geq (D_1)^{-1} \|L^N_{\theta^{-N}\omega}\varphi\|_{\omega,+}.$
Hence $(\ref{Lnvarphi+ over Lnvarphi-})$ is proved. The proof of Sublemma \ref{sublemma 7.2.} is complete.
\end{proof}

Note that the Lemma \ref{lemma diamter of LN finite} is proved for all $\omega\in\Omega$, so we also have
\begin{equation}
  \sup\{d_{\theta^N\omega}(L_\omega^N\varphi_1,L_\omega^N\varphi_2):\varphi_1,\varphi_2\in C_\omega(b,c,\nu)\}\leq D_2,\mbox{ for all }\omega\in\Omega,
\end{equation} where $d_{\theta^N\omega}$ is the projective metric on $C_{\theta^N\omega}(b,c,\nu)$.

\begin{lemma}\label{lemma D4}
  There exist a number $D_4$ and a number $\Lambda\in(0,1)$ both depending on $D_2$ and $N$ such that for all $n\geq N$, for all $\omega\in\Omega$,
  \begin{align}
     d_\omega(L^n_{\theta^{-n}\omega}\varphi_{\theta^{-n}\omega}^1,L^n_{\theta^{-n}\omega}\varphi_{\theta^{-n}\omega}^2)&\leq D_4\Lambda^n\mbox{ for any }\varphi_{\theta^{-n}\omega}^1,\varphi_{\theta^{-n}\omega}^2\in C_{\theta^{-n}\omega}(b,c,\nu);\label{contraction of L n}\\
    d_{\theta^n\omega}(L_\omega^n\varphi_\omega^1,L_\omega^n\varphi_\omega^2)&\leq D_4\Lambda^n\mbox{ for any }\varphi_{\omega}^1,\varphi_{\omega}^2\in C_{\omega}(b,c,\nu),\label{contraction of Ln from omega}
  \end{align}where $d_\omega$ and $d_{\theta^n\omega}$ are projective metric on $C_\omega(b,c,\nu)$ and $C_{\theta^n\omega}(b,c,\nu)$ respectively.
\end{lemma}
\begin{proof}
Now we have a linear operator $L^N_{\theta^{-N}\omega}$ maps cone $C_{\theta^{-N}\omega}(b,c,\nu)$ into cone $C_\omega(b,c,\nu)$ with finite diameter of $L^N_{\theta^{-N}\omega}(C_{\theta^{-N}\omega}(b,c,\nu))$ in $C_\omega(b,c,\nu)$, then we apply Birkhoff's inequality (Proposition \ref{birkhoff inequality}) to obtain that for all $\omega\in\Omega$,
\begin{equation}\label{iterate form}
  d_\omega(L^N_{\theta^{-N}\omega}\varphi_1,L^N_{\theta^{-N}\omega}\varphi_2)\leq (1-e^{-D_2}) d_{\theta^{-N}\omega}(\varphi_1,\varphi_2),\mbox{ for all }\varphi_1,\varphi_2\in C_{\theta^{-N}\omega}(b,c,\nu).
\end{equation} 
Now, for any $\omega\in\Omega$, $n\geq N$ and $\varphi^1_{\theta^{-n}\omega},\varphi^2_{\theta^{-n}\omega}\in C_{\theta^{-n}\omega}(b,c,\nu)$, we write $n=(k+1)N+r$ for some $r\{0,...,N-1\}$ and $k\geq 0$. Then
\begin{align*}
 &d_\omega(L^n_{\theta^{-n}\omega}\varphi_{\theta^{-n}\omega}^1,L^n_{\theta^{-n}\omega}\varphi_{\theta^{-n}\omega}^2)\\
\overset{\eqref{iterate form}}\leq & (1-e^{-D_2})^kd_{\theta^{-n+N+r}\omega}(L_{\theta^{-n}\omega}^{N+r}\varphi_{\theta^{-n}\omega}^1,L_{\theta^{-n}\omega}^{N+r}\varphi_{\theta^{-n}\omega}^2)\\
= & (1-e^{-D_2})^{[\frac{n-N}{N}]}d_{\theta^{-n+N+r}\omega}(L^N_{\theta^{-n+r}\omega}L_{\theta^{-n}\omega}^r\varphi^1_{\theta^{-n}\omega},L^N_{\theta^{-n+r}\omega}L_{\theta^{-n}\omega}^r\varphi^2_{\theta^{-n}\omega})\nonumber\\
 \leq & \Lambda^{n-2N}D_2=\frac{D_2}{\Lambda^{2N}}\Lambda^n:=D_4\Lambda^n,
\end{align*}where $\Lambda=(1-e^{-D_2})^{\frac{1}{N}}<1$.
Similarly, for all $\omega\in\Omega$, $n\geq N$ and $\varphi_\omega^1,\varphi_\omega^2\in C_\omega(b,c,\nu)$, one has
\begin{align*}
   d_{\theta^n\omega}(L_\omega^n\varphi_\omega^1,L_\omega^n\varphi_\omega^2)&\leq
   (1-e^{-D_2})^{[\frac{n-N}{N}]}d_{\theta^{N+r}\omega}(L^N_{\theta^r\omega}L_\omega^r\varphi^1_{\omega},L^N_{\theta^r\omega}L_\omega^r\varphi^2_{\omega})
   \leq D_4\Lambda^n.
\end{align*}The proof of Lemma \ref{lemma D4} is complete.
\end{proof}

\subsection{Construction of the random SRB measure}\label{subsection 4.3}
 In this subsection, we will prove that the sequence $(f_{\theta^{-n}\omega}^n)_*m$ converges with respect to the weak$^*$ topology on $Pr(M)$ by using the contraction of $L^n_{\theta^{-n}\omega}$ when $n\geq N$. Moreover, we will prove that the random probability measure $\mu_\omega$ defined by the weak$^*$ limit of $(f_{\theta^{-n}\omega}^n)_*m$ is $\phi-$invariant.

Before we introduce the next lemma, we need some preparations.
For any $\omega\in\Omega$, since the local stable leaves form a partition in a neighborhood of a point on each $M$, we can cover $M$ on the fiber $\{\omega\}$ by finite rectangles, i.e., $\mathcal{R}(\omega)=\{R_1(\omega),...,R_i(\omega),...,R_{k(\omega)}(\omega)\},\ k(\omega)<\infty,
$ satisfying
\begin{enumerate}
  \item[(A)] each $R_i(\omega)$ is a proper rectangle, i.e., it is the closure of its interior;
  \item[(B)] ${\rm interior}(R_i(\omega))\cap {\rm interior}(R_j(\omega))=\emptyset$ if $i\not=j$;
  \item[(C)] each $R_i(\omega)$ is foliated by local stable leaves having size between $\frac{A(\epsilon)}{4J^2}$ and $A(\epsilon)$.
\end{enumerate}
The conditions (A) and (B) of this cover is much weaker than the conditions of random Markov partition constructed in \cite[Sec. 3]{Gund99}. We can obtain (C) by cutting and pasting some sets of the random Markov partition if necessary.

 By Proposition \ref{proposition 7.1}, for any $i\in\{1,...,k(\omega)\}$, there exists a function $H_i(\omega):R_i(\omega)\rightarrow \mathbb{R}^+$ with $\log H_i(\omega)$ $(a_0,\nu_0)-$H\"older continuous on each local stable leaf and for all bounded measurable functions $\psi:M\rightarrow\mathbb{R}$, we have disintegration
\begin{equation}\label{disintegration}
  \int_{R_i(\omega)}\psi(x)dm(x)=\int\int_{\gamma^i(\omega)}\psi(x)H_i(\omega)(x)|_{\gamma^i(\omega)}dm_{\gamma^i(\omega)}(x)d\tilde{m}_{R_i(\omega)}(\gamma^i(\omega)),
\end{equation}where $\gamma^i(\omega)$ denotes the stable leaves in $R_i(\omega)$ and $\tilde{m}_{R_i(\omega)}$ is the quotient measure induced by Riemannian volume measure in the space of local stable leaves in $R_i(\omega)$.

\begin{lemma}\label{Cauchy sequence lemma}
 For any $\omega\in\Omega$, given any positive function sequence $\{\varphi_n\}_{n\in\mathbb{N}}\subset C_\omega(b,c,\nu)$ satisfying
  \begin{equation}\label{integral =1}
    \int_M\varphi_n(x)dm(x)=1\ \mbox{for all }n\in\mathbb{N},
  \end{equation}and
  \begin{equation*}
    d_{+,\omega}(\varphi_n,\varphi_m)\rightarrow 0\mbox{ exponentially as }n,m\rightarrow \infty,
  \end{equation*}where $d_{+,\omega}$ is the projective metric on $C_{+,\omega}$ defined in \eqref{def of d+omega}. Then for any continuous function $\psi:M\rightarrow \mathbb{R}$, the sequence
  $\left\{\int_M\varphi_n(x)\psi(x)dm(x)\right\}_{n\in\mathbb{N}}$ is a Cauchy sequence.
\end{lemma}
\begin{proof}
In the case that $\psi:M\to\mathbb{R}$ is a constant function, $\left\{\int_M\varphi_n(x)\psi(x)dm(x)\right\}_{n\in\mathbb{N}}$ is a constant sequence, therefore Cauchy. In the following, we only need to consider the case that $\psi$ is nonconstant continuous function.

  First, we consider any positive nonconstant continuous function $\psi:M\rightarrow \mathbb{R}$, satisfying
  \begin{equation*}
    |\log\psi|_{\kappa}=\sup_{x,y\in M,x\not=y}\frac{|\log\psi(x)-\log\psi(y)|}{d(x,y)^\kappa}<\frac{a}{4}.
  \end{equation*}
 Let $R_i(\omega)$ and $H_i(\omega)$ be defined as above for $i\in\{1,...,k(\omega)\}$. Note that $\psi(\cdot)H_i(\omega)(\cdot)|_{\gamma^i(\omega)}$ is strictly positive on any local stable leaf $\gamma^i(\omega)\subset  R_i(\omega)$. Moreover, $\log(\psi(\cdot)H_i(\omega)(\cdot))$ is $(a/2,\kappa)-$H\"older continuous on $\gamma^i(\omega)$ since both $\log\psi$ and $\log H_i(\omega)$ are $(a/2,\kappa)$-H\"older continuous on $\gamma^i(\omega)$. Hence $(\psi\cdot H_i(\omega))|_{\gamma^i(\omega)}\in D(\frac{a}{2},\kappa,\gamma^i(\omega))$. Therefore, by the representation of $\beta_{+,\omega}(\varphi_k,\varphi_l)$ and $\alpha_{+,\omega}(\varphi_k,\varphi_l)$ as in $(\ref{beta +    omega phi1phi2})$ and $(\ref{alpha   + omega phi1phi2})$, for all $i\in\{1,...,k(\omega)\}$, any local stable leaf $\gamma^i(\omega)\subset R_i(\omega)$ and $k,l\in\mathbb{N}$, we have
 \begin{equation}\label{over leq beta}
   \begin{split}
   &\ \ \ \ \frac{\int_{\gamma^i(\omega)}\varphi_k(x)\psi(x)H_i(\omega)(x)|_{\gamma^i(\omega)}dm_{\gamma^i(\omega)}(x)}{\int_{\gamma^i(\omega)}\varphi_l(x)\psi(x)H_i(\omega)(x)|_{\gamma^i(\omega)}dm_{\gamma^i(\omega)}(x)}\\
   &=\frac{\int_{\gamma^i(\omega)}\varphi_k(x)\psi(x)H_i(\omega)(x)|_{\gamma^i(\omega)}dm_{\gamma^i(\omega)}(x)/\int_{\gamma^i(\omega)}\psi(x)H_i(\omega)(x)|_{\gamma^i(\omega)}dm_{\gamma^i(\omega)}(x)}{\int_{\gamma^i(\omega)}\varphi_l(x)\psi(x)H_i(\omega)(x)|_{\gamma^i(\omega)}dm_{\gamma^i(\omega)}(x)/\int_{\gamma^i(\omega)}\psi(x)H_i(\omega)(x)|_{\gamma^i(\omega)}dm_{\gamma^i(\omega)}(x)}\\
    &\in[ \alpha_{+,\omega}(\varphi_k,\varphi_l), \beta_{+,\omega}(\varphi_k,\varphi_l)].
   \end{split}
 \end{equation}
 By the assumption that $
    \int_M\varphi_k(x)dm(x)=\int_{M}\varphi_l(x)dm(x)=1$ and $(\ref{disintegration})$, there exists a $\hat{i}$ and a local stable leaf $\gamma^{\hat{i}}(\omega)\subset R_{\hat{i}}(\omega)$ such that
  \begin{equation}\label{over leq 1}
    \int_{\gamma^{\hat{i}}(\omega)}\varphi_k(x)H_{\hat{i}}(\omega)(x)|_{\gamma^{\hat{i}}(\omega)}dm_{\gamma^i(\omega)}(x)\leq \int_{\gamma^{\hat{i}}(\omega)}\varphi_l(x)H_{\hat{i}}(\omega)(x)|_{\gamma^{\hat{i}}(\omega)}dm_{\gamma^i(\omega)}(x).
  \end{equation}Otherwise,
   \begin{align*}
     \int_M\varphi_kdm=&\sum_{i=1}^{k(\omega)}\int\int_{\gamma^i(\omega)}\varphi_k(x)H_i(\omega)(x)|_{\gamma^i(\omega)}dm_{\gamma^i(\omega)}(x)d\tilde{m}_{R_i(\omega)}\\
     >& \sum_{i=1}^{k(\omega)}\int\int_{\gamma^i(\omega)}\varphi_l(x)H_i(\omega)(x)|_{\gamma^i(\omega)}dm_{\gamma^i(\omega)}(x)d\tilde{m}_{R_i(\omega)}=\int_M\varphi_ldm,
   \end{align*}
   a contradiction.
Now for any $i$ and local stable leaf $\gamma^i(\omega)\subset R_i(\omega)$, we have
\begin{equation}\label{over leq d+omega}
\begin{split}
   &\ \ \ \ \ \frac{\int_{\gamma^i(\omega)}\varphi_k(x)\psi(x)H_i(\omega)(x)|_{\gamma^i(\omega)}dm_{\gamma^i(\omega)}(x)}{\int_{\gamma^i(\omega)}\varphi_l(x)\psi(x)H_i(\omega)(x)|_{\gamma^i(\omega)}dm_{\gamma^i(\omega)}(x)}\\
   &\overset{\eqref{over leq     beta}}\leq \beta_{+,\omega}(\varphi_k,\varphi_l)= \frac{\beta_{+,\omega}(\varphi_k,\varphi_l)}{\alpha_{+,\omega}(\varphi_k,\varphi_l)}\cdot \alpha_{+,\omega}(\varphi_k,\varphi_l) \\
   &\overset{\eqref{over     leq beta}}\leq \frac{\beta_{+,\omega}(\varphi_k,\varphi_l)}{\alpha_{+,\omega}(\varphi_k,\varphi_l)}\cdot \frac{\int_{\gamma^{\hat{i}}(\omega)}\varphi_k(x)H_{\hat{i}}(\omega)(x)|_{\gamma^{\hat{i}}(\omega)}dm_{\gamma^i(\omega)}(x)}{\int_{\gamma^{\hat{i}}(\omega)}\varphi_l(x)H_{\hat{i}}(\omega)(x)|_{\gamma^{\hat{i}}(\omega)}dm_{\gamma^i(\omega)}(x)}\\
   &\overset{\eqref{over leq 1}}\leq \frac{\beta_{+,\omega}(\varphi_k,\varphi_l)}{\alpha_{+,\omega}(\varphi_k,\varphi_l)}\cdot 1\\
   &=\exp(d_{+,\omega}(\varphi_k,\varphi_l)),\mbox{ for all }k,l\geq 1.
\end{split}
\end{equation}
By assumption, $d_{+,\omega}(\varphi_k,\varphi_l)\to 0$ exponentially as $n,m\to\infty.$ Now pick $N^\prime>0$ such that for any $k,l>N^\prime$, $d_{+,\omega}(\varphi_k,\varphi_l)<\frac{1}{2}$, then we have
\begin{equation}\label{cauchy 1}
  \begin{split}
   & \ \ \ \ \left|\int_M\varphi_k(x)\psi(x)dm(x)-\int_M\varphi_l(x)\psi(x)dm(x)\right|\\
   &=\left|\int_M\varphi_l(x)\psi(x)dm(x)\right|\cdot \left|\frac{\int_M\varphi_k(x)\psi(x)dm(x)}{\int_M\varphi_l(x)\psi(x)dm(x)}-1\right|\\
   &\leq \sup_{x\in M}|\psi(x)|\cdot \left|\frac{\sum_{i=1}^{k(\omega)}\int_{R_i(\omega)}\varphi_k(x)\psi(x)dm(x)}{\sum_{i=1}^{k(\omega)}\int_{R_i(\omega)}\varphi_l(x)\psi(x)dm(x)}-1\right|\\
   &=\|\psi\|_{C^0(M)}\cdot\left|\frac{\sum_{i=1}^{k(\omega)}\int\int_{\gamma^i(\omega)}\varphi_k(x)\psi(x)H_i(\omega)(x)|_{\gamma^i(\omega)}dm_{\gamma^i(\omega)}(x)d\tilde{m}_{R_i(\omega)}}{\sum_{i=1}^{k(\omega)}\int\int_{\gamma^i(\omega)}\varphi_l(x)\psi(x)H_i(\omega)(x)|_{\gamma^i(\omega)}dm_{\gamma^i(\omega)}(x)d\tilde{m}_{R_i(\omega)}}-1\right|\\
   &\overset{\eqref{over leq d+omega}}\leq \|\psi\|_{C^0(M)}\cdot\left(e^{d_{+,\omega}(\varphi_k,\varphi_l)}-1\right)\\
   &\leq 2\|\psi\|_{C^0(M)}\cdot d_{+,\omega}(\varphi_k,\varphi_l),
  \end{split}
\end{equation}
where in the first $\leq $ we use the positivity of $\varphi_l$ and $\int\varphi_l dm=1$.
Hence $\{\int_M\varphi_n(x)\psi(x)dm(x)\}_{n\in\mathbb{N}}$ is a Cauchy sequence in this case.

Secondly, for any nonconstant function $\psi\in C^{0,\kappa}(M)$, let
\begin{equation*}
  B=\frac{5|\psi|_{\kappa}}{a}>0,
\end{equation*}where $|\psi|_{\kappa}:=\sup_{x,y\in M,x\not=y}\frac{|\psi(x)-\psi(y)|}{d(x,y)^\kappa}$. We define
\begin{equation*}
  \psi_B^+:=\frac{1}{2}(|\psi|+\psi)+B,\ \psi_B^-:=\frac{1}{2}(|\psi|-\psi)+B.
\end{equation*}Then for any $x,y\in M$, we have
\begin{equation*}
  |\psi_B^\pm(x)-\psi_B^\pm(y)|\leq |\psi|_\kappa d(x,y)^\kappa,
\end{equation*}and therefore, we obtain
\begin{equation*}
  \left|\frac{\psi_B^\pm(x)}{\psi_B^\pm(y)}-1\right|\leq \frac{a}{5}d(x,y)^\kappa.
\end{equation*}
Switch $x$ and $y$ to get
\begin{equation}\label{eq Holder log psi}
    |\log\psi_B^\pm|_{\kappa}=\sup_{x,y\in M,x\not=y}\frac{|\log\psi_B^\pm(x)-\log\psi_B^\pm(y)|}{d(x,y)^\kappa}<\frac{a}{4}.
  \end{equation}
 Then we apply $(\ref{cauchy 1})$ and the linearity of integration, for any $k,l>N^\prime$, one has
\begin{align*}
  &\ \ \ \ \left|\int_M\varphi_k(x)\psi(x)dm(x)-\int_M\varphi_l(x)\psi(x)dm(x)\right|\\
  &\leq \left|\int_M\varphi_k(x)\psi_B^+(x)dm(x)-\int_M\varphi_l(x)\psi_B^+(x)dm(x)\right|+\left|\int_M\varphi_k(x)\psi_B^-(x)dm(x)-\int_M\varphi_l(x)\psi_B^-(x)dm(x)\right|\\
 &\leq 2(\|\psi_B^+\|_{C^0(M)}+\|\psi_B^-\|_{C^0(M)})d_{+,\omega}(\varphi_k,\varphi_l)\\
  &\leq \left(4\|\psi\|_{C^0(M)}+\frac{20}{a}|\psi|_\kappa\right)d_{+,\omega}(\varphi_k,\varphi_l)\\
  &\leq \max\{4,\frac{20}{a}\}\|\psi\|_{C^{0,\kappa}(M)}\cdot d_{+,\omega}(\varphi_k,\varphi_l).
\end{align*}

Finally, for any nonconstant continuous function $\psi:M\rightarrow \mathbb{R}$, for any $\epsilon>0$, we can pick a nonconstant function $\tilde{\psi}\in C^{0,\kappa}(M)$ such that
\begin{equation*}
  \sup_{x\in M}|\psi(x)-\tilde{\psi}(x)|<\epsilon/4.
\end{equation*}
Now, pick $N^{\prime\prime}>N^\prime>0$ depending on $\tilde{\psi}$ and $\epsilon$ such that for all $k,l\geq N^{\prime\prime}$
\begin{equation*}
  \max\left\{4,\frac{20}{a}\right\}\cdot\|\tilde{\psi}\|_{C^{0,\kappa}(M)}\cdot d_{+,\omega}(\varphi_k,\varphi_l)<\epsilon/2.
\end{equation*}Then for any $k,l\geq N^{\prime\prime}$, one has
\begin{align*}
   &\ \ \ \ \ \left|\int_M\varphi_k(x)\psi(x)dm(x)-\int_M\varphi_l(x)\psi(x)dm(x)\right|\\
   &\leq \left|\int_M\varphi_k(x)\tilde{\psi}(x)dm(x)-\int_M\varphi_l(x)\tilde{\psi}(x)dm(x)\right|+\int_M\varphi_k|\psi-\tilde{\psi}|dm+\int_M\varphi_l|\psi-\tilde{\psi}|dm\\
   &\overset{\eqref{integral =1}}\leq  \max\left\{4,\frac{20}{a}\right\}\cdot\|\tilde{\psi}\|_{C^{0,\kappa}(M)}\cdot d_{+,\omega}(\varphi_k,\varphi_l)+\epsilon/4+\epsilon/4\\
   &\leq \epsilon,
\end{align*}where we note that $\varphi_k$ and $\varphi_l$ are positive functions.
Hence, for any continuous function $\psi:M\rightarrow \mathbb{R}$, the sequence $\{\int_M\varphi_n(x)\psi(x)dm(x)\}_{n\in\mathbb{N}}$ is a Cauchy sequence. The proof of Lemma \ref{Cauchy sequence lemma} is complete.
\end{proof}
For any measurable function $\varphi:M\rightarrow \mathbb{R}$, we define the fiber Koopman operator
\begin{equation}\label{U varphi}
 U_{\omega}\varphi: M\rightarrow\mathbb{R} ,\ (U_{\omega}\varphi)(x):=\varphi(f_{\theta^{-1}\omega}x).
\end{equation}We denote
\begin{equation*}
  U_\omega^{n}:=U_{\theta^{-(n-1)}\omega}\circ\cdots \circ U_{\theta^{-1}\omega} \circ U_{\omega}\mbox{ for all }n\in\mathbb{N} \mbox{ and }\omega\in\Omega.
\end{equation*}
For any $\omega\in\Omega$ and any bounded measurable functions $\varphi_1,\varphi_2,$  by changing variable, we have
\begin{align}
  \int_M(L_{\theta^{-1}\omega}\varphi_1)(y)\varphi_2(y)dm(y) &=\int_{M}\frac{\varphi_1((f_{\theta^{-1}\omega})^{-1}y)}{|\det D_{(f_{\theta^{-1}\omega})^{-1}(y)}f_{\theta^{-1}\omega}|}\varphi_2(y)dm(y)\nonumber\\
  &=\int_{M}\frac{\varphi_1(x)}{|\det D_xf_{\theta^{-1}\omega}|}\varphi_2(f_{\theta^{-1}\omega}x)|\det D_xf_{\theta^{-1}\omega}|dm(x)\nonumber\\
  &=\int_M\varphi_1(x)(U_{\omega}\varphi_2)(x)dm(x)\label{L to U}.
\end{align} Let $\textbf{1}$ be the constant function $\textbf{1}(x)\equiv 1$, then $\textbf{1}\in C_\omega(b,c,\nu)$ for all $\omega\in\Omega$ by the Remark \ref{nonegative function s C2 auto}.
Now consider $\varphi_n=L^n_{\theta^{-n}\omega}\textbf{1}$ for $n\geq N$ and notice that for all $\omega\in \Omega$,
\begin{align*}
  \int_M (L^n_{\theta^{-n}\omega}\textbf{1})(x)dm(x) =\int_M \textbf{1}(x)(U^n_\omega1)(x)dm(x)
  =\int_M \textbf{1} dm(x)
  =1.
\end{align*}Moreover, by $(\ref{contraction of L n})$, we have
 \begin{equation}
   d_{+,\omega}(L^n_{\theta^{-n}\omega}\textbf{1},L^{n+k}_{\theta^{-(n+k)}\omega}\textbf{1})\leq d(L^n_{\theta^{-n}\omega}\textbf{1},L^{n}_{\theta^{-n}\omega}(L^k_{\theta^{-(n+k)}\omega}\textbf{1}))\leq \Lambda^n\cdot D_4\mbox{ for }n\geq N.
 \end{equation} Hence the positive functions sequence $\{\varphi_n=L^n_{\theta^{-n}\omega}\textbf{1}\}_{n\in\mathbb{N}}\subset C_\omega(b,c,\nu)$ satisfies conditions of Lemma \ref{Cauchy sequence lemma}. So for any $g\in C^0(M)$, $\{\int_M (L_{\theta^{-n}\omega}^n\textbf{1})(x)g(x)dm\}_{n\in\mathbb{N}}$ is a Cauchy sequence. Now define $\mathcal{F}_\omega:C^0(M)\rightarrow \mathbb{R}$ by $
   \mathcal{F}_\omega(g)=\lim_{n\rightarrow\infty}\int_M (L^n_{\theta^{-n}\omega}\textbf{1})(x)g(x)dm(x).$ It is clear that $\mathcal{F}_\omega$ is a positive linear functional on $C^0(M)$. By the Riesz representation theorem, there exists a regular Borel measure $\mu_\omega$ such that
 \begin{equation}\label{definition of mu omega}
   \int_M g(x)d\mu_\omega(x)=\lim_{n\rightarrow \infty}\int_M(L^n_{\theta^{-n}\omega}\textbf{1})(x)g(x)dm(x).
 \end{equation}Moreover, $\mu_\omega$ is a probability measure since $\mu_\omega(M)=\lim_{n\to\infty}\int_M (L^n_{\theta^{-n}\omega}\textbf{1})(x)dm(x)=1$.

 Note that for each $g\in C^0(M)$, $\omega\mapsto \int_Mg(x)d\mu_\omega(x)$ is measurable because of the measurability of $\omega\mapsto \int_M(L^n_{\theta^{-n}\omega}\textbf{1})(x)g(x)dm(x)$. For any closed set $B\subset M$, let $g_k(x):=1-\min\{kd(x,B),1\}$ for $k\in \mathbb{N}$ where $d(x,B):=\inf\{d(x,y):\ y\in B\}$, then $g_k(x)\in C^0(M)$ and $g_k(x)\searrow 1_B(x)$. Then by Monotone convergence theorem, we have
 \begin{equation*}
   \mu_\omega(B)=\lim_{k\rightarrow\infty}\int_M g_k(x)d\mu_\omega(x)=\lim_{k\rightarrow\infty}\lim_{n\rightarrow\infty}\int_M(L^n_{\theta^{-n}\omega}\textbf{1})(x,\omega)g_k(x)dm(x).
 \end{equation*}Hence $\omega\mapsto \mu_\omega(B)$ is measurable for any closed set $B\subset M$. By the definition in Section \ref{subsection 2.2}, $\omega\mapsto\mu_\omega$ defines a random probability measure.

Now for any continuous $g:M\rightarrow\mathbb{R}$,
\begin{align*}
  \int_Mg(f_\omega x)d\mu_\omega &=\lim_{n\rightarrow \infty}\int_M(L_{\theta^{-n}\omega}^n\textbf{1})(x)g(f_\omega x)dm(x)\overset{\eqref{L to U}}=\lim_{n\rightarrow \infty}\int_M g(f_\omega f_{\theta^{-n}\omega}^nx)dm(x)\\
  &=\lim_{n\rightarrow \infty}\int_M g(f_{\theta^{-(n+1)}\theta\omega}^{n+1}x)dm(x)=\lim_{n\rightarrow \infty}\int_M (L_{\theta^{-(n+1)}\theta\omega}^{n+1}\textbf{1})(x)g(x)dm(x)\\
  &=\int_M g(x)d\mu_{\theta\omega}.
\end{align*}
 Thus, the random probability measure $\mu_\omega$ is $\phi-$invariant, i.e.,
 \begin{equation}\label{measure invariant}
   (f_\omega)_*\mu_\omega=\mu_{\theta\omega} \mbox{ for all }\omega\in\Omega.
 \end{equation}

 Notice that for any $g\in C^0(M)$, we have
 \begin{align*}
   \int_Mg(x)d\mu_\omega(x)&=\lim_{n\rightarrow \infty}\int_M(L^n_{\theta^{-n}\omega}\textbf{1})(x)g(x)dm(x)\overset{\eqref{L to U}}=\lim_{n\rightarrow\infty}\int_M U_\omega^ngdm\\
   &=\lim_{n\rightarrow\infty}\int_M g(f_{\theta^{-n}\omega}^nx)dm(x)
   =\lim_{n\rightarrow\infty}\int_M g(y)d(f_{\theta^{-n}\omega}^n)_*m(y).
 \end{align*}So $\mu_\omega$ is actually the weak$^*-$limit of $(f_{\theta^{-n}\omega}^n)_*m$.

\begin{remark}\label{another definitio of mu omega}
For each $\omega\in\Omega$, $k\in\mathbb{N}$ and any positive function $\varphi_{\theta^{-k}\omega}\in C_{\theta^{-k}\omega}(b,c,\nu)$ such that $\int_M\varphi_{\theta^{-k}\omega}(x)dm=1$, then we must have
  \begin{equation}
    \lim_{n\rightarrow\infty}\int_M(L^n_{\theta^{-n}\omega}\varphi_{\theta^{-n}\omega})(x)g(x)dm(x)=\int_M g(x)d\mu_\omega(x),\ \forall g\in C^0(M).
  \end{equation} In fact, we define the sequence $\hat{\varphi}_n\in C_\omega(b,c,\nu)$ by $\hat{\varphi}_{2k}=L^k_{\theta^{-k}\omega}\textbf{1}$, $\hat{\varphi}_{2k+1}=L^k_{\theta^{-k}\omega}\varphi_{\theta^{-k}\omega}$ for all $k\geq N$. By noticing
  \begin{align*}
    d_{+,\omega}(\hat{\varphi}_{2k},\hat{\varphi}_{2k+1}) &\leq d_\omega(L^k_{\theta^{-k}\omega}\textbf{1},L^k_{\theta^{-k}\omega}\varphi_{\theta^{-k}\omega})\overset{\eqref{contraction of      L n}}\leq \Lambda^{k}D_4
  \end{align*}and by $(\ref{L   to U})$,
  \begin{equation*}
    \int_M(L^k_{\theta^{-k}\omega}\varphi_{\theta^{-k}\omega})(x)dm(x)=\int_M\varphi_{\theta^{-k}\omega}(x)(U^k_\omega\textbf{1})(x)dm(x)=\int_M\varphi_{\theta^{-k}\omega}(x)dm(x)=1.
  \end{equation*} So $\{\hat{\varphi}_n\}_{n\in\mathbb{N}}\subset C_\omega(b,c,\nu)$ satisfying the condition of Lemma \ref{Cauchy sequence lemma}. Thus, the sequence $\{\int_M\hat{\varphi}_{n}(x)g(x)dm(x)\}$ is a Cauchy sequence for all $g\in C^0(M).$ As a consequence, we have
  \begin{align*}
    \int_Mg(x)d\mu_\omega(x) &=\lim_{n\rightarrow\infty}\int_{M}(L^n_{\theta^{-n}\omega}\textbf{1})(x)g(x)dm(x)=\lim_{n\rightarrow\infty}\int_{M}(L^n_{\theta^{-n}\omega}\varphi_{\theta^{-n}\omega})(x)g(x)dm(x).
  \end{align*}
\end{remark}
\subsection{Proof of the exponential decay of the (quenched) past random correlations}\label{subsection 4.4}
In this subsection, we prove the exponential decay of the past random correlations.

\begin{lemma}\label{correlation step 1}
  Let $\psi:M\rightarrow\mathbb{R}^+$ be a positive function such that $\log\psi$ is $(\frac{a}{4},\kappa)$-H\"older continuous. Then there exists a constant $K(D_4)>0$  depending on $D_4$ such that for any $\omega\in\Omega$, positive function $\varphi_{\theta^{-k}\omega}\in C_{\theta^{-k}\omega}(b,c,\nu)$ for $k\in\mathbb{N}$, and $n\geq N$, the following holds:
  \begin{equation}\label{correlation 3}
    \begin{split}
    &\ \ \ \ \ \left|\int_M\psi(f_{\theta^{-n}\omega}^nx)\varphi_{\theta^{-n}\omega}(x)dm(x)-\int_M\psi(x)d\mu_\omega(x)\int_M\varphi_{\theta^{-n}\omega}(x)dm(x)\right|\\
    &\leq K(D_4)\cdot \|\psi\|_{C^0(M)} \cdot \int_M\varphi_{\theta^{-n}\omega}(x)dm(x)\cdot\Lambda^{n}.
    \end{split}
  \end{equation}
 Recall that $D_4$ comes from Lemma \ref{lemma D4}, and $N$ is constructed satisfying \eqref{N property 1} and \eqref{N property 2}.
\end{lemma}
\begin{proof}
  First, we  consider that positive function $\varphi_{\theta^{-k}\omega}\in C_{\theta^{-k}\omega}(b,c,\nu)$ satisfying $\int_M\varphi_{\theta^{-k}\omega}(x)dm(x)=1$ for all $k\in\mathbb{N}$. So
  \begin{equation*}
    \int_M(L^n_{\theta^{-n}\omega}\varphi_{\theta^{-n}\omega})(x)dm(x)\overset{\eqref{L to U}}=\int_M\varphi_{\theta^{-n}\omega}\cdot U_\omega^n\textbf{1}dm=1,\mbox{ for all }n\in\mathbb{N}.
  \end{equation*}
  Then the proof of $(\ref{cauchy 1})$ indicates that for any $n\geq N,\ k\geq 0$, we have
  \begin{align*}
     & \ \ \ \ \left|\int_M\psi(x)(L^n_{\theta^{-n}\omega}\varphi_{\theta^{-n}\omega})(x)dm(x)-\int_M\psi(x)(L^{n+k}_{\theta^{-(n+k)}\omega}\varphi_{\theta^{-(n+k)}\omega})(x)dm(x)\right|\\
     &\leq \|\psi\|_{C^0(M)}\left(e^{d_{+,\omega}(L^n_{\theta^{-n}\omega}\varphi_{\theta^{-n}\omega},L^{n+k}_{\theta^{-(n+k)}\omega}\varphi_{\theta^{-(n+k)}\omega})}-1\right)\\
     &= \|\psi\|_{C^0(M)}\left(e^{d_{+,\omega}(L^n_{\theta^{-n}\omega}\varphi_{\theta^{-n}\omega},L^{n}_{\theta^{-n}\omega}L^{k}_{\theta^{-(n+k)}\omega}\varphi_{\theta^{-(n+k)}\omega})}-1\right)\\
      &\leq \|\psi\|_{C^0(M)}\left(e^{d_{\omega}(L^n_{\theta^{-n}\omega}\varphi_{\theta^{-n}\omega},L^{n}_{\theta^{-n}\omega}L^{k}_{\theta^{-(n+k)}\omega}\varphi_{\theta^{-(n+k)}\omega})}-1\right)\\
     &\overset{\eqref{contraction of      L n}}\leq \|\psi\|_{C^0(M)}\left(e^{D_4\Lambda^n}-1\right)\\
     &\leq K(D_4)\|\psi\|_{C^0(M)}\Lambda^{n}
  \end{align*}for some constant $K(D_4)$. Letting $k\rightarrow\infty$ in the above, by Remark \ref{another definitio of mu omega}, we have
  \begin{equation}\label{correlation 1}
    \left|\int_M\psi(x)(L^n_{\theta^{-n}\omega}\varphi_{\theta^{-n}\omega})(x)dm(x)-\int_M\psi(x)d\mu_\omega(x)\right|\leq K(D_4)\cdot\|\psi\|_{C^0(M)}\Lambda^{n}.
  \end{equation}Note that by $(\ref{L   to U})$, we have
  \begin{equation*}
    \int_M\psi(x)(L^n_{\theta^{-n}\omega}\varphi_{\theta^{-n}\omega})(x)dm(x)=\int_M\psi(f_{\theta^{-n}\omega}^nx)\varphi_{\theta^{-n}\omega}(x)dm(x).
  \end{equation*}
  Hence $(\ref{correlation 1})$ becomes
  \begin{equation}\label{correlation 2}
    \left|\int_M\psi(f_{\theta^{-n}\omega}^nx)\varphi_{\theta^{-n}\omega}(x)dm(x)-\int_M\psi(x)d\mu_\omega(x)\right|\leq K(D_4)\cdot\|\psi\|_{C^0(M)}\Lambda^{n}.
  \end{equation}
  Now for any positive function $\varphi_{\theta^{-n}\omega}\in C_{\theta^{-n}\omega}(b,c,\nu)$, let $\tilde{\varphi}_{\theta^{-n}\omega}(x):=\varphi_{\theta^{-n}\omega}(x)/\int_M\varphi_{\theta^{-n}\omega}(x)dm(x)$. We  replace $\varphi_{\theta^{-n}\omega}$ by $\tilde{\varphi}_{\theta^{-n}\omega}$ in $(\ref{correlation 2})$. Then \eqref{correlation     3} is proved.
\end{proof}
We still need the following lemma. Recall the constant $K_2$ in $(\ref{Lip constant of log det Dxfomega})$, and $C_2+C_2C_1$ in \eqref{det dfEs-detdfEs}.
\begin{lemma}\label{varphi L1 in Cbcv}
Pick the number $c$ in the definition of $C_\omega(b,c,\nu)$ sufficiently large satisfying not only $c>c_0(b,\nu)$ in Lemma \ref{invariance of C(b,c,v)}, but also
\begin{equation}\label{pick c}
  \max\left\{2a_0,\frac{K_2}{1-e^{-\lambda}},\frac{C_2+C_2C_1}{1-e^{-\lambda\nu_0}}\right\}<c.
\end{equation} Let $c_1$ be any constant such that
\begin{equation}\label{assumption on c1}
  1<c_1<\max\left\{2a_0,\frac{K_2}{1-e^{-\lambda}},\frac{C_2+C_2C_1}{1-e^{-\lambda\nu_0}}\right\}.
\end{equation}
 Given any positive continuous function $\varphi:M\rightarrow\mathbb{R}^+$ with
  \begin{equation*}
   |\log\varphi|_\nu= \sup_{x,y\in M,\ x\not=y}\frac{|\log\varphi(x)-\log\varphi(y)|}{d(x,y)^\nu}< c_1,
  \end{equation*}
  then $\varphi\cdot(L^l_{\theta^{-l}\omega}\textbf{1})\in C_\omega(b,c,\nu)$ for every $l\geq 1$ and all $\omega\in\Omega$.
\end{lemma}
\begin{proof}
We prove this lemma for each fixed $\omega\in\Omega$ and $l\geq 1$. First, $\varphi\cdot (L^l_{\theta^{-l}\omega}\textbf{1})$ is obviously bounded and measurable function.

  Let us verify \ref{C1}. For every local stable manifold $\gamma(\omega)$ having size between $\frac{A(\epsilon)}{4J^2}$ and $A(\epsilon)$, and any $\rho(\omega)\in D(a/2,\kappa,\gamma(\omega))$ with $\int_{\gamma(\omega)}\rho(x,\omega)dm_{\gamma(\omega)}(x)=1$, we have
  \begin{equation*}
    \int_{\gamma(\omega)}\varphi(x)(L^l_{\theta^{-l}\omega}\textbf{1})(x)\rho(x,\omega)dm_{\gamma(\omega)}(x)\geq \inf_{x\in M}\varphi(x)\cdot \int_{\gamma(\omega)}(L^l_{\theta^{-l}\omega}\textbf{1})(x)\rho(x,\omega)dm_{\gamma(\omega)}(x)>0
  \end{equation*}since $L^l_{\theta^{-l}\omega}\textbf{1}\in C_\omega(b,c,\nu)$ and $\varphi$ is positive and continuous.

  By Remark \ref{nonegative function s C2 auto}, $\varphi\cdot L^l_{\theta^{-l}\omega}\textbf{1}$ fulfills \ref{C2} since $\varphi\cdot L^l_{\theta^{-l}\omega}\textbf{1}$ is nonnegative. So it is left to verify \ref{C3}.

  Let $\gamma(\omega),\tilde{\gamma}(\omega)$ be any pair of local stable manifolds. Pick any $\rho(\omega)\in D(a_1,\kappa_1,\gamma(\omega))$, and let $\tilde{\rho}(\omega)\in D(a/2,\kappa,\tilde{\gamma}(\omega))$ be defined as $(\ref{definition of tilde rho})$ corresponding to $\rho(\omega)$. We divide $f_{\omega}^{-l}\gamma(\omega)$ into connected local stable manifolds of size between $\frac{A(\epsilon)}{4J}$ and $\frac{A(\epsilon)}{2J}$, named $\gamma_i(\theta^{-l}\omega)$, such that $\gamma(\omega)=\cup f_{\theta^{-l}\omega}^l\gamma_i(\theta^{-l}\omega)$. Let $\tilde{\gamma}_i(\theta^{-l}\omega)$ be the holonomy image of $\gamma_i(\theta^{-l}\omega)$ inside of $f_{\omega}^{-l}\tilde{\gamma}(\omega)$. Naturally, we have $\tilde{\gamma}(\omega)=\cup f_{\theta^{-1}\omega}\tilde{\gamma}_i(\theta^{-1}\omega)$. Note that the Jacobian of holonomy map between local stable manifolds is bounded above by $J$  and bounded below by $J^{-1}$. Therefore, $\tilde{\gamma}_i(\theta^{-l}\omega)$ have size between $\frac{A(\epsilon)}{4J^2}$ and $A(\epsilon)$. Denote $\psi_\omega:\tilde{\gamma}(\omega)\rightarrow \gamma(\omega)$ to be the holonomy map between $\tilde{\gamma}(\omega)$ and $\gamma(\omega)$, and $\psi^i_{\theta^{-l}\omega}:\tilde{\gamma}_i(\theta^{-l}\omega)\rightarrow\gamma_i(\theta^{-l}\omega)$ the holonomy map  induced by the local unstable manifolds. By using the definition of $L_\omega$ and changing of variables, we have
  \begin{align*}
     & \ \ \ \ \int_{\gamma(\omega)}(L^l_{\theta^{-l}\omega}\textbf{1})(x)\varphi(x)\rho(x,\omega)dm_{\gamma(\omega)}(x)\\
     &=\sum_{i}\int_{\gamma_i(\theta^{-l}\omega)}\frac{\left|\det D_xf_{\theta^{-l}\omega}^l|_{E^s(x,\theta^{-l}\omega)}\right|}{\left|\det D_xf_{\theta^{-l}\omega}^l\right|}\rho(f_{\theta^{-l}\omega}^lx,\omega)\varphi(f_{\theta^{-l}\omega}^lx)dm_{\gamma_i(\theta^{-l}\omega)}(x)\\
     &=\sum_{i}\int_{\tilde{\gamma}_i(\theta^{-l}\omega)}\frac{\left|\det D_{\psi^i_{\theta^{-l}\omega}(x)}f_{\theta^{-l}\omega}^l|_{E^s(\psi^i_{\theta^{-l}\omega}(x),\theta^{-l}\omega)}\right|}{\left|\det D_{\psi^i_{\theta^{-l}\omega}(x)}f_{\theta^{-l}\omega}^l\right|}\cdot\rho(f_{\theta^{-l}\omega}^l\psi^i_{\theta^{-l}\omega}(x),\omega)\cdot\varphi(f_{\theta^{-l}\omega}^l\psi^i_{\theta^{-l}\omega}(x))\\
     &\ \ \ \ \ \ \ \ \ \ \ \ \ \ \ \cdot Jac( \psi^i_{\theta^{-l}\omega})(x)dm_{\tilde{\gamma}_i(\theta^{-l}\omega)}(x).
  \end{align*}On the other hand, we have
  \begin{align*}
     & \ \ \ \ \ \int_{\tilde{\gamma}(\omega)}(L^l_{\theta^{-l}\omega}\textbf{1})(x)\varphi(x)\tilde{\rho}(x,\omega)dm_{\tilde{\gamma}(\omega)}(x)\\
     &=\sum_{i}\int_{\tilde{\gamma}_i(\theta^{-l}\omega)}\frac{\left|\det D_xf_{\theta^{-l}\omega}^l|_{E^s(x,\theta^{-l}\omega)}\right|}{\left|\det D_xf_{\theta^{-l}\omega}^l\right|}\tilde{\rho}(f_{\theta^{-l}\omega}^lx,\omega)\varphi(f_{\theta^{-l}\omega}^lx)dm_{\tilde{\gamma}_i(\theta^{-l}\omega)}(x)\\
     &=\sum_{i}\int_{\tilde{\gamma}_i(\theta^{-l}\omega)}\frac{\left|\det D_xf_{\theta^{-l}\omega}^l|_{E^s(x,\theta^{-l}\omega)}\right|}{\left|\det D_xf_{\theta^{-l}\omega}^l\right|}\rho(\psi_\omega(f_{\theta^{-l}\omega}^lx),\omega)\cdot Jac(\psi_\omega)(f_{\theta^{-l}\omega}^lx)\cdot \varphi(f_{\theta^{-l}\omega}^lx)dm_{\tilde{\gamma}_i(\theta^{-l}\omega)}(x).
  \end{align*}Note that $f_{\theta^{-l}\omega}^l\psi_{\theta^{-l}\omega}^i(x)=\psi_\omega(f_{\theta^{-l}\omega}^l(x))$ for $x\in\tilde{\gamma}_i(\theta^{-l}\omega)$ by the invariance of stable and unstable manifolds, so we have
  \begin{equation}\label{inequality 1}
    \rho(f_{\theta^{-l}\omega}^l\psi_{\theta^{-l}\omega}^i(x),\omega)=\rho(\psi_\omega(f_{\theta^{-l}\omega}^l\omega),\omega).
  \end{equation}
Since $\log\varphi$ is $(c_1,\nu)-$H\"older continuous, for $x\in\tilde{\gamma}_i(\theta^{-l}\omega)$
\begin{align}
  \left|\log\varphi(f_{\theta^{-l}\omega}^l\psi_{\theta^{-l}\omega}^i(x))-\log\varphi(f_{\theta^{-l}\omega}^lx)\right|&\leq c_1d(f_{\theta^{-l}\omega}^l\psi_{\theta^{-l}\omega}^i(x),f_{\theta^{-l}\omega}^l(x))^\nu\nonumber\\
  &\leq c_1d_u(\gamma(\omega),\tilde{\gamma}(\omega))^\nu\label{inequality2}.
\end{align}
By Lemma \ref{property of fiberwise holonomy map} (2) and $a_0^\prime\leq a_0$, for $x\in\tilde{\gamma}_i(\theta^{-l}\omega)$, one has
\begin{align}
 &\ \ \ \ \ \  \left|\log Jac(\psi_\omega)(f_{\theta^{-l}\omega}^lx)-\log Jac (\psi_{\theta^{-l}\omega}^i)(x)|\right| \nonumber\\
  &\leq a_0d(f_{\theta^{-l}\omega}^lx,\psi_\omega f_{\theta^{-l}\omega}^lx)^{\nu_0}+a_0d(x,\psi_{\theta^{-l}\omega}^ix)^{\nu_0}\nonumber\\
  &\leq a_0d_u(\gamma(\omega),\tilde{\gamma}(\omega))^{\nu_0}+a_0d_u(\gamma_i(\theta^{-l}\omega),\tilde{\gamma}_i(\theta^{-l}\omega))^{\nu_0}\nonumber\\
  &\leq a_0d_u(\gamma(\omega),\tilde{\gamma}(\omega))^{\nu_0}+a_0e^{-\lambda l\nu_0}d_u(\gamma(\omega),\tilde{\gamma}(\omega))^{\nu_0}\nonumber\\
  &< 2a_0d_u(\gamma(\omega),\tilde{\gamma}(\omega))^{\nu_0}\label{inequality3}.
\end{align}By $(\ref{Lip constant of log det Dxfomega})$, for $x\in \tilde{\gamma}_i(\theta^{-l}\omega)$, we deduce that
\begin{align}
   & \ \ \ \ \ \left|\log|\det D_{\psi_{\theta^{-l}\omega}^i(x)}f_{\theta^{-l}\omega}^l|-\log|\det D_xf_{\theta^{-l}\omega}^l|\right|\nonumber\\
   &\leq K_2d(x,\psi_{\theta^{-l}\omega}^i(x))+K_2d(f_{\theta^{-l}\omega}x,f_{\theta^{-l}\omega}\psi_{\theta^{-l}\omega}^i(x))+\cdots+ K_2d(f_{\theta^{-l}\omega}^{l-1}x,f_{\theta^{-l}\omega}^{l-1}\psi^i_{\theta^{-l}\omega}(x))\nonumber\\
   &\leq K_2e^{-l\lambda}d_u(\gamma(\omega),\tilde{\gamma}(\omega))+K_2e^{-(l-1)\lambda}d_u(\gamma(\omega),\tilde{\gamma}(\omega))+\cdots +K_2e^{-\lambda}d_u(\gamma(\omega),\tilde{\gamma}(\omega))\nonumber\\
   &\leq \frac{K_2}{1-e^{-\lambda}}\cdot d_u(\gamma(\omega),\tilde{\gamma}(\omega))\label{inequality 4}.
\end{align}By applying $(\ref{det dfEs-detdfEs})$, for $x\in\tilde{\gamma}_i(\theta^{-l}\omega)$,
\begin{align}
   &\ \ \ \ \ \left|\log|\det D_{\psi^i_{\theta^{-l}\omega}(x)}f_{\theta^{-l}\omega}^l|_{E^s(\psi^i_{\theta^{-l}\omega}(x),\theta^{-l}\omega)}| -\log |\det D_xf_{\theta^{-l}\omega}^l|_{E^s(x,\theta^{-l}\omega)}| \right|\nonumber\\
   &\leq (C_2+C_2C_1)\left[d(\psi^i_{\theta^{-l}\omega}(x),x)^{\nu_0}+\cdots +d(f_{\theta^{-l}\omega}^{l-1}\psi^i_{\theta^{-l}\omega}(x),f_{\theta^{-l}\omega}^{l-1}x)^{\nu_0}\right]\nonumber\\
   &\leq \frac{C_2+C_2C_1}{ 1-e^{-\lambda\nu_0}}d_u(\gamma(\omega),\tilde{\gamma}(\omega))^{\nu_0}\label{inequality5}.
\end{align}Combining $(\ref{inequality 1})$, $(\ref{inequality2})$, $(\ref{inequality3})$, $(\ref{inequality 4})$ and $(\ref{inequality5})$, we conclude
\begin{align*}
   &\ \ \ \ \ \left|\log \int_{\gamma(\omega)}(L^l_{\theta^{-l}\omega}\textbf{1})(x)\varphi(x)\rho(x,\omega)dm_{\gamma(\omega)}(x)-\log \int_{\tilde{\gamma}(\omega)}(L^l_{\theta^{-l}\omega}\textbf{1})(x)\varphi(x)\tilde{\rho}(x,\omega)dm_{\tilde{\gamma}(\omega)}(x)\right|\\
   &\leq \max\left\{c_1,2a_0,\frac{K_2}{1-e^{-\lambda}},\frac{C_2+C_2C_1}{1-e^{-\lambda\nu_0}}\right\}d_u(\gamma(\omega),\tilde{\gamma}(\omega))^\nu\\
  &\overset{\eqref{pick c}\eqref{assumption on c1}}\leq cd_u(\gamma(\omega),\tilde{\gamma}(\omega))^{\nu}.
\end{align*}Hence \ref{C3} is verified. Therefore,  $\varphi\cdot(L^l_{\theta^{-l}\omega}\textbf{1})\in C_\omega(b,c,\nu)$. The proof of Lemma \ref{varphi L1 in Cbcv} is complete.
\end{proof}
Now for any positive continuous function $\psi\in C^0(M)$ such that $\log\psi$ is $(a/4,\kappa)-$H\"older continuous, and for any positive continuous function $\varphi:M\rightarrow\mathbb{R}$ satisfying that $\log\varphi$ is $(c_1,\nu)-$H\"older continuous, by Lemma \ref{varphi L1 in Cbcv}, for each $n\in\mathbb{N}$ we have
\begin{equation*}
  \varphi\cdot L_{\theta^{-(l+n)}\omega}^{l}\textbf{1}=\varphi\cdot L_{\theta^{-l}\theta^{-n}\omega}^{l}\textbf{1}\in C_{\theta^{-n}\omega}(b,c,\nu)\mbox{ for all }l\in\mathbb{N}, \ \omega\in\Omega.
\end{equation*}
Now, we apply Lemma \ref{correlation step 1} to obtain that for all $\omega\in\Omega$, $n\geq N$, we have
\begin{align}
   & \ \ \ \ \ \left|\int_M\psi(f_{\theta^{-n}\omega}^nx)(\varphi\cdot L^l_{\theta^{-l}\theta^{-n}\omega}\textbf{1})(x)dm(x)-\int_M\psi(x)d\mu_\omega(x)\int_M(\varphi\cdot L^l_{\theta^{-l}\theta^{-n}\omega}\textbf{1})(x)dm(x)\right|\nonumber\\
   &\leq K(D_4)\|\psi\|_{C^0(M)}\int_M\varphi\cdot(L^l_{\theta^{-l}\theta^{-n}\omega}\textbf{1})dm\cdot \Lambda^{n}\label{correlation2},\mbox{ for all }l\in\mathbb{N}.
\end{align}Let $l\rightarrow\infty$, by $(\ref{definition of mu omega})$, for all $\omega\in\Omega$, $n\geq N$, we have
\begin{align}
   & \ \ \ \ \ \left|\int_{M}\psi(f_{\theta^{-n}\omega}^n x)\varphi(x)d\mu_{\theta^{-n}\omega}(x)-\int_M\psi(x)d\mu_\omega(x)\int_M\varphi(x)d\mu_{\theta^{-n}\omega}(x)\right|\nonumber\\
   &\leq K(D_4)\|\psi\|_{C^0(M)}\int_M\varphi d\mu_{\theta^{-n}\omega}\cdot\Lambda^{n}\nonumber\\
   &\leq  K(D_4)\|\psi\|_{C^0(M)}\cdot\|\varphi\|_{C^0(M)}\cdot\Lambda^{n}.\label{last2}
\end{align}

Finally, given any $\psi\in C^{0,\kappa}(M)$ and $\varphi\in C^{0,\nu}(M)$. If $\psi$ or $\varphi$ is a constant function, then by \eqref{measure invariant},
\begin{equation*}
  \left|\int_{M}\psi(f_{\theta^{-n}\omega}^n x)\varphi(x)d\mu_{\theta^{-n}\omega}(x)-\int_M\psi(x)d\mu_\omega(x)\int_M\varphi(x)d\mu_{\theta^{-n}\omega}(x)\right|=0.
\end{equation*}Therefore, to prove the exponential decay of past random correlations, it is sufficient to consider the case that both $\psi\in C^{0,\kappa}(M)$ and $\varphi\in C^{0,\nu}(M)$ are nonconstant functions.
Let
\begin{equation*}
  B_{\psi}=\frac{5|\psi|_{\kappa}}{a}>0,\ B_{\varphi}=\frac{2|\varphi|_{\nu}}{c_1}>0,
\end{equation*}and define
\begin{align*}
  \psi_{B_\psi}^+=\frac{1}{2}(|\psi|+\psi)+B_{\psi},\ &\psi_{B_\psi}^-=\frac{1}{2}(|\psi|-\psi)+B_{\psi},\\
  \varphi_{B_\varphi}^+=\frac{1}{2}(|\varphi|+\varphi)+B_{\varphi},\ & \varphi_{B_\varphi}^-=\frac{1}{2}(|\varphi|-\varphi)+B_{\varphi}.
\end{align*}Similar as \eqref{eq Holder log psi}, we can show that $\log\psi_{B_\psi}^\pm$ are $(a/4,\kappa)-$H\"older continuous, and $\log\varphi_{B_\varphi}^\pm$ are $(c_1,\nu)$-H\"older continuous. By $(\ref{last2})$ and the linearity of integration, we conclude for all $\omega\in\Omega$ and $n\geq N$,
\begin{align*}
   &\left|\int_{M}\psi(f_{\theta^{-n}\omega}^n x)\varphi(x)d\mu_{\theta^{-n}\omega}(x)-\int_M\psi(x)d\mu_\omega(x)\int_M\varphi(x)d\mu_{\theta^{-n}\omega}(x)\right|\nonumber\\
 = & \left|\int_{M}\left(\psi_{B_\psi}^+-\psi_{B_\psi}^-\right)(f_{\theta^{-n}\omega}^n x)\left(\varphi_{B_\varphi}^+-\varphi_{B_\varphi}^-\right)(x)d\mu_{\theta^{-n}\omega}(x)\right.\\
 &\quad\quad\left.-\int_M\left(\psi_{B_\psi}^+-\psi_{B_\psi}^-\right)(x)d\mu_\omega(x)\int_M\left(\varphi_{B_\varphi}^+-\varphi_{B_\varphi}^-\right)(x)d\mu_{\theta^{-n}\omega}(x)\right|\\
\leq  & 4K(D_4)\cdot\max\left\{1,\frac{5}{a}\right\}\cdot\max\left\{1,\frac{2}{c_1}\right\}\cdot\|\psi\|_{C^{0,\kappa}(M)}\cdot\|\varphi\|_{C^{0,\nu}(M)}\cdot\Lambda^{n}.
\end{align*}Note that the above is true for all $n\geq N$. Next, for $n\in\{0,...,N-1\}$, we have
\begin{align*}
   &\left|\int_{M}\psi(f_{\theta^{-n}\omega}^n x)\varphi(x)d\mu_{\theta^{-n}\omega}(x)-\int_M\psi(x)d\mu_\omega(x)\int_M\varphi(x)d\mu_{\theta^{-n}\omega}(x)\right|\\
  \leq& 2\|\psi\|_{C^0(M)}\|\varphi\|_{C^0}\leq \frac{2}{\Lambda^N}\|\psi\|_{C^0(M)}\|\varphi\|_{C^0(M)}\Lambda ^n.
\end{align*}
Therefore, we let
\begin{equation}\label{def of k}
  K:=\max\left\{ 4K(D_4)\cdot\max\left\{1,\frac{5}{a}\right\}\cdot\max\left\{1,\frac{2}{c_1}\right\},2\Lambda^{-N}\right\},
\end{equation}then
\begin{align*}
 \left|\int_{M}\psi(f_{\theta^{-n}\omega}^n x)\varphi(x)d\mu_{\theta^{-n}\omega}(x)-\int_M\psi(x)d\mu_\omega(x)\int_M\varphi(x)d\mu_{\theta^{-n}\omega}(x)\right|
  \leq K\|\psi\|_{C^{0,\kappa}(M)}\cdot\|\varphi\|_{C^{0,\nu}(M)}\cdot\Lambda^{n}.
\end{align*}for all $n\geq 0$. This finishes the proof for the past random correlations.
\subsection{Proof of the exponential decay of the (quenched) future random correlation}\label{subsection 4.5}In this subsection, we prove the exponential decay of the future random correlations.

\begin{lemma}\label{future correlation step 1}
  Let $\psi:M\rightarrow\mathbb{R}^+$ be any positive continuous function satisfying that $\log\psi$ is $(\frac{a}{4},\kappa)-$H\"older continuous. Then for each $\omega\in\Omega$, positive function $\varphi_{\omega}\in C_{\omega}(b,c,\nu)$, for any $n\geq N$, the following holds:
\begin{equation}\label{future 1}
  \left|\int_M\psi(f_\omega^n x)\varphi_\omega(x)dm-\int_M\psi(x)d\mu_{\theta^n\omega}\int_M\varphi_\omega(x)dm\right|\leq K(D_4)\cdot\|\psi\|_{C^0(M)}\int_M\varphi_\omega(x)dm\cdot \Lambda^n.
\end{equation}Recall that $K(D_4)$ is defined in Lemma \ref{correlation step 1}.
\end{lemma}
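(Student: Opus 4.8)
The plan is to mirror the proof of Lemma~\ref{correlation step 1}, with the roles of $L_\omega$ and $U_\omega$ exchanged via the duality $(\ref{L to U})$. Dividing through by $\int_M\varphi_\omega(x)dm(x)>0$ (positivity holds since $\varphi_\omega\in C_\omega(b,c,\nu)\subset C_{+,\omega}$), it suffices to prove $(\ref{future 1})$ when $\int_M\varphi_\omega(x)dm(x)=1$. Iterating $(\ref{L to U})$ with base point $\theta^n\omega$ gives $\int_M\psi(f_\omega^n x)\varphi_\omega(x)dm(x)=\int_M (L_\omega^n\varphi_\omega)(y)\psi(y)dm(y)$, and the choice $\psi\equiv 1$ there shows $\int_M (L_\omega^n\varphi_\omega)(y)dm(y)=1$. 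By Lemma~\ref{invariance of C(b,c,v)} applied $n$ times, $L_\omega^n\varphi_\omega\in C_{\theta^n\omega}(b,c,\nu)$.

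Next I would introduce the comparison function $L^{n+k}_{\theta^{-k}\omega}1=L_\omega^n\bigl(L^k_{\theta^{-k}\omega}1\bigr)$ for $k\ge N$. Since $L^k_{\theta^{-k}\omega}1\in C_\omega(b,c,\nu)$ and $\int_M L^k_{\theta^{-k}\omega}1\,dm=1$, the future contraction estimate $(\ref{contraction of Ln from omega})$ yields, for every $n\ge N$ and every $k\ge N$,
\begin{equation*}
d_{\theta^n\omega}\bigl(L_\omega^n\varphi_\omega,\,L^{n+k}_{\theta^{-k}\omega}1\bigr)\le D_4\Lambda^n,
\end{equation*}
and hence $d_{+,\theta^n\omega}\bigl(L_\omega^n\varphi_\omega,\,L^{n+k}_{\theta^{-k}\omega}1\bigr)\le D_4\Lambda^n$, because $C_{\theta^n\omega}(b,c,\nu)\subset C_{+,\theta^n\omega}$ forces $d_{+,\theta^n\omega}\le d_{\theta^n\omega}$ on the smaller cone. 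Both functions are nonnegative, lie in $C_{\theta^n\omega}(b,c,\nu)$, and carry equal $m$-integral; moreover, since $\log\psi$ is $(a/4,\mu)$-H\"older, on each rectangle $R_i(\theta^n\omega)$ and each stable leaf $\gamma^i(\theta^n\omega)$ the density $\psi(\cdot)H_i(\theta^n\omega)(\cdot)|_{\gamma^i(\theta^n\omega)}$ lies in $D(a/2,\mu,\gamma^i(\theta^n\omega))$ by $(\ref{assumption on a})$. Running the computation behind $(\ref{cauchy 1})$ verbatim on the fiber $\theta^n\omega$ — disintegrate $m$ over the partition $\mathcal R(\theta^n\omega)$ using $(\ref{disintegration})$, bound leafwise ratios by $e^{d_{+,\theta^n\omega}(\cdot,\cdot)}$ via $(\ref{alpha + omega phi1phi2})$ and $(\ref{beta + omega phi1phi2})$, and select a leaf on which the unweighted ratio is $\le 1$ by equality of the $m$-integrals — gives
\begin{equation*}
\left|\int_M\psi(L_\omega^n\varphi_\omega)dm-\int_M\psi(L^{n+k}_{\theta^{-k}\omega}1)dm\right|\le \|\psi\|_{C^0(M)}\bigl(e^{D_4\Lambda^n}-1\bigr)\le K(D_4)\|\psi\|_{C^0(M)}\Lambda^n,
\end{equation*}
with the same $K(D_4)$ as in Lemma~\ref{correlation step 1} (chosen so that $e^{D_4t}-1\le K(D_4)t$ on $[0,D_4\Lambda^N]$, which covers all $n\ge N$).

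Finally I would let $k\to\infty$. Writing $m=n+k$ and $\theta^{-k}\omega=\theta^{-m}(\theta^n\omega)$, the term $\int_M (L^{n+k}_{\theta^{-k}\omega}1)(y)\psi(y)dm(y)=\int_M (L^{m}_{\theta^{-m}(\theta^n\omega)}1)(y)\psi(y)dm(y)$ converges to $\int_M\psi(x)d\mu_{\theta^n\omega}(x)$ by $(\ref{definition of mu omega})$. Together with $\int_M\psi(f_\omega^n x)\varphi_\omega(x)dm(x)=\int_M (L_\omega^n\varphi_\omega)(y)\psi(y)dm(y)$ and $\int_M (L_\omega^n\varphi_\omega)dm=1=\int_M\varphi_\omega dm$, this establishes $(\ref{future 1})$ in the normalized case, and undoing the normalization reinstates the factor $\int_M\varphi_\omega(x)dm(x)$. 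I expect the only point requiring genuine care to be verifying that the disintegration/Hilbert-metric estimate underlying $(\ref{cauchy 1})$ transfers unchanged to the fiber $\theta^n\omega$ and to the specific pair $\bigl(L_\omega^n\varphi_\omega,\,L^{n+k}_{\theta^{-k}\omega}1\bigr)$; the remainder is a symmetric repackaging of the past-correlation argument through the duality between $L_\omega$ and $U_\omega$.
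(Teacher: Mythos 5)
Your proposal is correct and follows essentially the same route as the paper: normalize $\int_M\varphi_\omega\,dm=1$, compare $L_\omega^n\varphi_\omega$ with $L^{n+k}_{\theta^{-k}\omega}1=L_\omega^n\bigl(L^k_{\theta^{-k}\omega}1\bigr)$ via the future contraction $(\ref{contraction of Ln from omega})$, run the $(\ref{cauchy 1})$-type disintegration estimate on the fiber $\theta^n\omega$, and let $k\to\infty$ using $(\ref{definition of mu omega})$ applied at $\theta^n\omega$. One cosmetic inaccuracy: you assert that $L_\omega^n\varphi_\omega$ and $L^{n+k}_{\theta^{-k}\omega}1$ are nonnegative, but membership in $C_{\theta^n\omega}(b,c,\nu)$ does not imply pointwise nonnegativity (condition $(C1)$ only controls leafwise integrals), so $L_\omega^n\varphi_\omega$ need not be nonnegative for general $\varphi_\omega\in C_\omega(b,c,\nu)$; fortunately this claim is not used anywhere in the estimate, which relies only on the cone conditions $(C1)$--$(C3)$ and the equality of $m$-integrals to select a leaf where the ratio is at most $1$, exactly as in the paper.
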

\begin{proof}
  We first prove the case that positive function $\varphi_\omega\in C_\omega(b,c,\nu)$ satisfies $\int_M\varphi_\omega(x)dm(x)=1$. Note that by Lemma \ref{invariance of C(b,c,v)}, for any $n\geq N,\ k\geq 0$, $L_\omega^n\varphi_\omega,\ L_{\theta^{-k}\omega}^{n+k}1\in C_{\theta^n\omega}(b,c,\nu)$ are positive functions. Similar proof as $(\ref{cauchy 1})$ can be applied on the fiber $\{\theta^n\omega\}$ to show that for $n\geq N$,
  \begin{align}
     & \ \ \ \ \left|\int_M\psi(x)(L^n_{\omega}\varphi_{\omega})(x)dm(x)-\int_M\psi(x)(L^{n+k}_{\theta^{-k}\omega}\textbf{1})(x)dm(x)\right|\nonumber\\
     &\leq \|\psi\|_{C^0(M)}\left(e^{d_{+,\theta^n\omega}(L^n_{\omega}\varphi_\omega,L^{n+k}_{\theta^{-k}\omega}\textbf{1})}-1\right)\nonumber\\
     &\leq \|\psi\|_{C^0(M)}\left(e^{d_{+,\theta^n\omega}(L^n_{\omega}\varphi_\omega,L^{n}_{\omega}L^{k}_{\theta^{-k}\omega}\textbf{1})}-1\right)\nonumber\\
     &\overset{\eqref{contraction of Ln from omega}}\leq   \|\psi\|_{C^0(M)}\left(e^{D_4\Lambda^n}-1\right)\nonumber\\
     &\leq K(D_4)\cdot\|\psi\|_{C^0(M)}\Lambda^{n}.\label{lemma 4.9 1}
  \end{align} Notice that $L_{\theta^{-k}\omega}^{n+k}\textbf{1}=L_{\theta^{-k}\theta^n\omega}^kL_{\theta^{-k}\omega}^n\textbf{1}$, and  $L_{\theta^{-k}\omega}^n\textbf{1}\in C_{\theta^{n-k}\omega}(b,c,\nu)=C_{\theta^{-k}\theta^n\omega}(b,c,\nu)$. Moreover, $\int_ML_{\theta^{-k}\omega}^n\textbf{1}dm=1$ for all $k\in\mathbb{N}$.
  By Remark \ref{another definitio of mu omega}, we have
  \begin{equation*}
  \lim_{k\to \infty}\int_M\psi(x)(L_{\theta^{-k}\omega}^{n+k}\textbf{1})(x)dm(x)=\lim_{k\to \infty}\int_M\psi\cdot L_{\theta^{-k}\theta^n\omega}^{k}(L_ {\theta^{-k}\omega}^{n}\textbf{1})dm=\int_M\psi(x)d\mu_{\theta^n\omega}(x).
  \end{equation*}Therefore, let $k\to \infty$ in \eqref{lemma      4.9 1}, then for $n\geq N$,
  \begin{equation}\label{future correlation 1}
    \left|\int_M\psi(x)(L^n_{\omega}\varphi_{\omega})(x)dm(x)-\int_M\psi(x)d\mu_{\theta^n\omega}(x)\right|\leq K(D_4)\cdot \|\psi\|_{C^0(M)}\Lambda^{n}.
  \end{equation}Note that by $(\ref{L   to U})$, we have
  \begin{equation*}
    \int_M\psi(x)(L^n_{\omega}\varphi_{\omega})(x)dm(x)=\int_M\psi(f_{\omega}^nx)\varphi_{\omega}(x)dm(x).
  \end{equation*}
  Hence \eqref{future correlation 1} becomes that for any $n\geq N$,
  \begin{equation}\label{future 2}
    \left|\int_M\psi(f_{\omega}^nx)\varphi_{\omega}(x)dm(x)-\int_M\psi(x)d\mu_{\theta^n\omega}(x)\right|\leq K(D_4)\cdot \|\psi\|_{C^0(M)}\Lambda^{n}.
  \end{equation}
  Now for any positive function $\varphi_\omega\in C_\omega(b,c,\nu)$, let $\tilde{\varphi}_\omega(x):=\varphi_\omega(x)/\int_M\varphi_\omega dm$. Then  $(\ref{future 1})$  can be proved by replacing $\varphi_\omega$ by $\tilde{\varphi}_\omega$ in $(\ref{future 2})$.
\end{proof}

Now assume the function $\psi:M\rightarrow\mathbb{R}$ such that $\psi>0$, and $\log\psi$ is $(a/4,\kappa)-$H\"older continuous. Let $\varphi:M\rightarrow\mathbb{R}^+$ with $\log\varphi$ is $(c_1,\nu)-$H\"older continuous. Then by the Lemma \ref{varphi L1 in Cbcv}, we have
\begin{equation*}
  \varphi\cdot L_{\theta^{-l}\omega}^{l} \textbf{1}\in C_{\omega}(b,c,\nu)\mbox{ for all }l\in\mathbb{N} \mbox{ all }\omega\in\Omega.
\end{equation*}
Now, we apply Lemma \ref{future correlation step 1} to obtain that for all $\omega\in\Omega$, $n\geq N$,
\begin{align*}
   & \ \ \ \ \ \left|\int_M\psi(f_{\omega}^nx)(\varphi\cdot L^l_{\theta^{-l}\omega}\textbf{1})(x)dm(x)-\int_M\psi(x)d\mu_{\theta^n\omega}(x)\int_M(\varphi\cdot L^l_{\theta^{-l}\omega}\textbf{1})(x)dm(x)\right|\nonumber\\
   &\leq K(D_4)\|\psi\|_{C^0(M)}\int_M\varphi\cdot(L^l_{\theta^{-l}\omega}\textbf{1})dm\cdot \Lambda^{n},\mbox{ for all }l\in\mathbb{N}.
\end{align*}Let $l\rightarrow\infty$, by $(\ref{definition of mu omega})$, for all $\omega\in\Omega$, $n\geq N$,
\begin{equation}\label{future last2}
  \begin{split}
 &\left|\int_{M}\psi(f_{\omega}^n x)\varphi(x)d\mu_{\omega}(x)-\int_M\psi(x)d\mu_{\theta^n\omega}(x)\int_M\varphi(x)d\mu_{\omega}(x)\right|\\
\leq & K(D_4)\|\psi\|_{C^0(M)}\int_M\varphi d\mu_{\omega}\cdot\Lambda^{n}\leq  K(D_4)\|\psi\|_{C^0(M)}\cdot\|\varphi\|_{C^0(M)}\cdot\Lambda^{n}.
  \end{split}
\end{equation}

Finally, given $\psi\in C^{0,\kappa}(M)$ and $\varphi\in C^{0,\nu}(M)$. If $\psi$ or $\varphi$ is a constant function, then by \eqref{measure invariant},
\begin{equation*}
 \left|\int_{M}\psi(f_{\omega}^n x)\varphi(x)d\mu_{\omega}(x)-\int_M\psi(x)d\mu_{\theta^n\omega}(x)\int_M\varphi(x)d\mu_{\omega}(x)\right|=0.
\end{equation*}Therefore, to prove the exponential decay of past random correlations, it is sufficient to consider the case that both $\psi\in C^{0,\kappa}(M)$ and $\varphi\in C^{0,\nu}(M)$ are nonconstant functions. let
\begin{equation*}
  B_{\psi}=\frac{5|\psi|_{\kappa}}{a}>0,\ B_{\varphi}=\frac{2|\varphi|_{\nu}}{c_1}>0,
\end{equation*}and define
\begin{align*}
  \psi_{B_\psi}^+=\frac{1}{2}(|\psi|+\psi)+B_{\psi},\ &\psi_{B_\psi}^-=\frac{1}{2}(|\psi|-\psi)+B_{\psi},\\
  \varphi_{B_\varphi}^+=\frac{1}{2}(|\varphi|+\varphi)+B_{\varphi},\ & \varphi_{B_\varphi}^-=\frac{1}{2}(|\varphi|-\varphi)+B_{\varphi}.
\end{align*}As before, $\log\psi_{B_\psi}^\pm$ are $(a/4,\kappa)-$H\"older continuous, and $\log\varphi_{B_\varphi}^\pm$ are $(c_1,\nu)$ H\"older continuous. By $(\ref{future    last2})$ and the linearity of integration, we conclude for $n\geq N$,
\begin{align*}
 &\left|\int_{M}\psi(f_{\omega}^n x)\varphi(x)d\mu_{\omega}(x)-\int_M\psi(x)d\mu_{\theta^n\omega}(x)\int_M\varphi(x)d\mu_{\omega}(x)\right|\\
  \leq & 4K(D_4)\cdot\max\left\{1,\frac{5}{a}\right\}\cdot\max\left\{1,\frac{2}{c_1}\right\}\cdot\|\psi\|_{C^{0,\kappa}(M)}\cdot\|\varphi\|_{C^{0,\nu}(M)}\cdot\Lambda^{n}.
\end{align*} Recall that $K$ is defined in $(\ref{def of k})$, then we arrive
\begin{equation*}
  \left|\int_{M}\psi(f_{\omega}^n x)\varphi(x)d\mu_{\omega}(x)-\int_M\psi(x)d\mu_{\theta^n\omega}(x)\int_M\varphi(x)d\mu_{\omega}(x)\right|
  \leq  K\cdot \|\psi\|_{C^{0,\kappa}(M)}\cdot\|\varphi\|_{C^{0,\nu}(M)}\cdot\Lambda^{n},
\end{equation*}for all $n\geq 0$.
This finishes the proof for the future random correlations.

\appendix
\section{}

\subsection{Examples of Anosov and mixing on Fibers System}\label{Appendix Example}
Theorem 8.1 in \cite{HLL} showed that the random dynamical systems satisfying the following conditions are topological mixing on fibers:
\begin{enumerate}
  \item[(A1)]$(\Omega,\theta)$ is a minimal irrational rotation on the compact torus;
  \item[(A2)]$\phi$ is Anosov on fibers;
  \item[(A3)]$\phi$ is topological transitive on $M\times\Omega$.
\end{enumerate}
\begin{remark}
This class of systems can't be topological mixing.
In fact, it is well-known that a factor of a topological mixing system is also topological mixing.
Notice that the irrational rotation is not topological mixing, hence $\phi$ can't be topological mixing.
\end{remark}
A typical example satisfying (A1)-(A3) is given by following.  Fiber Anosov maps on 2-d tori:  Let $\theta$ be a homeomorphism on a compact metric space $\Omega$ and let $A= \begin{pmatrix}
                                                                                                    2 &1 \\
                                                                                                    1 &1
                                                                                                  \end{pmatrix}.$ Define $\phi:\mathbb{T}^2\times \Omega\rightarrow \mathbb{T}^2\times \Omega$ by
 \begin{equation*}
   \phi\left( \begin{pmatrix}
               x \\
               y
             \end{pmatrix},\omega\right)=\left( A\begin{pmatrix}
                                                  x \\
                                                  y
                                                \end{pmatrix}+h(\omega),\theta\omega\right)=\left( \begin{pmatrix}
                                                                                                    2 &1 \\
                                                                                                    1 &1
                                                                                                  \end{pmatrix}\begin{pmatrix}
                                                                                                                 x \\
                                                                                                                 y
                                                                                                               \end{pmatrix}+\begin{pmatrix}
                                                                                                                               h_1(\omega) \\
                                                                                                                               h_2(\omega)
                                                                                                                             \end{pmatrix},\theta\omega\right),
 \end{equation*}where $h$ is a continuous map from $\Omega$ to $\mathbb{T}^2$.

Theorem 8.2 in \cite{HLL} showed that the systems satisfying the following conditions are also mixing on fibers:
\begin{enumerate}
  \item[(B1)]$(\Omega,\theta)$ is a homeomorphism on a compact metric space;
  \item[(B2)]$\phi$ is Anosov on fibers;
  \item[(B3)]There exists an $f_\omega-$invariant Borel probability measure $\nu$ with full support (i.e. supp$\nu=M$).
\end{enumerate}
A typical example satisfying (B1)-(B3) is given by following. Random composition of $2\times 2$ area-preserving positive matrices: let
  \begin{equation*}
    \left\{B_i=\begin{pmatrix}
            a_i & b_i  \\
            c_i & d_i  \\
          \end{pmatrix}\right\}_{1\leq i\leq p}
  \end{equation*}be $2\times2$ matrices with $a_i,b_i,c_i,d_i\in\mathbb{Z}^+$, and $|a_id_i-c_ib_i|=1$ for any $i\in\{1,...,p\}$.  Let $\Omega=\mathcal{S}_p:=\{1,...,p\}^\mathbb{Z}$ with the left shift operator $\theta$ be the symbolic dynamical system with $p$ symbols.
  For any $\omega=(...,\omega_{-1},\omega_0,\omega_1,...)\in \Omega$, we define $g(\omega)=B_{\omega_0}.$ Then the skew product $\tilde{\phi}:\Omega\times\mathbb{T}^2\rightarrow \Omega\times\mathbb{T}^2$ defined by
  \begin{equation*}
    \tilde{\phi}( x,\omega)=\big( g(\omega)x,\theta\omega\big)
  \end{equation*}
  is an Anosov on fibers system with continuous co-invariant splitting $\mathbb{R}^2=E^u_{\omega}\oplus E^s_{\omega} $ for $\omega\in \mathcal{S}_p$, on which
  \begin{align*}
    \|Dg_\omega v\|&\geq \kappa\|v\| \mbox{ for }v\in E^{u}_{\omega};\\
    \|Dg_\omega \eta\|&\leq \kappa^{-1}\|\eta\| \mbox{ for }\eta\in E^{s}_{\omega}
  \end{align*}for $\kappa:=\underset{1\leq i\leq p}{\min}\min\{\sqrt{a_i^2+c_i^2},\sqrt{b_i^2+d_i^2}\}\geq \sqrt{2}$ by Proposition 8.2 in \cite{HLL}.
\subsection{Random SRB measure for Anosov and Mixing on Fibers Systems}
Let $F:\mathbb{Z}\times\Omega\times M\rightarrow M$ be a continuous random dynamical system over an invertible ergodic metric dynamical systems $(\Omega,\mathcal{B},P,\theta)$.
\begin{definition}
  A map $K:\Omega\rightarrow 2^M$ is called a closed random set if $K(\omega)$ is closed for any $\omega\in\Omega$; and $\omega\mapsto d(x,K(\omega))$ is measurable for each fixed $x\in M.$ $U:\Omega\rightarrow 2^M$ is called an open random set if $U^c$ is a random closed set.
\end{definition}
\begin{definition}
  F is called random topological transitive if for any given open random sets $U$ and $V$ with $U(\omega),V(\omega)\not=\emptyset$ for all $\omega\in\Omega$, there exists a random variable $n$ taking values in $\mathbb{Z}$ such that the intersection $F(n(\omega),\theta^{-n(\omega)})U(\theta^{-n(\omega)}\omega)\cap V(\omega)\not=\emptyset$ $\P$-a.s..
\end{definition}
The following Lemma is the Lemma A.1 in \cite{HLL}.
\begin{lemma}
  If $F$ is topological mixing on fibers, then $F$ is random topological transitive.
\end{lemma}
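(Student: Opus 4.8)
The plan is to build the random variable $n$ by hand, feeding the \emph{uniform} mixing times produced by topological mixing on fibers into a recurrence argument that uses the ergodicity of $(\Omega,\mathcal B,P,\theta)$. Fix once and for all a countable family $\{Q_k\}_{k\ge 1}$ of open balls of $M$ (rational radii, centres in a countable dense set) such that the closed balls $\{\overline{Q_k}\}$ form a neighbourhood base at every point. Given open random sets $U,V$ with $U(\omega),V(\omega)\neq\emptyset$ for all $\omega$, set
\[
  \mathcal E_k=\{\zeta\in\Omega:\ \overline{Q_k}\subset U(\zeta)\},\qquad
  \mathcal F_l=\{\zeta\in\Omega:\ \overline{Q_l}\subset V(\zeta)\}.
\]
The first step is measurability of these sets. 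Since $U^c$ is a closed random set, $\zeta\mapsto d(x,U^c(\zeta))$ is measurable for each $x\in M$; as $\overline{Q_k}$ is compact and $U^c(\zeta)$ is closed in the compact manifold $M$, the condition $\overline{Q_k}\subset U(\zeta)$ is equivalent to $d(\overline{Q_k},U^c(\zeta))>0$, and writing $d(\overline{Q_k},U^c(\zeta))=\inf_{x\in D_k}d(x,U^c(\zeta))$ for a countable dense $D_k\subset\overline{Q_k}$ (using continuity of $x\mapsto d(x,U^c(\zeta))$) exhibits $\mathcal E_k$ as measurable; the same works for $\mathcal F_l$. Because $U(\zeta)$ and $V(\zeta)$ are nonempty and open, $\bigcup_k\mathcal E_k=\Omega=\bigcup_l\mathcal F_l$, so in particular $P(\mathcal E_{k_0})>0$ for some index $k_0$.

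Next I would apply topological mixing on fibers to each pair $(k_0,l)$: there is $N_{k_0 l}\ge 1$ with $f^{\,n}_{\zeta}(Q_{k_0})\cap Q_l\neq\emptyset$ for all $n\ge N_{k_0 l}$ and all $\zeta\in\Omega$ (this is $\phi^n(\{\zeta\}\times Q_{k_0})\cap(\{\theta^n\zeta\}\times Q_l)\neq\emptyset$ read fibrewise). Now bring in recurrence: the set $\Omega_0:=\{\omega:\ \theta^{-n}\omega\in\mathcal E_{k_0}\ \text{for infinitely many}\ n\ge 0\}$ is $\theta$-invariant, and by the Poincar\'e recurrence theorem applied to $\theta^{-1}$ and $\mathcal E_{k_0}$ it contains $P$-a.e.\ point of $\mathcal E_{k_0}$; hence $P(\Omega_0)\ge P(\mathcal E_{k_0})>0$, and ergodicity of $P$ upgrades this to $P(\Omega_0)=1$. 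Define, for $\omega\in\Omega_0$, the index $l(\omega)$ to be the least $l$ with $\omega\in\mathcal F_l$ and $n(\omega)$ to be the least $n\ge N_{k_0,l(\omega)}$ with $\theta^{-n}\omega\in\mathcal E_{k_0}$ (which exists since $\omega\in\Omega_0$), and put $n(\omega)=0$ off $\Omega_0$. Assembled from countably many measurable conditions and the bimeasurable maps $\theta^{\pm n}$, this $n$ is a genuine integer-valued random variable, and for every $\omega\in\Omega_0$ we have $\overline{Q_{k_0}}\subset U(\theta^{-n(\omega)}\omega)$, $\overline{Q_{l(\omega)}}\subset V(\omega)$ and $n(\omega)\ge N_{k_0,l(\omega)}$, so
\[
  F(n(\omega),\theta^{-n(\omega)}\omega)\,U(\theta^{-n(\omega)}\omega)\cap V(\omega)\ \supseteq\ f^{\,n(\omega)}_{\theta^{-n(\omega)}\omega}(Q_{k_0})\cap Q_{l(\omega)}\ \neq\ \emptyset,
\]
which gives the required property for $P$-a.e.\ $\omega$.

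I expect the main obstacle to be the measurability bookkeeping rather than the dynamics: one has to be sure that ``a ball lies inside the random set'' cuts out a measurable subset of $\Omega$, which is precisely why it is convenient to test with the closed balls $\overline{Q_k}$ and reduce to the measurable distance functions $d(\cdot,U^c(\zeta))$ and $d(\cdot,V^c(\zeta))$. The one genuinely essential dynamical ingredient beyond mixing is that $P$ is ergodic, not merely $\theta$-invariant: ergodicity is what lets a single source ball $Q_{k_0}$ serve $P$-a.e.\ $\omega$ while the target index $l(\omega)$ is allowed to depend on the fibre; with only invariance one would obtain the conclusion on a positive-measure $\theta$-invariant set but not necessarily almost everywhere.
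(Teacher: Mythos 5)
The paper does not supply its own proof of this lemma: it records it as Lemma A.1 of \cite{HLL} and moves on, so there is no in-text argument to compare against. Evaluating your proof on its own terms, it is correct and complete. The load-bearing dynamical observation is exactly the one you isolate: the mixing time $N_{k_0,l}$ given by topological mixing on fibers is uniform over $\omega$, so the target index $l(\omega)$ may be chosen \emph{before} the return time, and any backward Poincar\'e return $n\geq N_{k_0,l(\omega)}$ of $\omega$ into $\mathcal{E}_{k_0}$ then does the job. The measurability bookkeeping is also in order: reducing ``$\overline{Q_k}\subset U(\zeta)$'' to positivity of $\inf_{x\in D_k}d(x,U^c(\zeta))$ uses nothing beyond the definition of a closed random set, continuity of the distance function in $x$, and compactness of $\overline{Q_k}$; $\theta$-invariance of $\Omega_0$ follows because dropping or adding the single term $n=0$ does not change ``for infinitely many $n\geq0$''; and $n(\cdot)$ is assembled from countably many measurable sets via the bimeasurable iterates $\theta^{-n}$.

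One small overclaim in your closing comment: ergodicity of $P$ is a convenience here, not an essential ingredient. Without it, apply Poincar\'e recurrence to every $k$ with $P(\mathcal{E}_k)>0$; since $\bigcup_k\mathcal{E}_k=\Omega$ and countably many null $\mathcal{E}_k$'s contribute nothing, the sets $\Omega_0^k$ so obtained still cover $\Omega$ up to a null set. Letting both the source index $k(\omega)$ and the target index $l(\omega)$ depend measurably on $\omega$ then yields a measurable $n(\omega)$ almost everywhere, with no appeal to ergodicity. In the paper's framework $P$ is ergodic anyway, so nothing is at stake; but ``ergodicity is what lets a single source ball $Q_{k_0}$ serve $P$-a.e.\ $\omega$'' describes a convenience of your particular bookkeeping rather than a genuine obstruction.
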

The following theorem is Theorem 4.3 in \cite{Gund99}, which is the main result of the SRB measure for random hyperbolic systems.
\begin{theorem}
  Let $F$ be a $C^{1+\alpha}$ RDS with a random topological transitive hyperbolic attractor $\Lambda\subset M\times\Omega$. Then there exists a unique $F-$invariant measure (SRB-measure) $\nu$ supported by $\Lambda$ and characterized by each of the following:
  \begin{enumerate}
    \item[(i)]$h_\nu(F)=\int\sum\lambda_i^+d\nu$, where $\lambda_i$ are the Lyapunov exponents corresponding to $\nu$;
    \item[(ii)]$\P$-a.s. the conditional measure of $\nu_\omega$ on the unstable manifolds are absolutely continuous with respect to the Riemannian volume on these submanifolds;
    \item[(iii)]$h_\nu(F)+\int fd\nu=\sup\{h_\mu(F)+\int fd\mu: F-\mbox{invariant}\  \mu\}$ and the later is the topological pressure $\pi_F(f)$ of $f$ which satisfies $\pi_F(f)=0$;
    \item[(iv)] $\nu=\psi\tilde{\mu}$, where $\psi$ is the conjugation between $F$ on $\Lambda$ and two-sided shift $\sigma$ on $\Sigma_A$, and $\tilde{\mu}$ is the equilibrium state for the $\sigma$ and function $f\circ\psi$. The measure $\tilde{\mu}$ can be obtained as a natural extension of the probability measure $\mu$ which is invariant with respect to the one-sided shift on $\Sigma_A^+$ and such that $L_\eta^*\mu_{\theta\omega}=\mu_\omega$ $\P$-a.s. where $\eta-f\circ\psi=h-h\circ(\theta\times\sigma)$ for some random H\"older continuous function $h$;
    \item[(v)]$\nu$ can be obtained as a weak$^*$ limit $\nu_\omega=\lim_{n\rightarrow }F(n,\theta^{-n}\omega)m_{\theta^{-n}\omega}$ $\P$-a.s. for any measure $m_\omega$ absolutely continuous with respect to the Riemannian volume such that supp$m_\omega\subset U(\omega)$.
  \end{enumerate}
\end{theorem}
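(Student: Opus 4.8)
The plan is to prove the theorem above (Theorem~4.3 of \cite{Gund99}) along the classical route of Bowen's and Ruelle's thermodynamic formalism, transported to the skew-product setting: reduce the attractor to a random subshift of finite type, run the random Ruelle--Perron--Frobenius (RPF) machinery for the geometric potential, and then read the RPF eigendata geometrically. First I would build a random Markov partition of $\Lambda$ of small diameter, using the local product structure (Lemma~\ref{local product structure}) and the H\"older/absolute-continuity regularity of the stable and unstable foliations on each fibre (Propositions~\ref{Theorem fiberwise absolutely continuous} and \ref{Holder continuity of holonomy map}). This yields a continuous, fibrewise bounded-to-one, $\nu$-a.e.\ bijective conjugacy $\psi:\Sigma_A\times\Omega\to\Lambda$ intertwining the skew shift $\theta\times\sigma$ on a random SFT $\Sigma_A$ with $F$ on $\Lambda$. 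Pulling the geometric potential $f(x,\omega)=-\log|\det(D_xf_\omega|_{E^u(x,\omega)})|$ back through $\psi$ gives a function on the one-sided shift $\Sigma_A^+$ that is random H\"older with $\omega$-uniform constants; this step uses exactly the bounded-distortion estimate (Lemma~\ref{Lip jac Es}) and the H\"older continuity of $E^u$ (Lemma~\ref{Holder continuity of stableunstable distribution}). Then I would apply the random RPF theorem, which can in fact be obtained by the very cone/Birkhoff argument of Section~\ref{section 4} applied to the one-sided shift: there are a unique random probability $\mu_\omega$ on $\Sigma_A^+$ and unique positive random H\"older densities $h_\omega$ (normalized by $\int h_\omega\,d\mu_\omega=1$) with $\mathcal{L}_\omega^\ast\mu_{\theta\omega}=\lambda_\omega\mu_\omega$ and $\mathcal{L}_\omega h_\omega=\lambda_\omega h_{\theta\omega}$. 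The hyperbolic-attractor structure forces $\int_\Omega\log\lambda_\omega\,dP=\pi_F(f)$, and the conformality computation --- summing the $u$-Jacobian over the preimages in one partition step covers a full local unstable leaf --- gives $\lambda_\omega\equiv1$, i.e.\ $\pi_F(f)=0$. Setting $\tilde\mu$ equal to the natural extension of $h\mu$ to $\Sigma_A$ and $\nu:=\psi_\ast\tilde\mu$ gives characterization (iv).

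Next I would make the geometric translation that delivers (ii) and uniqueness. After conjugation, the eigenfunction equation $\mathcal{L}_\omega h_\omega=h_{\theta\omega}$ is precisely the cocycle identity for the densities of the conditional measures of $\nu_\omega$ along local unstable leaves with respect to leaf-volume: the density is, up to leafwise normalization, the product over $k\ge1$ of the ratios of unstable Jacobians $|\det(Df_\omega^{-k}|_{E^u})|$ at the point and at a fixed reference point on the leaf, which converges to a H\"older function by the backward contraction along unstable leaves together with Lemmas~\ref{Holder continuity of stableunstable distribution} and \ref{detDfE-detDfE}. The absolute continuity of the stable holonomies (Proposition~\ref{Theorem fiberwise absolutely continuous}) and the disintegration of Riemannian volume on rectangles (Proposition~\ref{proposition 7.1}) then show that $\nu_\omega$ has absolutely continuous unstable conditionals, which is (ii); conversely, any $F$-invariant measure whose unstable conditionals are absolutely continuous must carry these same densities and the same holonomy invariance along stable leaves, hence must coincide with $\nu$ --- this is the uniqueness statement and the equivalence of (ii) with ``being the SRB measure''.

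It remains to derive (i), (iii) and (v). For (i): since all positive Lyapunov exponents of any $F$-invariant $\mu$ lie in $E^u$, $\int\sum\lambda_i^+\,d\mu=\int\log|\det(Df_\omega|_{E^u})|\,d\mu=-\int f\,d\mu$, and the random Ruelle inequality gives $h_\mu(F)\le-\int f\,d\mu$; the random Ledrappier--Young argument (an increasing partition subordinate to $W^u$, comparing fibre entropy with the logarithm of the $u$-Jacobian via the $\mu$-a.e.\ unstable conditionals) shows equality holds iff those conditionals are absolutely continuous, i.e.\ iff $\mu=\nu$, so $\nu$ satisfies (i) and (i) $\Leftrightarrow$ (ii). For (iii): the random variational principle for topological pressure gives $\pi_F(f)=\sup_\mu\{h_\mu(F)+\int f\,d\mu\}$, and combining with the previous line, $h_\mu(F)+\int f\,d\mu\le0$ with equality iff $\mu=\nu$; hence $\pi_F(f)=0$ and $\nu$ is the unique equilibrium state, consistent with the vanishing symbolic pressure from the first paragraph. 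For (v): for any $m_\omega\ll\mathrm{Leb}$ supported in a neighbourhood $U(\omega)$ of $\Lambda$, the unstable-leaf densities of $F(n,\theta^{-n}\omega)_\ast m_{\theta^{-n}\omega}$ are governed by $n$-fold iterates of the RPF operator, which converge uniformly to $h_\omega$ by cone contraction exactly as in Lemma~\ref{lemma diamter of LN finite}, while the transversal directions are collapsed onto $\Lambda$ by hyperbolic contraction; so $F(n,\theta^{-n}\omega)_\ast m_{\theta^{-n}\omega}\to\nu_\omega$ weakly $P$-a.s., closing the circle of equivalences.

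The hard part will be the interface between the symbolic RPF eigendata and the geometric property (ii), made rigorous with uniform-in-$\omega$ control: one must construct the random Markov partition with boundaries regular enough (``H\"older'' stable/unstable edges) that the pulled-back potential is genuinely random H\"older with uniform constants, and one must check that absolute continuity of the conjugacy $\psi$ transverse to $W^u$ matches the absolute continuity of the stable holonomies --- which is exactly the content that the present paper builds in Propositions~\ref{Theorem fiberwise absolutely continuous}, \ref{Holder continuity of holonomy map} and \ref{proposition 7.1}, and in the distortion Lemmas~\ref{Lip jac Es} and \ref{detDfE-detDfE}. The equality case of the random Ledrappier--Young formula, with its bookkeeping of fibre entropies along an increasing subordinate partition, is the second delicate point; the remainder is a faithful transcription of the deterministic Bowen--Ruelle arguments into the skew-product framework.
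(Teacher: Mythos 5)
The paper does not prove this statement: it is Theorem 4.3 of \cite{Gund99}, quoted verbatim in the Appendix and used as a black box (together with Lemma A.1 of \cite{HLL}) to identify the measure $\mu_\omega=\lim_{n\rightarrow\infty}(f^n_{\theta^{-n}\omega})_*m$ constructed in Section \ref{section 4} as the unique random SRB measure, so there is no internal proof to compare against. What you have written is an outline of the proof in the cited source itself --- random Markov partition, conjugacy to a random subshift, random Ruelle--Perron--Frobenius theory for the geometric potential, vanishing random pressure, and a random Ruelle-inequality/Ledrappier--Young argument for the equivalences (i)--(iii) --- i.e.\ the Bogensch\"utz--Gundlach/Kifer route, and as a plan it is coherent. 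Two caveats. First, essentially every load-bearing step (existence and measurability of the random Markov partition, the random RPF theorem with $\omega$-uniform H\"older constants, the random Ruelle inequality and the equality case of the random Ledrappier--Young formula, the random variational principle) is itself a substantial theorem which your text invokes but does not prove; as written this is a roadmap that delegates the real work to the same literature the paper cites, not an independent proof. Second, your claim that the conformality computation forces $\lambda_\omega\equiv 1$ fibrewise is too strong: summing $e^{f}$ over one-step preimages along unstable leaves only yields a quantity uniformly bounded away from $0$ and $\infty$; what is true is $\pi_F(f)=\int_\Omega\log\lambda_\omega\,dP=0$, and the fibrewise normalization to eigenvalue $1$ is obtained only after the H\"older coboundary correction $\eta-f\circ\psi=h-h\circ(\theta\times\sigma)$, which is exactly why item (iv) of the statement is phrased with that coboundary. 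Finally, note that the only portion of this theorem the present paper re-derives independently is (v) with $m$ the Riemannian volume: Section \ref{section 4} gets $\mu_\omega=\lim_{n\rightarrow\infty}(f^n_{\theta^{-n}\omega})_*m$ by Birkhoff-cone contraction of the fibre transfer operators, with no symbolic dynamics at all, so your cone-based justification of (v) is the part of your plan closest in spirit to what the paper actually does.
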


\subsection{Anosov on Fibers System and Partially Hyperbolic systems}
In this section, we show that Anosov on fibers system contains a class of partially hyperbolic systems.
\begin{definition}
  $(M,f)$ is called partially hyperbolic in the narrow sense if the tangent bundle admits a splitting into three continuous vector subbundles $T_xM=E^1(x)\oplus E^2(x)\oplus E^3(x)$ which satisfy
  \begin{enumerate}
    \item dominated splitting, i.e., $D_xf(E^i(x))=E^i(f(x))$ for $i=1,2,3$, and there exists a constant $c>0$ and $\lambda\in(0,1)$ such that $\|Df^n|_{E^i(x)}\|\leq c\lambda^n\|Df^n|_{E^{i+1}(x)}\|$ for $i=1,2$,
    \item $E^1(x)$ is uniformly contracted and $E^3(x)$ is uniformly expanded under the action of $D_xf$.
  \end{enumerate}
\end{definition}We denote the dominated splitting by $T_xM=E^1(x)\oplus_<E^2(x)\oplus_<E^3(x)$.

 Let $\Omega$ be a compact smooth manifold, and let $\theta:\Omega\rightarrow \Omega$ be a diffeomorphism. Denote $f(x,\omega):=f_\omega(x)$ and $\phi^{-1}(x,\omega)=(f_\omega^{-1}(x),\theta^{-1}\omega).$
\begin{proposition}\label{proposition A.1}
Assume
\begin{enumerate}
  \item[(a)]$\phi:M\times\Omega\rightarrow M\times\Omega$ is Anosov on fibers,
  \item[(b)]$f(x,\omega)$ and $f_\omega^{-1}(x)$ are $C^1$ in $\omega$,
  \item[(c)]The diffeomorphism $\theta$ satisfying:\begin{align*}
              \sup_{(x,\omega)\in M\times\Omega}\|D_xf_\omega|_{E^s(x,\omega)}\|<&\inf_{\omega\in\Omega}\|D_\omega\theta^{-1}\|^{-1}:=m_1\\
              &\leq \sup_{\omega\in\Omega}\|D_\omega\theta\|:=m_2<\inf_{(x,\omega)\in M\times\Omega}\|D_x(f_{\theta^{-1}\omega})^{-1}|_{E^u(x,\omega)}\|^{-1}.
            \end{align*}
\end{enumerate}
Then $\phi$ is partially hyperbolic in the narrow sense.
\end{proposition}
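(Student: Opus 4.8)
The plan is to exhibit the dominated splitting explicitly. Since the fibre splitting $E^s\oplus E^u$ is already $D\phi$-invariant in the $M$-direction, set $E^1(x,\omega):=E^s(x,\omega)\oplus\{0\}$ and $E^3(x,\omega):=E^u(x,\omega)\oplus\{0\}$, viewed as subbundles of $T_{(x,\omega)}(M\times\Omega)=T_xM\oplus T_\omega\Omega$. For a vector of the form $(v,0)$ one has $D\phi(x,\omega)(v,0)=(D_xf_\omega v,0)$, so $E^1$ and $E^3$ are $D\phi$-invariant, $E^1$ is uniformly contracted ($\|D\phi^n|_{E^1}\|\le e^{-n\lambda_0}$), and $E^3$ is uniformly expanded ($m(D\phi^n|_{E^3})\ge e^{n\lambda_0}$), directly from hypothesis (a). The real work is to produce the invariant centre bundle $E^2$ of dimension $\dim\Omega$, which cannot be $\{0\}\oplus T_\omega\Omega$ because $D\phi(x,\omega)(0,w)=(\partial_\omega f_\omega(x)\,w,\,D_\omega\theta\,w)$ has a nonzero $M$-component, where $\partial_\omega f_\omega(x)$ denotes the derivative of $\omega\mapsto f_\omega(x)$, which exists by (b).

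Thus $E^2$ must be the graph of a bundle map $\ell_{(x,\omega)}=(\ell^s_{(x,\omega)},\ell^u_{(x,\omega)}):T_\omega\Omega\to E^s(x,\omega)\oplus E^u(x,\omega)$, and $D\phi$-invariance of $\mathrm{graph}(\ell)$ is equivalent to
\begin{align*}
 \ell^s_{\phi(x,\omega)}\,D_\omega\theta&=D_xf_\omega|_{E^s}\,\ell^s_{(x,\omega)}+P^s_{(x,\omega)}\partial_\omega f_\omega(x),\\
 \ell^u_{\phi(x,\omega)}\,D_\omega\theta&=D_xf_\omega|_{E^u}\,\ell^u_{(x,\omega)}+P^u_{(x,\omega)}\partial_\omega f_\omega(x),
\end{align*}
where $P^s,P^u$ are the projections associated with $E^s\oplus E^u$. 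I would solve these with the Invariant Section Lemma for skew products, Sublemma \ref{invariant section lemma}. For $\ell^s$ one reads the first equation as $\ell^s_{\phi(x,\omega)}=\big(D_xf_\omega|_{E^s}\,\ell^s_{(x,\omega)}+P^s_{(x,\omega)}\partial_\omega f_\omega(x)\big)(D_\omega\theta)^{-1}$; the associated fibre map is affine in $\ell^s$ with linear part of norm at most $\big(\sup_{(x,\omega)}\|D_xf_\omega|_{E^s}\|\big)\cdot\big(\sup_\omega\|(D_\omega\theta)^{-1}\|\big)=\big(\sup\|D_xf_\omega|_{E^s}\|\big)/m_1<1$ by hypothesis (c), so on a large enough forward-invariant ball of bounded sections it is a fibre contraction and Sublemma \ref{invariant section lemma} yields a unique bounded invariant $\ell^s$. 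For $\ell^u$ the forward map is expanding, so instead I would pass to $\phi^{-1}$: writing the second equation at $\phi^{-1}(x,\omega)$ and solving for $\ell^u_{\phi^{-1}(x,\omega)}$ gives a fibre map whose linear part has norm at most $\big(\sup_{(x,\omega)}\|D_x(f_{\theta^{-1}\omega})^{-1}|_{E^u(x,\omega)}\|\big)\cdot\big(\sup_\omega\|D_\omega\theta\|\big)<(1/m_2)\cdot m_2=1$, again by (c), and Sublemma \ref{invariant section lemma} applied to $\phi^{-1}$ yields a unique bounded invariant $\ell^u$. Since $E^s,E^u$ depend continuously on $(x,\omega)$ and $\omega\mapsto f_\omega,\ \omega\mapsto f_\omega^{-1}$ are $C^1$ (hypothesis (b)), the bundle maps are continuous with bounded affine term, so iterating continuous sections preserves continuity and the uniform limit $\ell=(\ell^s,\ell^u)$ is continuous. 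Define $E^2(x,\omega):=\{(\ell_{(x,\omega)}w,w):w\in T_\omega\Omega\}$; it is continuous, $\dim\Omega$-dimensional, $D\phi$-invariant, and complementary to $E^1\oplus E^3=T_xM\oplus\{0\}$, so $T(M\times\Omega)=E^1\oplus E^2\oplus E^3$.

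It remains to verify the domination estimates $\|D\phi^n|_{E^i}\|\le c\lambda^n\|D\phi^n|_{E^{i+1}}\|$. Using $\|w\|\le\|(\ell_{(x,\omega)}w,w)\|\le(1+\|\ell\|_\infty)\|w\|$ and the fact that the $\Omega$-component of $D\phi^n$ on $E^2$ is $D_\omega\theta^n$, one gets $(1+\|\ell\|_\infty)^{-1}\,m(D_\omega\theta^n)\le\|D\phi^n|_{E^2}\|\le(1+\|\ell\|_\infty)\,\|D_\omega\theta^n\|$; since $m(D_\omega\theta)\ge m_1$ and $\|D_\omega\theta\|\le m_2$ (submultiplicativity of conorm and norm) this gives $c_1 m_1^n\le\|D\phi^n|_{E^2}\|\le c_2 m_2^n$. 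Combined with $\|D\phi^n|_{E^1}\|\le(\sup\|D_xf_\omega|_{E^s}\|)^n$ and $m(D\phi^n|_{E^3})=m(D_xf_\omega^n|_{E^u})\ge m_u^n$, where $m_u:=\inf_{(x,\omega)}m(D_xf_\omega|_{E^u})>m_2$ (the third part of (c) rewritten via $\|A\|^{-1}=m(A^{-1})$), the ratios are bounded by $c\lambda^n$ with $\lambda:=\max\{\sup\|D_xf_\omega|_{E^s}\|/m_1,\ m_2/m_u\}<1$. Together with the already-noted uniform contraction of $E^1$ and uniform expansion of $E^3$, this shows $T_{(x,\omega)}(M\times\Omega)=E^1\oplus_< E^2\oplus_< E^3$ with $\dim E^2=\dim\Omega$, i.e. $\phi$ is partially hyperbolic in the narrow sense with a $\dim\Omega$-dimensional centre. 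The main obstacle is the construction of $E^2$: forcing the two graph transforms into the exact hypotheses of Sublemma \ref{invariant section lemma} (boundedness of the affine terms, an invariant ball, and — crucially — checking that both contraction factors are $<1$, which is precisely where the spectral-gap hypothesis (c) enters), and ensuring the resulting invariant section is continuous rather than merely measurable.
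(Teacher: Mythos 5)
Your proposal is correct but follows a genuinely different route from the paper. The paper never constructs the centre bundle by hand: it introduces two cone families $C(x,\omega)$ and $\bar C(x,\omega)$ (fattened around $E^u\times\{0\}$ and $E^s\times\{0\}$ respectively), shows they are strictly $D\phi$- and $D\phi^{-1}$-invariant using only the projection bound $\mathcal P$ and the gap (c), then invokes the cone-field criterion and uniqueness of dominated splittings (Theorem 2.6 and Proposition 2.2 in [CP15]) to obtain $S_1\oplus_< (E^u\times\{0\})$ and $(E^s\times\{0\})\oplus_< H_2$ and intersects them into the three-way splitting with $E^2=S_1\cap H_2$. You instead exhibit $E^2$ explicitly as the graph of a bundle map $\ell=(\ell^s,\ell^u):T_\omega\Omega\to E^s\oplus E^u$, solving the two coupled invariance equations by forward and backward application of the paper's own Invariant Section Lemma (Sublemma \ref{invariant section lemma}), with the contraction factors $\sup\|D f_\omega|_{E^s}\|/m_1<1$ and $m_2\cdot\sup\|D(f_{\theta^{-1}\omega})^{-1}|_{E^u}\|<1$ supplied by hypothesis (c), and then read off domination estimates directly from the bounded graph structure. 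What each buys: the paper's cone argument is softer — it needs no explicit fixed-point bundle and no discussion of continuity of the invariant section — but must cite external cone-field machinery; your graph-transform argument is more constructive and self-contained (it reuses Sublemma \ref{invariant section lemma} rather than importing [CP15]), at the cost of the extra bookkeeping you flag yourself — the forward-invariant ball for the affine fibre contraction, the continuity of the uniform limit, and rewriting the conorm inequalities via $m(A)=\|A^{-1}\|^{-1}$, $m(AB)\ge m(A)m(B)$. One cosmetic note: your lower estimate $c_1m_1^n\le\|D\phi^n|_{E^2}\|$ should be stated for the conorm $m(D\phi^n|_{E^2})$, since that is the quantity that enters the domination ratio $\|D\phi^n|_{E^1}\|/m(D\phi^n|_{E^2})$; the inequality you wrote is still true (norm dominates conorm) but does not directly feed the estimate you need.
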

\begin{proof}
  We first show the existence of a dominated splitting.
   Note that $T_{(x,\omega)}M\times\Omega=T_xM\times T_\omega\Omega$ already has a splitting $E^u(x,\omega)\times\{0\}\oplus E^s(x,\omega)\times\{0\}\oplus \{0\}\times T_\omega\Omega$, but this splitting may not be invariant. For any $v\in T_xM\times T_\omega\Omega$, then $v=v_1+v_2+v_3$ according to the above splitting. Notice that $\|D\phi(x,\omega)v_3\|$ only depends on $\|D_\omega f(x,\omega)\|$ and $\|D_\omega\theta\|$. $\|D\phi^{-1}(x,\omega)v_3\|$ only depends on $\|D_\omega (f_{\theta^{-1}\omega})^{-1}(x)\|$ and $\|D_\omega\theta^{-1}\|$, then by the compactness of $M$ and $\Omega$, there exists a number $K$ such that
   \begin{equation*}
     \|D\phi(x,\omega)v_3\|\leq K\|v_3\|,\ \|D\phi^{-1}(x,\omega)v_3\|\leq K\|v_3\|.
   \end{equation*} We let $P(E^u(x,\omega)\times\{0\})$ denote the projection map from $T_xM\times T_\omega\Omega$ to $E^u(x,\omega)\times\{0\}$ with respect to the splitting $E^u(x,\omega)\times\{0\}\oplus E^s(x,\omega)\times\{0\}\oplus \{0\}\times T_\omega\Omega$. $P(E^s(x,\omega)\times\{0\})$ and $P(\{0\}\times T_\omega\Omega)$ are similar notations. Since $E^s(x,\omega) $ and $E^u(x,\omega)$ are uniformly continuous on $x$ and $\omega$, there exists a number $\mathcal{P}>1$ such that
   \begin{equation*}
     \sup\{\|P(E^s(x,\omega)\times\{0\})\|,\|P(E^u(x,\omega)\times\{0\})\|:\ (x,\omega)\in M\times\Omega\}<\mathcal{P}.
   \end{equation*}
   Now consider the cone
   \begin{equation*}
     C(x,\omega):=\{v\in T_xM\times T_\omega\Omega|\ \|v_2\|+b\|v_3\|\leq\|v_1\|\},
   \end{equation*}where $b$ is a number such that
   \begin{equation}\label{Appendix condition on b}
     b>\frac{2\mathcal{P}K}{e^{\lambda_0}-m_2}.
   \end{equation}
   Denote $$c_0=\max\left\{\frac{2\mathcal{P}K+bm_2}{e^{\lambda_0}b},e^{-2\lambda_0}\right\}\in (0,1).$$

   For any $v\in C(x,\omega)$, we have
   \begin{align*}
     D\phi(x,\omega)v &=D\phi(x,\omega)v_1+D\phi(x,\omega)v_2+D\phi(x,\omega)v_3\\
     &=D\phi(x,\omega)v_1+P(E^u(\phi(x,\omega))\times\{0\})D\phi(x,\omega)v_3\\
     &\ \ \ \ \ \ \ +D\phi(x,\omega)v_2+P(E^s(\phi(x,\omega))\times\{0\})D\phi(x,\omega)v_3\\
     &\ \ \ \ \ \ \ \ \ \ \ \ +P(\{0\}\times T_{\theta\omega}\Omega)D\phi(x,\omega)v_3\\
     &=(D\phi(x,\omega)v)_1+(D\phi(x,\omega)v)_2+(D\phi(x,\omega)v)_3.
   \end{align*}
  Then
  \begin{align*}
    \|(D\phi(x,\omega)v)_2\|+b\|(D\phi(x,\omega)v)_3\| &\leq e^{-\lambda_0}\|v_2\|+\mathcal{P}K\|v_3\|+b\cdot m_2\|v_3\|\\
    &=e^{-\lambda_0}\|v_2\|+(\mathcal{P}K+bm_2)\|v_3\|\\
    &= e^{-\lambda_0}\|v_2\|+(2\mathcal{P}K+bm_2)\|v_3\|-\mathcal{P}K\|v_3\|\\
    &\leq e^{\lambda_0}\left(e^{-2\lambda_0}\|v_2\|+e^{-\lambda_0}(2\mathcal{P}K+bm_2)\|v_3\|\right)-c_0\mathcal{P}K\|v_3\|\\
    &\overset{\eqref{Appendix condition on b}}<e^{\lambda_0}(c_0\|v_2\|+c_0b\|v_3\|)-c_0\mathcal{P}K\|v_3\|\\
    &\leq c_0e^{\lambda_0}\|v_1\|-c_0\mathcal{P}K\|v_3\|\\
    &\leq c_0\|(D\phi(x,\omega)v)_1\|.
  \end{align*}Hence $D\phi(x,\omega)C(x,\omega)\subset int C(\phi(x,\omega))$. By the cone-field criteria (see Theorem 2.6 in \cite{CP15}), $T_xM\times T_\omega\Omega$ has a dominated splitting $S_1(x,\omega)\oplus_<S_2(x,\omega)$ with $\dim(S_2(x,\omega))=\dim(E^u(x,\omega)\times\{0\})$. Notice that $E^u(x,\omega)\times\{0\}$ lies in $C(x,\omega)$ and it is invariant under $D\phi(x,\omega)$, so $S_2(x,\omega)=E^u(x,\omega)\times\{0\}$.

  On the other hand, consider another cone
  \begin{equation*}
    \mathcal{C}(x,\omega)=\{v\in T_xM\times T_\omega\Omega:\ \|v_1\|+d\|v_3\|\leq \|v_2\|\},
  \end{equation*}where
  \begin{equation}\label{appendix condition on d}
    d> \frac{2\mathcal{P}K}{e^{\lambda_0}-m_1^{-1}}.
  \end{equation}
  Denote $$c_1=\max\left\{\frac{2\mathcal{P}K+m_1^{-1}d}{de^{\lambda_0}},e^{-2\lambda_0}\right\}\in(0,1).$$
  For any $v\in \mathcal{C}(x,\omega)$, we have
  \begin{align*}
     D\phi^{-1}(x,\omega)v&=D\phi^{-1}(x,\omega)v_1+D\phi^{-1}(x,\omega)v_2+D\phi^{-1}(x,\omega)v_3\\
     &=D\phi^{-1}(x,\omega)v_1+P(E^u(\phi^{-1}(x,\omega))\times\{0\})D\phi^{-1}(x,\omega)v_3\\
     &\ \ \ \ \ \ \ \ +D\phi^{-1}(x,\omega)v_2+P(E^s(\phi^{-1}(x,\omega))\times\{0\})D\phi^{-1}(x,\omega)v_3\\
     &\ \ \ \ \ \ \ \ \ \ \ \ \ \ +P(\{0\}\times T_\omega\Omega)D\phi^{-1}(x,\omega)v_3\\
     &=(D\phi^{-1}(x,\omega)v)_1+(D\phi^{-1}(x,\omega)v)_2+(D\phi^{-1}(x,\omega)v)_3.
  \end{align*}Then
  \begin{align*}
    \|(D\phi^{-1}(x,\omega)v)_1\|+d\|(D\phi^{-1}(x,\omega)v)_3\| & \leq e^{-\lambda_0}\|v_1\|+\mathcal{P}K\|v_3\|+d\cdot m_1^{-1}\|v_3\|\\
    &\leq e^{-\lambda_0}\|v_1\|+(2\mathcal{P}K+dm_1^{-1})\|v_3\|-c_1\mathcal{P}K\|v_3\|\\
    &\leq e^{\lambda_0}(e^{-2\lambda_0}\|v_1\|+e^{-\lambda_0}(2\mathcal{P}K+dm_1^{-1})\|v_3\|)-c_1\mathcal{P}K\|v_3\|\\
    &\overset{\eqref{appendix condition on d}}< e^{\lambda_0}(c_1\|v_1\|+c_1d\|v_3\|)-c_1\mathcal{P}K\|v_3\|\\
    &\leq c_1e^{\lambda_0}\|v_2\|-c_1\mathcal{P}K\|v_3\|\\
    &\leq c_1\|(D\phi^{-1}(x,\omega)v)_2\|.
  \end{align*}Hence $D\phi^{-1}(x,\omega)\mathcal{C}(x,\omega)\subset int(\mathcal{C}(\phi^{-1}(x,\omega)))$. By the cone-field criteria, $T_xM\times T_\omega\Omega$ has a dominated splitting $H_1(x,\omega)\oplus_<H_2(x,\omega)$ with $\dim H_1(x,\omega)=\dim (E^s(x,\omega)\times\{0\})$. Notice that $E^s(x,\omega)\times\{0\}$ lies in cone $\mathcal{C}(x,\omega)$ and it is invariant under $D\phi(x,\omega)$, so $H_1(x,\omega)=E^s(x,\omega)\times\{0\}.$

  Now $T_xM\times T_\omega\Omega$ has two dominated splittings: $S_1(x,\omega)\oplus_< (E^u(x,\omega)\times\{0\})$ and $(E^s(x,\omega)\times\{0\})\oplus_< H_2(x,\omega)$. Then, by uniqueness of the dominated splitting (Proposition 2.2 in \cite{CP15}), we have
  \begin{equation*}
    T_xM\times T_\omega\Omega=(E^s(x,\omega)\times\{0\})\oplus_<(S_1(x,\omega)\cap H_2(x,\omega))\oplus_<(E^u(x,\omega)\times\{0\}).
  \end{equation*}
  Besides, we already know that $E^s(x,\omega)\times\{0\}$ is uniformly contracted under $D\phi(x,\omega)$ and $E^u(x,\omega)\times \{0\}$ is uniformly expanded under $D\phi(x,\omega)$. Hence $\phi$ is partially hyperbolic in the narrow sense.
\end{proof}

\subsection{Convex Cone, Projective Metric and Birkhoff's Inequality}\label{section convex cone}
In this section, we introduce the notion of projective metric associated to a convex cone in a topological vector space. The following knowledge is borrowed from \cite[Section 2.1]{Viana}, and also can be found in \cite{Liv95A}.

\begin{definition}\label{definition convex cone}
Let $E$ be a topological vector space. A subset $C\subset E$ is said to be a convex cone if
\begin{enumerate}
  \item $tv\in C$ for $v\in C$ and $t\in \mathbb{R}^+$;
  \item for any $t_1,t_2\in\mathbb{R}^+$, $v_1,v_2\in C$, then $t_1v_1+t_2v_2\in C$;
  \item $\bar{C}\cap -\bar{C}=\{0\}$, where $\bar{C}$ is the `` integral closure" in a weaker sense, and it is defined by: $w\in\bar{C}$ if and only if there are $v\in C$ and $t_n\searrow0$ such that $w+t_nv\in C$ for all $n\geq 1$.
\end{enumerate}
\end{definition}
\begin{definition}\label{projective metric for cone}
For a convex cone $C\subset E$, given any $v_1,v_2\in C$, we define
\begin{align*}
   \alpha(v_1,v_2)&:=\sup\{t>0:\ v_2-tv_1\in C\};\\
   \beta(v_1,v_2)&:=\inf\{s>0:\ sv_1-v_2\in C\},
\end{align*}with the convention that $\sup\emptyset=0$ and $\inf\emptyset=+\infty$. The projective metric between $v_1,v_2\in C$ is defined by
\begin{equation*}
  d_C(v_1,v_2)=\log\frac{\beta(v_1,v_2)}{\alpha(v_1,v_2)}
\end{equation*}with the convention that $d_C(v_1,v_2)=\infty$ if $\alpha(v_1,v_2)=0$ or $\beta(v_1,v_2)=\infty$.
\end{definition}
\begin{proposition}\label{proposition property of pm}
$d_{C}$ is a metric in the projective quotient of $C$, i.e.,
\begin{enumerate}
  \item $d_C(v_1,v_2)\geq 0$ for all $v_1,v_2\in C$(guaranteed by (3) in Definition \ref{definition convex cone});
  \item $d_C(v_1,v_2)=d_C(v_2,v_1)$ for all $v_1,v_2\in C$;
  \item $d_C(v_1,v_3)\leq d_C(v_1,v_2)+d_C(v_2,v_3)$ for all $v_1,v_2,v_3\in C$;
  \item $d_C(v_1,v_2)=0$ if and only if there exists $t\in\mathbb{R}^+$ such that $v_1=tv_2$.
\end{enumerate}
\end{proposition}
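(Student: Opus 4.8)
The plan is to verify the four listed properties one at a time, working directly from the definitions of $\alpha$, $\beta$ and $d_C$; the only genuinely non-formal ingredient is property (3) in Definition \ref{definition convex cone} (i.e.\ $\bar C\cap -\bar C=\{0\}$), which is what produces strict positivity and the rigidity statement in (4). First I would record two monotonicity facts that make the relevant suprema/infima well behaved: if $v_2-t'v_1\in C$ and $0<t<t'$ then $v_2-tv_1=(v_2-t'v_1)+(t'-t)v_1\in C$, so $\{t>0:v_2-tv_1\in C\}$ is an interval with left endpoint $0$; dually $\{s>0:sv_1-v_2\in C\}$ is an interval with right endpoint $+\infty$. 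Hence $v_2-tv_1\in C$ for all $0<t<\alpha(v_1,v_2)$ and $sv_1-v_2\in C$ for all $s>\beta(v_1,v_2)$. For such $t,s$ one gets $(s-t)v_1=(sv_1-v_2)+(v_2-tv_1)\in C$; if $v_1\neq 0$ and $s\le t$ this would give $-v_1\in C$ (or $0\in C$ when $s=t$), so $v_1\in C\cap -C\subseteq\bar C\cap-\bar C=\{0\}$, a contradiction. Therefore $\alpha(v_1,v_2)\le\beta(v_1,v_2)$ and $d_C(v_1,v_2)\ge 0$, the degenerate cases ($v_1=0$, or $\alpha=0$, or $\beta=\infty$) being covered by the convention $d_C=+\infty$.

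For property (2) I would prove $\beta(v_1,v_2)=1/\alpha(v_2,v_1)$, with the conventions $1/0=+\infty$ and $1/\infty=0$: scaling by $1/s>0$ gives $sv_1-v_2\in C\iff v_1-s^{-1}v_2\in C$, so $\beta(v_1,v_2)=\inf\{s>0:v_1-s^{-1}v_2\in C\}=\bigl(\sup\{t>0:v_1-tv_2\in C\}\bigr)^{-1}=\alpha(v_2,v_1)^{-1}$. Consequently
\begin{equation*}
  d_C(v_1,v_2)=\log\frac{\beta(v_1,v_2)}{\alpha(v_1,v_2)}=-\log\bigl(\alpha(v_1,v_2)\,\alpha(v_2,v_1)\bigr),
\end{equation*}
which is visibly symmetric; the same scaling computation gives $d_C(t_1v_1,t_2v_2)=d_C(v_1,v_2)$ for $t_1,t_2>0$, so $d_C$ is well defined on the projective quotient.

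For property (3) the crucial estimate is submultiplicativity,
\begin{equation*}
  \alpha(v_1,v_3)\ge\alpha(v_1,v_2)\,\alpha(v_2,v_3),\qquad \beta(v_1,v_3)\le\beta(v_1,v_2)\,\beta(v_2,v_3).
\end{equation*}
For the first, pick $0<t_1<\alpha(v_1,v_2)$ and $0<t_2<\alpha(v_2,v_3)$; then $v_2-t_1v_1\in C$ and $v_3-t_2v_2\in C$, whence $v_3-t_1t_2v_1=(v_3-t_2v_2)+t_2(v_2-t_1v_1)\in C$ by convexity, so $\alpha(v_1,v_3)\ge t_1t_2$, and taking suprema gives the claim. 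The inequality for $\beta$ follows symmetrically (from $sv_1-v_2\in C$ and $s'v_2-v_3\in C$ one gets $ss'v_1-v_3\in C$), or simply by applying the $\alpha$-inequality to the reversed triples and invoking (2). Dividing and taking logarithms yields $d_C(v_1,v_3)\le d_C(v_1,v_2)+d_C(v_2,v_3)$, the inequality being trivially true if either term on the right is $+\infty$.

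Finally, for property (4): if $v_1=tv_2$ with $t>0$, then $v_2\in C$ (and $v_2\neq 0$, since $v_2\in C\cap-C$ would force $v_2=0$), so for real $r$ one has $r v_2\in C\iff r\ge0$; applying this with $r=1-st$ and $r=st-1$ gives $\alpha(v_1,v_2)=\beta(v_1,v_2)=1/t$, hence $d_C(v_1,v_2)=0$. Conversely, suppose $d_C(v_1,v_2)=0$; then $\alpha:=\alpha(v_1,v_2)=\beta(v_1,v_2)\in(0,\infty)$, and for every $\varepsilon\in(0,\alpha)$ both $(v_2-\alpha v_1)+\varepsilon v_1=v_2-(\alpha-\varepsilon)v_1\in C$ and $-(v_2-\alpha v_1)+\varepsilon v_1=(\alpha+\varepsilon)v_1-v_2\in C$, so by the definition of $\bar C$ (with $v_1\in C$ and $\varepsilon_n\searrow 0$) we get $v_2-\alpha v_1\in\bar C\cap-\bar C=\{0\}$, i.e.\ $v_2=\alpha v_1$. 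The only place the cone axioms are really used is this passage to the closure in (1) and (4); that is the ``hard'' step in the sense that everything else is formal bookkeeping with the conventions $\sup\emptyset=0$ and $\inf\emptyset=+\infty$.
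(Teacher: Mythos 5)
Your proof is correct; it is the standard argument for the Hilbert projective (pseudo)metric, and the paper itself does not prove this proposition but simply cites it from the background (Viana's notes), so there is no paper proof to compare against. One small remark on property (1): the case $s=t$ giving ``$0\in C$'' is not by itself a contradiction under the stated axioms (they do not explicitly exclude $0\in C$); the contradiction is cleanly obtained from strict inequality $s<t$, which is all that is needed since, if $\alpha(v_1,v_2)>\beta(v_1,v_2)$, one can always choose $\beta<s<t<\alpha$ and deduce $v_1\in C\cap -C\subseteq\bar C\cap -\bar C=\{0\}$. With that minor rephrasing the argument is airtight, and your treatment of the degenerate cases, submultiplicativity of $\alpha$ and $\beta$, the identity $\beta(v_1,v_2)=\alpha(v_2,v_1)^{-1}$, and the closure argument in (4) are exactly the steps one would expect.
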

\begin{proposition}[Birkhoff's inequality]\label{birkhoff inequality}
Let $E_1,\ E_2$ be two topological vector spaces, and $C_i\subset E_i$, for $i=1,2$ be convex cones. Let $L:E_1\rightarrow E_2$ be a linear operator and assume that $L(C_1)\subset C_2$. Let $D=\sup\{d_{C_2}(L(v_1),L(v_2)):\ v_1,v_2\in C_1\}$. If $D<\infty$, then
\begin{equation*}
  d_{C_2}(L(v_1),L(v_2))\leq (1-e^{-D})d_{C_1}(v_1,v_2)\mbox{ for all }v_1,v_2\in C_1.
\end{equation*}

\end{proposition}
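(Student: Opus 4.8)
The plan is to run the classical two-element (``decomposition'') argument for cone contractions, which reduces the claim to an elementary inequality about a linear-fractional function. If $d_{C_1}(v_1,v_2)=\infty$ there is nothing to prove, so assume $d_{C_1}(v_1,v_2)<\infty$; by the convention in the definition of the projective metric this forces $0<\alpha(v_1,v_2)\le\beta(v_1,v_2)<\infty$. Fix any $\alpha_0,\beta_0$ with $0<\alpha_0<\alpha(v_1,v_2)\le\beta(v_1,v_2)<\beta_0<\infty$. I would first check that $w_1:=v_2-\alpha_0 v_1$ and $w_2:=\beta_0 v_1-v_2$ genuinely belong to $C_1$: from $\alpha_0<\alpha(v_1,v_2)$ there is $t>\alpha_0$ with $v_2-t v_1\in C_1$, hence $w_1=(v_2-t v_1)+(t-\alpha_0)v_1\in C_1$ because $v_1\in C_1$ and $C_1$ is a convex cone; the argument for $w_2$ is symmetric. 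One then has the algebraic identities $(\beta_0-\alpha_0)v_1=w_1+w_2$ and $(\beta_0-\alpha_0)v_2=\beta_0 w_1+\alpha_0 w_2$.

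Applying $L$ and using that $d_{C_2}$ is invariant under multiplication by positive scalars, I would write $d_{C_2}(Lv_1,Lv_2)=d_{C_2}(A+B,\ \beta_0 A+\alpha_0 B)$, where $A:=Lw_1$ and $B:=Lw_2$ both lie in $C_2$ and, \emph{by the very definition of} $D$, satisfy $d_{C_2}(A,B)\le D$. Put $a:=\alpha(A,B)$ and $b:=\beta(A,B)$, so $0<a\le b$ and $b/a\le e^{D}$, i.e.\ $a/b\ge e^{-D}$. Using the cone-membership facts that $B-\lambda A\in C_2$ whenever $0<\lambda<a$, that $A-\nu B\in C_2$ whenever $0<\nu<\alpha(B,A)=1/b$, and that $sA-B\in C_2$ whenever $s>b$, a direct inspection of the coefficients in $\beta_0 A+\alpha_0 B-\tau(A+B)$ and in $s(A+B)-(\beta_0 A+\alpha_0 B)$ yields
\begin{align*}
\alpha(Lv_1,Lv_2)\ \ge\ \frac{\beta_0+b\,\alpha_0}{1+b},\qquad
\beta(Lv_1,Lv_2)\ \le\ \frac{\beta_0+a\,\alpha_0}{1+a}.
\end{align*}
Writing $R:=\beta_0/\alpha_0>1$, this gives $d_{C_2}(Lv_1,Lv_2)\le\log\dfrac{(a+R)(b+1)}{(a+1)(b+R)}$.

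The crux --- and the step I expect to be the real obstacle --- is the elementary inequality
$$\frac{(a+R)(b+1)}{(a+1)(b+R)}\ \le\ R^{\,1-a/b}\qquad\text{for }0<a\le b,\ R>1.$$
I would prove it by introducing $u(x):=\log\frac{x+R}{x+1}$, which is positive and strictly decreasing with $u(0)=\log R$ and $u(\infty)=0$, so that $\log\frac{(a+R)(b+1)}{(a+1)(b+R)}=u(a)-u(b)=\int_a^b\rho$ and $\log R=\int_0^\infty\rho$, where $\rho(x):=-u'(x)=\frac{R-1}{(x+R)(x+1)}>0$ is itself decreasing. The inequality then amounts to $\int_a^b\rho\le\frac{b-a}{b}\int_0^\infty\rho$, which follows purely from the monotonicity of $\rho$: since $\rho$ decreases, $(b-a)\int_0^a\rho\ge(b-a)\,a\,\rho(a)\ge a\int_a^b\rho$, hence $\frac1b\int_0^b\rho\ge\frac1{b-a}\int_a^b\rho$, and finally $\int_0^b\rho\le\int_0^\infty\rho$. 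Combining this with $1-a/b\le 1-e^{-D}$ (and $R>1$) gives $d_{C_2}(Lv_1,Lv_2)\le(1-e^{-D})\log R=(1-e^{-D})\log(\beta_0/\alpha_0)$. To conclude I would let $\alpha_0\uparrow\alpha(v_1,v_2)$ and $\beta_0\downarrow\beta(v_1,v_2)$, so that $\log(\beta_0/\alpha_0)\to d_{C_1}(v_1,v_2)$; this passage to the limit also absorbs the degenerate case $d_{C_1}(v_1,v_2)=0$ at no extra cost.
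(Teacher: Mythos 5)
Your proposal is correct. Note that the paper itself gives no proof of this proposition: it is recalled in the Appendix with the remark that the notions are borrowed from \cite{Viana}, so there is no internal argument to compare against. What you wrote is the standard two-element decomposition proof of the cone-contraction estimate: the identities $(\beta_0-\alpha_0)v_1=w_1+w_2$ and $(\beta_0-\alpha_0)v_2=\beta_0w_1+\alpha_0w_2$ are right, the cone memberships $w_1,w_2\in C_1$ are justified exactly as you say, the scale invariance of $d_{C_2}$ reduces everything to the pair $(A+B,\ \beta_0A+\alpha_0B)$ with $d_{C_2}(A,B)\le D$, and your coefficient inspection does give $\alpha(Lv_1,Lv_2)\ge(\beta_0+b\alpha_0)/(1+b)$ and $\beta(Lv_1,Lv_2)\le(\beta_0+a\alpha_0)/(1+a)$ (using that $B-\lambda A\in C_2$ for $\lambda<a$ and $sA-B\in C_2$ for $s>b$, which follow from the sup/inf definitions plus convexity). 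The key scalar inequality $\log\frac{(a+R)(b+1)}{(a+1)(b+R)}\le(1-a/b)\log R$ is genuinely the crux, and your integral argument with the decreasing density $\rho(x)=(R-1)/((x+R)(x+1))$ is valid: $b\int_a^b\rho\le a(b-a)\rho(a)+(b-a)\int_a^b\rho\le(b-a)\int_0^b\rho\le(b-a)\int_0^\infty\rho$, which is exactly what is needed; the case $a=b$ is trivial and $R>1$ holds since $\alpha(v_1,v_2)\le\beta(v_1,v_2)$ (a consequence of $\bar C_1\cap-\bar C_1=\{0\}$, worth one sentence in a write-up). Combining with $a/b\ge e^{-D}$ and letting $\alpha_0\uparrow\alpha(v_1,v_2)$, $\beta_0\downarrow\beta(v_1,v_2)$ finishes the proof, including the degenerate case $d_{C_1}(v_1,v_2)=0$. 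The only remark of substance: the sharp classical constant is $\tanh(D/4)$, which is strictly smaller than $1-e^{-D}$; your argument yields precisely the weaker constant stated in the proposition, which is all the paper uses.
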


\subsection*{Acknowledgement}
This work was finished at the Brigham Young University and constitutes part of my PhD. I would like to thank my advisor Kening Lu for many valuable discussions. Thanks also goes to Jianyu Chen, Zeng Lian, Fan Yang and referee for valuable suggestions.

\bibliographystyle{acm}
\bibliography{randomcorrelationbib}


\end{document}